\documentclass[reqno]{amsart}

\usepackage{a4wide}
\usepackage[utf8]{inputenc}
\usepackage[english]{babel}
\usepackage{amsmath, amsthm, amsfonts, amssymb, enumerate}
\usepackage{mathrsfs}
\usepackage[numbers]{natbib}
\usepackage{xcolor}
\usepackage{graphicx}
\usepackage{hyperref}
\usepackage{cleveref}
\usepackage{verbatim}
\usepackage{ifthen}
\usepackage{subcaption}

\hyphenation{sub-mani-fold}

\theoremstyle{plain}

\newtheorem{theorem}{Theorem}[section]

\newtheorem{lemma}[theorem]{Lemma}

\newtheorem*{lemma*}{Lemma}
\newtheorem{corollary}[theorem]{Corollary}

\newtheorem*{corollary*}{Corollary}

\newtheorem*{theorem*}{Theorem}
\newtheorem{proposition}[theorem]{Proposition}

\newtheorem*{proposition*}{Proposition}
\theoremstyle{definition}
\newtheorem{definition}[theorem]{Definition}

\newtheorem{example}[theorem]{Example}

\newtheorem{examples}[theorem]{Examples}

\newtheorem{remark}[theorem]{Remark}

\newtheorem*{remark*}{Remark}
\newtheorem*{notation*}{Notation}

\numberwithin{equation}{section}

\newcommand{\id}{\operatorname{id}}

\newcommand{\N}{{\mathbb N}}
\newcommand{\R}{{\mathbb R}}
\newcommand{\Q}{{\mathbb Q}}
\newcommand{\Z}{{\mathbb Z}}

\newcommand{\diam}{\operatorname{diam}}

  \newcommand{\vertii}[1]{{\left\vert\kern-0.25ex\left\vert #1 \right\vert\kern-0.25ex\right\vert}}
 \newcommand{\vertiii}[1]{{\left\vert\kern-0.25ex\left\vert\kern-0.25ex\left\vert #1 
    \right\vert\kern-0.25ex\right\vert\kern-0.25ex\right\vert}}

\newcommand{\exclude}[1]{}

\newcounter{newcl}{}
\setcounter{newcl}{-1}
\definecolorseries{newcol}{rgb}{step}[rgb]{.0,.5,.0}{.3,.1,.8}

\definecolor{grey}{rgb}{0.7, 0.7, 0.7}
\definecolor{darkgreen}{rgb}{0., 0.6, 0.}

\makeatletter
\def\blfootnote{\xdef\@thefnmark{}\@footnotetext}
\makeatother

\bibliographystyle{plain}

\title{Measure and dimension theory of permeable sets and its applications to fractals
}

\author[G. Leobacher]{Gunther Leobacher}
\author[T. Rajala]{Tapio Rajala}
\author[A. Steinicke]{Alexander Steinicke}
\author[J. Thuswaldner]{J\"org Thuswaldner}

\address[G.L.]{Institute of Mathematics and Scientific Computing, University of Graz. Heinrichstraße 36, 8010 Graz, Austria.}
\email{gunther.leobacher@uni-graz.at}
\address[T.R.]{Department of Mathematics and Statistics, University of Jyvaskyla,
P.O. Box 35 (MaD), 40014 University of Jyvaskyla, Finland.}
\email{tapio.m.rajala@jyu.fi}
\address[A.S.]{Chair of Applied Mathematics, 
Montanuniversitaet Le\-o\-ben.
Peter-Tunner-Strasse 25/I, 8700 Leoben, Austria.}
\email{alexander.steinicke@unileoben.ac.at}
\address[J.T.]{Chair of Mathematics, Statistics, and Geometry, 
Montanuniversitaet Le\-o\-ben.
Franz-Josef-Strasse 18, 8700 Leoben, Austria. }
\email{joerg.thuswaldner@unileoben.ac.at}

\date{\today}
\subjclass[2020]{51M25, %
28A75, %
54F45, %
28A80%
}
\keywords{Path length, permeability, Nagata dimension, self-similar set}
\thanks{The fourth author is supported by the bilateral project I~6750 granted by the Agence Nationale de la Recherche (ANR) and the Austrian Science Fund (FWF)}

\begin{document}
\resetcolorseries[7]{newcol} %

\maketitle

\begin{abstract}
We study {\it permeable} sets. These are sets $\Theta \subset \R^d$ which have the property that each two points $x,y\in \R^d$ can be connected by a short path $\gamma$ which has small (or even empty, apart from the end points of $\gamma$) intersection with $\Theta$.
We investigate relations between permeability and Lebesgue measure and establish theorems on the relation of permeability with several notions of dimension. It turns out that for most notions of dimension each subset of $\R^d$ of dimension less than $d-1$ is permeable.
We use our permeability result on the Nagata dimension to characterize permeability properties of self-similar sets with certain finiteness properties.
\end{abstract}

\section*{Introduction} 
\subsection*{Context of the paper} In this article we study the geometric concept of permeability and its variants as introduced in \cite{leoste21}. The intuition behind  these concepts is that a line segment in the Euclidean plane, being ``infinitely thin'', poses much less of a barrier than a strip of positive thickness. 
\begin{figure}[h]
     \centering
     \begin{subfigure}[b]{0.3\textwidth}
         \centering
         \includegraphics[width=\textwidth]{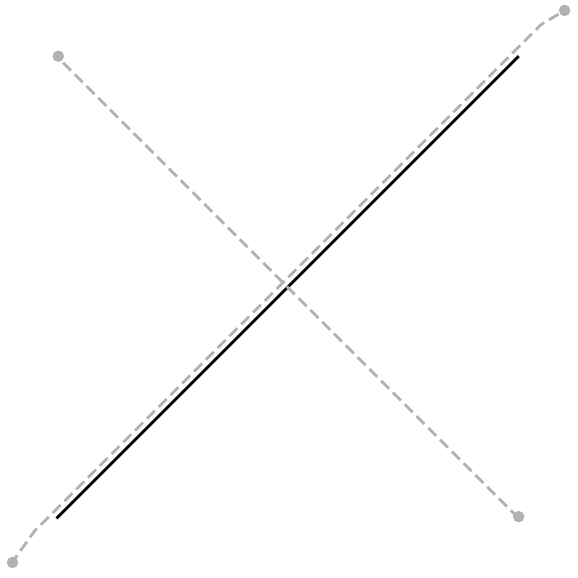}
         \caption{Line segment} %
     \end{subfigure}
     \hfill
     \begin{subfigure}[b]{0.3\textwidth}
         \centering
         \includegraphics[width=\textwidth]{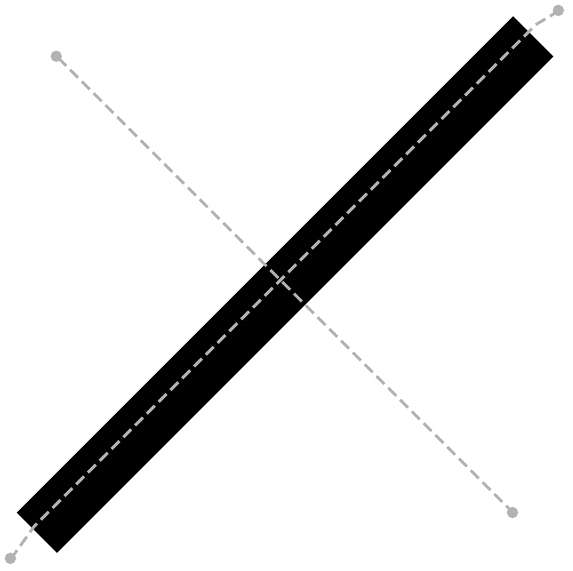}
         \caption{Strip} %
     \end{subfigure}
     \hfill
     \begin{subfigure}[b]{0.32\textwidth}
         \centering
         \includegraphics[width=\textwidth]{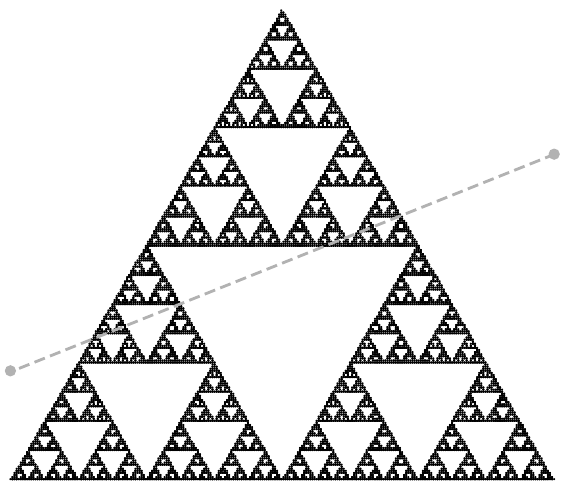}
         \caption{Sierpi\'nski triangle} %
     \end{subfigure}
 	\caption{Connecting points by short paths. (A)~If we allow for small detours, we can connect each pair of points by intersecting the line in at most one point. (B)~For the strip we get an uncountable intersection even with small detours. (C)~Can we always get countable intersections with the Sierpi\'nski triangle with small detours?}
        \label{fig:startex}
\end{figure}
Indeed, if we are willing to take small detours, we may connect each pair of points of the plane by a path that intersects the line segment in at most one point. On the other hand, for the strip of positive thickness, the intersection with a path connecting two points of the plane may remain uncountable, even if we allow for small detours.  A countable subset of the plane or even the middle third Cantor set (when isometrically embedded in the plane) can always be avoided entirely when one is willing to accept small detours from straight lines. According to Marstrand's intersection theorem \cite[Theorem~III]{Marstrand}, straight lines intersecting the Sierpi\'nski triangle often have uncountable intersection (see Figure~\ref{fig:startex}). However, as we shall see later, if we choose two points in the plane, there always exist small detours connecting these points, having only countable intersection with the Sierpi\'nski triangle.

The notions of permeability studied in the present paper provide a framework for classifying sets with respect to their intersections with ``short'' paths that ``pass through'' them. The different ``sizes'' of the intersections considered in the present paper are classified as ``empty'' (for \emph{null permeability}), ``finite'' (for \emph{finite permeability}),  or ``having countable closure'' (for \emph{permeability}). We refer to Definition~\ref{def:perm} for the formal definition of these notions.

The name ``permeability'' appears as a physical quantity in models of flow in porous media, as the {\em permeability coefficient} in  {\em Darcy's law}, {\em cf.}~\cite[Chapter 5]{bear1988dynamics}. In this context, the permeability coefficient describes the ability of a material to conduct a liquid or a gas. Our notion of permeability can be regarded as an abstract version of this physical concept.  

A concept that is closely related to null permeability is {\em tortuosity}, which describes the ability of a particle of a given size to move through a porous medium (see {\em e.g.}~\cite{ghanbarian2013} for different variants and applications of tortuosity). The notion of null permeability considered in our article is a limit case of tortuosity for infinitesimal particle size.

Another related concept is {\it percolation} ({\em cf.}~\cite{grimmett-1999}), in particular {\it Mandelbrot percolation} (also called {\it fractal percolation}, see {\it e.g.}~\cite{broman2013,buczolich2021,rams2015}), which pursues a similar goal as permeability. 
Indeed, in Mandelbrot percolation one iteratively constructs a random {\em retaining set} $\mathcal{C}\subset [0,1]^d$ and studies typical properties of $\mathcal{C}$. In particular, $\mathcal{C}$ {\em percolates} if there exists a connected subset of $\mathcal{C}$ that has nonempty intersection with the ``bottom''
$[0,1]^{d-1}\times\{0\}$ and the ``top''
$[0,1]^{d-1}\times\{1\}$ of the unit cube $[0,1]^{d}$.
While permeability also aims at connecting points, it investigates the existence of {\it short paths between arbitrary points} that do not hit any point of $[0,1]^d\setminus\mathcal{C}$ (or at most finitely or countably many), which is a somewhat different goal. Null permeability can thus be viewed as ``fast percolation everywhere''.  We provide more details on this relation in Remark~\ref{rem:percolation}.

The notion of null permeability  is also connected to the classical notion of negligibility for {\it extremal length} (see {\em e.g.}~\cite{Ahlfors1950} for the definition of extremal length and extremal distance). Recall that a set $E \subset \mathbb R^d$ is {\em negligible} for extremal lengths, if the extremal length of the collection of curves between any two disjoint continua is the same as the extremal length of the subset of the above curves that do not intersect $E$, except possibly at the endpoints. It was shown in \cite{Ahlfors1950}, that a closed set in the complex plane is negligible for extremal distances if and only if it is {\em removable} for conformal embeddings. Many results on the topic appeared in the meantime, in particular the study of negligibility for extremal distance in higher dimensions (see~\cite{Vaisala62}) and its implications  to removability ({\em cf.}~\cite{AseesSyvev74,VoGo77}) are studied. We also refer to \cite{bhatia2024metric} for a related concept that is called ``metric removability''. Very recently a modification of this condition, called negligibility for extremal distances with countable intersections, was considered in \cite{ntalampekos2023cned,ntalampekos2023cned2}. The modification is similar to passing from null permeability to permeability: the extremal length of the curve family joining the continua is required to equal the extremal length of the subcollection of curves where the curves are allowed to have countable intersection with $E$. According to \cite{ntalampekos2023cned2}, this condition for closed sets implies removability for quasiconformal homeomorphisms.

Our notion of permeability has applications in the analysis of Lipschitz maps. Given a continuous function on $\R^d$, which has bounded gradient on the complement of a small set $\Theta\subset \R^d$, 
one can ask whether it is Lipschitz continuous on the whole space.  
It is obvious that this is the case when $\Theta$ is finite or a line in the plane, and it is also obvious that it is not the case
if $\Theta$ has inner points, for example, if it is a strip of positive thickness. It is much less obvious what happens when $\Theta$ is a Lipschitz or H\"older manifold, or when it is a Sierpi\'nski carpet, a Sierpi\'nski triangle, or some other complicated set. It turns out that permeability is the proper framework for answering these more demanding questions, see \cite{leoste21}. For example, we will see in Example~\ref{exs:basic}~(9) that the Sierpi\'nski carpet is impermeable. This implies that the Devil's staircase function (also known as Cantor function), a nonconstant continuous function $f:\R\to\R$ that is constant in each complementary component of the middle third Cantor set, can be generalized to the Sierpi\'nski carpet in the sense that there is a non-constant continuous function $f:\R^2 \to \R$ that is constant on each complementary component of the Sierpinski carpet. Because the Sierpi\'nski triangle is permeable according to Example~\ref{exs:p2}, such a function does {\em not} exist for the Sierpi\'nski triangle.
 
The first application of finitely permeable sets --- without being called ``finitely permeable'' there --- was to prove Lipschitz continuity of certain functions, and can be found in the context of numerical methods for stochastic differential equations (SDEs) with discontinuous coefficients, see for instance \cite{sz2016b,MGRY2024,szoel2021,szoel2021b}. In each of these papers, classes of SDEs with discontinuous drift coefficient are studied, where the set $\Theta$  of discontinuity is a submanifold. The idea is to apply a certain transformation to such an SDE, in order to transform it to another SDE with a Lipschitz continuous drift coefficient. To this transformed SDE classical results for existence, uniqueness, and convergence of numerical methods assuming Lipschitz continuous drift coefficient of the SDE can be applied. The crucial ingredient that ensures the existence of a suitable transformation is provided in \cite[Lemma 3.6]{sz2016b}. Indeed, this result allows to conclude Lipschitz continuity of the drift coefficient on the whole of $\R^d$, from Lipschitz continuity (w.r.t.~the {\em intrinsic metric}) on $\R^d\setminus\Theta$, provided that $\Theta$ is finitely permeable. This ``transformation method'' of \cite{sz2016b} for studying SDEs with discontinuous drift coefficient stirred some interest in the community,  as it made possible a number of results on tractability of a range of numerical methods \cite{difonzo2023stochastic,MGRY2024,MGY2023,szoel2021,szoel2021b}.

\subsection*{Contributions of the present paper} 
The main contribution of this article is to develop a theory of permeable subsets of $\R^d$, 
relating permeability to topological, dimension theoretic, and measure theoretic properties
of sets. We present a variety of conditions for checking null permeability, permeability, or impermeability of sets. Our theory is applicable to a variety of natural and well-known examples, for instance, permeability can be characterized for large classes of self-similar sets, fractal curves, and related subsets in $\R^d$. 

In Section~\ref{sec:prel} we provide fundamental definitions including null permeability, finite permeability, and permeability, the central concepts of this paper. We give a list of simple examples that illustrate these concepts, and we establish basic results on permeability that will be used later on.  Moreover, we show that, under mild conditions, our notions of permeability do not depend on the norm we use in order to determine the length of a path in $\R^d$ (see \Cref{thm:pperm}). \Cref{rem:non-convex-norm} shows that the Euclidean norm furnishes the strongest notion of permeability. This result provides the justification for restricting our attention to the Euclidean norm in the sequel. Section~\ref{ssec:perm-measure} deals with permeability and impermeability criteria for sets having zero (\Cref{sec:measure-zero}) and positive (\Cref{sec:lebPos}) Lebesgue measure, respectively. We present several examples that illustrate and sharpen the results of this section. In particular, we discuss permeable and impermeable versions of so-called {\em Osgood curves}, {\em i.e.,} injective paths with positive Lebesgue measure (Examples~\ref{ex:osgood} -- \ref{ex:curve}). In \Cref{sec:dims} we investigate the relation between permeability and various notions of dimension. It turns out that for most notions of dimension a subset of $\R^d$ with dimension less than $d-1$ is null permeable. Special emphasis is put on the Hausdorff and Nagata dimension. The corresponding result for the Nagata dimension ({\it cf}.~\Cref{th:Nagata}) turns out to be useful later to establish criteria for null permeability of self-similar sets. \Cref{sec:synopsis} provides an overview of our dimension results. \Cref{sec:selfsimilarsets} is devoted to permeability properties of self-similar sets in $\R^d$ under certain finiteness properties. Here the classical {\em finite type condition} plays a crucial role. After proving that this condition is equivalent to a seemingly stronger condition (see \Cref{thm:finite-type-finite-types}) we provide quite general permeability criteria for self-similar sets. In particular, \Cref{thm:ftn-null-permeable2d} shows that self-similar sets in $\R^2$ are permeable under certain finiteness conditions, while \Cref{thm:ftn-null-permeable3d} shows that such sets are even null permeable in $\R^d$ for $d\ge 3$. We end \Cref{sec:selfsimilarsets} with a discussion of permeability properties of Bedford-McMullen carpets. This discussion exhibits impermeable subsets of $\R^d$ that are compact, topologically $0$-dimensional, and have Lebesgue measure~$0$.

\section{Preliminaries}\label{sec:prel} 
In this section we define {\it permeability}, the basic concept of our paper, and some of its variants. We provide simple examples to illustrate permeability and establish auxiliary results that will be needed later on. The section ends with a theorem that shows to what extent our notions of permeability depend on the norm we use to define a metric on $\R^d$. 

\subsection{Basic definitions}\label{sec:basic}
We recall general definitions and terminology that will be used throughout the paper. Assume that the Euclidean space $\R^d$ is equipped with some norm\footnote{A metric space would be sufficient to define permeability. However, to avoid pathologies, we restrict ourselves to this setting.} $\vertii\cdot$. For $Y\subseteq \R^d$ we denote the closure of $Y$ by $\overline{Y}$ and the interior of $Y$ by $Y^\circ$. If $x\in \R^d$, we set $d(x,Y):=\inf\{\vertii{y-x}\colon y\in Y\}$. Let $\varepsilon>0$. We use the notation
\begin{equation}\label{eq:Aeps}
\begin{aligned}
[Y]_\varepsilon &:= \{ x \in \R^d \colon \vertii{y-x} \le \varepsilon \text{ for some }y\in\overline{Y} \},\\
(Y)_\varepsilon &:= \{ x \in \R^d \colon \vertii{y-x} < \varepsilon \text{ for some }y\in Y \}
\end{aligned}
\end{equation}
for the {\em closed and open $\varepsilon$-neighborhood} of $Y$, respectively. In particular, $B_{\varepsilon}(x):=(\{x\})_\varepsilon$ is the open ball with center $x\in \R^d$ and radius $\varepsilon$.  For $X,Y \subset \R^d$, we set $d(X,Y):=\inf\{\vertii{y-x}: x\in X, y\in Y\}$. A {\em continuum} is a compact connected Hausdorff space and a {\em Peano continuum} is a locally connected metric continuum. A continuum is called \emph{nondegenerate} if it contains at least two  elements. A {\it Cantor set} is a nonempty compact totally disconnected perfect subset of $\R$. A {\em region} is a nonempty, open, and connected subset of $\R^d$. A {\em path} is a continuous map $\gamma\colon [a,b]\to \R^d$, where $a,b\in \R$ with $a<b$. We say that $\gamma$ connects 
$x\in \R^d$ and $y\in \R^d$ if $x=\gamma(a)$ and $y=\gamma(b)$.  An injective path is called an {\em arc}. A continuous injection $\gamma\colon\mathbb{S}^1\to \R^d$, where $\mathbb{S}^1$ is the unit circle, is called a {\em loop}. Often, when there is no risk of confusion, we abuse notation and identify a path, an arc, or a loop with its image, respectively. For $x,y\in \R^d$ we denote the {\em directed line segment} from $x$ to $y$ by $\overline{xy}$. If $x_0,\ldots,x_n\in \R^d$ are given, the path consisting of the concatenation of the line segments $\overline{x_{k-1}x_{k}}$ for $k\in \{1,\ldots, n\}$ is called
a {\em polygonal chain} and is denoted by $\overline{x_0\cdots x_n}$. We regard the directed line segments of a polygonal chain as its elements. For two directed line segments $s_1$ and $s_2$ we denote by $\sphericalangle(s_1,s_2) \in [0,\pi]$ the angle between the direction vectors of $s_1$ and $s_2$. 

\begin{definition}[Length of a path]\label{def:length}
If $\gamma\colon[a,b] \to \R^d$ is a path, then its {\em length} $\ell(\gamma)$ is defined as
\[
\ell(\gamma)
:=\sup\bigg\{\sum_{k=1}^n \vertii{\gamma(t_{k})-\gamma(t_{k-1})}\colon n\in\N,\,a =t_0<\cdots<t_n=b \bigg\}.
\]
\end{definition}

Let $\gamma\colon[a,b] \to \R^d$ be a path and $a=t_0<\cdots<t_n=b$. The polygonal chain consisting of the directed line segments  $\overline{\gamma(t_{k-1})\gamma(t_{k})}$, $k\in\{1,\ldots, n\}$, is called a {\em polygonal approximation of $\gamma$.} 

\subsection{Permeability}
We are now ready to provide the central definition of the present paper.
 
\begin{definition}[Permeability]\label{def:perm}
Let $\Theta\subset \R^d$.  
\begin{enumerate}
\item We call $\Theta$ {\em null permeable}, if for all $x,y\in \R^d$ and all $\delta>0$, $x$ and $y$ can be connected by a path $\gamma$ in $\R^d$  with $\ell(\gamma)\le\vertii{y-x}+\delta$ and 
$\gamma\cap \Theta\subset\{x,y\}$.
\item We call $\Theta$ {\em finitely permeable}, if for all $x,y\in \R^d$ and all $\delta>0$, $x$ and $y$ can be connected by a path $\gamma$ in $\R^d$  with $\ell(\gamma)\le\vertii{y-x}+\delta$ and 
$\gamma\cap \Theta$ finite. 
\item We call $\Theta$ {\em permeable}, if for all $x,y\in \R^d$ and all $\delta>0$, $x$ and $y$ can be connected by a path $\gamma$ in $\R^d$  with $\ell(\gamma)\le\vertii{y-x}+\delta$ and $\overline{\gamma\cap \Theta}$ countable.  
\item We call $\Theta$ {\em impermeable}, if $\Theta$ is not permeable.
\end{enumerate}
\end{definition}

Note that  every null permeable set is finitely permeable, and every finitely permeable set is  permeable. Note also that these notions of permeability depend on the norm $\vertii\cdot$ we are using (see Section~\ref{sec:norms} for more on this dependence). Null permeability is equivalent to a purely geometric version of {\em metric removability} that has been studied in \cite{rajala2019}. However, since we saw in the introduction that the term ``metric removability'' is also used in the context of conformal mappings, we will not use it in the sequel. 

\begin{examples} 
\label{exs:basic}
We start with a number of elementary examples to illustrate the permeability concepts of \Cref{def:perm}. In these examples, we equip $\R^d$ with the Euclidean metric.
\begin{enumerate}
\item For $d>1$ every finite subset of $\R^d$ is null permeable.
\item Every finite subset of $\R$ is finitely permeable.
\item Every closed and countably infinite ({\it i.e.}, scattered) subset of $\R$ is permeable (but not finitely permeable).
\item The set $\Q\subset \R$ is impermeable.
\item The sets $\{(x,0)\in \R^2\colon x\in \Q\}$ and $\{(\cos x,\sin x)\in \R^2\colon x\in \Q\cap [0,2\pi]\}$ are null permeable, their 
closures in $\R^2$ are finitely permeable.
\item The {\em Warsaw sine curve} $\{(x,\sin(x^{-1}))\colon x\in (0,\infty)\}\cup (\{0\}\times [-1,1])\subset\R^2$ is permeable, but not finitely permeable.
\item\label{ex:inner-point} If 
$\Theta\subset\R^d$ has an interior point, then $\Theta$ is impermeable (\cite[Proposition 12]{leoste21}).
\item\label{ex:extruded} If $C\subset [0,1]$ denotes the {\em middle third Cantor set} (see \cite[Chapter~2]{Edgar:04} for its first appearances in the literature), then $C\times [0,1]^{d-1}\subset\R^d$ is impermeable.
\item  \label{ex:qrimp} $([0,1]\setminus \Q)^2$ is impermeable \cite[Proposition 26]{leoste21}.
\item \label{ex:carpet1} Supersets of impermeable sets are impermeable. Thus as a consequence of item \eqref{ex:extruded}, the classical {\it Sierpi\'nski carpet} (which goes back to~\cite{Sierpinski:1916}) is impermeable.
\item  \label{ex:subset-perm} Subsets of null permeable sets are null permeable, and analogous statements hold for finite permeability and permeability. 
\item A topological submanifold of $\R^d$ of dimension smaller than $d$, which has bi-Lipschitz charts and is closed as a subset of $\R^d$, is permeable ({\em cf.}~\cite[Theorem 31]{leoste21}). In particular, if $D\subset \R^{d-1}$ is closed and convex, and $\vartheta\colon D\to \R$ is a Lipschitz function,
then its graph $\{(x,\vartheta(x))\colon x\in D\}\subset \R^d$ is permeable.
\item In \cite[Theorem 3]{BuLeSt22} an example of a H\"older-continuous function $\vartheta\colon [0,1]\to\R$ with impermeable graph (considered as a subset of $\R^2$) is constructed.
\end{enumerate}
\end{examples}

\begin{remark}[Relation to percolation]\label{rem:percolation}
Let $N> 1$ and $p\in (0,1)$. Partition the $d$-dimensional unit cube $[0,1]^d$ into $N^d$ subcubes  
of side length $\frac 1 N$. Each of these subcubes
is independently retained with probability $p$ or deleted with probability $1-p$.
Repeat this process for each of the previously retained subcubes, {\em ad infinitum}, to obtain a random set $\mathcal C$, which is called 
{\em Mandelbrot fractal percolation set}.
The investigation of permeability properties of probabilistic constructions like Mandelbrot percolation offers interesting facts and questions.
\begin{enumerate}
\item If the inequality $p\leq\frac{1}{N}$ holds, then for any $\delta>0$ any two points $x,y\in[0,1]^d$ can be connected by a path $\gamma$ of length $\|x-y\|+\delta$ that does not intersect $\mathcal{C}\setminus \{x,y\}$ ({\it i.e.}, $\mathcal{C}$ is null permeable). This is a consequence of $\mathcal{C}$ being totally disconnected (see \cite{FalconerGrimmett92}), having finite $(d-1)$-dimensional Hausdorff measure (see {\it e.g.}~\cite[Section 1.a]{Chayes88}, where this is shown for $d=2$), and \cite[Theorem 1.3]{rajala2019}.
\item  
There exists $p_c\in (0,1)$ such that for $p\in [p_c, 1)$ there are nondegenerate continua contained in $\mathcal C$ (see {\it e.g.}\ \cite{FalconerGrimmett92}), so for $d=2$, $\mathcal{C}$ 
is not null permeable. We do not know whether $\mathcal{C}$ can be permeable.
\item It is unknown what kind of behavior w.r.t.\ permeability can occur for $p\in \big(\tfrac 1 N, p_c\big)$.
\end{enumerate}
\end{remark}

\subsection{Auxiliary results on permeability}\label{sec:auxiliary-results}
The next two lemmata state that for a set $\Theta\subset \R^d$ with empty interior 
null permeability as well as permeability already follows if one only considers short paths connecting points in the complement of $\Theta$. 

\begin{lemma}\label{lemma:outside-theta} Let $\Theta\subset \R^d$ have empty interior. 
Assume that for all $x,y\in \R^d\setminus \Theta$ and all $\delta>0$ there exists a path $\gamma$ in $\R^d$
 connecting $x$ and $y$ with $\ell(\gamma)< \vertii{y-x}+\delta$ and $\gamma\cap \Theta=\emptyset$. Then $\Theta$ is null  permeable.
\end{lemma}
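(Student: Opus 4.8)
The plan is to note that the hypothesis is essentially the definition of null permeability, with one gap: it only connects points lying \emph{outside} $\Theta$, whereas null permeability allows arbitrary endpoints $x,y$ and lets the connecting path touch $\Theta$ exactly at $x$ and $y$. So the task reduces to ``reaching'' an arbitrary point of $\R^d$ from $\R^d\setminus\Theta$ by a short path that meets $\Theta$ only at that point, and then gluing three such pieces together.

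\emph{Step 1 (an approach lemma).} First I would prove: for every $p\in\R^d$ and every $\eta>0$ there exist $p'\in\R^d\setminus\Theta$ and a path $\sigma$ connecting $p$ and $p'$ with $\ell(\sigma)<\eta$ and $\sigma\cap\Theta\subseteq\{p\}$. Since $\Theta$ has empty interior, $\R^d\setminus\Theta$ is dense, so I can pick points $p_k\in B_{r_k}(p)\setminus\Theta$ for $k\ge 0$ with $r_k\downarrow 0$ fast enough that $\sum_{k\ge 0}\vertii{p_{k+1}-p_k}$ is smaller than any prescribed amount; put $p':=p_0$. For each $k$ both $p_k,p_{k+1}\in\R^d\setminus\Theta$, so the hypothesis gives a path $\sigma_k$ connecting them with $\sigma_k\cap\Theta=\emptyset$ and $\ell(\sigma_k)<\vertii{p_{k+1}-p_k}+\varepsilon_k$, where $\sum_k\varepsilon_k$ is again arbitrarily small. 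Stringing the $\sigma_k$ together in order (each run from $p_{k+1}$ to $p_k$ and reparametrised on $[2^{-k-1},2^{-k}]$) and adjoining $p$ at parameter $0$ gives a map $\sigma\colon[0,1]\to\R^d$; it is continuous at $0$ because $p_k\to p$ and $\diam\sigma_k\le\ell(\sigma_k)\to 0$, its length equals $\sum_k\ell(\sigma_k)<\eta$, and every $\sigma(t)$ with $t>0$ lies on some $\sigma_k$, hence outside $\Theta$. (If $p\notin\Theta$, take $p'=p$ and $\sigma$ constant instead.)

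\emph{Step 2 (gluing).} Given arbitrary $x,y\in\R^d$ and $\delta>0$, apply Step~1 with $\eta=\delta/8$ at $x$ and at $y$, obtaining $x',y'\in\R^d\setminus\Theta$ and paths $\sigma_x$ from $x$ to $x'$ and $\sigma_y$ from $y$ to $y'$, each of length $<\delta/8$, meeting $\Theta$ only in $x$, resp.\ $y$. Then $\vertii{y'-x'}<\vertii{y-x}+\delta/4$, and since $x',y'\notin\Theta$ the hypothesis yields a path $\gamma_0$ from $x'$ to $y'$ with $\gamma_0\cap\Theta=\emptyset$ and $\ell(\gamma_0)<\vertii{y'-x'}+\delta/4<\vertii{y-x}+\delta/2$. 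Let $\gamma$ be the concatenation of $\sigma_x$, then $\gamma_0$, then $\sigma_y$ traversed backwards; it connects $x$ and $y$, has $\ell(\gamma)<\delta/8+\vertii{y-x}+\delta/2+\delta/8<\vertii{y-x}+\delta$, and $\gamma\cap\Theta\subseteq(\sigma_x\cap\Theta)\cup(\gamma_0\cap\Theta)\cup(\sigma_y\cap\Theta)\subseteq\{x,y\}$. Hence $\Theta$ is null permeable.

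\emph{Main obstacle.} The only non-routine point is Step~1: since the hypothesis connects only points of $\R^d\setminus\Theta$, no single application of it can terminate at $x$ when $x\in\Theta$, so a limiting construction is unavoidable. The care required is to choose the radii $r_k$ and errors $\varepsilon_k$ so that the accumulated length stays below $\eta$, and to verify that the countable concatenation together with the adjoined limit point is a genuine path (continuity at the accumulation parameter), with length equal to the sum of the pieces and with $p$ as its only possible point of contact with $\Theta$. Everything else is elementary bookkeeping with the triangle inequality.
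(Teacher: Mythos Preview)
Your proof is correct and follows essentially the same approach as the paper: both use the density of $\R^d\setminus\Theta$ to pick sequences converging to the given endpoints, connect consecutive terms by the hypothesis, and concatenate (adjoining the limit point) to obtain the required path. The only difference is organizational---you isolate the ``approach lemma'' as a separate Step~1 and then glue three pieces, whereas the paper carries out the two endpoint constructions and the middle connection simultaneously in a single concatenation $\ldots,\gamma_2,\gamma_1,\gamma_0,\eta_1,\eta_2,\ldots$; the underlying idea and estimates are the same.
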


\begin{proof}
Let $x,y\in \R^d$ and let $\delta>0$. 
Since $\Theta$ has no interior points, there exist sequences $(x_n)_{n\ge 1}$ and $(y_n)_{n\ge 1}$
of points in $\R^d\setminus \Theta$ such that
$\vertii{x_n-x}<\frac{\delta}{24\cdot 2^{n}}$ and $\vertii{y_n-y}<\frac{\delta}{24\cdot 2^{n}}$ for all $n$. By assumption,  
there exist paths $\gamma_n$ from $x_{n+1}$ to $x_n$ with 
\begin{align*}
\ell(\gamma_n)<\vertii{x_{n+1}-x_n}+\frac{\delta}{6\cdot 2^n}\leq \vertii{x_{n+1}-x}+\vertii{x_n-x}+\frac{\delta}{6\cdot 2^n}<\frac{\delta}{4\cdot 2^n}
\end{align*} and $\gamma_n\cap \Theta=\emptyset$, as well as paths
 $\eta_n$ from $y_n$ to  $y_{n+1}$ with
$\ell(\eta_n)<\vertii{y_{n+1}-y_n}+\frac{\delta}{6\cdot 2^n}<\frac{\delta}{4\cdot 2^n}$ and $\eta_n\cap \Theta=\emptyset$.
Moreover, there exists a path $\gamma_0$ from $x_1$ to $y_1$ with 
$\ell(\gamma_0)<\vertii{y_{1}-x_1}+\frac{\delta}{6}$ and $\gamma_0\cap \Theta=\emptyset$.
Now let  $\gamma$ be the closure of the concatenation of the paths
$\ldots,\gamma_2,\gamma_1,\gamma_0,\eta_1,\eta_2,\ldots$ By taking the closure of this concatenation, the only points that are added are $x$ and $y$, thus $\gamma$ is a path. For its length we gain
\begin{align*}
\ell(\gamma)&=\ell(\gamma_0)+\sum_{n=1}^\infty (\ell(\gamma_i)+\ell(\eta_i))\leq \vertii{y_1-x_1}+\frac{\delta}{6}+\sum_{n=1}^\infty\Big(\frac{\delta}{4\cdot 2^n}+\frac{\delta}{4\cdot 2^n}\Big)\\
&\leq \vertii{x-x_1}+\vertii{y-x}+\vertii{y_1-y}+\frac{2\delta}{3}\leq \frac{\delta}{48}+\vertii{y-x}+\frac{\delta}{48}+\frac{2\delta}{3}\leq\vertii{y-x}+\delta. \qedhere
\end{align*}
\end{proof}

\begin{lemma}\label{lemma:outside-theta-closed} 
Let $\Theta\subset \R^d$ have empty interior. 
Assume that  for all $x,y\in \R^d\setminus \Theta$ and all $\delta>0$ there exists a path $\gamma$ 
 connecting $x$ and $y$ with $\ell(\gamma)< \vertii{y-x}+\delta$ and $\overline{\gamma\cap \Theta} $ countable.
Then $\Theta$ is  permeable.
\end{lemma}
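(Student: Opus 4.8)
The plan is to reuse verbatim the construction from the proof of \Cref{lemma:outside-theta} and then supply the one extra verification that the resulting path $\gamma$ has countable $\overline{\gamma\cap\Theta}$. So, given $x,y\in\R^d$ and $\delta>0$, I would first use that $\Theta$ has empty interior to pick sequences $(x_n)_{n\ge1}$ and $(y_n)_{n\ge1}$ in $\R^d\setminus\Theta$ with $\vertii{x_n-x}<\frac{\delta}{24\cdot2^n}$ and $\vertii{y_n-y}<\frac{\delta}{24\cdot2^n}$. By the hypothesis of the lemma there are paths $\gamma_n$ from $x_{n+1}$ to $x_n$ and $\eta_n$ from $y_n$ to $y_{n+1}$ with $\ell(\gamma_n),\ell(\eta_n)<\frac{\delta}{4\cdot2^n}$ and $\overline{\gamma_n\cap\Theta}$, $\overline{\eta_n\cap\Theta}$ countable, together with a path $\gamma_0$ from $x_1$ to $y_1$ with $\ell(\gamma_0)<\vertii{y_1-x_1}+\frac{\delta}{6}$ and $\overline{\gamma_0\cap\Theta}$ countable. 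Let $\gamma$ be the closure of the concatenation $\ldots,\gamma_2,\gamma_1,\gamma_0,\eta_1,\eta_2,\ldots$. As in \Cref{lemma:outside-theta}, taking the closure adds only the points $x$ and $y$, so $\gamma$ is a path connecting $x$ and $y$, and the very same computation gives $\ell(\gamma)\le\vertii{y-x}+\delta$.

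It remains to check that $\overline{\gamma\cap\Theta}$ is countable. Since the image of $\gamma$ is $\bigcup_{n\ge1}\gamma_n\cup\bigcup_{n\ge1}\eta_n\cup\gamma_0\cup\{x,y\}$, we have
\[
\gamma\cap\Theta=\Big(\bigcup_{n\ge1}(\gamma_n\cap\Theta)\Big)\cup\Big(\bigcup_{n\ge1}(\eta_n\cap\Theta)\Big)\cup(\gamma_0\cap\Theta)\cup(\{x,y\}\cap\Theta),
\]
and since closure distributes over finite unions it suffices to show that $\overline{\bigcup_{n\ge1}(\gamma_n\cap\Theta)}$ and $\overline{\bigcup_{n\ge1}(\eta_n\cap\Theta)}$ are countable ($\overline{\gamma_0\cap\Theta}$ is countable by assumption, and $\{x,y\}$ is finite).

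The key point — and the only place where an argument is genuinely needed, since the closure of a countable union of sets with countable closure may well fail to have countable closure — is that the pieces $\gamma_n$ collapse to the single point $x$: from $\diam(\gamma_n)\le\ell(\gamma_n)<\frac{\delta}{4\cdot2^n}$ and $x_n\in\gamma_n$ we get $\gamma_n\subset B_{r_n}(x)$ with $r_n\to0$. Hence if $z$ is the limit of a sequence $(z_k)$ with $z_k\in\bigcup_{n\ge1}(\gamma_n\cap\Theta)$, then either infinitely many $z_k$ lie in some finite union $\bigcup_{n\le N}(\gamma_n\cap\Theta)$ — whence $z\in\bigcup_{n\le N}\overline{\gamma_n\cap\Theta}$ — or after passing to a subsequence we may assume $z_k\in\gamma_{n_k}\cap\Theta$ with $n_k\to\infty$, which forces $z_k\to x$ and hence $z=x$. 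Therefore $\overline{\bigcup_{n\ge1}(\gamma_n\cap\Theta)}\subseteq\big(\bigcup_{n\ge1}\overline{\gamma_n\cap\Theta}\big)\cup\{x\}$, which is a countable union of countable sets together with one point, hence countable. The same argument with $y$ in place of $x$ applies to $\bigcup_{n\ge1}(\eta_n\cap\Theta)$, and combining everything shows $\overline{\gamma\cap\Theta}$ is countable, so $\Theta$ is permeable. I do not anticipate any real difficulty beyond stating this collapsing-to-a-point observation cleanly.
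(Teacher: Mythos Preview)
Your proof is correct and follows essentially the same approach as the paper: the same construction of $\gamma$ from the pieces $\gamma_n,\gamma_0,\eta_n$ as in \Cref{lemma:outside-theta}, followed by checking that $\overline{\gamma\cap\Theta}$ is countable. In fact you are more careful than the paper on the one subtle point: the paper simply asserts the identity \eqref{eq:glue-paths-closed} and says the right-hand side is a countable union of countable sets, whereas you explicitly justify why no new accumulation points (other than $x$ and $y$) can arise when taking the closure of the infinite union, using the collapsing-diameter observation $\gamma_n\subset B_{r_n}(x)$ with $r_n\to0$.
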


\begin{proof}
Let $x,y\in \R^d$ and let $\delta>0$.
We choose sequences $(x_n)_{n\ge 1}$ and $(y_n)_{n\ge 1}$
as in the proof of \Cref{lemma:outside-theta}. Again  
there are paths $\gamma_n$ from $x_{n+1}$ to $x_n$ with 
$\ell(\gamma_n)<\vertii{x_{n+1}-x_n}+\frac{\delta}{6\cdot 2^n}<\frac{\delta}{4\cdot 2^n}$ and $\overline{\gamma_n\cap \Theta}$ countable, and paths 
 $\eta_n$ from $y_n$ to  $y_{n+1}$ with
$\ell(\eta_n)<\vertii{y_{n+1}-y_n}+\frac{\delta}{6\cdot 2^n}<\frac{\delta}{4\cdot 2^n}$ and $\overline{\eta_n\cap \Theta}$ countable.
Moreover, there exists a path $\gamma_0$ from $x_1$ to $y_1$ with 
$\ell(\gamma_0)<\vertii{y_1-x_1}+\frac{\delta}{6}$ and $\overline{\gamma_0\cap \Theta}$  countable.
Now let  $\gamma$ be the closure of the concatenation of the paths
$\ldots,\gamma_2,\gamma_1,\gamma_0,\eta_1,\eta_2,\ldots$,
and, as in the proof of \Cref{lemma:outside-theta}, we get that the closure adds only $x$ and $y$ to the concatenation. Hence, $\gamma$ is a path with
\(
\ell(\gamma)<\vertii{y-x}+\delta.
\) 
We need to show that, by taking the closure of the concatenation, countability of $\overline{\gamma\cap\Theta}$ is maintained. However, this follows because
\begin{equation}\label{eq:glue-paths-closed}
\overline{\gamma\cap\Theta} =  \overline{(\gamma_0\cap \Theta}) \cup \bigcup_{n=1}^\infty \overline{(\gamma_n\cup \eta_n)\cap \Theta} \cup(\{x,y\} \cap \Theta),
\end{equation}
and the right hand side is a union of countably many countable sets. 
\end{proof}

\subsection{Invariance under change of norms}\label{sec:norms}
In this section, for $1\le p \le \infty$ we will denote the $p$-norm by $\vertii{\cdot}_p$. If we concatenate  axis-parallel line segments to connect points and apply \Cref{lemma:outside-theta} it is easy to see that the set $(\R\setminus\Q)^2$ is permeable in $(\R^2,\vertii{\cdot}_1)$. However,  $(\R\setminus\Q)^2$ is impermeable in $(\R^2,\vertii{\cdot}_2)$ by Example~\ref{exs:basic}~\eqref{ex:qrimp}. This raises the question to what extent permeability depends on the norm used on $\R^d$. It turns out that norms admitting many geodesics (like $\vertii{\cdot}_\infty$ and $\vertii{\cdot}_1$) correspond to weaker forms of permeability while norms having a strictly convex closed unit ball all lead to the same class of permeable sets. Analogous assertions hold for finite permeability and null permeability. 
Indeed, we get the following results.

\begin{theorem}\label{thm:pperm}
Let $\vertii\cdot$ be any norm on $\R^d$ such that the boundary of its unit ball is strictly convex.
Then $\Theta\subset \R^d$ is permeable in $(\R^d,\vertii\cdot)$ if and only if it is permeable in $(\R^d,\vertii\cdot_2)$. The same equivalence is true for finite permeability and null permeability.
\end{theorem}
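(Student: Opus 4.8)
The plan is to exploit the strict convexity of the unit ball to show that short paths in $(\R^d,\vertii\cdot)$ and short paths in $(\R^d,\vertii\cdot_2)$ are, in a suitable quantitative sense, interchangeable. The key observation is the following: if $\gamma\colon[a,b]\to\R^d$ is a path connecting $x$ and $y$ whose $\vertii\cdot$-length only slightly exceeds $\vertii{y-x}$, then strict convexity forces $\gamma$ to stay in a small neighborhood of the segment $\overline{xy}$ and, moreover, to be ``almost monotone'' along the direction $y-x$; consequently its Euclidean length is also only slightly larger than $\vertii{y-x}_2$, and vice versa. Since permeability, finite permeability, and null permeability are each defined in terms of the existence of arbitrarily-short connecting paths with a prescribed smallness condition on $\gamma\cap\Theta$ (and the latter condition is \emph{norm-independent}), establishing such a comparison of length functionals in both directions yields all three equivalences simultaneously.

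First I would record the elementary fact that any two norms on $\R^d$ are bi-Lipschitz equivalent, so $c\vertii\cdot_2\le\vertii\cdot\le C\vertii\cdot_2$ for constants $0<c\le C$; this already gives the qualitative statement that a path of finite $\vertii\cdot$-length has finite $\vertii\cdot_2$-length and conversely, but not the quantitative ``$\le\vertii{y-x}+\delta$'' control that the definitions require. To upgrade this, I would prove a lemma of the form: for every $\eta>0$ there is $\varepsilon>0$ such that whenever $\ell_{\vertii\cdot}(\gamma)\le(1+\varepsilon)\vertii{\gamma(b)-\gamma(a)}$ one has $\ell_{\vertii\cdot_2}(\gamma)\le(1+\eta)\vertii{\gamma(b)-\gamma(a)}_2$, and symmetrically with the two norms exchanged. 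The proof of this lemma goes through polygonal approximations (\Cref{def:length} and the paragraph after it): it suffices to prove the estimate for polygonal chains $\overline{x_0\cdots x_n}$, and then the key point is a statement about a single unit vector — namely, that if $v_1,\dots,v_n$ are vectors with $\sum_i\vertii{v_i}$ close to $\vertii{\sum_i v_i}$, then each $v_i$ points in nearly the same direction, a quantitative modulus-of-strict-convexity argument. Once the $v_i$ are nearly parallel, both the $\vertii\cdot$-sum and the $\vertii\cdot_2$-sum are controlled by the corresponding norm of $\sum_i v_i$ via a short computation using the modulus of convexity of the (Euclidean, respectively $\vertii\cdot$-) unit ball.

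With the length-comparison lemma in hand, the equivalence is routine bookkeeping: given $x,y$ and $\delta>0$, and a path $\gamma$ that is $\vertii\cdot_2$-short (with $\ell(\gamma)\le\vertii{y-x}_2+\delta$) and has the desired intersection property with $\Theta$, apply the lemma with $\eta$ chosen small relative to $\delta$ and to $\vertii{y-x}$ (treating the degenerate case $x=y$ separately, where both sides are trivial) to conclude that the same $\gamma$ is $\vertii\cdot$-short up to an additive error below $\delta$; the set $\gamma\cap\Theta$ and its closure are unchanged since the underlying topology does not depend on the norm. This proves one implication of each of the three equivalences, and the reverse implication is obtained by swapping the roles of $\vertii\cdot$ and $\vertii\cdot_2$ — this is precisely why I would state the lemma symmetrically from the outset.

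\textbf{Main obstacle.} The crux is the quantitative strict-convexity estimate asserting that near-additivity of norms forces near-parallelism of the summands, together with turning that into a clean bound on the Euclidean length. Strict convexity of the boundary of the unit ball is exactly the hypothesis that rules out the bad examples (like $\vertii\cdot_1$ and $\vertii\cdot_\infty$, discussed before the theorem), where a path can zig-zag along axis-parallel directions without increasing its length; but strict convexity by itself is only a qualitative condition, and on a general (possibly non-smooth) strictly convex unit ball one must extract a uniform modulus of convexity, which exists by compactness of the sphere. Making this compactness-to-uniformity step precise, and carefully tracking how the errors $\varepsilon$ and $\eta$ depend on each other and on $\vertii{y-x}$ so that the final additive bound comes out below $\delta$ for \emph{every} pair $x,y$, is where the real work lies; the rest is the two soft reductions (to polygonal approximations, and then to the definition of permeability).
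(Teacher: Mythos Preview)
Your proposal is correct and follows essentially the same route as the paper. The paper's organization is slightly cleaner: rather than proving your length-comparison lemma symmetrically in the two norms, it isolates a single norm-independent intermediate condition on a sequence of paths~$\gamma_n$ (namely, that the total length of segments in any polygonal approximation making an angle~$>\varepsilon$ with $\overline{0z}$ is eventually~$<\varepsilon$), and shows that this condition implies $\hat\ell(\gamma_n)\to\vertii z$ for \emph{any} norm while being implied by it for strictly convex norms; the equivalence of permeability then drops out without having to run the argument twice.
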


\begin{theorem}\label{rem:non-convex-norm}
If $\Theta\subset \R^d$ is permeable in $(\R^d,\vertii\cdot_2)$ then it is permeable in $(\R^d,\vertii\cdot)$ for any norm~$\vertii{\cdot}$. The same statement is true for finite permeability and null permeability.
\end{theorem}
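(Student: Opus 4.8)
The plan is to avoid constructing any new path: I will show that a path which is a near-geodesic for the Euclidean norm is automatically a near-geodesic for $\vertii{\cdot}$, and then feed in, verbatim, the path furnished by permeability in $(\R^d,\vertii{\cdot}_2)$. Fix $x,y\in\R^d$ and $\delta>0$; we may assume $x\ne y$, since for $x=y$ the constant path witnesses all forms of permeability for every norm. Write $v:=y-x$ and $\hat v:=v/\vertii{v}_2$, let $c:=\max\{\vertii{u}\colon\vertii{u}_2\le 1\}<\infty$ (finite because any norm is continuous on the compact Euclidean unit sphere), so that $\vertii{w}\le c\vertii{w}_2$ for all $w\in\R^d$, and denote by $\ell_2(\gamma)$ and $\ell_{\vertii{\cdot}}(\gamma)$ the length of a path $\gamma$ measured with $\vertii{\cdot}_2$ and $\vertii{\cdot}$, respectively. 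Note that $\vertii{v}_2\,\vertii{\hat v}=\vertii{v}$.

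The heart of the argument is the following comparison: if $\gamma$ is any path from $x$ to $y$ with $\ell_2(\gamma)\le\vertii{v}_2+\varepsilon$, then $\ell_{\vertii{\cdot}}(\gamma)\le\vertii{v}+R(\varepsilon)$, where $R$ depends only on $\vertii{\cdot}$ and $\vertii{v}_2$ and $R(\varepsilon)\to0$ as $\varepsilon\to0^+$. To see this, take an arbitrary polygonal approximation $\overline{p_0\cdots p_n}$ of $\gamma$ (so $p_k=\gamma(t_k)$ along a partition), put $a_k:=\vertii{p_k-p_{k-1}}_2$, and let $\theta_k\in[0,\pi]$ be the Euclidean angle between $p_k-p_{k-1}$ and $v$. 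From $\sum_k(p_k-p_{k-1})=v$ we get $\sum_k a_k\cos\theta_k=\vertii{v}_2$, while $\sum_k a_k\le\ell_2(\gamma)\le\vertii{v}_2+\varepsilon$, hence $\sum_k a_k(1-\cos\theta_k)\le\varepsilon$. Splitting $p_k-p_{k-1}$ into its $\hat v$-component and its orthogonal part and applying the triangle inequality for $\vertii{\cdot}$ together with $\vertii{\cdot}\le c\vertii{\cdot}_2$ on the orthogonal part gives $\vertii{p_k-p_{k-1}}\le a_k|\cos\theta_k|\,\vertii{\hat v}+c\,a_k\sin\theta_k$. The indices with $\cos\theta_k<0$ contribute at most $(\vertii{\hat v}+c)\varepsilon$, because there $a_k\le a_k(1-\cos\theta_k)$; the indices with $\cos\theta_k\ge0$ contribute at most $(\vertii{v}_2+\varepsilon)\vertii{\hat v}+c\sqrt{2}\sqrt{(\vertii{v}_2+\varepsilon)\varepsilon}$, using $\sum_k a_k\cos\theta_k=\vertii{v}_2$, then $\sin\theta_k\le\sqrt{2(1-\cos\theta_k)}$ and Cauchy--Schwarz. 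Altogether $\sum_k\vertii{p_k-p_{k-1}}\le\vertii{v}+R(\varepsilon)$ with $R(\varepsilon):=\varepsilon\vertii{\hat v}+(\vertii{\hat v}+c)\varepsilon+c\sqrt{2}\sqrt{(\vertii{v}_2+\varepsilon)\varepsilon}$, which tends to $0$ with $\varepsilon$; since this bound is uniform over all polygonal approximations of $\gamma$, it bounds their supremum $\ell_{\vertii{\cdot}}(\gamma)$.

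Granting the comparison, the theorem is immediate: given $\delta>0$, choose $\varepsilon>0$ with $R(\varepsilon)<\delta$, invoke permeability of $\Theta$ in $(\R^d,\vertii{\cdot}_2)$ to obtain a path $\gamma$ from $x$ to $y$ with $\ell_2(\gamma)\le\vertii{v}_2+\varepsilon$ and $\overline{\gamma\cap\Theta}$ countable, and observe that this same $\gamma$ satisfies $\ell_{\vertii{\cdot}}(\gamma)\le\vertii{v}+\delta$. Because the condition on $\gamma\cap\Theta$ is purely set-theoretic — it does not involve the norm — the identical argument with the same path $\gamma$ proves the statements for finite permeability ($\gamma\cap\Theta$ finite) and for null permeability ($\gamma\cap\Theta\subset\{x,y\}$).

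I expect the comparison lemma to be the only genuine obstacle. The naive bound $\ell_{\vertii{\cdot}}(\gamma)\le c\,\ell_2(\gamma)$ is useless here, since $c\,\vertii{v}_2$ can vastly exceed $\vertii{v}+\delta$; one must exploit that a Euclidean near-geodesic travels almost entirely in the fixed direction $\hat v$, its transversal displacement being only of order $\sqrt{\varepsilon}$, so that passing from $\vertii{\cdot}_2$ to $\vertii{\cdot}$ costs merely $O(\sqrt{\varepsilon})$ in length. No convexity assumption on the unit ball of $\vertii{\cdot}$ is used, consistent with the contrast to \Cref{thm:pperm}: strict convexity is needed only for the reverse implication.
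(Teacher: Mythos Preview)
Your proof is correct and complete; the comparison lemma is sound, and the application to all three variants of permeability is immediate. However, your route differs from the paper's. The paper does not prove a direct quantitative bound $\ell_{\vertii{\cdot}}(\gamma)\le\vertii{v}+R(\varepsilon)$; instead it factors through an intermediate, norm-independent condition on the angles of polygonal approximations (condition~(1) in \Cref{lem:pperm}: for every $\varepsilon>0$, eventually the total length of segments making angle $>\varepsilon$ with $\overline{xy}$ is $<\varepsilon$). The lemma shows that this condition is \emph{equivalent} to $\ell(\gamma_n)\to\vertii{y-x}$ for strictly convex norms (hence for $\vertii{\cdot}_2$), and \emph{implies} it for arbitrary norms via a supporting-hyperplane projection argument. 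The theorem then follows in one line. What the paper's architecture buys is reusability: the same lemma also proves \Cref{thm:pperm} (equivalence of permeability under all strictly convex norms), since the angle condition is manifestly norm-free. What your approach buys is a self-contained, explicit estimate with an $O(\sqrt{\varepsilon})$ error term, obtained by orthogonal decomposition and Cauchy--Schwarz rather than by projection along a supporting hyperplane; it is more elementary but single-purpose, and does not immediately yield the converse direction needed for \Cref{thm:pperm}.
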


The following lemma forms the main part of the proofs of \Cref{thm:pperm} and \Cref{rem:non-convex-norm}. In its statement recall the definitions from \Cref{sec:basic} and the fact that the line segments of a polygonal approximation of a path are regarded as its elements.

\begin{lemma}\label{lem:pperm}
We denote by $\hat \ell$ the length of a path in $\R^d$ w.r.t.\ some norm $\vertii\cdot$. Fix $z\in \R^d$. For every $n\in\N$ let $\gamma_n$ be a path in $\R^d$ connecting $0$ and $z$. Consider the following assertions. 
\begin{enumerate}
\item[(1)] For each $\varepsilon>0$ there is $N\in\N$ such that for all $n\ge N$ and all polygonal approximations $Z$ of $\gamma_n$ we have
\begin{equation*}
\sum_{s\in Z \colon \sphericalangle(s,\overline{0z}) > \varepsilon}  \hat\ell(s) <  \varepsilon.
\end{equation*}
\item[(2)] $\lim_{n\to \infty}\hat \ell(\gamma_n) = \vertii z $.
\end{enumerate}
Then
(1) $\Rightarrow$ (2) holds for any norm and (2) $\Rightarrow$ (1) holds for each norm with strictly convex closed unit ball. 
\end{lemma}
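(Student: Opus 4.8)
The plan is to treat the two implications separately, using the decomposition of a polygonal approximation into segments that are ``almost parallel'' to $\overline{0z}$ and segments that point in a substantially different direction.

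\emph{For (1) $\Rightarrow$ (2).} First I would fix $\varepsilon>0$ and pick $N$ as in (1). For $n\ge N$ and any polygonal approximation $Z=\{s_1,\dots,s_m\}$ of $\gamma_n$, split the index set into $G=\{s\in Z\colon \sphericalangle(s,\overline{0z})\le\varepsilon\}$ (the ``good'' segments) and $B=Z\setminus G$ (the ``bad'' ones). The vector sum of all direction-scaled segments telescopes to $z$, so $\sum_{s\in G}v(s)=z-\sum_{s\in B}v(s)$, where $v(s)$ denotes the displacement vector of $s$. On the good part, each $v(s)$ lies in a narrow cone of half-angle $\varepsilon$ around $z$, hence the projections onto the direction $z/\|z\|_2$ satisfy $\langle v(s),z/\|z\|_2\rangle\ge \|v(s)\|_2\cos\varepsilon \ge c_1\hat\ell(s)\cos\varepsilon$ for a norm-equivalence constant $c_1$; summing and using that the total good displacement is within $\|z\|_2+\varepsilon\cdot(\text{const})$ of $z$ in the $z$-direction gives $\sum_{s\in G}\hat\ell(s)\le \|z\|+C\varepsilon$ for a constant $C=C(\|\cdot\|,z)$ independent of $n$. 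Combining with $\sum_{s\in B}\hat\ell(s)<\varepsilon$ from (1) yields $\hat\ell(Z)\le\|z\|+(C+1)\varepsilon$, and taking the supremum over $Z$ gives $\hat\ell(\gamma_n)\le\|z\|+(C+1)\varepsilon$. Since $\hat\ell(\gamma_n)\ge\|z\|$ always, letting $\varepsilon\to0$ proves (2). This direction works for any norm because only the upper bound on cone-confined sums and the elementary inequality $\hat\ell(\gamma_n)\ge\|z\|$ (triangle inequality) are used.

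\emph{For (2) $\Rightarrow$ (1), assuming the closed unit ball is strictly convex.} This is the harder direction and the main obstacle. I would argue by contradiction: suppose (1) fails, so there is $\varepsilon_0>0$ and, along a subsequence, polygonal approximations $Z_n$ of $\gamma_n$ with $\sum_{s\in Z_n\colon\sphericalangle(s,\overline{0z})>\varepsilon_0}\hat\ell(s)\ge\varepsilon_0$. The key quantitative input is a strict-convexity ``defect'' estimate: strict convexity of the unit sphere implies that there is a modulus $\omega(\varepsilon_0)>0$ such that for any finite family of vectors with $\sum v_i=w$ and $\sum\|v_i\|=L$, if a subset of total $\|\cdot\|$-length at least $\varepsilon_0$ consists of vectors making angle $>\varepsilon_0$ with $w$, then $L\ge\|w\|+\omega(\varepsilon_0)$. (One proves this by a compactness argument on the unit sphere: the failure of equality in the triangle inequality is uniformly bounded below once a definite proportion of the mass is uniformly misaligned; strict convexity is exactly what rules out ``free'' sideways travel.) Applying this with $w=z$, $v_i$ the displacement vectors of the segments of $Z_n$, gives $\hat\ell(\gamma_n)\ge\hat\ell(Z_n)\ge\|z\|+\omega(\varepsilon_0)$ for all $n$ in the subsequence, contradicting (2).

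The step I expect to require the most care is the uniform strict-convexity defect estimate used in (2) $\Rightarrow$ (1): one must quantify how much the total length is forced to exceed $\|z\|$ purely in terms of the amount of misaligned mass, uniformly in the number and arrangement of segments. I would handle this by first reducing to two vectors (group the misaligned mass and the aligned mass into single resultant vectors, controlling the angle of the misaligned resultant via a convexity/averaging argument), then invoking the compactness of the unit sphere together with strict convexity to get that $\|a+b\|\le\|a\|+\|b\|-\rho$ whenever $a,b$ are unit-length-comparable and the angle between them is bounded below, with $\rho>0$ depending only on that angle bound. Everything else is bookkeeping with the triangle inequality and the equivalence of $\|\cdot\|$ with $\|\cdot\|_2$.
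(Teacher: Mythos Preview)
Your overall strategy---split each polygonal approximation into ``good'' (nearly aligned with $\overline{0z}$) and ``bad'' segments, bound the good part geometrically and the bad part by hypothesis---matches the paper's, but the execution has a gap in each direction, and both gaps stem from the same missing tool.

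In (1) $\Rightarrow$ (2), your chain $\langle v(s),z/\vertii{z}_2\rangle\ge\vertii{v(s)}_2\cos\varepsilon\ge c_1\hat\ell(s)\cos\varepsilon$ with a \emph{global} norm-equivalence constant $c_1$ is too lossy: summing gives only $\sum_{s\in G}\hat\ell(s)\le(\vertii{z}_2+O(\varepsilon))/(c_1\cos\varepsilon)$, whose limit as $\varepsilon\to 0$ is $\vertii{z}_2/c_1$, not $\vertii{z}$. These coincide only when $z$ happens to realise the extremal ratio between $\vertii{\cdot}_2$ and $\vertii{\cdot}$, so your conclusion $\sum_{s\in G}\hat\ell(s)\le\vertii{z}+C\varepsilon$ does not follow. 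The paper repairs this by replacing the Euclidean projection with the projection $\pi_z$ onto $\R z$ \emph{parallel to a supporting hyperplane $E_z$ of the $\vertii{\cdot}$-unit ball at $z/\vertii{z}$}. This $\pi_z$ never increases $\vertii{\cdot}$, and for directions within angle $\varepsilon$ of $z$ one has $\hat\ell(s)\le(1+c\varepsilon)\hat\ell(\pi_z(s))$ for a constant $c$ depending only on the local shape of the unit sphere near $z$; since the projected segments lie on $\R z$, their total $\vertii{\cdot}$-length is controlled directly by $\vertii{z}$, yielding the sharp bound.

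In (2) $\Rightarrow$ (1), your proposed reduction to two vectors does not work: the resultant $b=\sum_{\text{bad}}v(s)$ of the misaligned segments need not itself be misaligned. Two bad segments symmetric about the $z$-direction can have resultant exactly parallel to $z$, and then $\vertii{z-b}+\vertii{b}=\vertii{z}$ with no defect at all; the excess must come from $\sum_{\text{bad}}\hat\ell(s)>\vertii{b}$, and quantifying \emph{that} uniformly in the number and arrangement of segments is precisely the original difficulty, not a two-vector problem. The paper again uses $\pi_z$: strict convexity gives $\delta(\varepsilon):=\inf\{\vertii{x}-1:x\in E_z,\ \sphericalangle(\overline{0x},\overline{0z})>\varepsilon\}>0$, so every bad segment individually satisfies $\hat\ell(\pi_z(s))\le\hat\ell(s)/(1+\delta(\varepsilon))$. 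Hence the bad segments project to total length at most $\varepsilon'/(1+\delta(\varepsilon))$ on $\overline{0z}$, forcing the good segments to contribute at least $\vertii{z}-\varepsilon'/(1+\delta(\varepsilon))$, and the total exceeds $\vertii{z}$ by at least $\varepsilon\delta(\varepsilon)/(1+\delta(\varepsilon))$. The defect is extracted \emph{segment by segment} through the norm-adapted projection, not from a two-vector reduction.
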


\begin{proof}
Let $\vertii{\cdot}$ be any norm on $\R^d$. W.l.o.g.\ we may assume that $\vertii z =1$. Let us denote the open unit ball of  $\vertii\cdot$ by $U$. Using the norm axioms one easily checks that $\overline{U}$ is convex. Thus there exists an affine hyperplane $E_z$ with  $z \in \partial U \cap E_z$ and $E_z\cap U=\emptyset$ ($E_z$ is a {\em supporting hyperplane} of $\overline{U}$ in $z$). Denote  by $\pi_{z}$ the projection  parallel to $E_{z}$ onto  $\R z$, and by $v$ the unit normal vector of $E_z$ satisfying $\langle x,v\rangle>0$ for each $x\in E_z$. By definition, this projection does not increase the norm.

We first prove that (1) $\Rightarrow$ (2) for arbitrary norms. Let $\alpha:=\tfrac{\pi}{2}-\sphericalangle(\overline{0v},\overline{0z})$ and consider the map 
\[
f:\,\left\{ x \in E_z \colon \sphericalangle(\overline{0x},\overline{0z}) \le \frac{\alpha}{2}\right\}\setminus\{z\}\to \R
\,;\quad x\mapsto \frac{\vertii x - 1}{\sphericalangle(\overline{0x},\overline{0z})}.
\] 
Because $\overline{U}$ is nondegenerate and convex,  the singularity of $f$ at $z$ is removable, and we can extend $f$ continuously to the whole filled ellipse
$\left\{ x \in E_z \colon \sphericalangle(\overline{0x},\overline{0z}) \le \frac{\alpha}{2}\right\}$. Denoting this extension again by $f$,  we 
define a constant  
\begin{equation}\label{eq:ccc}
 c := \sup\left\{f(x)\colon x \in E_z \text{ with }\sphericalangle(\overline{0x},\overline{0z}) \le \frac{\alpha}{2}\right\},
\end{equation}
which is finite.
With this definition of $c$, we have for every $x \in E_z$ with $\sphericalangle(\overline{0x},\overline{0z}) \le \varepsilon \le \frac{\alpha}{2}$ that
\begin{equation}\label{eq:pperm1_a_pre}
 \vertii x \le 1 + c\varepsilon.
\end{equation}
 Then \eqref{eq:pperm1_a_pre} implies that for each $x\in \R^d$ with $\sphericalangle(\overline{0x},\overline{0z}) \le \varepsilon\le \frac{\alpha}{2}$ we have
\begin{equation}\label{eq:pperm1_a}
\hat\ell(\overline{0x}) = \vertii{x} \le (1+c\varepsilon) \vertii{\pi_z(x)} =  (1+c\varepsilon)\hat\ell(\pi_z(\overline{0x})).
\end{equation}
For each $\varepsilon>0$ small enough there is  $n\ge N$ such that for each $Z$ that approximates $\gamma_n$ we have 
\begin{equation}\label{eq:lhatlx1}
\hat\ell(Z)  \le \varepsilon + \sum_{s\in Z \colon \sphericalangle(s,\overline{0z}) \le \varepsilon}  \hat\ell(s)
\le  \varepsilon + (1+c\varepsilon)\sum_{s\in Z \colon \sphericalangle(s,\overline{0z}) \le \varepsilon}  \hat\ell(\pi_{z}(s)),
\end{equation}
where we used \eqref{eq:pperm1_a} in the second inequality.
Because by assumption we have
\[
\sum_{s\in Z \colon \sphericalangle(s,\overline{0z}) \le \varepsilon}  \hat\ell(\pi_{z}(s)) \le \sum_{s\in Z \colon \sphericalangle(s,\overline{0z}) \le \pi/2}  \hat\ell(\pi_{z}(s)) =
 1 +  \sum_{s\in Z \colon \sphericalangle(s,\overline{0z}) > \pi/2}  \hat\ell(\pi_{z}(s)) \le 1 + \varepsilon,
\]
we gain from \eqref{eq:lhatlx1} that
\[
\hat\ell(Z) \le  \varepsilon + (1+c\varepsilon)(1+\varepsilon)
< 1+ (2c+2) \varepsilon.
\]
Since $Z$ was arbitrary, this yields $\hat\ell(\gamma_n)\le 1+(2c+2)\varepsilon $ for $n\ge N$ and, hence, $\lim_{n\to \infty}\hat\ell(\gamma_n) = 1$.

To establish the implication (2) $\Rightarrow$ (1), we may assume that the closed unit ball $\overline{U}$ of $\vertii{\cdot}$ is strictly convex. We define a function $\delta \colon \mathbb R_+\to \mathbb R$ as
\[
 \delta(\varepsilon) := \inf\{\vertii x - 1\colon x \in E_z \text{ with } \sphericalangle(\overline{0x},\overline{0z}) > \varepsilon\}.
\]
Because  $\overline{U}$ is strictly convex, we have $ \delta(\varepsilon) > 0$ for all $\varepsilon \in \R_+$. Suppose that there is $\varepsilon>0$ such that for arbitrarily large $n$ there is an approximation $Z$ of $\gamma_n$  satisfying
\[
\sum_{s\in Z \colon \sphericalangle(s,\overline{0z}) > \varepsilon}  \hat\ell(s) =\varepsilon'  \ge \varepsilon.
\]
The projections $\pi_{z}(s)$ of the line segments $s\in Z$ with $\sphericalangle(s,\overline{0z}) > \varepsilon$ cover at most a $\vertii\cdot$-length $\frac{\varepsilon'}{1+\delta(\varepsilon)}$ of $\overline{0z}$. Thus the projections of the line segments $s\in Z$ with $\sphericalangle(\overline{0x},\overline{0z}) \le  \varepsilon$ have to cover a $\vertii\cdot$-length at least $1 -\frac{\varepsilon'}{1+\delta(\varepsilon)}$ of $\overline{0z}$. Note that $\ell(s)\ge \ell(\pi_z(s))$. We therefore get
\begin{align*}
\hat\ell(\gamma_n)\ge \hat\ell(Z)
&=
\sum_{s\in Z \colon \sphericalangle(s,\overline{0z}) > \varepsilon}  \hat\ell(s) + \sum_{s\in Z \colon \sphericalangle(s,\overline{0z}) \le \varepsilon}  \hat\ell(s)\\
&\ge
\varepsilon' + \sum_{s\in Z \colon \sphericalangle(s,\overline{0z}) \le \varepsilon}  \hat\ell(\pi_{z}(s))
\ge
\varepsilon' +1 -\frac{\varepsilon'}
{1+\delta(\varepsilon)} \ge 1 +\frac{\varepsilon\delta(\varepsilon)}
{1+\delta(\varepsilon)} .
\end{align*}
and, hence, $\hat\ell(\gamma_n)$ cannot converge to $1$.
\end{proof}

\Cref{lem:pperm} is sharp in the following sense: If $\vertii{\cdot}$ is a norm whose closed unit ball is not strictly convex, then there exist points $z_1,z_2\in \R^d$ with $\vertii{z_1}=\vertii{z_2}=1$, $z_1\ne z_2$, and $\vertii{\tfrac{1}{2}(z_1+z_2)}=1$. For $z=\tfrac{1}{2}(z_1+z_2)$, it is not hard to find $\varepsilon>0$ and  a polygonal chain $Z$ with $\sum_{s\in Z}\hat\ell(s)=\vertii{z}$ and $\sum_{s\in Z \colon \sphericalangle(s,\overline{0z}) > \varepsilon}  \hat\ell(s) \ge  \varepsilon$.

We immediately gain the following corollary, which is of interest in its own right.

\begin{corollary}\label{cor:pperm}
Let $\vertii\cdot$ be any norm on $\R^d$ whose unit ball is strictly convex. Further let $x,y\in \R^d$ and let $\gamma_n$ be a sequence of paths from $x$ to $y$.  Let $\hat \ell$ be the length of a path w.r.t.\ $\vertii\cdot$ and let $\ell$ be the length of a path w.r.t.\ $\vertii\cdot_2$. Then $\lim_{n\to \infty }\hat \ell(\gamma_n)=\vertii{y-x}$ if and only if $\lim_{n\to \infty }\ell(\gamma_n)=\vertii{y-x}_2$.
\end{corollary}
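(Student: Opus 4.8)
\textbf{Proof proposal for Corollary~\ref{cor:pperm}.}

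The plan is to deduce the corollary directly from \Cref{lem:pperm} applied twice --- once with the given strictly convex norm $\vertii\cdot$ and once with $\vertii\cdot_2$ --- using condition (1) of that lemma as a bridge that is independent of the norm. After a translation we may assume $x=0$, so that $y=z$ plays the role of the fixed point in \Cref{lem:pperm}, and $\gamma_n$ is a sequence of paths from $0$ to $z$. Note that condition (1) in \Cref{lem:pperm} refers to the length $\hat\ell$ with respect to whichever norm is under consideration, so to use it as a genuine bridge I first need to observe that the two versions of (1) (for $\vertii\cdot$ and for $\vertii\cdot_2$) are in fact equivalent.

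The key observation is the following norm comparison. Both $\vertii\cdot$ and $\vertii\cdot_2$ are norms on the finite-dimensional space $\R^d$, hence equivalent: there are constants $0<m\le M$ with $m\vertii w_2\le\vertii w\le M\vertii w_2$ for all $w\in\R^d$. Consequently, for any polygonal approximation $Z$ of $\gamma_n$ and any angle threshold, the truncated sums $\sum_{s\in Z:\,\sphericalangle(s,\overline{0z})>\varepsilon}\hat\ell(s)$ computed in one norm are squeezed between constant multiples of the same sum computed in the other norm (the set of summands is the same, only the summand values change by a factor in $[m,M]$). Hence condition (1) of \Cref{lem:pperm} for the norm $\vertii\cdot$ holds if and only if condition (1) for $\vertii\cdot_2$ holds: given $\varepsilon>0$ one applies the hypothesis with a rescaled tolerance $\varepsilon/\max\{M,1\}$ (and, since the angle is measured by the unchanged Euclidean direction vectors in both cases, no adjustment to the angular threshold is needed beyond possibly shrinking it). Let us call this common condition~$(\ast)$.

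Now assemble the equivalence. Suppose $\lim_{n\to\infty}\hat\ell(\gamma_n)=\vertii z$. Since $\overline U$ is strictly convex, the implication (2)$\Rightarrow$(1) of \Cref{lem:pperm} applies to $\vertii\cdot$ and yields $(\ast)$. By the norm-comparison step, $(\ast)$ also holds for $\vertii\cdot_2$; and since the Euclidean unit ball is strictly convex, implication (2)$\Rightarrow$(1) is not needed here --- rather implication (1)$\Rightarrow$(2), which holds for \emph{any} norm, gives $\lim_{n\to\infty}\ell(\gamma_n)=\vertii z_2$. The converse direction is symmetric: from $\lim_{n\to\infty}\ell(\gamma_n)=\vertii z_2$ and strict convexity of the Euclidean ball, (2)$\Rightarrow$(1) gives $(\ast)$ for $\vertii\cdot_2$, hence for $\vertii\cdot$, and then (1)$\Rightarrow$(2) for the norm $\vertii\cdot$ gives $\lim_{n\to\infty}\hat\ell(\gamma_n)=\vertii z$. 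Undoing the translation replaces $z$ by $y-x$ and finishes the proof.

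The only mildly delicate point --- and the place I would be most careful --- is the bookkeeping in the norm-comparison step: one must check that the $\varepsilon$ appearing inside the truncation condition $\sphericalangle(s,\overline{0z})>\varepsilon$ (an angular quantity, identical in both norms) is handled separately from the $\varepsilon$ bounding the truncated length (a metric quantity, which changes by the equivalence constants), so that quantifiers are chased in the right order. Apart from that, everything is a direct invocation of \Cref{lem:pperm}; I expect no real obstacle.
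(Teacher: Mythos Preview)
Your proposal is correct and follows essentially the same route as the paper's own proof: use that condition~(1) of \Cref{lem:pperm} is norm-independent (by equivalence of norms on $\R^d$, since the angle $\sphericalangle$ is a fixed Euclidean quantity and the truncated length sums differ only by bounded multiplicative constants), and then apply the two implications of the lemma in each direction. The paper compresses all of this into a single sentence --- ``By the equivalence of all norms on $\R^d$, the condition in Lemma~\ref{lem:pperm} does not depend on the norm'' --- while you have spelled out the quantifier bookkeeping, but the argument is the same.
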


\begin{proof}
By the equivalence of all norms on $\R^d$, the condition in Lemma~\ref{lem:pperm} does not depend on the norm. This implies the result.
\end{proof}

\begin{proof}[Proof of Theorem~\ref{thm:pperm}]
Let $\hat \ell$  and $\ell$ be as in \Cref{cor:pperm}. Now $\Theta$ is permeable in $(\R^d,\vertii\cdot)$ if and only if for each $x,y\in \R^d$ there is a sequence $(\gamma_n)_{n\in \N}$ of paths with $\overline{\gamma_n\cap \Theta}$ countable and $\lim_{n\to \infty}\hat \ell(\gamma_n) =\vertii{x-y}$. By \Cref{cor:pperm} this is equivalent to the fact that for each $x,y\in \R^d$ there is a sequence $(\gamma_n)_{n\in \N}$ of paths with $\overline{\gamma_n\cap \Theta}$ countable and $\lim_{n\to\infty}\ell(\gamma_n) = \vertii{x-y}_2$. But this is equivalent to permeability of $\Theta$ in $(\R^d,\vertii\cdot_2)$.

The same argument goes through for finite permeability and null permeability.
\end{proof}

\begin{proof}[Proof of Theorem~\ref{rem:non-convex-norm}]
This runs along the same lines as the proof of Theorem~\ref{thm:pperm} because the implication (1) $\Rightarrow$ (2) in Lemma~\ref{lem:pperm} is true for any norm.
\end{proof}

Because of Theorems~\ref{thm:pperm} and~\ref{rem:non-convex-norm}, from now onwards, we will exclusively work in the space $(\R^d,\vertii{\cdot}_2)$. For this reason, in the sequel we will just write $\vertii\cdot$ for the $2$-norm.

\section{Permeability and Lebesgue measure}\label{ssec:perm-measure}
In this section we investigate how properties of the Lebesgue measure of a set relate to our notions of permeability. As we will see in Section~\ref{sec:measure-zero}, the fact that a set $\Theta \subset \R^d$ has Lebesgue measure zero allows for easier ways to infer that $\Theta$ is permeable. On the other hand, Section~\ref{sec:lebPos} provides a criterion for impermeability under the assumption that $\Theta$ has positive Lebesgue measure. Nevertheless, we will see from various examples that the Lebesgue measure cannot be used to characterize permeability and its variants.

\subsection{Sets of measure zero}\label{sec:measure-zero}
In this section we will deal with permeability results for subsets of $\R^d$ with zero Lebesgue measure. 
We start with an auxiliary result.

\begin{lemma}[{\cite[Lemma 30]{leoste21}}]\label{lemma:fubini-argument}
Let $\Theta\subset \R^d$ be a Lebesgue nullset. Then for all $x,y\in \R^d$ and all $\varepsilon>0$ there exists a polygonal chain $\gamma\colon[a,b]\to\R^d$ connecting $x$ and $y$, parametrized by arc length\footnote{The assertion that $\gamma$ is parametrized by arc length is not present in \cite[Lemma 30]{leoste21}, but is evident from its proof.}, such that $\ell(\gamma)<\vertii{y-x} +\varepsilon$ and $\{t\in[a,b]:\gamma(t)\in\Theta\}$ is a Lebesgue nullset in $[a,b]$.
\end{lemma}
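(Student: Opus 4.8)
The plan is to reduce to the one-dimensional case by a Fubini-type slicing argument, just as the name of the lemma suggests. First I would handle the trivial case $x=y$ (take a constant path, or rather a degenerate polygonal chain) and otherwise assume $x\ne y$. The rough idea is the following: connect $x$ to $y$ by a short polygonal chain that consists of at most three segments — one "nearly straight" long segment and two short correction segments — chosen so that the long segment belongs to a large family of parallel candidate segments, almost all of which (in the Fubini sense) meet $\Theta$ only in a set of times of measure zero.

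More precisely, fix a small $\varepsilon>0$. Choose a hyperplane $H$ orthogonal to a direction close to $y-x$ but not exactly equal to it, together with a thin "slab" of starting points $x'$ near $x$ and ending points $y'$ near $y$, in such a way that for each admissible pair $(x',y')$ the polygonal chain $\overline{x\,x'\,y'\,y}$ has length $<\vertii{y-x}+\varepsilon$; this is possible because the correction segments $\overline{x x'}$ and $\overline{y' y}$ can be made arbitrarily short while $\overline{x'y'}$ is arbitrarily close in length to $\vertii{y-x}$. The family of middle segments $\{\overline{x'y'}\}$ can be arranged to sweep out a set of positive $d$-dimensional Lebesgue measure (e.g. parametrize $x'$ over a $(d-1)$-dimensional patch and keep $y'$ determined, or let both vary over small patches). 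Applying Fubini's theorem to the indicator function of $\Theta$ integrated over this family: since $\Theta$ is a Lebesgue nullset in $\R^d$, for almost every parameter value the corresponding segment $\overline{x'y'}$ meets $\Theta$ in a set of linear measure zero along that segment. Pick one such good segment; then the polygonal chain $\gamma=\overline{x\,x'\,y'\,y}$ has the property that $\{t : \gamma(t)\in\Theta\}$ has measure zero, because the contributions from the two short segments $\overline{x x'}$, $\overline{y' y}$ are each nullsets in $\R$ by the same Fubini argument applied to those (one-parameter) families — or, more simply, those two segments can themselves be chosen generically within a positive-measure family of correction segments so that they too meet $\Theta$ in measure zero. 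Finally, reparametrize $\gamma$ by arc length; this does not affect the nullset property since arc-length parametrization is bi-Lipschitz on each segment.

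The main obstacle — though it is really a bookkeeping issue rather than a deep one — is to set up the family of polygonal chains so that (i) it has positive $d$-dimensional measure (so Fubini gives information about almost every member), (ii) every member has length below $\vertii{y-x}+\varepsilon$, and (iii) every member genuinely connects $x$ to $y$. Items (ii) and (iii) force the use of the short correction segments at both ends, and one must check that the Fubini conclusion can be applied simultaneously (or sequentially) to all three segment-families; the cleanest way is to note that a countable union of nullsets in $[a,b]$ is a nullset, so one can treat the three segments one after another, at each stage discarding a nullset of bad parameters. Since this is quoted as \cite[Lemma 30]{leoste21}, I would expect the actual proof to be short and to cite Fubini directly; I would simply make sure the arc-length reparametrization remark is justified, as the footnote indicates it is only implicit in the original source.
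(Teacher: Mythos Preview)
The paper does not give its own proof of this lemma; it simply cites \cite[Lemma~30]{leoste21} and adds the footnote that the arc-length parametrization is evident from that proof. Your sketch is correct and is precisely the Fubini argument one expects (and that the lemma's label advertises): perturb the straight segment into a short polygonal chain whose pieces belong to positive-measure families of segments, then invoke Fubini on each family to select a chain whose pieces meet $\Theta$ in linear nullsets. One small simplification over your three-segment version: it is enough to pick a single intermediate point $z$ in a small $(d-1)$-ball orthogonal to $\overline{xy}$ through the midpoint and use the two-segment chain $\overline{x\,z\,y}$; both cones $\{\overline{xz}\}$ and $\{\overline{zy}\}$ have positive $d$-dimensional measure, so Fubini handles both at once and the length bound is immediate. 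Your remark on arc-length reparametrization is fine.
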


The next definition is a special case of \cite[2.15]{heinonen1998}.

\begin{definition}[Quasi convexity]\label{defi:C-quasi-convex}
A subset $A\subset \R^d$ is called {\em $C$-quasi convex} if there exists $C>0$ such that for all $x,y\in A$ and $\varepsilon>0$ 
there exists a path $\gamma\subset A$ connecting $x$ and $y$ with $\ell(\gamma) \le C \vertii{y-x}+\varepsilon$.  
\end{definition}

Our first result shows that $C$-quasi convexity of its complement implies null permeability of a Lebesgue nullset.

\begin{proposition}\label{prop:c-quasiconvex=>nullperm}
Let $\Theta\subset \R^d$ and $C\in [1,\infty)$. If $\Theta$ is a Lebesgue nullset and $\R^d\setminus \Theta$ is $C$-quasi convex  then $\Theta$ is  null permeable. 
\end{proposition}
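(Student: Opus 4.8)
The plan is to combine \Cref{lemma:fubini-argument} with the $C$-quasi convexity of $\R^d \setminus \Theta$ to build, for given $x,y \in \R^d \setminus \Theta$ and $\delta > 0$, a path from $x$ to $y$ whose length is at most $\vertii{y-x} + \delta$ and which avoids $\Theta$ entirely; then null permeability of $\Theta$ will follow from \Cref{lemma:outside-theta}, once we observe that a Lebesgue nullset has empty interior. First I would apply \Cref{lemma:fubini-argument} to obtain a polygonal chain $\gamma\colon [a,b] \to \R^d$ connecting $x$ and $y$, parametrized by arc length, with $\ell(\gamma) < \vertii{y-x} + \delta/2$ and such that $S := \{t \in [a,b] : \gamma(t) \in \Theta\}$ is a Lebesgue nullset in $[a,b]$. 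In particular $S$ has empty interior, so its complement in $[a,b]$ is dense; the set of parameters $t$ with $\gamma(t) \notin \Theta$ is dense in $[a,b]$.

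Next I would use the density of ``good'' parameters to select finitely many points $a = t_0 < t_1 < \cdots < t_m = b$ with $\gamma(t_j) \notin \Theta$ for all $j$, chosen fine enough that the total contribution $\sum_{j} \vertii{\gamma(t_j) - \gamma(t_{j-1})}$ is controlled: since $\gamma$ is parametrized by arc length and has finite length, by picking the mesh small we can ensure $\sum_{j=1}^m \vertii{\gamma(t_j) - \gamma(t_{j-1})} \le \ell(\gamma) < \vertii{y-x} + \delta/2$, and moreover that $\sum_{j=1}^m \vertii{\gamma(t_j) - \gamma(t_{j-1})}$ is as close as we like to $\ell(\gamma)$ — actually I only need the upper bound. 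Now each pair $\gamma(t_{j-1}), \gamma(t_j)$ lies in $\R^d \setminus \Theta$, so by $C$-quasi convexity there is a path $\sigma_j \subset \R^d \setminus \Theta$ connecting them with $\ell(\sigma_j) \le C\,\vertii{\gamma(t_j) - \gamma(t_{j-1})} + \varepsilon_j$, where the $\varepsilon_j > 0$ are chosen with $\sum_j \varepsilon_j$ small. Concatenating $\sigma_1, \ldots, \sigma_m$ gives a path $\sigma$ from $x$ to $y$ lying entirely in $\R^d \setminus \Theta$, i.e.\ $\sigma \cap \Theta = \emptyset$.

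The main obstacle is that this naive concatenation only gives $\ell(\sigma) \le C \sum_j \vertii{\gamma(t_j) - \gamma(t_{j-1})} + \sum_j \varepsilon_j \lesssim C(\vertii{y-x} + \delta/2)$, which is too long by the factor $C$ — quasi convexity controls length only up to the multiplicative constant $C$, whereas null permeability demands length arbitrarily close to $\vertii{y-x}$. To fix this, the key observation is that the excess only accrues on the \emph{short} segments $\overline{\gamma(t_{j-1})\gamma(t_j)}$: I should choose the mesh $\{t_j\}$ so fine that each individual chord $\vertii{\gamma(t_j)-\gamma(t_{j-1})}$ is tiny, and replace only those $\sigma_j$ that actually need detouring. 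More precisely, for indices $j$ where the straight segment $\overline{\gamma(t_{j-1})\gamma(t_j)}$ already misses $\Theta$ we simply take $\sigma_j = \overline{\gamma(t_{j-1})\gamma(t_j)}$ at no cost; for the (possibly many, but short) remaining segments we invoke quasi convexity, and since such segments are contained in $(S)_{\text{small}}$-type parameter sets whose total chord-length can be made $< \delta/(4C)$ by choosing the mesh adapted to the nullset $S$ (using that $S$ is null, hence $\gamma(S)$ contributes negligibly to arc length), the quasi-convex detours add at most $C \cdot \delta/(4C) + \sum_j\varepsilon_j \le \delta/2$ in total. This yields $\ell(\sigma) \le (\vertii{y-x}+\delta/2) + \delta/2 = \vertii{y-x}+\delta$ with $\sigma \cap \Theta = \emptyset$. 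Finally, since $\Theta$ is a Lebesgue nullset it has empty interior, so \Cref{lemma:outside-theta} applies and concludes that $\Theta$ is null permeable. The delicate point to get right in the write-up is exactly the bookkeeping in the last step: quantifying, via the nullity of $S$ and arc-length parametrization, that the union of ``bad'' subintervals contributes arbitrarily little to $\ell(\gamma)$, so that inflating their length by the factor $C$ is harmless.
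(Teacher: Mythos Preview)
Your proposal is correct and takes essentially the same approach as the paper: apply \Cref{lemma:fubini-argument} to get a short arc-length-parametrized path $\gamma$ whose bad parameter set $S=\gamma^{-1}(\Theta)$ is null, cover $S$ by intervals of total length $<\delta/(4C)$, keep $\gamma$ on the complement and replace it on those intervals by quasi-convex detours in $\R^d\setminus\Theta$, then invoke \Cref{lemma:outside-theta}. Two small points to tighten in the write-up: the cover of $S$ should in general be \emph{countable} (since $S$ need not be closed), and you should note that the interval endpoints lie outside $S$ so that their images are in $\R^d\setminus\Theta$ and quasi convexity applies.
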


\begin{proof}
Let $x,y\in\R^d\setminus\Theta$ with $x\ne y$ and $\delta>0$. Lemma \ref{lemma:fubini-argument} provides a path 
$\gamma\colon [0,\ell(\gamma)]\to \R^d$ from $x$ to $y$, parametrized by arc length, such that 
$A=\{t\in [0,\ell(\gamma)]\colon \gamma(t)\in \Theta\}$ is 
a Lebesgue nullset in $[0,\ell(\gamma)]$ and such that $\ell(\gamma)<\vertii{x-y}+\frac{\delta}{3}$. 
Thus there exists a countable collection of disjoint intervals $\big((a_n,b_n)\big)_{n\in \N}$ such that $A\subset \bigcup_{n=1}^\infty (a_n,b_n)$ and
$\sum_{k=1}^\infty (b_n-a_n)<\frac{\delta}{3C}$. By our assumption $\R^d\setminus \Theta$ is $C$-quasi convex. Thus for each $n\in\N$ there exists a path $\kappa_n\colon [a_n,b_n]\to\R^d\setminus \Theta$ connecting $\gamma(a_n)$ and $\gamma(b_n)$ with
\[
 \ell(\kappa_n)\le C\vertii{\gamma(b_n)-\gamma(a_n)} + \frac{\delta}{3\cdot 2^n}
 \le C\ell(\gamma|_{[a_n,b_n]}) + \frac{\delta}{3\cdot 2^n}.
\]
Then 
\begin{equation*}
\kappa(t):=\begin{cases}
\gamma(t)&\text{ if }t\notin \bigcup_{n=1}^\infty [a_n,b_n],\\
\kappa_n(t)&\text{ if }t\in  [a_n,b_n]
\end{cases}
\end{equation*}
defines a path $\kappa \colon[0,\ell(\gamma)]\to \R^d$ disjoint from $\Theta$ satisfying
\begin{align*}
\ell(\kappa)&\le \ell(\gamma)+\sum_{n=1}^\infty \ell(\kappa_n) \le\ell(\gamma)+ \sum_{n=1}^\infty \Big(C\vertii{\gamma(b_n)-\gamma(a_n)}+ \frac{\delta}{3\cdot 2^n}\Big)\\
&\le \ell(\gamma) + \frac{\delta}{3}+\sum_{n=1}^\infty C\ell(\gamma|_{[a_n,b_n]})
\le \vertii{y-x}+\frac{2\delta}{3}+\sum_{n=1}^\infty C (b_n-a_n)<\vertii{y-x}+\delta. 
\end{align*}
Now the assertion follows from \Cref{lemma:outside-theta}.
\end{proof}

\begin{corollary}\label{cor:product-of-dense-sets} Let $d\ge 2$ and $1\le k\le d-1$.
Let $A\subset \R^k$ be a Lebesgue nullset and let $B\subset \R^{d-k}$ be a set with dense complement in $\R^{d-1}$. Then $A\times B\subset \R^d$ is null permeable. 

\end{corollary}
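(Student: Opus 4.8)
The plan is to apply \Cref{prop:c-quasiconvex=>nullperm}, so it suffices to check the two hypotheses: that $A \times B$ is a Lebesgue nullset in $\R^d$, and that its complement $\R^d \setminus (A \times B)$ is $C$-quasi convex for some $C \ge 1$. The first point is immediate from Fubini's theorem: since $A \subset \R^k$ has $k$-dimensional Lebesgue measure zero, the product $A \times \R^{d-k}$ already has $d$-dimensional measure zero, hence so does the subset $A \times B$. (Note that there is a small typo in the statement — $B \subset \R^{d-k}$ with dense complement in $\R^{d-k}$, not $\R^{d-1}$ — which I would silently correct.)

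The heart of the matter is the quasi convexity of the complement. First I would observe that $\R^d \setminus (A\times B) \supset (\R^k \setminus A) \times \R^{d-k}$ contains the dense open-complement structure we need, but more usefully: a point $(u,w) \in \R^k \times \R^{d-k}$ lies in $A \times B$ only if \emph{both} $u \in A$ and $w \in B$, so $(u,w)$ avoids $A \times B$ as soon as $u \notin A$ or $w \notin B$. The key geometric idea is that to connect two arbitrary points $p = (u_1,w_1)$ and $q = (u_2,w_2)$ in the complement, I route through an intermediate point whose second coordinate avoids $B$. Concretely, since $\R^{d-k} \setminus B$ is dense, I can pick $w^* \in \R^{d-k}\setminus B$ arbitrarily close to, say, $w_1$; then the entire segment from $(u_1, w^*)$ to $(u_2, w^*)$ has second coordinate $w^* \notin B$, hence lies in the complement regardless of the first coordinate. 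I then connect $p$ to $(u_1,w^*)$ and $(u_2,w^*)$ to $q$ by short detours — these are nearly vertical segments, but I must ensure they avoid $A \times B$ too.

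For those two short connecting pieces I would argue as follows. Fix $p=(u_1,w_1)\notin A\times B$. If $u_1 \notin A$, then the \emph{whole} vertical segment $\{u_1\}\times[\,\text{from }w_1\text{ to }w^*\,]$ avoids $A\times B$, and I am done with this piece at cost $\vertii{w^*-w_1}$, which can be made $<\varepsilon$. If instead $u_1 \in A$ (so necessarily $w_1 \notin B$), I instead move first in the $\R^{d-k}$-block from $w_1$ to $w^*$ while holding the first coordinate at $u_1$; but now the segment might meet $A\times B$ at intermediate heights $w$ with $w\in B$. To fix this, I move the first coordinate slightly off $A$ first: pick $u^* \in \R^k\setminus A$ with $\vertii{u^*-u_1}<\varepsilon$ (possible since $\R^k\setminus A$ is dense, as $A$ is a nullset), travel from $(u_1,w_1)$ to $(u^*,w_1)$ — legal since $w_1\notin B$ — then from $(u^*,w_1)$ to $(u^*,w^*)$ — legal since $u^*\notin A$ — landing at a point with first coordinate $u^*\notin A$. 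Doing the symmetric thing near $q$ and joining by the long horizontal segment at height $w^*$, the total length is at most $\vertii{u_1-u_2}_{\R^k}$-type term plus $O(\varepsilon)$; a cleaner bound of the form $\ell(\gamma)\le C\vertii{p-q}+\varepsilon$ with an absolute constant $C$ (depending only on $d$, e.g. $C=\sqrt{d}$ or similar, from comparing the polygonal detour to the straight segment) then follows by a routine estimate, since the "horizontal" move is at most $\vertii{q-p}$ in length and all vertical corrections are $O(\varepsilon)$.

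The main obstacle is purely bookkeeping: one must handle all four cases ($u_1\in A$ or not, $u_2\in A$ or not) uniformly and confirm that each little detour segment genuinely avoids $A\times B$ — the recurring principle being "a segment avoids $A\times B$ if its first coordinate is constantly outside $A$, or its second coordinate is constantly outside $B$." Once the detour is built entirely from such segments, disjointness from $A\times B$ is automatic and the length estimate is straightforward, so I do not anticipate any real difficulty beyond organizing the case analysis. With $C$-quasi convexity of the complement established (and the nullset property from Fubini), \Cref{prop:c-quasiconvex=>nullperm} yields null permeability of $A\times B$.
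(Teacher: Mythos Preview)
Your approach is essentially the paper's own: show that $\R^d\setminus(A\times B)$ is $C$-quasi convex via axis-parallel polygonal chains (each segment avoiding $A\times B$ because one of its two block-coordinates is held fixed outside $A$ or outside $B$), then invoke \Cref{prop:c-quasiconvex=>nullperm}. The paper does exactly this and gets $C=\sqrt{2}$.

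There is, however, a genuine slip in your length accounting. You choose $w^*$ close to $w_1$, but then the ``symmetric thing near $q$'' must still bring the second coordinate from $w^*$ down to $w_2$, and $\vertii{w^*-w_2}\approx\vertii{w_1-w_2}$ is \emph{not} $O(\varepsilon)$. So your claim that ``all vertical corrections are $O(\varepsilon)$'' is false, and the total length is not $\vertii{u_1-u_2}+O(\varepsilon)$ but rather
\[
\vertii{u_1-u_2}+\vertii{w_1-w_2}+O(\varepsilon)\ \le\ \sqrt{2}\,\vertii{p-q}+O(\varepsilon).
\]
This is harmless for the conclusion (you still get $C$-quasi convexity with $C=\sqrt{2}$), but the argument as written does not establish the bound you state. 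The paper's path $\overline{(x_1,x_2)(x_1,\xi_2)(\eta_1,\xi_2)(\eta_1,y_2)(y_1,y_2)}$ makes the full vertical traversal explicit in the third leg, and the estimate $\vertii{y_1-x_1}+\vertii{y_2-x_2}\le\sqrt{2}\,\vertii{y-x}$ is where the $\sqrt{2}$ enters.
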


\begin{proof}
We claim that the complement of $A\times B$ is $\sqrt{2}$-quasi convex. 
To show this, note that, as a Lebesgue nullset, $A$ has dense complement. 
Recall that $(A\times B)^c=(A^c\times B)\cup (A\times B^c) \cup (A^c\times B^c)$.
We first consider the case $(x_1,x_2)\in (A^c\times B)$ and $(y_1,y_2)\in (A\times B^c)$. 
Let $\varepsilon>0$. Since $A^c$ and $B^c$ are dense in $\R^k$ and $\R^{d-k}$, respectively, there exist $\xi_2\in B^c$ and  $\eta_1\in A^c$ with $\vertii{\xi_2-x_2}<\varepsilon$ and $\vertii{\eta_1-y_1}<\varepsilon$. Now the polygonal chain $\gamma:=\overline{(x_1,x_2)(x_1,\xi_2)(\eta_1,\xi_2)(\eta_1,y_2)(y_1,y_2)}$ is a path in $(A\times B)^c$ with 
\[
\ell(\gamma)<\varepsilon+\vertii{\eta_1-x_1}+\vertii{y_2-\xi_2}+\varepsilon<\vertii{y_1-x_1}+\vertii{y_2-x_2}+4 \varepsilon\le \sqrt{2} \vertii{y-x}+4\varepsilon. 
\]
Since the other cases can be shown in a similar way, the claim is proved.
 
Because $A$ is a Lebesgue nullset, the same is true for $A\times B$. Thus it follows from \Cref{prop:c-quasiconvex=>nullperm} that $A\times B$ is null permeable.
\end{proof}

The next corollary  is a variant of the above corollary.

\begin{corollary}\label{cor:projPerm}
Let $\Theta\subset\R^d$ be a closed Lebesgue nullset such that each projection to a coordinate $(d-1)$-plane is nowhere dense therein. Then $\Theta$ is null permeable.
\end{corollary}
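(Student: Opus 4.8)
The plan is to deduce this from \Cref{prop:c-quasiconvex=>nullperm} by showing that, under the hypotheses, the complement $\R^d\setminus\Theta$ is $\sqrt{d}$-quasi convex (or $\sqrt{2}$-quasi convex after an inductive argument; the precise constant is irrelevant). First I would fix $x,y\in\R^d\setminus\Theta$ and $\varepsilon>0$, and try to connect $x$ and $y$ by a polygonal chain $\overline{x_0 x_1\cdots x_d}$ that moves one coordinate at a time: $x_0=x$, then $x_k$ differs from $x_{k-1}$ only in the $k$-th coordinate, with $x_d=y$. Such a chain has length $\sum_{k=1}^d |y_k-x_k|\le\sqrt{d}\,\vertii{y-x}$. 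The issue is that the intermediate vertices $x_1,\dots,x_{d-1}$ might lie in $\Theta$, or that some of the axis-parallel edges might hit $\Theta$. This is exactly where the projection hypothesis enters.

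The key step is to choose the intermediate coordinates cleverly using the nowhere-density of the projections. Write $\proj_k\colon\R^d\to\R^{d-1}$ for the projection that forgets the $k$-th coordinate, so each $\proj_k(\Theta)$ is nowhere dense (hence has dense complement) in the corresponding coordinate hyperplane $\R^{d-1}$. An axis-parallel segment in direction $e_k$ through a point $w$ is contained in $\R^d\setminus\Theta$ as soon as $\proj_k(w)\notin\proj_k(\Theta)$, since every point of that segment has the same image under $\proj_k$. So I would build the chain as follows: having reached $x_{k-1}\in\R^d\setminus\Theta$, I want to move in direction $e_{k}$ to adjust the $k$-th coordinate to $y_k$; but before doing so I should have ensured, at the previous step, that $\proj_{k}(x_{k-1})$ avoids $\proj_{k}(\Theta)$. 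Concretely, at the step that adjusts coordinate $k-1$, I move along $e_{k-1}$ from $x_{k-2}$ toward the target value $y_{k-1}$, and I perturb the endpoint by less than $\varepsilon/d$ in coordinate $k$ (and possibly also nudge coordinate $k-1$) so that the resulting point $x_{k-1}$ satisfies $\proj_{k}(x_{k-1})\notin\proj_{k}(\Theta)$; this is possible because $\R^{d-1}\setminus\proj_{k}(\Theta)$ is dense, so it meets every small ball. One also needs the segment $\overline{x_{k-2}x_{k-1}}$ itself to miss $\Theta$, which is arranged by making sure $\proj_{k-1}(x_{k-2})\notin\proj_{k-1}(\Theta)$ — again secured at the prior step — so the whole construction is a finite induction where each vertex is chosen in the (dense) complement of the relevant projection, the final vertex $x_d=y$ is already given, and $x_0=x$ requires $\proj_1(x)\notin\proj_1(\Theta)$, which can be obtained by a tiny initial perturbation of $x$ within $\R^d\setminus\Theta$ (using that $\Theta$ has empty interior, being a closed nullset, so near $x$ there are points with $\proj_1$-image outside the nowhere dense set $\proj_1(\Theta)$). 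Collecting the perturbations, the total length is at most $\sqrt{d}\,\vertii{y-x}+O(\varepsilon)$, giving $\sqrt{d}$-quasi convexity.

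The main obstacle is the bookkeeping of which perturbation guarantees which segment (or which vertex) avoids $\Theta$: each axis-parallel edge in direction $e_k$ needs its $\proj_k$-image to be outside $\proj_k(\Theta)$, and that image is determined by the vertex at the \emph{start} of that edge in all coordinates except $k$, so the density choices must be threaded correctly through the induction. Once the chain lies in $\R^d\setminus\Theta$ with length $\le\sqrt{d}\,\vertii{y-x}+\varepsilon$, quasi convexity of the complement follows, $\Theta$ is a Lebesgue nullset by hypothesis, and \Cref{prop:c-quasiconvex=>nullperm} immediately yields null permeability. (Alternatively, one can reduce to \Cref{cor:product-of-dense-sets} by an inductive/slicing argument, but the direct coordinate-by-coordinate construction seems cleaner.)
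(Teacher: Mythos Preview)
Your overall strategy---prove that $\R^d\setminus\Theta$ is $C$-quasi convex and then invoke \Cref{prop:c-quasiconvex=>nullperm}---is exactly the paper's approach. The paper, however, does not build the staircase by hand: it obtains quasi convexity in one line by citing \cite[Theorem~A]{hakobyan2008}, which says precisely that a closed set whose coordinate $(d-1)$-projections are nowhere dense has quasi convex complement.

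Your direct construction has a genuine gap for $d\ge 3$. At the inductive step you need $\proj_k(x_{k-1})\notin \proj_k(\Theta)$ and you propose to arrange this by nudging coordinates $k-1$ and $k$. But $\proj_k$ discards coordinate $k$, so that nudge is invisible; effectively you may only vary the $(k-1)$-th entry of $\proj_k(x_{k-1})$ while the entries coming from coordinates $1,\dots,k-2$ are already pinned by previous steps and those from coordinates $k+1,\dots,d$ are still the original coordinates of $x$. You are therefore asking that a \emph{one-dimensional} affine line in $\R^{d-1}$ meet the complement of $\overline{\proj_k(\Theta)}$ near a prescribed point. Nowhere density in $\R^{d-1}$ does not prevent $\overline{\proj_k(\Theta)}$ from containing such a line (e.g.\ a coordinate axis in $\R^2$ is nowhere dense), so the sentence ``$\R^{d-1}\setminus\proj_k(\Theta)$ is dense, so it meets every small ball'' does not apply: you are not looking at a ball in $\R^{d-1}$ but at a one-dimensional slice of it. For $d=2$ there is no issue because the slice is all of $\R$.

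The idea is salvageable. After each long axis-parallel move the current point lies outside the closed set $\Theta$, hence some ball around it avoids $\Theta$; inside that ball one can perturb \emph{all} coordinates freely. The conditions $\proj_k(\,\cdot\,)\notin\overline{\proj_k(\Theta)}$ for $k=1,\dots,d$ are each open and dense in $\R^d$ (preimages under $\proj_k$ of open dense sets), so by Baire one finds a nearby point satisfying all $d$ of them simultaneously. Interspersing such small ``reset'' moves between the long coordinate moves repairs the induction. This is essentially a hands-on proof of the Hakobyan--Herron theorem; the paper simply cites it.
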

\begin{proof}
Since all $(d-1)$-dimensional coordinate projections of $\Theta$ are nowhere dense, $\R^d\setminus\Theta$ is $C$-quasi convex for some $C>0$ by \cite[Theorem A]{hakobyan2008}. Thus the corollary follows from \Cref{prop:c-quasiconvex=>nullperm}.
\end{proof}

\begin{example}[The Menger sponge is null permeable]
The {\it Menger sponge} (see~\cite[Chapter~6]{Edgar:04} for its origins) is obtained by removing the set 
$$
\Big(\Big(\frac{1}{3},\frac{2}{3}\Big)\times \Big(\frac{1}{3},\frac{2}{3}\Big)\times [0,1]\Big)\cup \Big(\Big(\frac{1}{3},\frac{2}{3}\Big)\times [0,1]\times \Big(\frac{1}{3},\frac{2}{3}\Big)\Big)\cup \Big([0,1]\times \Big(\frac{1}{3},\frac{2}{3}\Big)\times \Big(\frac{1}{3},\frac{2}{3}\Big)\Big)
$$ 
from $[0,1]^3$ and repeating this operation on the remaining 20 cubes of side length $\frac{1}{3}$ iteratively. Indeed, the Menger sponge is the attractor of a self-similar iterated function system (see Section~\ref{sec:ifsdef} for a definition of these objects).
The projection of the Menger sponge to each coordinate plane is equal to the Sierpi\'nski carpet. Because the Sierpi\'nski carpet is nowhere dense, null permeability of the Menger sponge follows from \Cref{cor:projPerm}.
\end{example}

Note that this argument does not work for the Sierpi\'nski tetrahedron (if one uses the tetrahedron's base plane the projection thereon equals a filled triangle). For the treatment of this set other methods can be used, see Examples~\ref{ex:SierpinskiTetrahedron} and \ref{ex:SierpTetr2}.

We continue with a version of Proposition~\ref{prop:c-quasiconvex=>nullperm} for permeability. 

\begin{proposition}\label{prop:c-finperm=>perm}
Let $\Theta\subset \R^d$   and let $C\in [1,\infty)$. 
Assume that $\Theta$ is {\em $C$-quasi permeable}, {\em i.e.}, for all $x,y\in \R^d$ and all $\delta>0$, $x$ and $y$ can be connected by a path $\gamma$ in $\R^d$  with $\ell(\gamma)\le C \vertii{y-x}+\delta$ and $\overline{\gamma\cap \Theta}$ countable.
If $\Theta$ is contained in a closed Lebesgue nullset, then $\Theta$ is permeable. 
\end{proposition}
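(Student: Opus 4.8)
The plan is to mimic the proof of \Cref{prop:c-quasiconvex=>nullperm}, but with the roles of ``Lebesgue nullset'' and ``$C$-quasi convex complement'' replaced by ``contained in a closed Lebesgue nullset'' and ``$C$-quasi permeable'', and with the null permeability conclusion weakened to permeability. Write $\widetilde\Theta$ for a closed Lebesgue nullset containing $\Theta$. Since $\widetilde\Theta$ is closed with empty interior, it suffices by \Cref{lemma:outside-theta-closed} to connect points $x,y\in\R^d\setminus\widetilde\Theta$ (hence in particular in $\R^d\setminus\Theta$) by short paths with countable-closure intersection with $\Theta$; note that this is a genuine simplification only insofar as it lets us avoid $\widetilde\Theta$ near the endpoints, and we will in fact produce a path avoiding $\widetilde\Theta$ entirely on a large portion of its parameter interval.

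First I would fix $x,y\in\R^d\setminus\widetilde\Theta$ and $\delta>0$, and apply \Cref{lemma:fubini-argument} to $\widetilde\Theta$ to obtain a polygonal chain $\gamma\colon[0,\ell(\gamma)]\to\R^d$ from $x$ to $y$, parametrized by arc length, with $\ell(\gamma)<\vertii{y-x}+\frac{\delta}{3}$ and such that $A:=\{t\colon\gamma(t)\in\widetilde\Theta\}$ is a Lebesgue nullset in $[0,\ell(\gamma)]$. Cover $A$ by a countable family of disjoint open intervals $(a_n,b_n)$ with $\sum_n(b_n-a_n)<\frac{\delta}{3C}$; we may also assume (shrinking if necessary, or just noting it is automatic from $A$ being a nullset and $x,y\notin\widetilde\Theta$) that the endpoints $a_n,b_n$ do not lie in $A$, so $\gamma(a_n),\gamma(b_n)\notin\widetilde\Theta$. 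On each closed interval $[a_n,b_n]$, invoke $C$-quasi permeability of $\Theta$ to replace $\gamma|_{[a_n,b_n]}$ by a path $\kappa_n$ from $\gamma(a_n)$ to $\gamma(b_n)$ with $\ell(\kappa_n)\le C\vertii{\gamma(b_n)-\gamma(a_n)}+\frac{\delta}{3\cdot 2^n}\le C\,\ell(\gamma|_{[a_n,b_n]})+\frac{\delta}{3\cdot 2^n}$ and $\overline{\kappa_n\cap\Theta}$ countable. Glue these together as in \Cref{prop:c-quasiconvex=>nullperm}: define $\kappa$ to equal $\gamma$ outside $\bigcup_n[a_n,b_n]$ and $\kappa_n$ on $[a_n,b_n]$ (consistent on the overlaps since the $[a_n,b_n]$ are essentially disjoint and the endpoint values match). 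The length estimate is identical to the one in that proof and gives $\ell(\kappa)<\vertii{y-x}+\delta$.

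It remains to check that $\overline{\kappa\cap\Theta}$ is countable. On the complement $K:=[0,\ell(\gamma)]\setminus\bigcup_n(a_n,b_n)$ of the open cover we have $\gamma(t)\notin\widetilde\Theta\supseteq\Theta$, so $\kappa$ restricted to $K$ never meets $\Theta$; thus $\kappa\cap\Theta\subseteq\bigcup_n(\kappa_n\cap\Theta)$, a countable union. Taking closures, $\overline{\kappa\cap\Theta}\subseteq\overline{\bigcup_n(\kappa_n\cap\Theta)}$, and this is where the only real subtlety lies: a countable union of sets with countable closure need not have countable closure, so I cannot simply conclude. The fix is exactly the one used in \Cref{lemma:outside-theta-closed}: the sets $\overline{\kappa_n\cap\Theta}$, together with the finitely many accumulation points contributed by the ``ends'' $\gamma(a_n),\gamma(b_n)$ and any limit of the sequence of pieces, suffice to describe the full closure. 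Concretely, since $\gamma$ is a \emph{polygonal chain} (finitely many segments), the parameter intervals $[a_n,b_n]$ accumulate only at finitely many points of $[0,\ell(\gamma)]$, or more carefully: $K$ is closed and $\kappa|_K$ avoids $\Theta$, while $\kappa(K)$ is compact, so $d(\kappa(K),\Theta)$ could be $0$, but any limit point of $\kappa\cap\Theta$ lying outside all $\overline{\kappa_n\cap\Theta}$ would have to be a limit of points $\kappa(t_j)$ with $t_j$ in infinitely many distinct intervals $(a_{n_j},b_{n_j})$, hence $b_{n_j}-a_{n_j}\to0$, hence $t_j$ converges (along a subsequence) to a point of $K$ whose image is not in $\Theta$ — forcing that limit point to in fact lie in $\overline{\Theta}\cap\overline{\widetilde\Theta}=\widetilde\Theta$, but also to be an accumulation point of the countably many endpoints $\{\gamma(a_n),\gamma(b_n)\}$. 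Therefore
\[
\overline{\kappa\cap\Theta}\subseteq \bigcup_{n=1}^\infty\overline{\kappa_n\cap\Theta}\ \cup\ \overline{\{\gamma(a_n),\gamma(b_n)\colon n\in\N\}},
\]
and the right-hand side is a countable union of countable sets together with the closure of a countable set whose only possible accumulation points are again among finitely many vertices of the polygonal chain — hence countable. This gives the claimed permeability of $\Theta$. The main obstacle, as indicated, is this closure bookkeeping; everything else is a verbatim adaptation of the proof of \Cref{prop:c-quasiconvex=>nullperm} via \Cref{lemma:outside-theta-closed} in place of \Cref{lemma:outside-theta}.
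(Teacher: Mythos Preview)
Your overall strategy---apply \Cref{lemma:fubini-argument} to a closed nullset $\widetilde\Theta\supseteq\Theta$, replace $\gamma$ on the bad intervals by $C$-quasi-permeable detours, and conclude via \Cref{lemma:outside-theta-closed}---is the right one, but you miss the simplification that makes the closure bookkeeping trivial and that is the whole point of the closedness hypothesis. Since $\widetilde\Theta$ is closed and $\gamma$ is continuous, $A=\gamma^{-1}(\widetilde\Theta)$ is closed in the compact interval $[0,\ell(\gamma)]$, hence \emph{compact}. A compact nullset can be covered by \emph{finitely} many disjoint open intervals $(a_1,b_1),\dots,(a_K,b_K)$ of total length less than $\frac{\delta}{3C}$. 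With only $K$ replacements one has $\kappa\cap\Theta\subseteq\bigcup_{n=1}^K(\kappa_n\cap\Theta)$, so $\overline{\kappa\cap\Theta}\subseteq\bigcup_{n=1}^K\overline{\kappa_n\cap\Theta}$ is a finite union of countable sets and there is nothing more to check. This is exactly the paper's proof.

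Your countably-many-intervals version can be repaired, but as written it has a genuine gap. The phrase ``whose image is not in $\Theta$'' should read ``not in $\widetilde\Theta$'' (that is what $t^*\in K$ gives you), and together with ``the limit point lies in $\widetilde\Theta$'' this is already the desired contradiction---so no extra accumulation points exist and $\overline{\kappa\cap\Theta}\subseteq\bigcup_n\overline{\kappa_n\cap\Theta}$ directly. You never state this contradiction and instead fall back on the claim that the accumulation points of $\{\gamma(a_n),\gamma(b_n)\}$ lie ``among finitely many vertices of the polygonal chain''. That claim is false: the $a_n,b_n$ are arbitrary parameter values, not vertices, and the closure of a countable subset of a line segment can easily be uncountable. (You also need, but do not mention, that $\diam(\kappa_{n_j})\le C(b_{n_j}-a_{n_j})+\delta\,3^{-1}2^{-n_j}\to 0$ in order to identify the limit point with $\gamma(t^*)$.) With the contradiction stated cleanly your argument goes through; but the finite-cover route is both shorter and the intended use of the closedness assumption.
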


\begin{proof}
W.l.o.g., we may assume that $\Theta$ is a closed Lebesgue nullset. Assume that $x$ and $y$ are points in $\R^d$ and $\delta>0$. We proceed just as in the proof of Proposition~\ref{prop:c-quasiconvex=>nullperm}. Using Lemma~\ref{lemma:fubini-argument}, we obtain a path $\gamma\colon[0,\ell(\gamma)]\to\R^d$, connecting $x$ and $y$, parametrized by arc length and with $\ell(\gamma)<\vertii{x-y}+\frac{\delta}{3}$ and $A=\gamma^{-1}(\Theta)$. As $\Theta$ is closed, $A\subset [0,\ell(\gamma)]$ is compact. Thus for some $K>1$ there exist disjoint intervals $((a_n,b_n))_{1\le k\le K}$ such that $A\subseteq \bigcup_{n=1}^K(a_n,b_n)$ and $\sum_{n=1}^K(b_n-a_n)<\frac{\delta}{3C}$. By assumption, for each $n\in\{1,\ldots, K\}$ there exists a path $\kappa_n$ connecting $\gamma(a_n)$ and $\gamma(b_n)$ with $\overline{\kappa_n\cap \Theta}$ countable and $\ell(\kappa_n)<C\vertii{\gamma(b_n)-\gamma(a_n)}+\frac{\delta}{3K}$. Replacing $\gamma$ on the pieces $[a_n,b_n]$ by $\kappa_n$ for each $n\in\{1,\ldots, K\}$, we obtain a path $\kappa$ that connects $x$ and $y$ and has $\ell(\kappa)<\vertii{y-x}+\delta$. Since $\overline{\kappa([a_n,b_n])\cap\Theta}$ is countable for each $n\in\{1,\ldots, K\}$, $\overline{\kappa([0,1])\cap\Theta}$ is countable as well and permeability of $\Theta$ follows.
\end{proof}

Example~\ref{ex:closedneeded} shows that one cannot remove the closedness condition in the statement of Proposition~\ref{prop:c-finperm=>perm}. In Example~\ref{exs:basic}~\eqref{ex:carpet1} we showed that the Sierpi\'nski carpet, a Lebesgue nullset, is impermeable. In \Cref{sec:dims} we will see more examples of impermeable Lebesgue nullsets, and \Cref{sec:bedford-mcmullen} contains examples of particularly ``small'' impermeable sets.

\subsection{Sets of positive measure}\label{sec:lebPos}
At the end of the \Cref{sec:measure-zero} we mentioned that Lebesgue nullsets may well be impermeable. We start this section with examples that point in the opposite direction in providing null permeable sets with large Lebesgue measure.

\begin{example}[Null permeable sets with large Lebesgue measure]\label{rem:LebPerm}
Let $d\ge 2$. We provide closed null permeable sets $A_n \subset [0,1]^d$, whose measures tend to $1$ for $n\to\infty$ and a null permeable set $A\subset [0,1]^d$ with Lebesgue measure $1$ (see also \cite[Proposition~3.5]{rajala2019}). Let $(q_i)_{i \in \mathbb N}$ be a dense sequence in $\R^d$ and define $A = [0,1]^d \setminus \bigcup_{0\le i<j} \overline{q_iq_j}$ (recall that $\overline{xy}$ denotes the line segment from $x$ to $y$). In the same spirit, for each $n\in \N$ define, using the notation \eqref{eq:Aeps}, the closed set $A_n = [0,1]^d \setminus \bigcup_{0\le i<j} (\overline{q_iq_j})_{2^{-i-j-n}}$. By \Cref{lemma:outside-theta}, $A$ and $A_n$ are null permeable. Obviously, the Lebesgue measures of $A_n$ tend to $1$ for $n\to\infty$, and $A$ has Lebesgue measure~1. \end{example}

See also Example~\ref{ex:curve} for a permeable curve with positive Lebesgue measure.\smallskip

We continue with an impermeability result for sets with positive Lebesgue measure. Recall that $x\in \R^d$ is a {\em Lebesgue point} of a measurable set $A\subset \R^d$, if and only if 
\[
\lim_{r\to 0}\frac{\lambda(B_r(x)\setminus A)}{\lambda(B_r(x))}=0.
\]
Moreover, for a set $A\subset \R^d$ we denote  the $s$-dimensional Hausdorff measure by  $\mathcal H^s(A)$.

\begin{proposition}\label{prop:impermeableintersection}
Let $d,k \in \mathbb N$, let $A_1,\ldots,A_k\subset \R$ have positive Lebesgue measure, and let $f_1,\ldots,f_k$ be isometries of $\mathbb R^d$. If the set $A=\bigcap_{i=1}^k f_i(A_i\times \mathbb R^{d-1})$ has positive Lebesgue measure, then it is impermeable in $\R^d$.
\end{proposition}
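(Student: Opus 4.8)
The idea is to show that almost every point of $A$ is a density/Lebesgue point not only of $A$ but of a related set adapted to the product structure, and that near such a point any short path is forced to intersect $A$ in an uncountable (indeed positive-measure) set. First I would recall that, since each $A_i\subset\R$ has positive Lebesgue measure, the set of density points of $A_i$ has full measure in $A_i$; by Lebesgue's density theorem the same holds for the ``slab'' sets $A_i\times\R^{d-1}$, where density is understood in the $1$-dimensional sense after applying the isometry $f_i$ and projecting to the relevant direction. More precisely, writing $v_i\in\mathbb S^{d-1}$ for the unit vector such that $f_i(A_i\times\R^{d-1})=\{x\in\R^d:\langle x,v_i\rangle\in f_i\text{-translate of }A_i\}$, I would let $E_i\subset\R$ be the set of density points of the (translated) $A_i$, which has full measure in it. Then at $\lambda$-a.e.\ point $a\in A$, the directional slice of each $A_i$ through $a$ in direction $v_i$ has $a$ as a density point; since $A$ has positive measure, we may fix such a point $a$ which is moreover a Lebesgue point of $A$ itself.

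\textbf{The core geometric obstruction.} Fix such a point $a\in A$ and a small radius $r>0$. Any path $\gamma$ connecting two points $x,y$ on opposite sides of $a$ with $\ell(\gamma)\le\|y-x\|+\delta$ for small $\delta$ must, for a suitable choice of $x,y$ near $a$, stay in $B_r(a)$ and be ``almost straight'' in a quantitative sense — this is exactly the content of \Cref{lem:pperm}: for $\gamma$ to have length close to $\|y-x\|$, all but a small fraction of its length (measured via a polygonal approximation) must consist of segments nearly parallel to $\overline{xy}$. Now choose $x,y$ so that $\overline{xy}$ is parallel to some fixed generic direction $u$. The path $\gamma$ then projects onto $\overline{xy}$ in an almost length-preserving way, so the ``time'' parameter along $\gamma$ and the coordinate $t=\langle\gamma-a,u\rangle$ are comparable up to $o(1)$. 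For each $i$, since $\langle u,v_i\rangle\neq 0$ (generic $u$), the condition $\gamma(t)\in f_i(A_i\times\R^{d-1})$ amounts to $\langle\gamma(t),v_i\rangle$ lying in the translated $A_i$; because $a$ is a density point of the slice of $A_i$ in direction $v_i$ and $\gamma$ is a near-straight perturbation of the line through $a$ in direction $u$, the set of $t$ for which this membership holds has relative measure tending to $1$ as $r\to0$. Intersecting over $i=1,\dots,k$, the set of $t$ with $\gamma(t)\in A$ still has positive (indeed nearly full) measure, hence is uncountable, and so $\overline{\gamma\cap A}$ is uncountable. This contradicts permeability.

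\textbf{Handling the perturbation and the main obstacle.} The delicate point — and what I expect to be the main obstacle — is controlling how the almost-straightness of $\gamma$ from \Cref{lem:pperm} interacts with the density estimates: \Cref{lem:pperm} controls the \emph{total length} of the ``bad'' segments of a polygonal approximation, but a priori those bad segments could be concentrated exactly where $\gamma$ leaves $A$, and one must rule out that the near-straight part of $\gamma$ systematically dodges $A$. The resolution is to use that $a$ is a density point of each directional slice \emph{uniformly} in small perturbations of the base line: for a near-straight path, $\langle\gamma(t),v_i\rangle = \langle a,v_i\rangle + t\langle u,v_i\rangle + \rho_i(t)$ with $|\rho_i(t)|$ bounded by (a constant times) the total length of the bad segments up to parameter $t$, which is $o(r)$; hence the image $\{\langle\gamma(t),v_i\rangle : t\in I\}$ is contained in an $o(r)$-neighborhood of an interval of length $\asymp r$, and within that neighborhood the translated $A_i$ has relative measure $1-o(1)$ by the density property. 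A change-of-variables bound (the map $t\mapsto\langle\gamma(t),v_i\rangle$ has derivative close to $\langle u,v_i\rangle\neq0$ away from the bad segments) then transfers this to a positive lower bound on $\lambda(\{t: \gamma(t)\in f_i(A_i\times\R^{d-1})\})$, and a union bound over the $k$ sets $A_1,\dots,A_k$ completes the estimate since each excludes only an $o(1)$-fraction. Finally, one reduces the case of arbitrary $x,y$ to points in a full-measure set near $a$ by the standard approximation argument (as in \Cref{lemma:outside-theta}), or simply observes that it suffices to exhibit a single pair $x,y$ that cannot be connected by arbitrarily short paths with countable intersection, which the above provides; this shows $A$ is impermeable.
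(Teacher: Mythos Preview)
Your setup coincides with the paper's: fix a Lebesgue point $a$ of $A$, pick a direction $u$ with $\langle u,v_i\rangle\ne 0$ for all $i$, and use that a path of length $\le(1+\varepsilon)\|y-x\|$ is nearly straight. The divergence is at the core estimate. You want to bound, for each $i$, the arclength of $\{s:\langle\gamma(s),v_i\rangle\notin\tilde A_i\}$ and then take a union over~$i$. Your justification (``derivative close to $\langle u,v_i\rangle$ away from the bad segments, hence a change-of-variables bound'') is the weak point: the projection $g_i(s)=\langle\gamma(s),v_i\rangle$ need not be monotone, and staying within $o(r)$ of a line does \emph{not} by itself bound the preimage of a set $B$ of measure $o(r)$, since the $o(r)$-thickening of $B$ can have large measure when $B$ has many components. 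This step can be rescued --- e.g.\ via the Banach indicatrix, using $\int_B N(g_i,t)\,dt\le\lambda(B)+(\mathrm{Var}\,g_i-|g_i(L)-g_i(0)|)$ together with the bound on excess variation coming from $\int\|\dot\gamma-u\|\,ds=O(\sqrt{\varepsilon}\,r)$ --- but that is a genuine missing ingredient, not a routine detail.

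The paper avoids this difficulty by a different maneuver. Rather than bounding each $\lambda(g_i^{-1}(B_i))$ from above, it assumes $\gamma\cap A$ has zero $\mathcal H^1$-measure, uses pigeonhole to find one index $i$ with $\mathcal H^1(\gamma\setminus f_i(A_i\times\R^{d-1}))\ge\delta/k$, and then expands the \emph{signed} integral $\|y_\delta\|=\int\langle\dot\gamma,v\rangle\,dt$. On the portion outside that slab it writes $\langle\dot\gamma,v\rangle\le\langle\dot\gamma,v_i\rangle+\sqrt{1-c^2}\,\|\dot\gamma\|$; the first term integrates by the ordinary signed change of variables $\int g_i'\,\chi_{B_i}(g_i)\,dt=\int_{g_i(0)}^{g_i(1)}\chi_{B_i}$, which is valid with no monotonicity assumption and is bounded by the total complementary width $a_\delta=o(\delta)$ (this is where density enters). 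The factor $\sqrt{1-c^2}<1$ acting on length $\ge\delta/k$ then forces $\ell(\gamma)\ge\tfrac{1+q}{1-q}\|y_\delta\|$, contradicting the length hypothesis. In short: your route is conceptually more direct (positive measure of $\gamma\cap A$) but needs a nontrivial multiplicity estimate you have not supplied; the paper trades that for a pigeonhole step plus a signed-integral computation that sidesteps monotonicity entirely.
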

\begin{proof} 
 The case $d = 1$ is trivial, so we assume $d \ge 2$. By considering subsets of $A_i$, $i\in\{1,\ldots, k\}$, we may assume that all the $A_i$ are compact. Let $x$ be a Lebesgue point of $A$. Applying a shift by $-x$ to all $f_i$ and noting that neither permeability, impermeability nor the measure of $A$ changes through translations, it suffices to consider $x=0$. Further, using translates $B_i$ of $A_i$, one can find isometries $\tilde{f}_i$ such that $\tilde{f}_i(0)=0$ and $f_i(A_i\times\R^{d-1})=\tilde{f}_i(B_i\times\R^{d-1})$ for all $i\in \{1,\dotsc,k\}$. Thus we may assume that $f_i(0)=0$.
 
 We want to find a  point $y$ close to $0$ so that $0$ and $y$ cannot be joined by a short path whose intersection with $A$ is countable. Let us first identify a suitable direction of the point $y$ from $0$. We do this by writing $v_i := f_i((1,0,\dots,0))$ for all $i$ and selecting a direction $v \in \mathbb S^1$ satisfying 
$
c:=\min\{|\langle v, v_1\rangle|, \dots, |\langle v,v_k\rangle|\}>0.
$
 Since each $A_i$ is compact, the complement of $f_i(A_i\times \mathbb R^{d-1})$ is the disjoint union  of countably many components of the form $f_i(U\times \mathbb  R^{d-1})$, where $U$ is an open interval. We define $w(f_i(U\times \mathbb  R^{d-1})) := \diam(U)$. For a given 
 $i\in \{1,\ldots,k\}$ and $\delta>0$ let $\{T^i_{\delta,j}\}_{j \in \mathbb N}$ be the collection of all complementary components of $f_i(A_i\times \mathbb R^{d-1})$ that intersect $B_\delta(0)$. Since $0$ is a Lebesgue  point of $A$, (the scalar) $0$ is also a Lebesgue  point of $A_i$, so we have, 
 setting
 \[
 a_\delta:=\max\bigg\{\sum_jw(T^i_{\delta,j}):i \in \{1,\dots,k\}\bigg\},
 \]
 that
 $\limsup_{\delta \to 0}\frac{a_\delta}{\delta} = 0$.
 Thus we may choose $\delta>0$ small enough so that  
\begin{equation}\label{c-delta-choice}
a_\delta< 3\delta q \quad\text{with}\quad q:=\frac{(1-\sqrt{1-c^2})}{6k} \in (0,1)
\end{equation} 
and we define $y_\delta :=  \delta v$.
Assume that $A$ is permeable. Then there exists a path $\gamma\colon [0,1]\to \R^d$ connecting $0$ and $y_\delta$ with $\gamma\cap A$ countable and
\begin{eqnarray}\label{eq:hausdorff-gamma1}
\delta \le \ell(\gamma)<\vertii{y_\delta} (1+q)=\delta (1+q).
\end{eqnarray} 
We may assume w.l.o.g.\ that $\gamma$ is an arc (see {\it e.g.}  \cite[Theorem 4.4.7]{AmbTil04}, or \cite[Proposition 3.4]{AlbOtt17}). Since \cite[Proposition 3.5]{AlbOtt17} implies that $ \mathcal H^1(\gamma) =\ell(\gamma)$ we gain from \eqref{eq:hausdorff-gamma1} that
 \begin{eqnarray}\label{eq:hausdorff-gamma}
\delta \le \mathcal H^1(\gamma)=\ell(\gamma)<\vertii{y_\delta} (1+q)=\delta (1+q).
\end{eqnarray}
 Because 
 \[
 \delta\le\mathcal H^1(\gamma)=\mathcal H^1(\gamma\setminus A)=\mathcal H^1(\bigcup_{i=1}^k(\gamma\setminus f_i(A_i\times \mathbb R^{d-1})))\le \sum_{i=1}^k \mathcal  H^1(\gamma \setminus f_i(A_i \times \mathbb R^{d-1})), 
 \]
 for some $i \in \{1,\dots,k\}$ we have
 \begin{eqnarray}\label{eqn:len-gamma-ge1k}
 \mathcal  H^1(\gamma \setminus f_i(A_i \times \mathbb R^{d-1})) \ge \frac{\delta}{k}.
 \end{eqnarray}
  Keep this $i$ fixed from now on.
 Let us abbreviate $c_i := |\langle v,v_i\rangle| \ge c >0$.
By changing $v$ to $-v$ if necessary, we may assume that $\langle v, v_i \rangle = c_i$. By choosing an absolutely continuous reparametrization of $\gamma$ (which is always possible since $\gamma$ is of finite length, see \cite[Theorem 4.4]{AlbOtt17})
 we gain
 \[
 \begin{split}
   \langle \dot{\gamma}(t) , v \rangle &= \langle \dot{\gamma}(t) , \langle v, v_i\rangle v_i \rangle + \langle \dot{\gamma}(t) , v - \langle v, v_i\rangle v_i \rangle
\\
& \le c_i\langle \dot{\gamma}(t) , v_i \rangle + \vertii{\dot{\gamma}(t)}\sqrt{1-c_i^2}
\\
&   \le \langle \dot{\gamma}(t) , v_i \rangle + \vertii{\dot{\gamma}(t)}\sqrt{1-c^2}
 \end{split}
 \]
for a.a.\ parameter values $t$, where we used $\vertii{v-\langle v,v_i\rangle v_i}=\sqrt{1-c_i^2}$. Hence, 
\begin{equation}\label{eq:calc-len-ydelta}
\begin{aligned}
  \vertii{y_\delta} & = \int_0^1 \langle \dot{\gamma}(t) , v \rangle\,dt
  = \int_0^1 \langle \dot{\gamma}(t) , v \rangle\chi_{\mathbb R^d \setminus f_i(A_i \times \mathbb R^{d-1})}(\gamma(t))\,dt + \int_0^1 \langle \dot{\gamma}(t) , v \rangle\chi_{f_i(A_i \times \mathbb R^{d-1})}(\gamma(t))\,dt\\
  & \le
  \int_0^1 \big(\langle \dot{\gamma}(t) , v_i \rangle + \vertii{\dot{\gamma}(t)}\sqrt{1-c^2} \big)\chi_{\mathbb R^d \setminus f_i(A_i \times \mathbb R^{d-1})}(\gamma(t))\,dt+ \int_0^1 \vertii{\dot{\gamma}(t)}\chi_{f_i(A_i \times \mathbb R^{d-1})}(\gamma(t))\,dt\\
  & \le \int_0^1 \langle \dot{\gamma}(t) , v_i \rangle\chi_{\mathbb R^d \setminus f_i(A_i \times \mathbb R^{d-1})}(\gamma(t))\,dt + \sqrt{1-c^2}\mathcal H^1(\gamma \setminus f_i(A_i \times \mathbb R^{d-1})) \\
  &\quad+ \mathcal H^1(\gamma \cap f_i(A_i \times \mathbb R^{d-1}))\\
  &\le \int_0^1 \langle \dot{\gamma}(t) , v_i \rangle\chi_{\mathbb R^d \setminus f_i(A_i \times \mathbb R^{d-1})}(\gamma(t)) \chi_{(-\delta,\delta)}(\langle \gamma(t) , v_i \rangle) \,dt+\int_0^1 \vertii{\dot{\gamma}(t)}\chi_{\R\setminus (-\delta,\delta)}(\langle \gamma(t) , v_i \rangle) \,dt\\
  &\quad + \sqrt{1-c^2}\mathcal H^1(\gamma \setminus f_i(A_i \times \mathbb R^{d-1})) + \mathcal H^1(\gamma \cap f_i(A_i \times \mathbb R^{d-1})).
\end{aligned}
\end{equation}
Now note that for $z\in \mathbb R^d$ we may write 
\[z=\langle z,v_i\rangle v_i +z-\langle z,v_i\rangle v_i
=\langle z,v_i\rangle f_i((1,0,\ldots,0))+f_i((0,x_2,\ldots,x_d))=f_i\big((\langle z,v_i\rangle,x_2,\ldots,x_d)\big)\] for some 
$x_2,\ldots,x_d\in \R$. Thus 
 $z\in f_i(A_i \times \mathbb R^{d-1})$ iff $\langle z,v_i\rangle \in A_i$,
 or, equivalently,
 $z\in \mathbb R^d \setminus f_i(A_i \times \mathbb R^{d-1})$ iff $\langle z,v_i\rangle \in \mathbb R \setminus A_i$.
 Defining $g(t):=\langle \gamma(t) , v_i \rangle$ and noting that $g(1)=\langle \delta v,v_i\rangle=\delta c_i>0$ we get, using the generalized change of variable theorem \cite[Theorem 3]{Leader2003} in the penultimate equality,
 \begin{align*}
 \lefteqn{\int_0^1 \langle \dot{\gamma}(t) , v_i \rangle\chi_{\mathbb R^d \setminus f_i(A_i \times \mathbb R^{d-1})}(\gamma(t)) \chi_{(-\delta,\delta)}(\langle \gamma(t) , v_i \rangle) \,dt}\\
 &=\int_0^1 \langle \dot{\gamma}(t) , v_i \rangle\chi_{\R\setminus A_i}\big(\langle  \gamma(t),v_i\rangle\big) \chi_{(-\delta,\delta)}(\langle \gamma(t) , v_i \rangle) \,dt\\
 &=\int_0^1 g'(t) \chi_{(\R\setminus A_i)\cap(-\delta,\delta)}\big(g(t)\big)  \,dt=\int_{g(0)}^{g(1)}  \chi_{(\R\setminus A_i)\cap(-\delta,\delta)}\big(s\big)  \,ds\le a_\delta.
\end{align*}
Combining this with \eqref{eq:calc-len-ydelta} finally yields
 \begin{align*}
 \|y_\delta\|
  & \le a_\delta +(\ell(\gamma)-\delta)+ \sqrt{1-c^2}\mathcal H^1(\gamma \setminus f_i(A_i \times \mathbb R^{d-1})) + \mathcal H^1(\gamma \cap f_i(A_i \times \mathbb R^{d-1}))
 \end{align*}
 We proceed, using  \eqref{c-delta-choice}, \eqref{eq:hausdorff-gamma}, and \eqref{eqn:len-gamma-ge1k}, to get
  \begin{align*}
    \vertii{y_\delta}& \le  a_\delta + \delta q+\sqrt{1-c^2}\mathcal H^1(\gamma \setminus f_i(A_i \times \mathbb R^{d-1})) + \big(\mathcal{H}^1(\gamma)-\mathcal H^1(\gamma \setminus f_i(A_i \times \mathbb R^{d-1}))\big)\\
    &\le 3q\delta + \delta q+(\sqrt{1-c^2}-1)\frac{\delta}{k}+\mathcal{H}^1(\gamma)= \mathcal{H}^1(\gamma) - 2q\delta
    \le \mathcal{H}^1(\gamma) \Big(1 - \frac{2q}{1+q}\ \Big) \\
   &=  \frac{1-q}{1+q}\mathcal{H}^1(\gamma).
 \end{align*}
Thus $\frac{1+q}{1-q}\vertii{y_\delta}  \le \mathcal H^1(\gamma)$, which contradicts \eqref{eq:hausdorff-gamma}. This shows that $A$ is impermeable.
\end{proof}

As an immediate consequence of Proposition \ref{prop:impermeableintersection} we have the following result.

\begin{corollary}\label{cor:impermeableproduct}
 Let $A_1,\ldots,A_d\subset \R$ be sets with positive Lebesgue measure. Then $\prod_{i=1}^d A_i$ is impermeable in $\R^d$.
\end{corollary}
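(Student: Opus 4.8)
The statement to prove is Corollary \ref{cor:impermeableproduct}: if $A_1, \ldots, A_d \subset \R$ have positive Lebesgue measure, then $\prod_{i=1}^d A_i$ is impermeable in $\R^d$.

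The plan is to derive this directly from Proposition \ref{prop:impermeableintersection} by exhibiting $\prod_{i=1}^d A_i$ as an intersection of the form $\bigcap_{i=1}^k f_i(A_i \times \R^{d-1})$ with each $f_i$ an isometry of $\R^d$. The natural choice is $k = d$: for each $i$, let $f_i$ be the coordinate permutation (a linear isometry, indeed an orthogonal map) that swaps the first coordinate with the $i$-th coordinate. Then $f_i(A_i \times \R^{d-1}) = \{z \in \R^d : z_i \in A_i\}$, the "slab" consisting of all points whose $i$-th coordinate lies in $A_i$. Intersecting over $i = 1, \ldots, d$ gives exactly $\{z : z_i \in A_i \text{ for all } i\} = \prod_{i=1}^d A_i$.

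With this identification, the hypotheses of Proposition \ref{prop:impermeableintersection} are immediate: each $A_i \subset \R$ has positive Lebesgue measure by assumption, each $f_i$ is an isometry, and the set $A = \bigcap_{i=1}^d f_i(A_i \times \R^{d-1}) = \prod_{i=1}^d A_i$ has positive Lebesgue measure since $\lambda_d\big(\prod_i A_i\big) = \prod_i \lambda_1(A_i) > 0$ by Fubini/Tonelli. Proposition \ref{prop:impermeableintersection} then yields that $A = \prod_{i=1}^d A_i$ is impermeable, which is the claim. I do not anticipate a genuine obstacle here — the only point requiring a line of care is verifying that $f_i(A_i \times \R^{d-1})$ is indeed the slab $\{z_i \in A_i\}$, which follows because the coordinate-swap isometry sends the set where the \emph{first} coordinate lies in $A_i$ to the set where the \emph{$i$-th} coordinate lies in $A_i$.

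Here is the proof.

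\begin{proof}
We may assume $d \ge 1$; the case $d=1$ is contained in Proposition~\ref{prop:impermeableintersection}. For each $i \in \{1,\dots,d\}$ let $f_i \colon \R^d \to \R^d$ be the linear isometry that interchanges the first and the $i$-th coordinate and fixes all others (for $i=1$ this is the identity). Then
\[
f_i(A_i \times \R^{d-1}) = \{ z \in \R^d \colon z_i \in A_i \},
\]
since a point has its first coordinate in $A_i$ if and only if its image under $f_i$ has its $i$-th coordinate in $A_i$. Consequently,
\[
\bigcap_{i=1}^d f_i(A_i \times \R^{d-1}) = \{ z \in \R^d \colon z_i \in A_i \text{ for all } i \in \{1,\dots,d\} \} = \prod_{i=1}^d A_i.
\]
By Tonelli's theorem, $\lambda\big(\prod_{i=1}^d A_i\big) = \prod_{i=1}^d \lambda(A_i) > 0$, so $\prod_{i=1}^d A_i$ has positive Lebesgue measure. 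Applying Proposition~\ref{prop:impermeableintersection} with $k=d$ and the isometries $f_1,\dots,f_d$ constructed above, we conclude that $\prod_{i=1}^d A_i$ is impermeable in $\R^d$.
\end{proof}
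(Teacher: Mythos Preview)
Your proof is correct and is precisely the intended argument: the paper states the corollary as ``an immediate consequence of Proposition~\ref{prop:impermeableintersection}'' without further detail, and you have supplied exactly the isometries (coordinate transpositions) and the positivity-of-measure check that make this immediate.
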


\begin{example}\label{ex:closedneeded}
With help of Corollary \ref{cor:impermeableproduct} we will exhibit a $\sqrt{2}$-quasi permeable set (as defined in \Cref{prop:c-finperm=>perm}) $\Theta \subset\mathbb R^2$ of Lebesgue measure zero that is impermeable. Note that, by \Cref{prop:c-finperm=>perm}, $\Theta$ cannot be closed. The set $\Theta$
will be a countable union of boundaries of squares --- and hence of Hausdorff dimension 1 --- that are defined by using approximations of the  Smith-Volterra-Cantor set.
Let us start with its definition. Set $F_{0}:=[0,1]$. If for some $n\in \N$ the set $F_{n-1}=\bigcup _{k=1}^{2^{n-1}}[a_{k},b_{k}]$ with $0=a_{1}<b_{1}<a_{2}<b_{2}<\cdots <a_{2^{n-1}}<b_{2^{n-1}}=1$ has already been constructed, we inductively define
\[
{ F_{n}:=\bigcup _{k=1}^{2^{n-1}}\bigg(\Big[a_{k},{\frac {a_{k}+b_{k}}{2}}-{\frac {1}{2^{2n+1}}}\Big]\cup \Big[{\frac {a_{k}+b_{k}}{2}}+{\frac {1}{2^{2n+1}}},b_{k}\Big]\bigg)}.
\]
 (Note that all of the $2^n$ intervals constituting $F_n$ have the same length, denoted by $\lambda_n$.)
Then $F:=\bigcap_{n\ge0} F_n$ is the {\em Smith-Volterra-Cantor set}, a Cantor set of positive Lebesgue measure. Our set $\Theta$ is now defined in terms of the approximations $F_n$ by setting
\[
\Theta := \bigcup_{n \ge 0} \partial(F_n \times F_n)
\]
(see Figure~\ref{fig:SVC}).
\begin{figure}[h]
\includegraphics[width=0.25\textwidth]{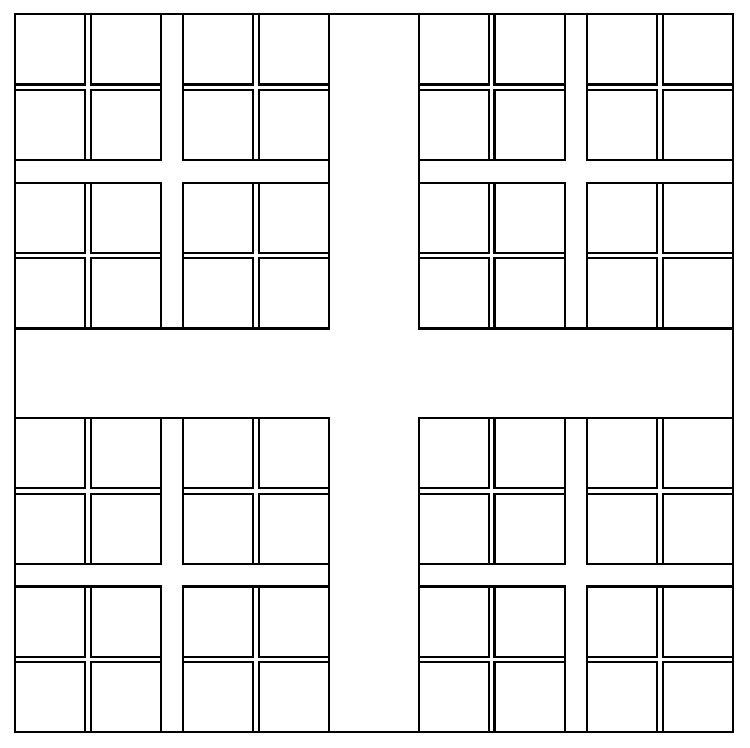} \hskip 1cm
\includegraphics[width=0.25\textwidth]{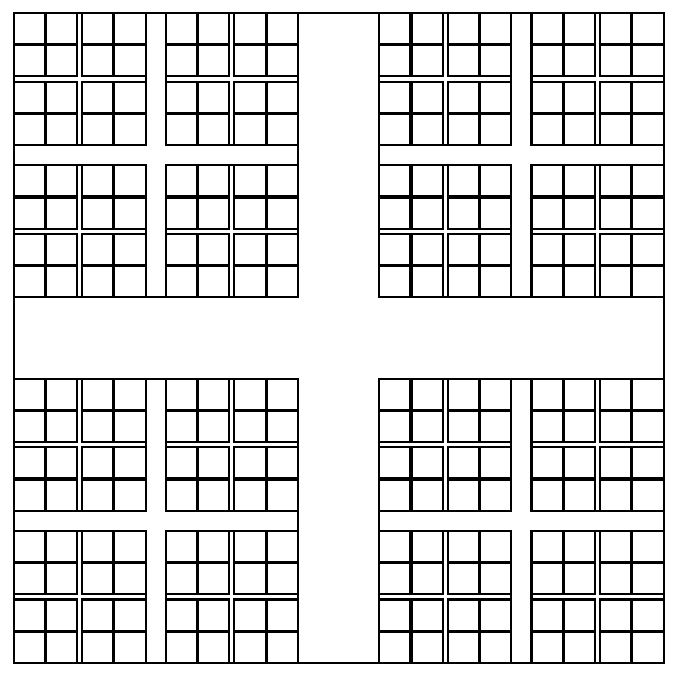}
\caption{Two approximations of the set $\Theta$. The width of the gaps between the squares tends to zero much faster than the width of the squares. We construct a path $\gamma$ connecting two points $(x_1,y_1),(x_2,y_2)\in \R^2$ that consists of axis-parallel line segments inside the gaps. This path $\gamma$ has countable intersection with $\Theta$.}\label{fig:SVC}
\end{figure}

Let us next prove that $\Theta$ is $\sqrt{2}$-quasi permeable.
To that end, fix $(x_1,y_1),(x_2,y_2) \in \mathbb R^2$ and let $\varepsilon > 0$.
By symmetry we may assume w.l.o.g.\ that $x_2\le x_1$ and $y_2\le y_1$.
Let $(c_0,d_0) \in \mathbb R^2$ be such that
$
 \vertii{(c_0,d_0) - (x_i,y_i)} \le \frac12\vertii{(x_1,y_1)-(x_2,y_2)}+\varepsilon
$
for $i \in\{1,2\}$ and so that $c_0 \notin F_{k_0}\cup\{x_1,x_2\}$ and $d_0\notin F_{k_0}\cup\{y_1,y_2\}$ for some $k_0 \in \mathbb N$. Now, we construct a path $\gamma_1$ connecting $(c_0,d_0)$ and $(x_1,y_1)$ (the path $\gamma_2$ connecting  $(c_0,d_0)$ and $(x_2,y_2)$ can be constructed analogously, and we omit the details). We assume  that $c_0 < x_1$ and $d_0 < y_1$ (the other constellations can be treated analogously).
For every $i \ge 1$ we iteratively select  real numbers $c_i \in (c_{i-1},x_1)$ and $d_i \in (d_{i-1},y_1)$ so that $|c_i - x_1| \le 2^{-i}\vertii{(x_1,y_1)-(x_2,y_2)}$, $|d_i-y_1| \le 2^{-i}\vertii{(x_1,y_1)-(x_2,y_2)}$ and $c_i,d_i \notin F_{k_i}$ for some $k_i \in \mathbb N$. Then we let $\gamma_1$ be the path obtained by concatenation of the axis-parallel line segments $\overline{(c_0,d_0)(c_1,d_0)}, \overline{(c_1,d_0)(c_1,d_1)}, \overline{(c_1,d_1)(c_2,d_1)}, \ldots$ By construction, each of these line-segments intersects $\Theta$ finitely many times. Thus, the only possible accumulation point of $\Theta \cap \gamma_1$ is at $(x_1,y_1)$. Constructing $\gamma_2$ along the same lines, we can concatenate $\gamma_1$ and $\gamma_2$ to a path $\gamma$ connecting $(x_1,y_1)$ and $(x_2,y_2)$ with $\ell(\gamma) \le \sqrt{2}\big(\vertii{(x_1,y_1) - (x_2,y_2)} + 2\varepsilon\big)$. 
Since $\varepsilon>0$ was arbitrary, $\Theta$ is $\sqrt{2}$-quasi permeable.

Finally, we show that $\Theta$ is impermeable. To this end we consider an arc $\gamma \colon[0,1] \to \mathbb R^2$. We claim that
\begin{equation}\label{eq:no_escape}
 \overline{\gamma\cap \Theta} = \gamma\cap \overline{\Theta}.
\end{equation}
Since $F$ has positive measure and $F\times F \subset \overline{\Theta}$, the impermeability of $\Theta$ then follows from \eqref{eq:no_escape} and Corollary \ref{cor:impermeableproduct}.
We have to establish \eqref{eq:no_escape}. Since $\gamma\cap \Theta \subseteq \overline{\gamma\cap \Theta}\subseteq \gamma\cap \overline{\Theta}$ trivially holds, it remains to show that $\gamma\cap (\overline{\Theta}\setminus \Theta) \subseteq  \overline{\gamma\cap \Theta}$. To see this, take $x \in \gamma\cap (\overline{\Theta}\setminus \Theta)$ and $\varepsilon > 0$. Let $i\in \mathbb N$ be large enough so that $\lambda_i < \min\{\diam(\gamma),\varepsilon\}$. By the construction of $\Theta$ there exists a square $Q\subset F_i\times F_i$ of side-length $\lambda_i$ so that $x \in Q$ and $\partial Q \subset \Theta$. By the choice of $i$, we have that $\gamma \cap \partial Q \ne \emptyset$.
Since such $i$ and $Q$ can be found for all $\varepsilon > 0$, we conclude that
$x \in \overline{\gamma\cap \Theta}$. Thus, \eqref{eq:no_escape} holds.
\end{example}

The remaining examples in this section concern arcs in $\mathbb{R}^2$ with positive Lebesgue measure, {\it i.e.}, so-called {\em Osgood curves}.

\begin{example}\label{ex:osgood}
\begin{figure}[h]
\includegraphics[width=0.333\textwidth]{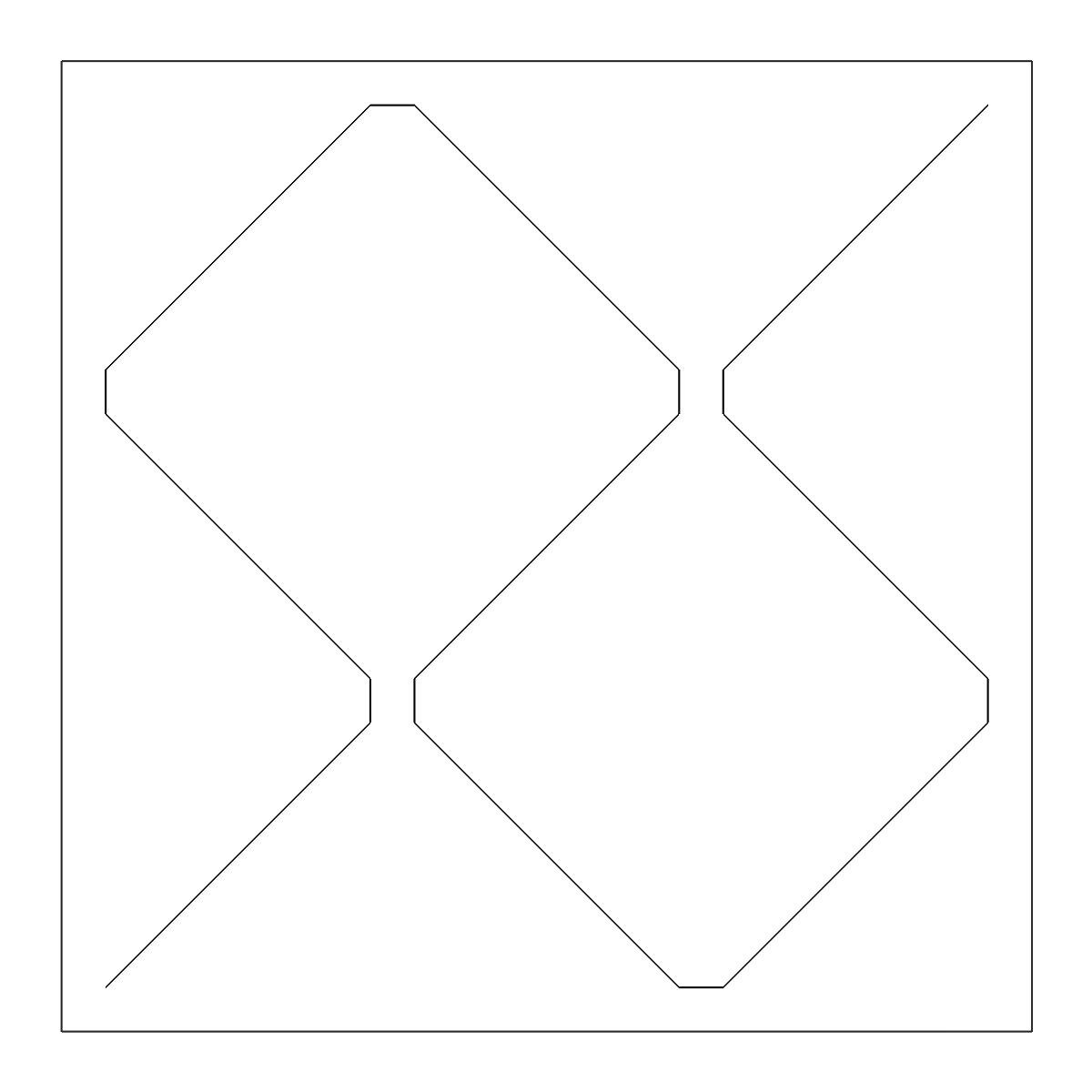}
\includegraphics[width=0.333\textwidth]{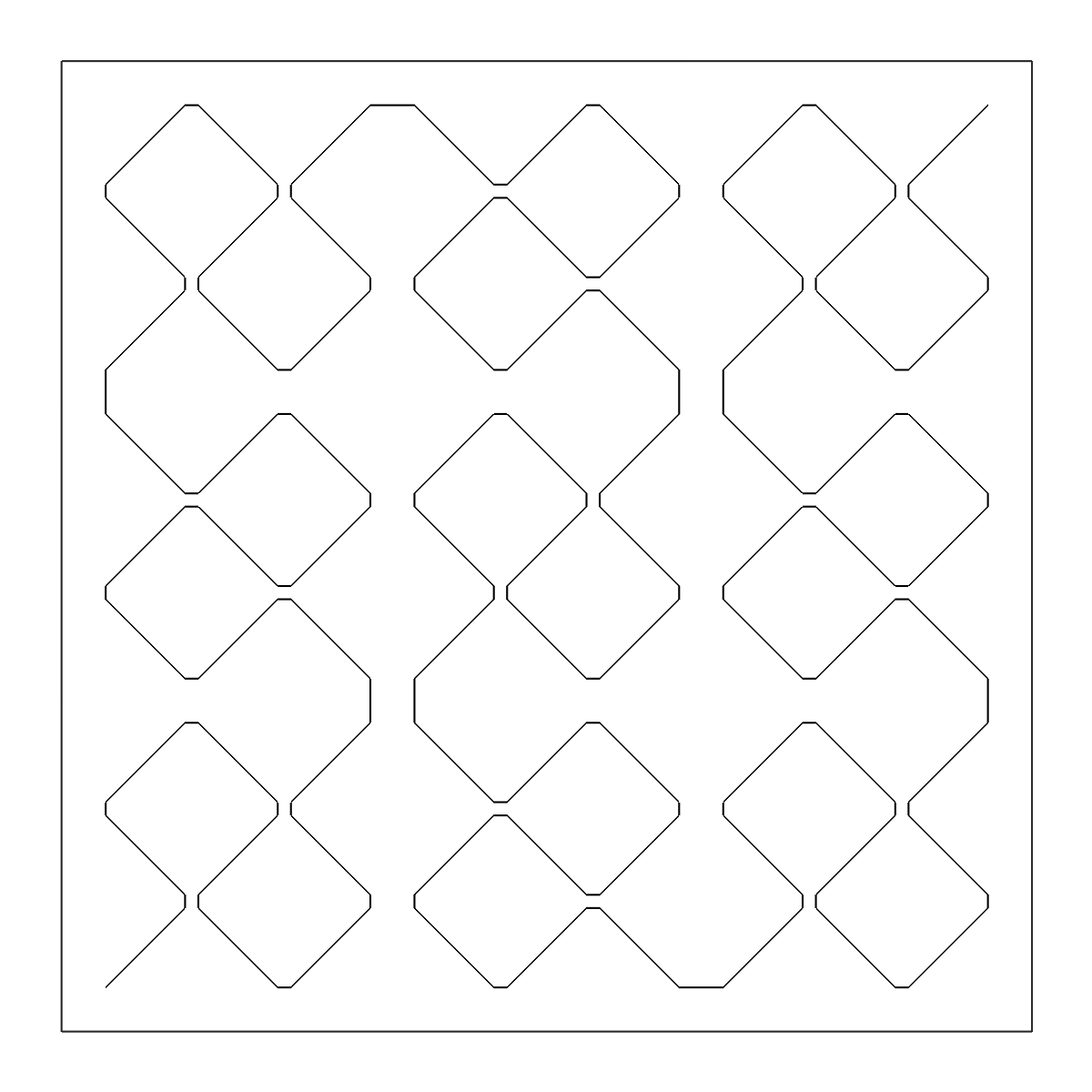}
\includegraphics[width=0.333\textwidth]{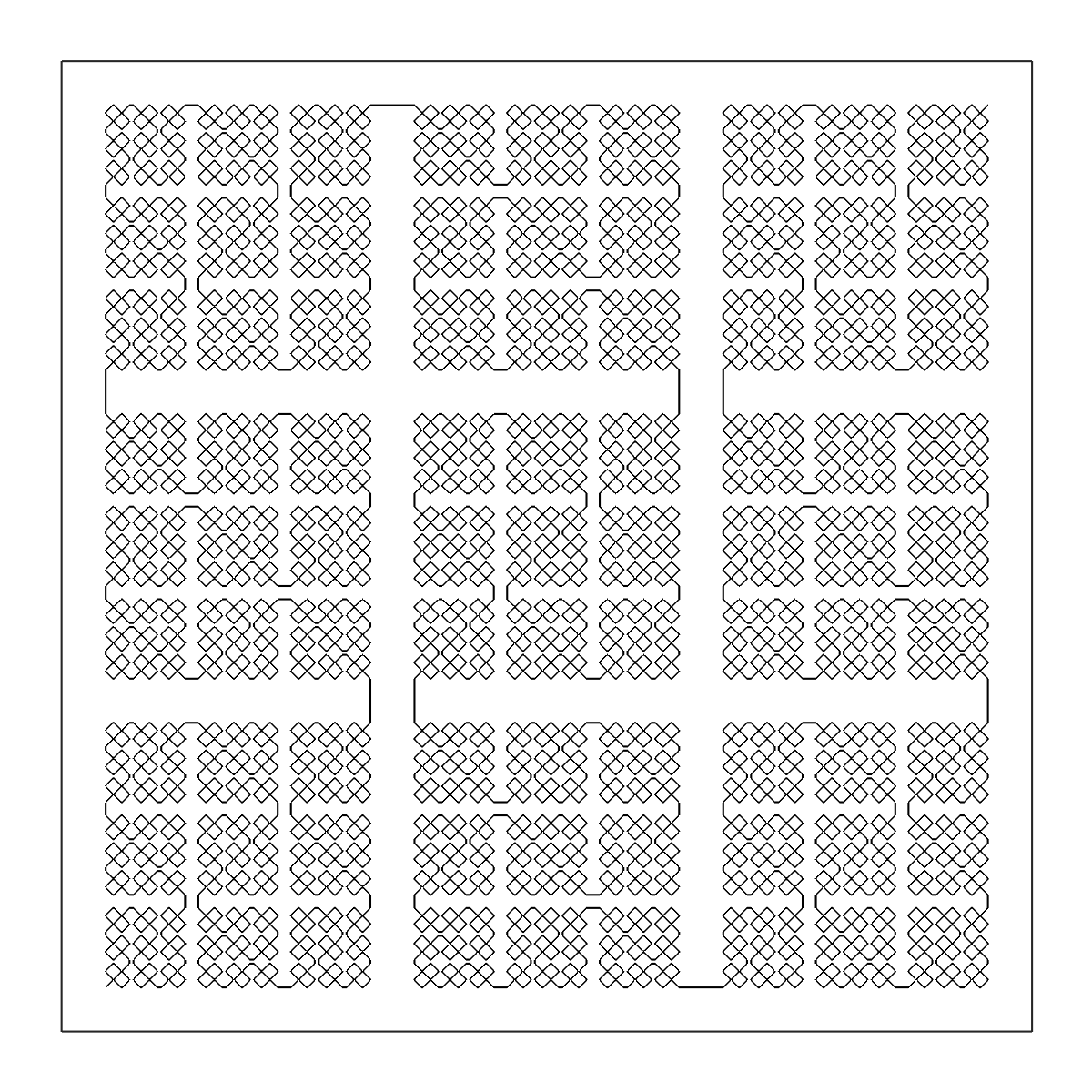}
\caption{One, two, and ten iterations of Osgood's construction}\label{fig:osgood}
\end{figure}
The classical Osgood curve $\gamma\colon[0,1]\to [0,1]^2$ goes back to \cite{OsgoodAJC}. We give a short reminder of its construction and refer to \cite[Section 8.2]{Sagan:94} for more details: Start with the path in Figure \ref{fig:osgood}. Then, iteratively substitute each of the diagonal segments of the path by a scaled-down version of that path, modified in a way that the relative size of the gaps between the segments parallel to a coordinate axis converges to zero exponentially faster than the diameter of the path.
Indeed, not only has $\gamma$ positive measure, it even contains the Cartesian product of a  Cantor set of positive measure with itself, which is impermeable by \Cref{cor:impermeableproduct}. Thus also $\gamma$ is impermeable. 
\end{example}

\begin{figure}[h]
\includegraphics[width=0.5\textwidth]{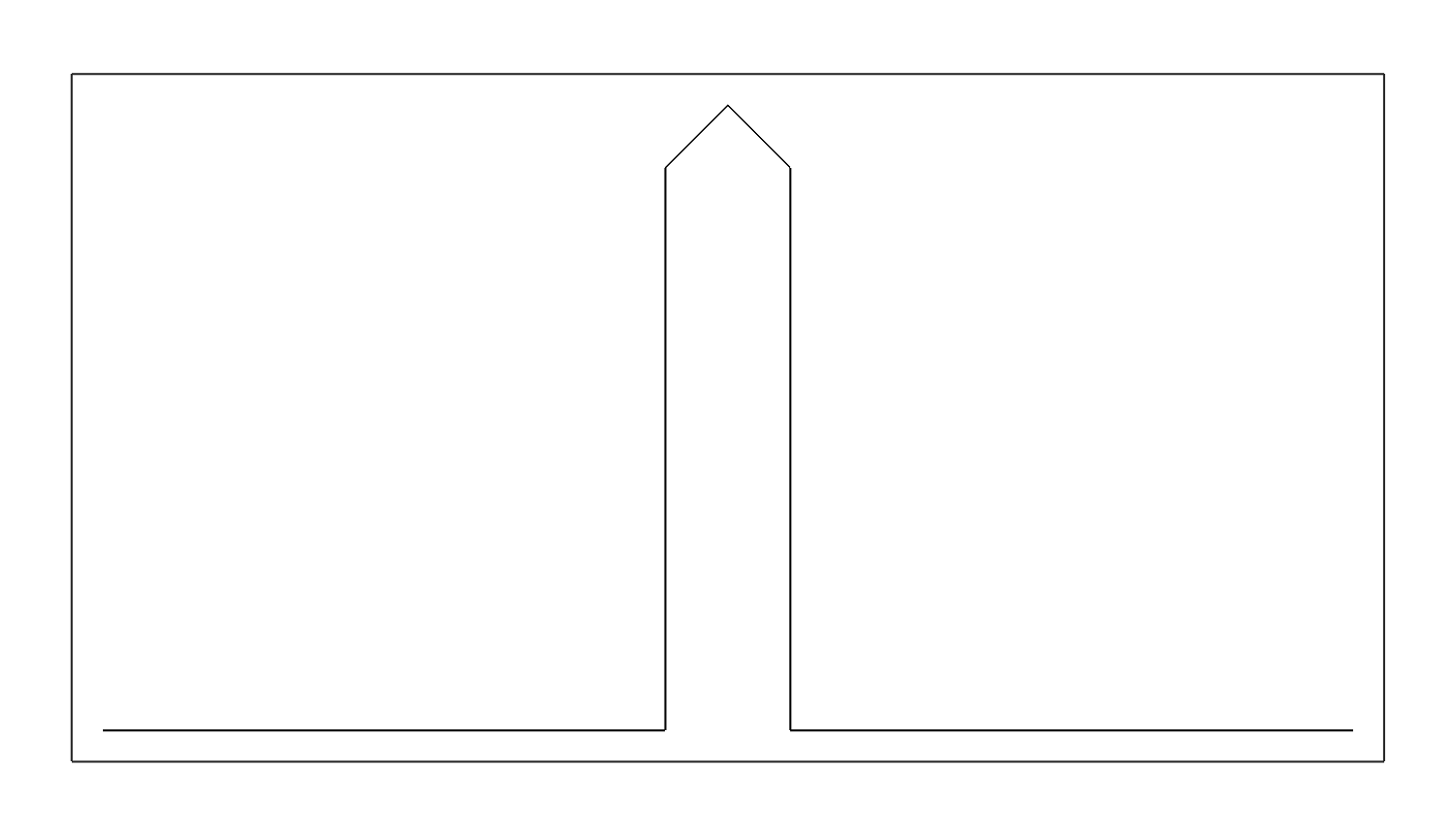}%
\includegraphics[width=0.5\textwidth]{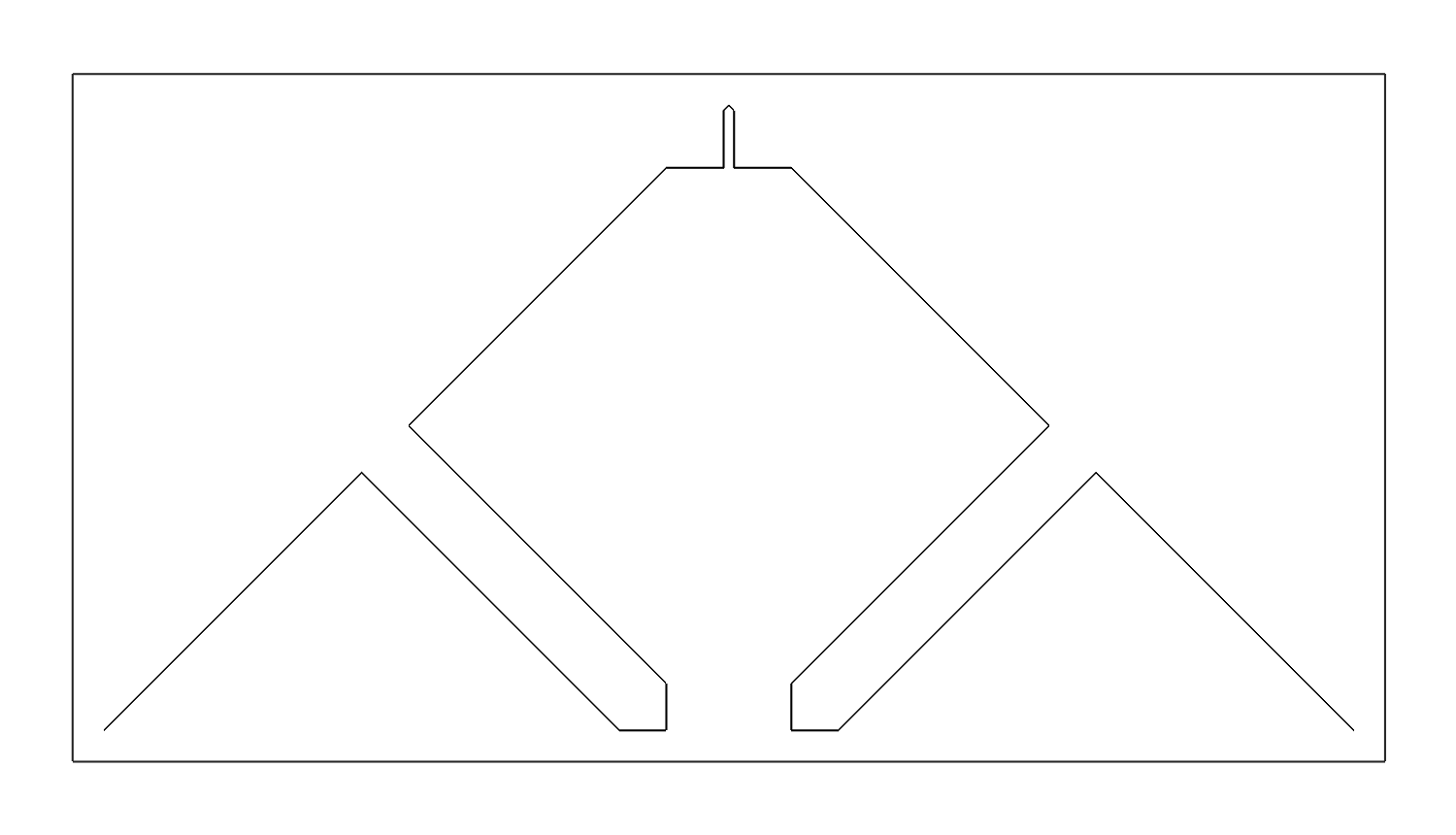} \\
\includegraphics[width=0.5\textwidth]{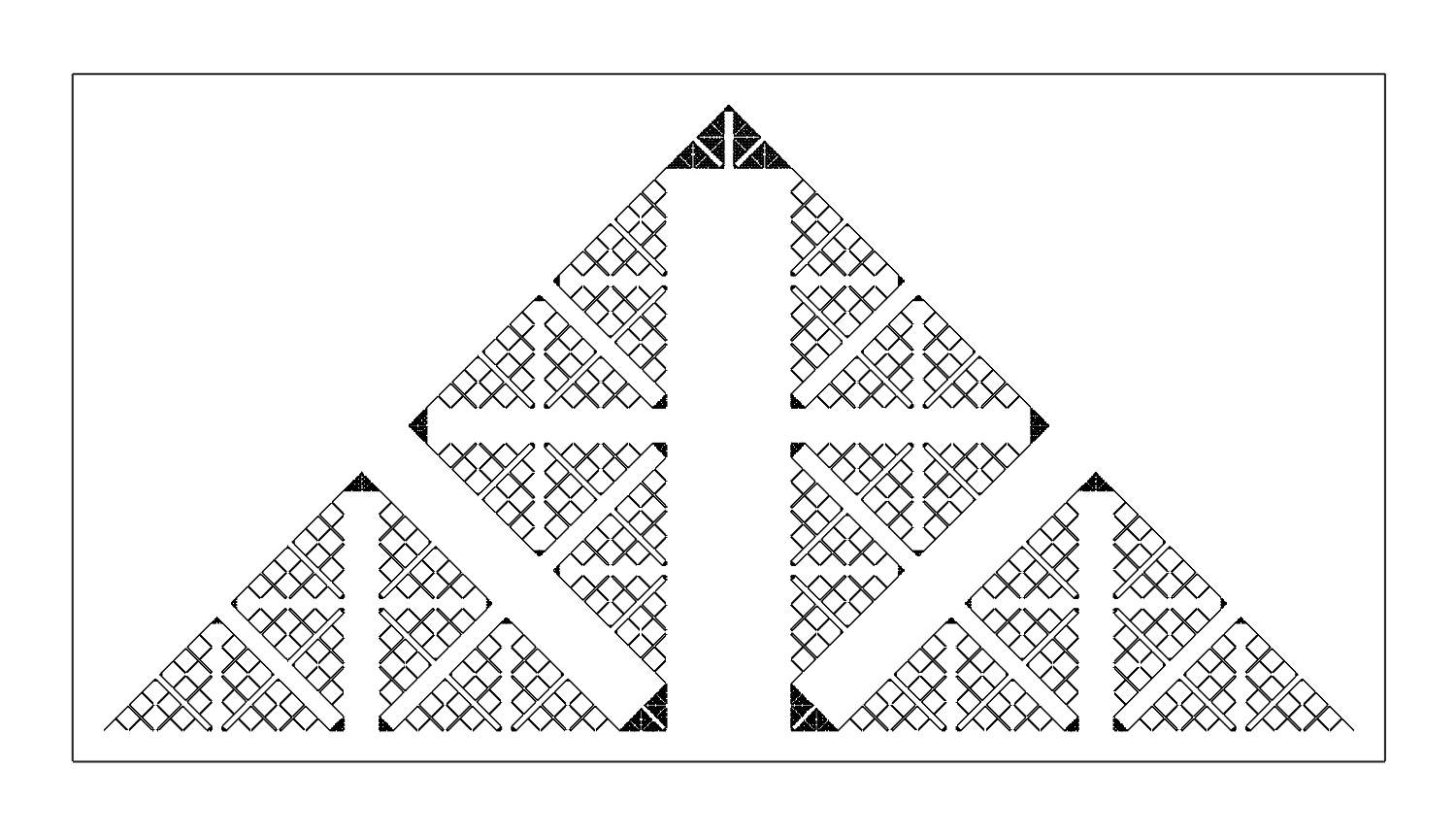}%
\caption{One, two, and ten iterations of Sierpiński's construction\label{fig:osgoodSierp}}
\end{figure}
\begin{example}\label{ex:curveSier}
Sierpiński's construction (see again \cite[Section 8.2]{Sagan:94}) of an Osgood curve is depicted in Figure \ref{fig:osgoodSierp}. It contains the intersection of four sets $f_i(A_i\times \mathbb R)$, $i\in \{1,\ldots,4\}$, where $f_1,f_2,f_3,f_4$ are rotations and $A_1,A_2,A_3,A_4$ are Cantor sets of positive measure and is thus impermeable by \Cref{prop:impermeableintersection}.
\end{example}

\begin{example}\label{ex:curve}
\begin{figure}[h]
\includegraphics[width=0.56\textwidth]{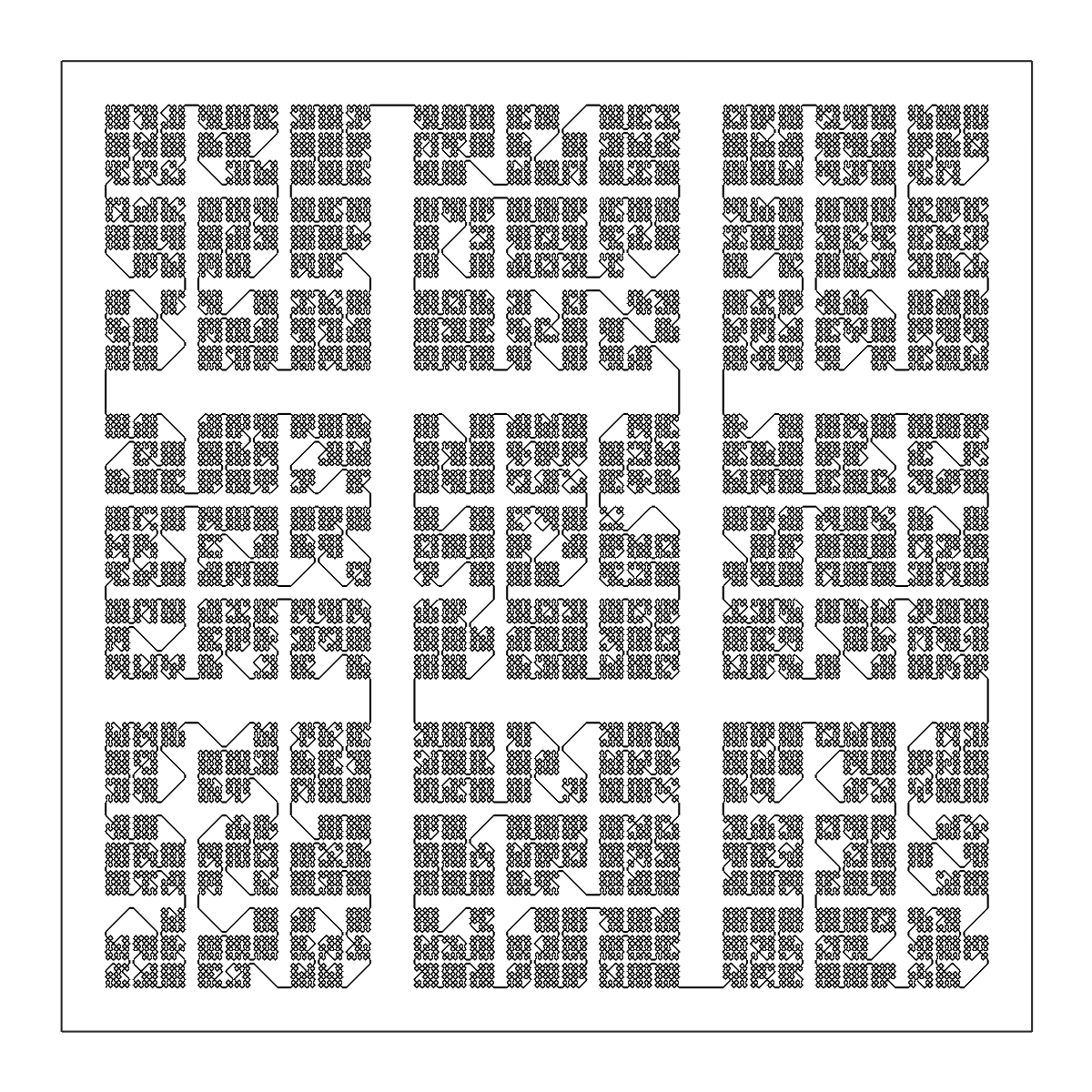}
\caption{Five iterations of a permeable Osgood curve}\label{fig:posgood}
\end{figure}
We next construct an Osgood curve which is permeable. To this end, 
let $C$ be the Osgood curve from \Cref{ex:osgood}. Let further 
$(A_m)_{m\ge 1}$ be a sequence of closed permeable subsets of $[0,1]^2$ with $\lim_{m\to \infty}\lambda(A_m)=1$. Such a sequence exists by \Cref{rem:LebPerm}. Additionally, assume that the dense sequence $(q_i)_{i\in \N}$, chosen in \Cref{rem:LebPerm}, has the property that for each line segment $\overline {q_iq_j}$ the angle included with the $x$-axis is not a multiple of $\frac{\pi }{4}$. We then have  $\lim_{m\to \infty}\lambda(C\cap A_m)=\lambda(C)$, so there exists $m_0$ such that $\lambda(C\cap A_{m_0})>0$. We now change Osgood's construction as follows: To every diagonal line segment in the $n$-th iteration of Osgood's construction there corresponds an axis-parallel square of which the endpoints of the line segments are the square's corners. Let $\mathcal Q_n$ denote
the collection of those squares and let $\mathcal Q:=\bigcup \mathcal Q_n$. 
We only refine in those squares which contain elements of $C\cap A_{m_0}$ (see \Cref{fig:posgood}). We claim that the resulting curve $\Theta$ is permeable. Suppose we have two distinct points $x,y\in \{q_i\colon i\in \N\}$. Then $\overline{xy}\cap A_{m_0}=\emptyset$. Suppose $\overline{xy}\cap \Theta$ is infinite. Then $\overline{xy}\cap \Theta$ has an accumulation point $z$. Thus there is a sequence $(Q_k)_{k\in \N}$,  $Q_k\in \mathcal Q$,  of axis-parallel squares which contain elements of $C\cap A_{m_0}$ and such that $Q_k\subset B_{\frac{1}{k}}(z)$. But then $z\in C\cap A_{m_0}$, since $C\cap A_{m_0}$ is closed, contradicting $\overline{xy}\cap A_{m_0}=\emptyset$. Now we can conclude permeability of $\Theta$ by \Cref{lemma:outside-theta-closed} (because $\Theta$ is closed, it suffices to consider points $x$ and $y$ from a dense subset of $\R^2\setminus \Theta$).
\end{example}

\section{Permeability and dimension}\label{sec:dims}
In this section we investigate the relationship between permeability (and its variants) and various notions of dimension for subsets of $\R^d$ (for the relationship between percolation and dimension we refer to \cite{broman2013}). We will get the same picture for most of the notions of dimension that we study: If the dimension is contained in the interval $[d-1,d]$, no general statement about permeability can be made. However, sets with dimension less than $d-1$ are null permeable. Only topological dimension shows a different behavior. 
Among the dimensions considered in this article, the Nagata dimension is the most challenging one. However, our result on Nagata dimension, namely \Cref{th:Nagata}, turns out to be quite useful. Indeed, we can apply this result in Section~\ref{sec:3difs} in order to show null permeability for a large class of self-similar sets in $\R^d$ with $d\ge 3$.

\subsection{Hausdorff dimension}\label{sec:dimH}
Let $d\ge 2$. We denote the Hausdorff dimension of a subset $A\subset \R^d$ by $\dim_H(A)$. We start with the following result.

\begin{proposition}\label{prop:dimhindep}
Let $d\ge 2$. 
\begin{itemize}
\item[(1)] For each $s\in [0,d]$ there exists a null permeable set $\Theta\subset \R^d$ with $\dim_H(\Theta)=s$.
\item[(2)] For each $s\in [d-1,d]$ there exists an impermeable set $\Theta\subset \R^d$ with $\dim_H(\Theta)=s$.
\end{itemize}
\end{proposition}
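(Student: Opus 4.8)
The plan is to handle the two statements separately, in each case building explicit examples using tools already available in the paper.

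For part~(1), the idea is to produce, for each target value $s\in[0,d]$, a set of Hausdorff dimension exactly $s$ that is null permeable. The cleanest construction is to take a self-similar Cantor-type set $K_s\subset\R$ of Hausdorff dimension $\min(s,1)$ (for $s\le 1$ take the standard middle-$\alpha$ Cantor set on the line with the ratio tuned so $\dim_H=s$; such a set has empty interior and, being a compact subset of $\R$, has $C$-quasi-convex complement in $\R$ — actually for the product argument we only need that its complement is dense), and then, for $s\in(1,d]$, take a product $K\times[0,1]^{j}\times L$ where $K$ is a Cantor set in one coordinate, $[0,1]^j$ fills $j$ coordinates with $j=\lfloor s\rfloor - 1$ when $s<d$, and $L$ is a Cantor set in a further coordinate tuned so that the dimensions add up to $s$; one then checks $\dim_H$ via the standard product formula for Hausdorff dimension of products of sets with coinciding Hausdorff and box dimensions (which Cantor sets and cubes have). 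The point is that such a product set is a Lebesgue nullset (since at least one factor is a nullset in its coordinate line as soon as $s<d$; for $s=d$ one instead invokes \Cref{rem:LebPerm}, which already gives a null permeable set of full measure, hence of Hausdorff dimension $d$), and its complement is $\sqrt{2}$-quasi convex by the same axis-parallel detour argument as in \Cref{cor:product-of-dense-sets} — indeed \Cref{cor:product-of-dense-sets} itself already covers the case where the $[0,1]^j$ block is absent; with the block present the argument is identical since one simply ignores the coordinates where the factor is all of $[0,1]$. Then \Cref{prop:c-quasiconvex=>nullperm} gives null permeability. So part~(1) reduces to bookkeeping of dimensions of explicit product sets plus one appeal to \Cref{rem:LebPerm} for the endpoint $s=d$.

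For part~(2), for each $s\in[d-1,d]$ I want an impermeable set of Hausdorff dimension $s$. The building block is \Cref{cor:impermeableproduct}: a product $\prod_{i=1}^d A_i$ with each $A_i\subset\R$ of positive Lebesgue measure is impermeable in $\R^d$. To get dimension exactly $s$ while staying impermeable, take $A_1,\dots,A_{d-1}$ to be Smith–Volterra–Cantor-type sets (Cantor sets of positive measure, so of Hausdorff dimension $1$), and let $A_d$ be a fat-Cantor-type subset of $\R$ of positive Lebesgue measure — which already forces $\dim_H(A_d)=1$. That gives dimension exactly $d$, which is the endpoint $s=d$. For $s\in[d-1,d)$ I instead take $\Theta = \big(\prod_{i=1}^{d-1}A_i\big)\times C_s$, where $A_1,\dots,A_{d-1}$ are positive-measure Cantor sets on $\R$ and $C_s\subset\R$ is a middle-$\alpha$ Cantor set of Hausdorff dimension $s-(d-1)\in[0,1)$; then $\dim_H(\Theta) = (d-1)\cdot 1 + (s-(d-1)) = s$ by the product formula (again valid because all factors have equal Hausdorff and box dimensions). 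Impermeability: $C_s$ can be chosen to contain a positive-measure fat Cantor set? No — a middle-$\alpha$ Cantor set with $\dim_H<1$ has measure zero, so \Cref{cor:impermeableproduct} does not apply directly. The fix: observe that impermeability is inherited by supersets (Example~\ref{exs:basic}~\eqref{ex:carpet1}), so instead engineer $\Theta$ of the required dimension that \emph{contains} a known impermeable set. Concretely, take $\Theta = \big(\prod_{i=1}^{d-1}A_i\big)\times C_s$ where now $C_s\subset\R$ is chosen so that $\dim_H(C_s)=s-(d-1)$ \emph{and} $C_s$ contains a single point, say $0\in C_s$; but $\{0\}$ contributes nothing. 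This doesn't work either. The genuine fix: use \Cref{cor:projPerm}/\Cref{prop:impermeableintersection} is the wrong direction. Let me instead keep the full product $\prod_{i=1}^d A_i$ with all $A_i$ of positive measure (hence impermeable and of dimension $d$) as the ``large'' end, and for $s<d$ take $\Theta_s$ to be the union of such an impermeable product with a lower-dimensional null permeable set — union with a null permeable set cannot lower impermeability (supersets of impermeable sets are impermeable), but it also cannot lower the dimension below $d$. So that fails to reach $s<d$ as well.

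The resolution, and the step I expect to be the main obstacle, is constructing impermeable sets of dimension $s<d$ directly: one should take a Cantor set $C\subset\R$ of Hausdorff dimension $s-(d-2)$ lying in $(d-1)$ but with positive \emph{one-dimensional} content is impossible if $\dim_H<1$, so instead one works with intersections as in \Cref{prop:impermeableintersection} combined with a self-similar construction where the ``bad'' (impermeable) core is thickened only in a fat-Cantor way in two of the coordinates while a third coordinate carries a thin Cantor factor of adjustable dimension $\in[0,1)$; the impermeability then must be re-proved by the same $\mathcal H^1$-length / Lebesgue-point argument as in \Cref{prop:impermeableintersection}, checking that having a thin Cantor factor in the last coordinate does not create short escaping paths because the obstruction already lives in a $2$-dimensional slice of positive area. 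I would carry this out by fixing a positive-measure fat Cantor set $F\subset[0,1]$ and setting, for $s\in[d-1,d)$, $\Theta_s := (F\times F\times[0,1]^{d-3}\times C_s)$ where $C_s$ has $\dim_H = s-(d-2)$, noting $F\times F\times\R^{d-2}$ already has positive $d$-dimensional density along its Lebesgue points so that Proposition~\ref{prop:impermeableintersection} (applied with $k=2$, the sets $F$ and the $f_i$ the identity and the coordinate swap) shows $\Theta_s\supset F\times F\times\{p\}\times\cdots$ is impermeable once we fix the remaining coordinates in the positive-measure part — wait, the remaining coordinates must also be positive measure for Proposition~\ref{prop:impermeableintersection}. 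So the honest conclusion is: for $s\in[d-1,d)$ the construction forces one coordinate to be genuinely thin, and Proposition~\ref{prop:impermeableintersection} does not literally apply; one must observe instead that the $(d-1)$-dimensional set $F\times\cdots\times F$ (with $d-1$ fat Cantor factors) $\times\{pt\}$ has Hausdorff dimension $d-1$ and is impermeable — this is exactly Example~\ref{exs:basic}~\eqref{ex:extruded}/\eqref{ex:qrimp} type, but here we need a clean impermeability statement for $(d-1)$-dimensional fat-Cantor products, which should follow from a slicing argument reducing to Corollary~\ref{cor:impermeableproduct} in one dimension lower inside a generic hyperplane. Granting that, $s=d-1$ is done; for $s\in(d-1,d)$ take the union of that $(d-1)$-dimensional impermeable set with a null permeable Cantor set of dimension $s$ disjoint from it — the union is impermeable (superset of an impermeable set) and has Hausdorff dimension $\max(d-1,s)=s$. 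This last move finally produces the full range, so the real content of part~(2) is the single impermeability lemma for $(d-1)$-dimensional positive-measure Cantor products, proved by slicing plus Corollary~\ref{cor:impermeableproduct}, after which everything is assembled from ``supersets of impermeable sets are impermeable'' and the dimension arithmetic.
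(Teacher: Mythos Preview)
Your part~(1) works but is more elaborate than needed. The paper simply takes $(C_a)^d$ with $a=s/d$ for $s\in(0,d)$ (plus \Cref{rem:LebPerm} for $s=d$ and a singleton for $s=0$): the product formula gives $\dim_H((C_a)^d)=s$, and null permeability follows from \Cref{prop:c-quasiconvex=>nullperm}. Your mixed products $K\times[0,1]^j\times L$ can be made to work too (via \Cref{cor:product-of-dense-sets}, the bounded ``$B$'' factor automatically having dense complement in $\R^{d-1}$), but the extra bookkeeping buys nothing and leaves some edge cases (embedding into $\R^d$ for $s\le 1$, the boundary $s=1$) underspecified.

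Part~(2) has a genuine gap. Your argument ultimately rests on the claim that $F^{d-1}\times\{p\}\subset\R^d$ (with $F$ a fat Cantor set) is impermeable, to be obtained by ``slicing'' to \Cref{cor:impermeableproduct} inside the hyperplane $\R^{d-1}\times\{p\}$. But impermeability in $\R^{d-1}$ does \emph{not} transfer to impermeability of the embedded slice in $\R^d$. In fact $F^{d-1}\times\{p\}$ is null permeable in $\R^d$: it lies inside a single hyperplane, and since $F$ is nowhere dense, $F^{d-1}$ is nowhere dense in that hyperplane; any two points of $\R^d$ can therefore be joined by a short path that crosses the hyperplane only at a point outside $F^{d-1}$. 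The extra ambient dimension provides exactly the escape route you need to rule out (compare Example~\ref{exs:basic}(5), where even a dense subset of a line is null permeable in $\R^2$, with Example~\ref{exs:basic}(\ref{ex:extruded})). Since your base case $s=d-1$ fails, the union trick for $s\in(d-1,d)$ collapses with it.

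The paper's construction avoids this by keeping a \emph{thick} factor in the first $d-1$ coordinates: take $\Theta=[0,1]^{d-1}\times P$ where $P\subset[0,1]$ is uncountable with $\dim_H(P)=s-(d-1)\in[0,1]$ (a self-similar Cantor set, or for $s=d-1$ any uncountable set of Hausdorff dimension~$0$). Impermeability is then immediate in the style of Example~\ref{exs:basic}(\ref{ex:extruded}): a short path from $(a,0)$ to $(a,1)$ with $a\in(0,1)^{d-1}$ stays inside $(0,1)^{d-1}\times\R$, and its last coordinate sweeps through all of $[0,1]$, hence through every point of $P$, giving an uncountable intersection. The full cube $[0,1]^{d-1}$ in the transverse coordinates is precisely what traps the path and prevents the escape that kills your $F^{d-1}\times\{p\}$.
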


Note that the situation is different for $d=1$. Any set $\Theta\subset\R$ with $\dim_H(\Theta)>0$ is impermeable.

\begin{proof}
To prove (1) first observe that by \Cref{rem:LebPerm} there are null permeable subsets of $\mathbb{R}^d$ with positive Lebesgue measure. Thus a null permeable set can have Hausdorff dimension $s=d$. Let now $s\in (0,d)$ be arbitrary. For $a:=\frac sd \in(0,1)$ let $C_a$ be a self-similar Cantor set with $\dim_H(C_a)=a$.  Then $\dim_H((C_a)^d)=ad=s$  (here we use that, if the box counting dimension of $Y$ coincides with its Hausdorff dimension, then $\dim_H(X\times Y)=\dim_H(X)+\dim_H(Y)$, see \cite[Corollary 7.4]{Falconer2003}). The set $(C_a)^d$ is null permeable for each $a\in(0,1)$ by \Cref{prop:c-quasiconvex=>nullperm}. Because for $d\ge 2$ the singleton $\{0\}$ is null permeable with $\dim_H(\{0\})=0$, assertion (1) follows also for~$s=0$. 

To show (2), observe that for an uncountable set $P\subset [0,1]$ satisfying $\dim_H(P)=0$, we obtain the (obviously) impermeable set $[0,1]^{d-1}\times P$ with $\dim_H([0,1]^{d-1}\times P)=d-1$ (see again  \cite[Corollary 7.4]{Falconer2003}). Let $a\in(0,1]$ be arbitrary. With a self-similar Cantor set $P=C_a$ satisfying $\dim_H(C_a)=a\in (0,1]$ we get the impermeable set  $[0,1]^{d-1}\times C_a$ satisfying $\dim_H([0,1]^{d-1}\times C_a)=a+d-1\in (d-1,d]$. Since $a\in(0,1]$ was arbitrary, this proves (2).
\end{proof}

\Cref{prop:dimhindep} indicates that permeability and Hausdorff dimension are quite ``independent''. However, we are able to give some results showing that low Hausdorff dimension implies permeability and even null permeability. We refer to Kal\-my\-kov {\it et al.}~\cite[Theorem~1.3]{rajala2019} for a related (but different) result. In the sequel we use $U^\bot$ to denote the orthogonal complement of a linear subspace $U$ of $\mathbb{R}^d$. 

\begin{theorem}\label{th:Haus}
Let $d\ge 1$. If $\Theta\subset \R^d$ satisfies $\mathcal{H}^{d-1}(\Theta)=0$, then $\Theta$ is null permeable. 
\end{theorem}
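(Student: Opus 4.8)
The plan is to reduce to the case of joining points in $\R^d\setminus\Theta$ (handling the empty-interior hypothesis of \Cref{lemma:outside-theta}), and then to produce the required short path by a Fubini-type slicing argument combined with a measure-zero comparison in codimension one. First I would observe that $\mathcal H^{d-1}(\Theta)=0$ forces $\Theta$ to have empty interior (indeed, any ball would have positive, even infinite, $(d-1)$-dimensional Hausdorff measure), so by \Cref{lemma:outside-theta} it suffices to connect arbitrary $x,y\in\R^d\setminus\Theta$ by a path $\gamma$ with $\ell(\gamma)<\vertii{y-x}+\delta$ and $\gamma\cap\Theta=\emptyset$. The case $d=1$ is immediate since $\mathcal H^0(\Theta)=0$ means $\Theta=\emptyset$; so assume $d\ge2$.

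The core idea is the following. Fix $x\ne y$, set $e:=(y-x)/\vertii{y-x}$, and let $H$ be the hyperplane through $x$ orthogonal to $e$; write $\pi\colon\R^d\to H$ for the orthogonal projection. For a point $p\in H$ consider the ``slanted'' two-segment polygonal chain $\gamma_p$ from $x$ to $p$ to $y$ (i.e.\ $\overline{xp}$ followed by $\overline{py}$). If $p$ is close to $\pi(x)=x$ (equivalently to $\pi(y)$), then $\ell(\gamma_p)=\vertii{p-x}+\vertii{y-p}$ is close to $\vertii{y-x}$; more precisely, choosing $p\in H$ with $\vertii{p-x}<\eta$ for $\eta$ small enough gives $\ell(\gamma_p)<\vertii{y-x}+\delta$. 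The point is to show that for $\mathcal H^{d-1}$-almost every such $p$ the chain $\gamma_p$ misses $\Theta$. For a fixed direction $u$ (here $u=\pm e$, or more symmetrically the two rays from $x$ and from $y$), the set of $p\in H$ for which the ray from $x$ through $p$, respectively the ray from $y$ through $p$, meets $\Theta$ has $\mathcal H^{d-1}$-measure zero: this is because the radial-projection map $\Theta\to H$ along lines through $x$ is locally Lipschitz on $\Theta\setminus\{x\}$ (the Lipschitz constant blows up only near $x$, which we exclude by working at scale bounded below), and locally Lipschitz maps do not increase $\mathcal H^{d-1}$-nullity; hence the image of $\Theta$ under this radial projection is $\mathcal H^{d-1}$-null in $H$. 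Taking the union of the two null sets (one for the cone from $x$, one for the cone from $y$) and intersecting with the small ball $\{p\in H:\vertii{p-x}<\eta\}$, which has positive $\mathcal H^{d-1}$-measure, yields a nonempty choice of $p$ for which $\gamma_p\cap\Theta=\emptyset$ and $\ell(\gamma_p)<\vertii{y-x}+\delta$. Then \Cref{lemma:outside-theta} finishes the argument.

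The one technical point requiring care is the blow-up of the Lipschitz constant of radial projection near the apex: the map $z\mapsto x+\frac{z-x}{\langle z-x,e\rangle}$ (projecting onto the affine hyperplane $H+e$ along rays from $x$) is smooth away from $x$ but degenerates as $\langle z-x,e\rangle\to0$. To handle this cleanly I would split $\Theta$ into countably many pieces $\Theta_k:=\{z\in\Theta: 2^{-k-1}\le\langle z-x,e\rangle\le 2^{-k}\}$ together with the analogous pieces measured from $y$ (and discard the slice $\langle z-x,e\rangle\le 0$ or $\ge\vertii{y-x}$, which the chains $\gamma_p$ for $p$ near $x$ never reach once $\eta$ is small, since such chains stay in the slab $0\le\langle\cdot-x,e\rangle\le\vertii{y-x}$). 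On each $\Theta_k$ the radial projection is Lipschitz with a finite (though $k$-dependent) constant, so its image in $H$ is $\mathcal H^{d-1}$-null; a countable union of null sets is null. Alternatively, one may invoke the slicing/coarea philosophy directly: parametrize candidate paths by the pair of ``exit directions'' and use that $\mathcal H^{d-1}(\Theta)=0$ implies almost every line in almost every direction meets $\Theta$ in a set of $\mathcal H^0$-measure zero, i.e.\ misses it — but the two-segment construction above is the most economical way to keep the length bound $\vertii{y-x}+\delta$ under control. The main obstacle is precisely this bookkeeping of Lipschitz constants versus the positive lower bound on the relevant scale; everything else is routine. (It is worth noting that if one only wanted \emph{permeability} rather than null permeability, \Cref{lemma:fubini-argument} together with \Cref{prop:c-quasiconvex=>nullperm}-type arguments would suffice, but for null permeability the direct geometric construction of missing paths is the natural route.)
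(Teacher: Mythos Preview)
Your strategy is essentially the paper's: parametrize candidate two–segment paths $x\to p\to y$ by a point $p$ in a cross-sectional $(d-1)$-plane, note that the ``bad'' $p$'s are contained in the image of $\Theta$ under the radial projections from $x$ and from $y$, and use that these projections are locally Lipschitz away from the apices, together with $\mathcal H^{d-1}(\Theta)=0$ and a countable decomposition into slabs, to conclude that $\mathcal H^{d-1}$-almost every $p$ yields a $\Theta$-avoiding path. The paper carries this out with the double cone over a small disk $B$ in the perpendicular bisector hyperplane of $\overline{xy}$, and its Lipschitz maps $p_{\pm s_n}$ are exactly your radial projections restricted to truncated cones.

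There is, however, a genuine glitch in your setup: you place $H$ through $x$, which forces the first leg $\overline{xp}$ to lie entirely in $H$ (orthogonal to $e$). The radial projection from $x$ is then degenerate on this leg, and the set $\{p\in H: \overline{xp}\cap\Theta\ne\emptyset\}$ is \emph{not} controlled by $\mathcal H^{d-1}(\Theta)=0$. For instance, in $d=2$ a single point of $\Theta\cap H$ arbitrarily close to $x$ (recall $\Theta$ need not be closed) makes an entire interval of $p$'s bad. Your own formula $z\mapsto x+(z-x)/\langle z-x,e\rangle$, which projects onto $H+e$ rather than $H$, betrays the confusion about where $p$ should live. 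The remedy is exactly the paper's choice: take $p$ in the hyperplane through $\tfrac12(x+y)$ (or any hyperplane strictly between $x$ and $y$). Then both legs are genuinely slanted, both radial projections are well defined and locally Lipschitz, and your slab decomposition $\Theta_k$ goes through verbatim. With this correction your argument coincides with the paper's; the preliminary reduction via \Cref{lemma:outside-theta} is then unnecessary, since the two-segment path already satisfies $\gamma\cap\Theta\subset\{x,y\}$ for arbitrary $x,y$.
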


\begin{proof}
Since the case $d=1$ is trivial, we may assume that $d\ge 2$. Let $x,y\in \R^d$ and $\delta>0$. Let $B$ be the $(d-1)$-dimensional ball  in $\frac{x+y}{2}+(\R(y-x))^\bot$ with center $\frac{x+y}{2}$ and radius $\delta$. Consider the map 
\begin{equation}\label{eq:cHdf}
c\colon [-1,1]\times B\to\R^d, (s,z)\mapsto \frac{1}{2}(x+y)+s\frac{y-x}{2}+(1-|s|)z.
\end{equation}
This parametrizes the double cone $\mathcal{C}:=c([-1,1]\times B)$ over $B+\frac{1}{2}(x+y)$ with apices $x$ and $y$ (see \Cref{fig:double-cone}). 
Note that the restriction of $c$ to $(-1,1)\times B$,  which takes away the apices $\{x,y\}$, is bijective. For $s\in (-1,1)$ let $p_{s}$ be the map 
$$p_{s}\colon c([\min(s,0),\max(s,0)]\times B)\to c(\{0\}\times B),c(t,z)\mapsto c(0,z).$$
\begin{figure}[ht]
\begin{center}
\begingroup%
  \makeatletter%
  \providecommand\color[2][]{%
    \errmessage{(Inkscape) Color is used for the text in Inkscape, but the package 'color.sty' is not loaded}%
    \renewcommand\color[2][]{}%
  }%
  \providecommand\transparent[1]{%
    \errmessage{(Inkscape) Transparency is used (non-zero) for the text in Inkscape, but the package 'transparent.sty' is not loaded}%
    \renewcommand\transparent[1]{}%
  }%
  \providecommand\rotatebox[2]{#2}%
  \newcommand*\fsize{\dimexpr\f@size pt\relax}%
  \newcommand*\lineheight[1]{\fontsize{\fsize}{#1\fsize}\selectfont}%
  \ifx\svgwidth\undefined%
    \setlength{\unitlength}{250.22255485bp}%
    \ifx\svgscale\undefined%
      \relax%
    \else%
      \setlength{\unitlength}{\unitlength * \real{\svgscale}}%
    \fi%
  \else%
    \setlength{\unitlength}{\svgwidth}%
  \fi%
  \global\let\svgwidth\undefined%
  \global\let\svgscale\undefined%
  \makeatother%
  \begin{picture}(1,0.36567569)%
    \lineheight{1}%
    \setlength\tabcolsep{0pt}%
    \put(0,0){\includegraphics[width=\unitlength,page=1]{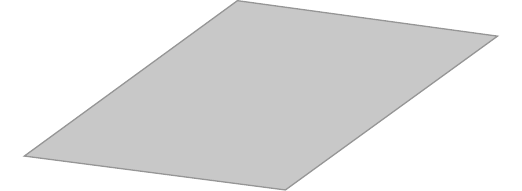}}%
    \put(0.59539201,0.14376863){\makebox(0,0)[t]{\lineheight{1.25}\smash{\begin{tabular}[t]{c}$\tfrac{1}{2}(x+y)$\end{tabular}}}}%
    \put(0,0){\includegraphics[width=\unitlength,page=2]{double-cone2.pdf}}%
    \put(0.03065245,0.03289642){\makebox(0,0)[t]{\lineheight{1.25}\smash{\begin{tabular}[t]{c}$x$\end{tabular}}}}%
    \put(0.9692403,0.26779979){\makebox(0,0)[t]{\lineheight{1.25}\smash{\begin{tabular}[t]{c}$y$\end{tabular}}}}%
    \put(0.49204479,0.29164938){\makebox(0,0)[t]{\lineheight{1.25}\smash{\begin{tabular}[t]{c}$z$\end{tabular}}}}%
    \put(0.36197908,0.11188143){\makebox(0,0)[t]{\lineheight{1.25}\smash{\begin{tabular}[t]{c}$s$\end{tabular}}}}%
    \put(0.3824004,0.19378374){\makebox(0,0)[t]{\lineheight{1.25}\smash{\begin{tabular}[t]{c}$c(s,z)$\end{tabular}}}}%
  \end{picture}%
\endgroup%
 \end{center}
\caption{Illustration of the double cone $\mathcal C$ and its parametrization $c$.}\label{fig:double-cone}
\end{figure}
Note that for each $s\in (-1,1)$ the map $p_{s}$ is Lipschitz. Take a strictly increasing sequence $(s_n)_{n\geq 0}$ with $s_n\in(0,1)$ and $\lim_{n\to\infty}s_n=1$. Then, because $p_{\pm s_n}$ is Lipschitz for each $n\in\N$ and $\mathcal{H}^{d-1}(\Theta)=0$,
\[
\Theta_{\infty}  :=\bigcup_{n=1}^\infty p_{-s_n}(\Theta\cap c([-s_n,0]\times B)) 
\cup \bigcup_{n=1}^\infty p_{s_n}(\Theta\cap c([0,s_n]\times B))
\]
is a countable union of subsets of $c(\{0\}\times B)=\frac{1}{2}(x+y)+B$, each of them having zero $\mathcal{H}^{d-1}$-measure by \cite[Proposition 2.2 and (2.9)]{Falconer2003}. Therefore, $\mathcal{H}^{d-1}(\Theta_\infty)=0$ and, hence, $\Theta_\infty$ cannot contain a $(d-1)$-dimensional ball in $\frac{1}{2}(x+y)+B$. Thus there is $z^*\in (\frac{1}{2}(x+y)+B)\setminus \Theta_{\infty}$, such that $\gamma:=\bigcup_{n\geq 1} p_{-s_n}^{-1}(z^*)\cup\bigcup_{n\geq 1} p_{-s_n}^{-1}(z^*)$ satisfies $\gamma\cap \Theta =\emptyset$. By construction, $\overline{\gamma}:=\gamma\cup\{x,y\}$ is a path connecting $x$ and $y$ that satisfies $\overline{\gamma}\cap\Theta\subseteq\{x,y\}$, whose length is bounded by $(\vertii{x-y}^2+4\delta^2)^{\frac12}$. Since $x,y\in \R^d$ and $\delta>0$ were arbitrary, this establishes null permeability of $\Theta$.
\end{proof}

From \Cref{th:Haus} we obtain the following corollary. 
\begin{corollary}
For $d\geq 2$, a countable subset of $\R^d$ is null permeable.
\end{corollary}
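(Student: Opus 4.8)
The plan is to derive this corollary directly from \Cref{th:Haus}, since the hypothesis of that theorem is that $\mathcal H^{d-1}(\Theta)=0$, and a countable set in $\R^d$ should have vanishing $(d-1)$-dimensional Hausdorff measure as soon as $d\ge 2$.

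First I would observe that for $d\ge 2$ we have $d-1\ge 1>0$, so the $s$-dimensional Hausdorff measure with $s=d-1$ of a single point is zero; indeed a point can be covered by a single set of arbitrarily small diameter, so its $\mathcal H^{s}$-premeasures tend to $0$ for any $s>0$. Next, I would use countable subadditivity of $\mathcal H^{d-1}$: writing $\Theta=\{p_1,p_2,\dots\}$, we get $\mathcal H^{d-1}(\Theta)\le\sum_{n}\mathcal H^{d-1}(\{p_n\})=0$. Hence $\mathcal H^{d-1}(\Theta)=0$, and \Cref{th:Haus} immediately yields that $\Theta$ is null permeable.

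Since both of these facts are entirely standard properties of Hausdorff measure, there is essentially no obstacle here; the only point worth noting is the exclusion of $d=1$, where a countable set need not be null permeable (e.g.\ $\Q\subset\R$ is impermeable by Examples~\ref{exs:basic}~\eqref{exs:basic}), and indeed there the argument breaks down because $\mathcal H^{0}$ is counting measure. So the corollary is genuinely a $d\ge 2$ phenomenon, and the proof is a one-line reduction to \Cref{th:Haus} once the measure-theoretic remark is in place.

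\begin{proof}
Since $d\ge 2$, we have $d-1\ge 1$, so $\mathcal H^{d-1}(\{p\})=0$ for every $p\in\R^d$. By countable subadditivity of $\mathcal H^{d-1}$, any countable set $\Theta\subset\R^d$ satisfies $\mathcal H^{d-1}(\Theta)=0$. Hence $\Theta$ is null permeable by \Cref{th:Haus}.
\end{proof}
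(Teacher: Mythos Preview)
Your proof is correct and takes essentially the same approach as the paper: the corollary is stated there as an immediate consequence of \Cref{th:Haus}, and you have simply spelled out the trivial verification that a countable set has vanishing $(d-1)$-dimensional Hausdorff measure for $d\ge 2$. (Minor note: your cross-reference ``Examples~\ref{exs:basic}~\eqref{exs:basic}'' in the discussion is malformed---you presumably meant item~(4)---but this does not affect the actual proof.)
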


\begin{example}[Brownian motion]\label{ex:BM}
With probability one, a path $B$ of a $d$-dimensional {\it Brownian motion} in $\R^d$ for $d\geq 2$ satisfies $\dim_H(B)=2$ and $\mathcal{H}^2(B)=0$ (this is the content of \cite{Taylor53}, for $d=2$ it goes back to \cite{Levy40}; for a modern survey and more detailed information about Hausdorff measures of random fractals and graphs of Markov processes see \cite{Xiao04}). Therefore, if $d\geq 3$, a path of a $d$-dimensional Brownian motion is almost surely null permeable in $\R^d$.
\end{example}

In the proof of the next result we need the following special case of a result from Mattila~ \cite{Mattila:1995} (recall that the 0-dimensional Hausdorff measure is the counting measure). 

\begin{lemma}[{see \cite[Theorem~10.10]{Mattila:1995}}]\label{lem:Mattila1010}
Let $\Theta \subset \mathbb{R}^d$ be a Borel set with $\mathcal{H}^{d-1}(\Theta) < \infty$. Then for each line $g \subset \mathbb{R}^d$  passing through the origin we have
$
\#((g+a) \cap \Theta) < \infty 
$
for almost all $a \in g^\bot$.
\end{lemma}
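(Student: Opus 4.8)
This statement is quoted directly from Mattila's book \cite[Theorem~10.10]{Mattila:1995}, so strictly speaking the ``proof'' is a citation. Nevertheless, let me sketch the argument one would give if asked to reconstruct it, since it is short and instructive. The plan is to realize the collection of lines $\{g+a : a\in g^\bot\}$ as the fibers of the orthogonal projection $\pi\colon\R^d\to g^\bot$ and to integrate the counting function over this foliation against $\mathcal H^{d-1}$ restricted to $\Theta$. Concretely, since $\mathcal H^{d-1}(\Theta)<\infty$, one applies the coarea-type inequality (or, in Mattila's framework, the integralgeometric bound) asserting that
\[
\int_{g^\bot} \#\big((g+a)\cap\Theta\big)\, d\mathcal H^{d-1}(a) \;\le\; C_d\, \mathcal H^{d-1}(\Theta) \;<\; \infty,
\]
where $C_d$ is a dimensional constant. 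Once this finiteness of the integral is established, the integrand $a\mapsto \#((g+a)\cap\Theta)$ is an $\mathcal H^{d-1}$-measurable function on $g^\bot\cong\R^{d-1}$ with finite integral, hence it is finite for $\mathcal H^{d-1}$-almost every $a\in g^\bot$, which is exactly the assertion.

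First I would note that by rotating coordinates we may assume $g=\R e_d$ and $g^\bot=\R^{d-1}\times\{0\}$, so that $\pi$ is the projection onto the first $d-1$ coordinates. Then for the key inequality I would invoke the standard estimate relating $\mathcal H^{d-1}$ of a set to the average multiplicity of its orthogonal projection onto a hyperplane: for any Borel set $\Theta$ and any coordinate hyperplane $V$,
\[
\int_{V} \#\big(\pi^{-1}(a)\cap\Theta\big)\, d\mathcal H^{d-1}(a) \;\le\; \mathcal H^{d-1}(\Theta),
\]
since $\pi$ is $1$-Lipschitz and the fibers $\pi^{-1}(a)$ are (translates of) a line, so the measure does not increase under projection and the multiplicity function is controlled by $\mathcal H^{d-1}$. (This is essentially \cite[Proposition 2.2 and (2.9)]{Falconer2003} combined with the fact that a $1$-Lipschitz projection cannot increase Hausdorff measure, the same tools already used in the proof of \Cref{th:Haus}.) Measurability of $a\mapsto \#(\pi^{-1}(a)\cap\Theta)$ follows from a routine approximation of $\Theta$ by finite unions of small balls together with monotone convergence.

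The main obstacle is not conceptual but a matter of citing the right statement: one must make sure the integralgeometric inequality is applied in the form valid for general Borel sets (not just rectifiable ones), which is precisely why quoting Mattila's Theorem~10.10 verbatim is the cleanest route — there the hypothesis $\mathcal H^{d-1}(\Theta)<\infty$ is exactly what guarantees the fiberwise finiteness almost everywhere without any rectifiability assumption. I would therefore present this lemma simply as a restatement of \cite[Theorem~10.10]{Mattila:1995}, with the remark above indicating how it can be derived from the non-increase of Hausdorff measure under orthogonal projection.
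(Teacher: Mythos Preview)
Your proposal is essentially correct and matches the paper's treatment: the paper does not give a proof either, but simply cites \cite[Theorem~10.10]{Mattila:1995}. The only thing the paper adds beyond the bare citation is a remark (which you omit) that Mattila's statement carries the extra hypothesis $\mathcal{H}^{d-1}(\Theta)>0$, and that this can obviously be dropped for the conclusion at hand.

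One small caveat on your sketch: the displayed multiplicity inequality
\[
\int_{V} \#\big(\pi^{-1}(a)\cap\Theta\big)\, d\mathcal H^{d-1}(a) \;\le\; C_d\,\mathcal H^{d-1}(\Theta)
\]
is indeed the right tool, but it does \emph{not} follow from \cite[Proposition~2.2 and~(2.9)]{Falconer2003}. Those facts only say that a $1$-Lipschitz map cannot increase $\mathcal H^{d-1}$, which yields $\mathcal H^{d-1}(\pi(\Theta))\le \mathcal H^{d-1}(\Theta)$ --- a strictly weaker statement that counts each fiber once rather than with multiplicity. What you actually need is the Eilenberg inequality (in Mattila's book this is Theorem~7.7), of which the coarea-type bound you wrote is the special case $s=m=d-1$. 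Your earlier phrase ``coarea-type inequality'' was the correct pointer; the later attempt to reduce it to non-increase of Hausdorff measure under Lipschitz maps is where the sketch overreaches.
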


Note that \cite[Theorem~10.10]{Mattila:1995} contains the assumption $\mathcal{H}^{d-1}(\Theta) > 0$. However, it is obvious (and mentioned in \cite[beginning of Chapter~10]{Mattila:1995}) that this assumption can be dropped for the part of the result that is formulated in our lemma.

\begin{theorem}\label{th:HausMeasure}
Let $d\ge 1$. If $\Theta\subset \R^d$ is a Borel set with $\mathcal{H}^{d-1}(\Theta)<\infty$, then $\Theta$ is permeable. 
\end{theorem}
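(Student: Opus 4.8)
The plan is to mimic the proof of \Cref{th:Haus}, replacing the ``measure zero on a ball'' argument with a Fubini-type counting argument based on \Cref{lem:Mattila1010}. As before, the case $d=1$ is trivial (a Borel set in $\R$ with $\mathcal H^0(\Theta)<\infty$ is finite, hence finitely permeable and in particular permeable), so assume $d\ge 2$. Fix $x,y\in\R^d$ and $\delta>0$, and build the same double cone $\mathcal C = c([-1,1]\times B)$ over the $(d-1)$-ball $B$ of radius $\delta$ centered at $\tfrac12(x+y)$ in the hyperplane $\tfrac12(x+y)+(\R(y-x))^\perp$, with the same parametrization $c$ from \eqref{eq:cHdf}. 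For each $z\in B$ the ``fiber'' $\gamma_z := c([-1,1]\times\{z\})$ is a path from $x$ to $y$ (a polygonal chain through $c(0,z)=\tfrac12(x+y)+z$) of length exactly $(\vertii{x-y}^2+4\vertii z^2)^{1/2}\le(\vertii{x-y}^2+4\delta^2)^{1/2}$, so each such fiber already has the required length bound. The point is to choose $z$ so that $\gamma_z$ meets $\Theta$ in only finitely (or countably) many points.

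First I would reduce the problem to the two straight half-segments making up $\gamma_z$. Each fiber $\gamma_z$ is the concatenation of $\overline{x\,,\,c(0,z)}$ and $\overline{c(0,z)\,,\,y}$. The set of midpoints $\{c(0,z):z\in B\}$ is a $(d-1)$-ball, and as $z$ ranges over $B$, the segment $\overline{x\,,\,c(0,z)}$ sweeps out the solid half-cone with apex $x$. The key observation is that the assignment $z\mapsto \overline{x\,,\,c(0,z)}$ can be realized, after an affine change of coordinates, as a family of parallel translates of a fixed line intersected with the half-cone: more precisely, projecting the half-cone radially from the apex $x$ onto the base ball $\tfrac12(x+y)+B$ is a bi-Lipschitz map off the apex, and under a suitable linear map it sends each radial segment to a segment parallel to a fixed direction. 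So I would apply a Lipschitz change of variables to transport $\Theta\cap(\text{half-cone})$ into a region where \Cref{lem:Mattila1010} applies directly: since Lipschitz maps do not increase $\mathcal H^{d-1}$ (by \cite[Proposition 2.2]{Falconer2003} or the standard Lipschitz estimate), the image still has finite $(d-1)$-measure, and for $\mathcal H^{d-1}$-almost every translate parameter $a$ the corresponding line meets the image in finitely many points. Pulling back, for $\mathcal H^{d-1}$-a.e.\ $z\in B$ the half-segment $\overline{x\,,\,c(0,z)}$ meets $\Theta$ in finitely many points.

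Doing this for both half-cones (with apices $x$ and $y$ respectively) and intersecting the two full-measure sets of good parameters $z$, I obtain a set of $z\in B$ of full $\mathcal H^{d-1}$-measure — in particular nonempty — for which $\gamma_z\cap\Theta$ is finite. Fix such a $z^*$ and set $\gamma := \gamma_{z^*}$. Then $\gamma$ is a path from $x$ to $y$ with $\ell(\gamma)\le(\vertii{x-y}^2+4\delta^2)^{1/2}$ and $\gamma\cap\Theta$ finite, so in particular $\overline{\gamma\cap\Theta}$ is finite and hence countable. Since $x,y\in\R^d$ and $\delta>0$ were arbitrary and $(\vertii{x-y}^2+4\delta^2)^{1/2}\to\vertii{x-y}$ as $\delta\to0$, this proves that $\Theta$ is permeable. (In fact this argument gives the stronger conclusion that $\Theta$ is finitely permeable, which is consistent with — and slightly sharpens — the stated theorem.)

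The main obstacle I anticipate is the bookkeeping in the change of variables needed to invoke \Cref{lem:Mattila1010}. The lemma is stated for translates $g+a$ of a line through the origin inside a solid region, whereas the fibers $\overline{x\,,\,c(0,z)}$ all pass through the single point $x$ and fan out; so one must either (i) work in ``projective'' coordinates centered at $x$ — e.g.\ parametrize points of the half-cone by (direction, radius) and change variables so that the radial direction becomes a fixed coordinate axis, checking that this coordinate change and its inverse are locally Lipschitz away from the apex — or (ii) slice the half-cone into countably many slabs $\{s\in[s_{n-1},s_n]\}$ bounded away from the apex (as in the proof of \Cref{th:Haus}, using the increasing sequence $s_n\uparrow 1$), apply the projection $p_{s_n}$ which is genuinely Lipschitz on each slab to push $\Theta$ into the base ball, use \Cref{lem:Mattila1010} (or just that a finite-$\mathcal H^{d-1}$ set cannot contain a ball) on each slab, and take a countable intersection of full-measure sets. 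Option (ii) is essentially the route already used for \Cref{th:Haus} and keeps all maps honestly Lipschitz, so I would follow it; the only extra care is to ensure ``finitely many intersection points per slab'' sums to ``at most countably many total,'' which is immediate, and, if one wants the sharper finite-permeability conclusion, to note that near the apices $x,y$ themselves the fiber can be taken to avoid $\Theta$ entirely except possibly at $x,y$ — but for the stated permeability conclusion countability suffices and no such refinement is needed.
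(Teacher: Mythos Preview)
Your option (ii) --- slab by slab, using that $c$ restricted to $[-s,s]\times B$ is bi-Lipschitz for each fixed $s<1$, pulling $\Theta$ back and applying \Cref{lem:Mattila1010} there, then intersecting the full-measure parameter sets over a sequence $s_n\uparrow 1$ --- is exactly the paper's proof, and it is correct. It yields, for a full-measure set of $z\in B$, a fiber $\gamma_z$ whose intersection with $\Theta$ is finite on every slab and hence can accumulate only at the apices $x,y$; thus $\overline{\gamma_z\cap\Theta}$ is countable, which is permeability.

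Your option (i) and the resulting claim of \emph{finite} permeability, however, do not go through. The straightening map that sends radial segments to parallel ones is not globally Lipschitz on the half-cone: its Lipschitz constant blows up at the apex, so the image of $\Theta$ need not have finite $\mathcal H^{d-1}$-measure, and \Cref{lem:Mattila1010} cannot be applied to the whole half-segment at once. Concretely, in $\R^2$ take the half-cone with apex $0$ and base $\{1\}\times[-\delta,\delta]$, and let
\[
\Theta=\bigcup_{n\ge 1}\bigl(\{2^{-n}\}\times[-2^{-n}\delta,\,2^{-n}\delta]\bigr).
\]
Then $\mathcal H^{1}(\Theta)=\sum_{n\ge 1}2^{1-n}\delta=2\delta<\infty$, yet \emph{every} radial segment $\{(t,tz):t\in[0,1]\}$ with $|z|\le\delta$ meets each level set $\{2^{-n}\}\times[-2^{-n}\delta,2^{-n}\delta]$ and hence meets $\Theta$ infinitely often, with accumulation at the apex. (Under your straightening $(t,tz)\mapsto(t,z)$ the image of $\Theta$ is $\bigcup_n\{2^{-n}\}\times[-\delta,\delta]$, which has infinite $\mathcal H^1$-measure.) So your assertion ``for $\mathcal H^{d-1}$-a.e.\ $z$ the half-segment meets $\Theta$ in finitely many points'' is false in general, and the parenthetical sharpening to finite permeability is unjustified by this argument. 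The correct conclusion from option (ii) is only that $\gamma_z\cap\Theta$ is countable with $\overline{\gamma_z\cap\Theta}\subset(\gamma_z\cap\Theta)\cup\{x,y\}$, exactly as the paper states.
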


\begin{proof}  
Since the case $d=1$ is trivial, we may assume that $d\ge 2$. Let $x,y\in \R^d$ and $\delta>0$. We use the notation from the proof of Theorem~\ref{th:Haus}. Consider the cone $c([-1,1]\times B)$.  For each $s\in[0,1)$, the restriction $c_s\colon[-s,s]\times B\to c([-s,s]\times B)$ of the map $c$ from \eqref{eq:cHdf} is a bi-Lipschitz map, so $\mathcal{H}^{d-1}\big(\Theta\cap c([-s,s]\times B)\big)<\mathcal{H}^{d-1}(\Theta)<\infty$ implies that $\mathcal{H}^{d-1}\big(c_s^{-1}(\Theta\cap c([-s,s]\times B))\big)<\infty$ (see again \cite[Proposition 2.2 and (2.9)]{Falconer2003}). By Lemma \ref{lem:Mattila1010}, for each such $s$ there is a subset $D_s\subset B$ with $\mathcal{H}^{d-1}(D_s)=\mathcal{H}^{d-1}(B)$ such that for each $z\in D_s$ we have that
$(-s,s)\times\{z\}$ has finite intersection with $c_s^{-1}(\Theta\cap c([-s,s]\times B))$. 
Taking the image under $c_s$, we conclude that $c([-s,s]\times \{z\})\cap\Theta$ is finite for each $z\in D_s$. Take a strictly increasing sequence $(s_n)_{n\geq 1}$ with $s_n\in(0,1)$ and $\lim_{n\to\infty}s_n=1$. Then, $D_{\infty}:=\bigcap_{n\geq 1}{D_{s_n}}$ has the same $(d-1)$-dimensional Hausdorff measure as $B$ and for each $z\in D_{\infty}$, $c([-1,1]\times \{z\})$ is a polygonal chain of length at most $(\vertii{x-y}^2+4\delta^2)^{\frac12}$ connecting $x$ and $y$, such that $c([-s_n,s_n]\times \{z\}) \cap \Theta$ is finite for each $n\in\N$ . Thus $c([-1,1]\times \{z\}) \cap \Theta$ can have accumulation points only in $\{x,y\}$ and, hence, $\overline{c([-1,1]\times \{z\}) \cap \Theta}$ is countable. Since $x,y\in \R^d$ and $\delta>0$ were arbitrary, this implies that $\Theta$ is permeable.
\end{proof}

\begin{example}\label{ex:SierpinskiTetrahedron}
The {\it Sierpi\'nski tetrahedron} $T \subset \R^3$ has Hausdorff dimension $2$ with positive and finite Hausdorff measure. This can easily be derived from the observations in \cite[p.~335]{JonesCampa93}. Just note that the one-to-one mapping mentioned there is Lipschitz to get positivity of $\mathcal{H}^2$; finiteness of $\mathcal{H}^2$ --- which is important for us --- is obtained by choosing a suitable sequence of $s$-covers (see \Cref{def:covering}) with $s\to0$. Thus $T$ is permeable by Theorem~\ref{th:HausMeasure}.
\end{example}

In view of Theorem \ref{th:HausMeasure}, one may ask whether a totally disconnected set $\Theta\subset\R^d$ with $\mathcal{H}^{d-1}(\Theta)<\infty$ is always null permeable. This has been answered affirmatively for closed sets in \cite[Theorem~1.3]{rajala2019}. We mention that the assumption of total disconnectedness cannot be dropped as a line segment in $\mathbb{R}^2$ is obviously not null permeable.

\subsection{Box counting and packing dimension}
Denote by $\dim_B(\Theta)$ the box counting dimension of a subset  $\Theta\in \R^d$.

\begin{proposition}\label{cor:Box}
Let $d \ge 2$. 
\begin{itemize}
\item[(1)] For each $s\in [0,d]$ there exists a null permeable set $\Theta\subset \R^d$ with $\dim_B(\Theta)=s$.
\item[(2)] For each $s\in [d-1,d]$ there exists an impermeable set $\Theta\subset \R^d$ with $\dim_B(\Theta)=s$.
\item[(3)] Each $\Theta\subset \R^d$ with $\dim_B(\Theta)< d-1$  is null permeable.
\end{itemize}
The same assertions hold for the packing dimension. 
\end{proposition}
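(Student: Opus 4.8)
The plan is to reduce everything to results already established for the Hausdorff dimension together with the basic inequality $\dim_H(A) \le \dim_B(A) \le \overline{\dim}_B(A)$ (and the analogous chain with packing dimension inserted, $\dim_H(A)\le \dim_P(A)\le \overline{\dim}_B(A)$). For part~(3), I would argue as follows: if $\dim_B(\Theta) < d-1$, then in particular $\dim_H(\Theta) < d-1$, so $\mathcal H^{d-1}(\Theta) = 0$, and \Cref{th:Haus} immediately gives null permeability. The same implication works verbatim for packing dimension since $\dim_H \le \dim_P$ as well. This part is essentially a one-line deduction.

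For part~(1), I would re-examine the examples used in the proof of \Cref{prop:dimhindep}(1). The singleton $\{0\}$ has box counting (and packing) dimension $0$, and by \Cref{rem:LebPerm} there are null permeable sets of positive Lebesgue measure, whose box counting dimension is necessarily $d$. For intermediate $s\in(0,d)$, I would again take $(C_a)^d$ with $a = s/d$, where $C_a$ is a self-similar Cantor set; for self-similar sets satisfying the open set condition the box counting dimension equals the Hausdorff dimension (see \cite[Corollary~7.4]{Falconer2003} or standard facts about self-similar sets), so $\dim_B((C_a)^d) = d\cdot a = s$, and \Cref{prop:c-quasiconvex=>nullperm} gives null permeability exactly as before. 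Since for self-similar sets the packing dimension also coincides with the Hausdorff dimension, the same construction settles the packing case.

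For part~(2), I would reuse the impermeable sets from the proof of \Cref{prop:dimhindep}(2), but now I must be slightly more careful: $[0,1]^{d-1}\times P$ is impermeable for any uncountable $P$, and I need its box counting dimension to realize all values in $[d-1,d]$. Taking $P = C_a$ a self-similar Cantor set of Hausdorff dimension $a\in(0,1]$, one has $\dim_B(C_a) = a$ (self-similarity again), and the box counting dimension of a product is the sum of box counting dimensions, so $\dim_B([0,1]^{d-1}\times C_a) = (d-1) + a \in (d-1,d]$; letting $a\to 0$ one would want the value $d-1$ itself, which is attained by taking $P$ an uncountable compact set of box counting dimension $0$ — for instance a suitably sparse Cantor-type set with gaps shrinking fast enough — noting that such a set is still uncountable, hence $[0,1]^{d-1}\times P$ is still impermeable. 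The packing dimension version is identical since $\dim_P$ behaves additively on these products and agrees with $\dim_B$ on the sets involved.

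The main obstacle, such as it is, is bookkeeping rather than mathematics: one must make sure the Cantor-type sets chosen are genuinely self-similar (or at least satisfy enough regularity) so that box counting, packing, and Hausdorff dimensions coincide on them and so that the product formula $\dim_B(X\times Y) = \dim_B(X) + \dim_B(Y)$ (resp.\ for packing dimension, using \cite[Corollary~7.4]{Falconer2003}) is applicable, and that for the endpoint $s = d-1$ in part~(2) one exhibits an uncountable compact set of box counting dimension zero. All the permeability/impermeability conclusions are then inherited unchanged from \Cref{th:Haus}, \Cref{prop:c-quasiconvex=>nullperm}, and the trivial impermeability of a set containing a full $(d-1)$-cube cross-section.
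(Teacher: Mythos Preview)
Your proposal is correct and follows essentially the same approach as the paper: reducing (3) to \Cref{th:Haus} via $\dim_H\le\dim_B$ (resp.\ $\dim_H\le\dim_P$), recycling the self-similar Cantor products $(C_a)^d$ and the full-measure example from \Cref{rem:LebPerm} for (1), and reusing $[0,1]^{d-1}\times P$ with $P$ a self-similar Cantor set (resp.\ an uncountable compact set of box dimension $0$) for (2). The only cosmetic difference is that the paper passes to $\overline{\Theta}$ in (3), which is harmless but unnecessary since \Cref{th:Haus} has no closedness hypothesis.
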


\begin{proof}
We start with (1). The case $s=0$ is trivial. For the case $s\in(0,d)$, we may choose a self-similar Cantor set $C_a\subset \R$ with $\dim_B(C_a)=a<1$ and, hence, $\dim_B((C_a)^d)=ad=s$. By \Cref{prop:c-quasiconvex=>nullperm}, $(C_a)^d$ is null permeable. For the case $s=d$ we use again the set $A$ from \Cref{rem:LebPerm} and the inequality $\dim_B(A)\ge \dim_H(A)$.

Assertion (2) follows for $s\in (d-1,d]$ by using the same sets as the ones we used in the case of Hausdorff dimension in the proof of \Cref{prop:dimhindep}.  For $s=d-1$ we construct a Cantor set $C$ of box counting dimension $0$ (take a Cantor middle set with proportion of the ``middle parts'' increasing
sufficiently fast, \cite[Section 3.2]{Falconer:97}) and obtain the impermeable set $\Theta=C\times [0,1]^{d-1}$. 

It remains to show (3). Since $\dim_B(\overline{\Theta})=\dim_B(\Theta)<d-1$ we may assume that $\Theta$ is closed. Because $\dim_B(\overline{\Theta})\ge \dim_H(\overline{\Theta})$ (see {\it e.g.}~\cite[Inequality (3.17)]{Falconer2003}), the result follows from Theorem~\ref{th:Haus}. 
 
By~\cite[Inequality (3.29)]{Falconer2003}, the same assertions are true for the packing dimension by combining our results on the Hausdorff dimension from Section~\ref{sec:dimH} with the above assertions (1), (2), and~(3).
\end{proof}

\subsection{Assouad dimension}\label{sec:Adim}
Recently, also the Assouad dimension $\dim_A$ has been studied extensively (see {\em e.g.}~Fraser~\cite{fraser:21} or Robinson~\cite[Chapter~9]{Robinson:11}). In the following we show that w.r.t.\ permeability it behaves similarly as the dimensions treated above. Indeed, we have the following analog of \Cref{cor:Box}.

\begin{proposition}\label{cor:Assu}
Let $d\ge 2$. 
\begin{itemize}
\item[(1)] For each $s\in [0,d]$ there exists a null permeable set $\Theta\subset \R^d$ with $\dim_A(\Theta)=s$.
\item[(2)] For each $s\in [d-1,d]$ there exists an impermeable set $\Theta\subset \R^d$ with $\dim_A(\Theta)=s$.
\item[(3)] Each $\Theta\subset \R^d$ with $\dim_A(\Theta)< d-1$  is null permeable.
\end{itemize}
\end{proposition}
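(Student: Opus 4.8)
The plan is to imitate the proof of \Cref{cor:Box} line by line, replacing $\dim_B$ throughout by $\dim_A$. The only ingredients needed are standard facts about the Assouad dimension (see \cite{fraser:21} or \cite[Chapter~9]{Robinson:11}): $\dim_A$ is monotone under inclusion and invariant under bi-Lipschitz maps and under taking closures; one has $\dim_H(E)\le\dim_A(E)$ for every $E\subset\R^d$ and $\dim_A(E)\le d$ for $E\subset\R^d$; the product bound $\dim_A(X\times Y)\le\dim_A(X)+\dim_A(Y)$ holds; and a self-similar set satisfying the open set condition has $\dim_A$ equal to its similarity (hence Hausdorff) dimension, while $\dim_A([0,1]^k)=k$.

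For (1): the case $s=0$ is trivial. For $s\in(0,d)$ I would set $a:=s/d\in(0,1)$ and let $C_a\subset\R$ be a self-similar Cantor set satisfying the open set condition with $\dim_H(C_a)=a$, so $\dim_A(C_a)=a$. Then the product bound gives $\dim_A((C_a)^d)\le da$, while $\dim_A((C_a)^d)\ge\dim_H((C_a)^d)=da$ (the last equality is the one already used in the proof of \Cref{prop:dimhindep}); hence $\dim_A((C_a)^d)=s$, and $(C_a)^d$ is null permeable by \Cref{cor:projPerm}, since its coordinate projections equal the nowhere dense set $(C_a)^{d-1}$. For $s=d$ I would take the null permeable set $A\subset[0,1]^d$ of Lebesgue measure $1$ from \Cref{rem:LebPerm}; then $d=\dim_H(A)\le\dim_A(A)\le d$.

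For (2): when $s\in(d-1,d]$ write $s=(d-1)+a$ with $a\in(0,1]$, take $C_a$ as above, and put $\Theta:=[0,1]^{d-1}\times C_a$. The product bound together with $\dim_A(\Theta)\ge\dim_H(\Theta)=(d-1)+a$ gives $\dim_A(\Theta)=s$, and $\Theta$ is impermeable exactly as $[0,1]^{d-1}\times P$ is in the proof of \Cref{prop:dimhindep}: for a sufficiently thin cone over $C_a$ with apices $x,y$, any path from $x$ to $y$ that is short enough stays so close to $\overline{xy}$ that it must meet $[0,1]^{d-1}\times\{c\}\subset\Theta$ for every $c\in C_a$, so its intersection with $\Theta$ is uncountable. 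For $s=d-1$ I need a Cantor set $C\subset\R$ with $\dim_A(C)=0$: a ``middle portion'' Cantor set whose removed central portions tend to full proportion sufficiently fast has this property (a direct estimate of covering numbers works; equivalently, every weak tangent of $C$ is a point, a two-point set, or a scaled copy of $C$ itself, each of box counting dimension $0$, cf.\ \cite{fraser:21}). Then $\Theta:=C\times[0,1]^{d-1}$ is impermeable, again as in \Cref{prop:dimhindep}, and $d-1=\dim_A(\{0\}\times[0,1]^{d-1})\le\dim_A(\Theta)\le\dim_A(C)+(d-1)=d-1$.

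Part (3) is immediate: $\dim_A(\Theta)<d-1$ forces $\dim_H(\Theta)\le\dim_A(\Theta)<d-1$, hence $\mathcal H^{d-1}(\Theta)=0$, so $\Theta$ is null permeable by \Cref{th:Haus}. The only point that differs in substance from \Cref{cor:Box}, and the one I expect to cost the most work, is the construction in (2) for $s=d-1$ of a Cantor set with vanishing Assouad dimension: since $\dim_A$ records the ``thickest'' local behavior of a set rather than its global scaling, it is not enough to thin out the construction so that its box counting dimension is $0$; the hard part will be checking that no zoom-in produces a positive-dimensional limit set, i.e.\ that the portions are removed fast enough that every transition scale degenerates to a two-point configuration in the limit.
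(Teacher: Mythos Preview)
Your proof is correct and follows essentially the same approach as the paper, with only cosmetic differences: for part~(1) the paper invokes Olsen's result that a self-similar set with the open set condition has $\dim_A=\dim_H$ rather than combining the product bound with the Hausdorff lower bound, and for part~(2) it simply cites the Cantor-set construction in \cite[Section~3.2]{Falconer:97} as an uncountable set of Assouad dimension~$0$ without further discussion. Your worry about the $s=d-1$ case is understandable but somewhat overblown: a middle-portion Cantor set with ratios $r_n\to 0$ sufficiently fast (for instance $r_n=2^{-n}$) does have $\dim_A=0$ by a routine covering estimate, so this step is standard rather than the delicate weak-tangent analysis you anticipate.
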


\begin{proof}
We start with (1). Since for $d\ge 2$ a singleton is null permeable with Assouad dimension zero, the assertion holds for $s=0$.
We know from \Cref{prop:dimhindep}~(1) that there are null permeable sets with Hausdorff dimension $s=d$ in $\mathbb{R}^d$. Since $\dim_H(\Theta) \le \dim_A(\Theta)$ ({\em cf}.~\cite[Lemma~2.4.3]{fraser:21}) these sets also have Assouad dimension $d$. Moreover, let $C_a$ be a self-similar Cantor set of Hausdorff dimension $a\in(0,1)$. Because $(C_a)^d$ is a null-permeable self-similar IFS attractor satisfying the open set condition, we have $\dim_H((C_a)^d) =\dim_A((C_a)^d)$ (see~\cite{Olsen:11}). Thus the case $0<s<d$ follows from \Cref{prop:dimhindep}~(1) as well. 

To prove (2) observe that there exists an uncountable set $B$ of Assouad dimension zero ({\em e.g.}~a Cantor set as constructed in \cite[Section 3.2]{Falconer:97}). Hence, $\dim_A([0,1]^{d-1} \times B)=d-1$ by \cite[Lemma~9.7]{Robinson:11}. With an analogous construction with a self-similar Cantor set $P=C_a$ of Assouad dimension $a\in (0,1]$ we can find impermeable subsets of $\R^d$ of any Assouad dimension in $s\in[d-1,d]$. 

Assertion (3) follows from Theorem~\ref{th:Haus}, because $\dim_H(\Theta) \le \dim_A(\Theta)$. 
\end{proof}

\subsection{Nagata dimension}
The integer-valued Nagata dimension (see {\it e.g.}~\cite{Assouad82}; this kind of dimension is also called Assouad-Nagata dimension) provides conditions for extensions of Lipschitz functions (see for instance \cite{LangSchlichenmaier05}) and is therefore of particular interest. Since we will study it in some detail, we recall its definition for the case where the ambient space is $\R^d$. This requires some preparations. 

\begin{definition}[$s$-cover and $r$-multiplicity]\label{def:covering}
An \emph{$s$-cover} $\mathcal{U}$ of a set $\Theta\subset \R^d$ is a collection of subsets of $\R^d$ such that $\Theta\subset \bigcup_{U\in\mathcal{U}}U$ and $\mathrm{diam}(U)\leq s$ for all $U\in \mathcal{U}$. The \emph{$r$-multiplicity} of $\mathcal{U}$ is given by the smallest $k\in \mathbb{N}$ such that each subset of $\Theta$ with diameter at most $r$ has a nonempty intersection with at most $k$ elements of $\mathcal{U}$. 
\end{definition}

\begin{definition}[$s$-separated sets]
We say that the sets $A,B\subset \R^d$ are {\em $s$-separated}, for some constant $s\ge 0$, if 
$d(A,B)  \ge  s$. A collection $\mathcal{U}$ of subsets of $\R^d$ is called {\em $s$-separated} if its elements are pairwise $s$-separated.
\end{definition}

Using this terminology, according to \cite[Proposition 2.5]{LangSchlichenmaier05}, the Nagata dimension of a set $\Theta\subseteq \R^d$  can be defined in two equivalent ways as follows. 

\begin{definition}[Nagata dimension; {{\em cf.}~\cite[Proposition 2.5]{LangSchlichenmaier05}}]\label{def:nagata} Let $\Theta\subset \R^d$. The following are equivalent definitions of the  \emph{Nagata dimension}\footnote{We only need item (2). We included item (1) just because it is the classical definition of Nagata dimension.}.
\begin{enumerate}
\item The Nagata dimension $\dim_N(\Theta)$ is defined as the smallest $n\in\mathbb{N}$ for which there exists $C > 0$ such that for all $r > 0$ the set $\Theta$ has a $Cr$-cover with $r$-multiplicity at most $n + 1$. 
\item The Nagata dimension  $\dim_N(\Theta)$ is the minimum of all $n\in\mathbb{N}$ with the following property: There exists $c>0$ such that for all $s>0$ the set $\Theta$ admits an $s$-cover $\mathcal{U}=\bigcup_{i=0}^n \mathcal{U}_i$, where $\mathcal{U}_i$ is a collection of $cs$-separated sets for each $i\in \{0,\ldots, n\}$. In this case we say that $\Theta$ has Nagata dimension $\dim_N(\Theta)$ {\em with constant~$c$}. 
\end{enumerate}
\end{definition}

For the relation between the constants $C$ and $c$ as well as for other equivalent characterizations of Nagata dimension we refer to~\cite[Proposition 2.5]{LangSchlichenmaier05} and its proof. It is evident from the definition that each $\Theta\subset \R^d$ with Nagata dimension $\dim_N(\Theta)$ has Nagata dimension $\dim_N(\Theta)$  with constant $c$ for some $c\in(0,1]$. 

\begin{proposition}\label{pro:nagataClosure}
Let $\Theta\subset \R^d$ be given. Then
$
\dim_N (\Theta) = \dim_N (\overline{\Theta}). 
$
\end{proposition}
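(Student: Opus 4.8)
The plan is to prove the two inequalities $\dim_N(\overline\Theta) \le \dim_N(\Theta)$ and $\dim_N(\Theta) \le \dim_N(\overline\Theta)$ separately. The second is immediate: any $s$-cover of $\overline\Theta$ is in particular an $s$-cover of $\Theta$, and the $r$-multiplicity can only decrease when we pass to a subset (since every subset of $\Theta$ of diameter $\le r$ is also a subset of $\overline\Theta$ of diameter $\le r$), so using Definition~\ref{def:nagata}(1) we get $\dim_N(\Theta)\le\dim_N(\overline\Theta)$ with the same constant $C$. So the real content is the inequality $\dim_N(\overline\Theta)\le\dim_N(\Theta)$.

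For that direction I would work with characterization (2) of Definition~\ref{def:nagata}. Suppose $\dim_N(\Theta)=n$ with constant $c\in(0,1]$; fix $s>0$ and we must produce, for a slightly smaller constant, an $s$-cover of $\overline\Theta$ splitting into $n+1$ families of appropriately separated sets. Pick $s' < s$ (say $s' = s/2$, or more carefully $s'$ chosen below) and let $\mathcal U = \bigcup_{i=0}^n \mathcal U_i$ be an $s'$-cover of $\Theta$ where each $\mathcal U_i$ is $cs'$-separated. The natural candidate for a cover of $\overline\Theta$ is the family of closures $\overline{\mathcal U} = \bigcup_{i=0}^n \overline{\mathcal U_i}$, where $\overline{\mathcal U_i} = \{\overline U : U\in\mathcal U_i\}$. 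Three things must be checked: first, $\overline{\mathcal U}$ covers $\overline\Theta$ — this holds because $\bigcup_{U\in\mathcal U}\overline U$ is a closed set containing $\Theta$, hence contains $\overline\Theta$; second, $\mathrm{diam}(\overline U) = \mathrm{diam}(U)\le s' \le s$, since closure does not change the diameter; third, each $\overline{\mathcal U_i}$ is still well-separated. For the separation, note that for sets $A,B$ we have $d(\overline A,\overline B) = d(A,B)$, so $\overline{\mathcal U_i}$ is again $cs'$-separated. Now choosing $s' = s$ already works for the diameter and separation, but to land on a uniform constant independent of $s$ we simply note $cs' = cs$ when $s'=s$; so in fact taking $s'=s$ directly, $\overline{\mathcal U}$ is an $s$-cover of $\overline\Theta$ that decomposes into $n+1$ families, the $i$-th being $cs$-separated. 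Hence $\dim_N(\overline\Theta)\le n = \dim_N(\Theta)$ with the same constant $c$.

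Putting the two inequalities together gives $\dim_N(\Theta)=\dim_N(\overline\Theta)$, and in fact the argument shows the equality holds "with constant $c$" simultaneously for the same $c$. The only mild subtlety — and the step I would be most careful about — is the identity $d(\overline A,\overline B) = d(A,B)$: the inequality $d(\overline A,\overline B)\le d(A,B)$ is trivial, and the reverse follows because for $x\in\overline A$, $y\in\overline B$ one can take sequences $a_k\to x$ in $A$, $b_k\to y$ in $B$ and use $\|a_k-b_k\|\to\|x-y\|$ together with $\|a_k-b_k\|\ge d(A,B)$. Everything else (diameter invariance under closure, closedness of a finite union of closed sets, monotonicity of $r$-multiplicity under passing to subsets) is routine point-set topology. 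So I do not anticipate a genuine obstacle here; the proposition is essentially a bookkeeping exercise once one commits to characterization (2) and the elementary fact about distances between closures.
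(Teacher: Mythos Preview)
Your approach is essentially identical to the paper's: use characterization~(2), take the closure of each cover element, and invoke monotonicity for the reverse inequality. There is, however, one genuine gap in your execution.

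You assert that $\bigcup_{U\in\mathcal U}\overline U$ is closed, and later list this under ``closedness of a finite union of closed sets.'' But $\mathcal U$ is in general \emph{infinite} (take $\Theta$ unbounded, e.g.\ $\Theta=\mathbb Z^d$), and an arbitrary union of closed sets need not be closed. The fix is exactly what the paper does: each family $\mathcal U_i$ is $cs$-separated, hence \emph{locally finite}, so closure commutes with the union over that family, i.e.\ $\overline{\bigcup_{U\in\mathcal U_i}U}=\bigcup_{U\in\mathcal U_i}\overline U$. Then
\[
\overline\Theta\subset\overline{\bigcup_{U\in\mathcal U}U}=\bigcup_{i=0}^n\overline{\bigcup_{U\in\mathcal U_i}U}=\bigcup_{i=0}^n\bigcup_{U\in\mathcal U_i}\overline U=\bigcup_{U\in\mathcal U}\overline U,
\]
where the outer union over $i$ is finite. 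With this one-line correction your proof is complete and matches the paper's.
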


\begin{proof}
Assume that for some $c>0$, $s>0$ and $n\in \mathbb{N}$ the set $\Theta$ admits an $s$-cover $\mathcal{U}=\bigcup_{i=0}^n \mathcal{U}_i$, where $\mathcal{U}_i$ is $cs$-separated. As $\mathcal{U}_i$ is $cs$-separated for each $i\in \{0,\ldots, n\}$, also 
$
\overline{\mathcal{U}_i} := \{\overline{U}_i \colon U_i \in \mathcal{U}_i\}
$
is $cs$-separated  for each $i\in \{0,\ldots, n\}$. 
Since $\mathcal{U}$ covers $\Theta$, and every $\mathcal{U}_i$ is $cs$-separated (and thus locally finite), we have
\[
\overline{\Theta} \subset  \overline{\bigcup_{U\in \mathcal{U}} U}=\bigcup_{i=0}^n \overline{\bigcup_{U\in \mathcal{U}_i} U}=\bigcup_{i=0}^n \bigcup_{U\in \mathcal{U}_i}\overline{ U}= \bigcup_{U\in \mathcal{U}} \overline{U}.
\]
Thus $\overline{\Theta}$ admits the $s$-cover $\overline{\mathcal{U}}:=\bigcup_{i=0}^n \overline{\mathcal{U}_i}$, where $\overline{\mathcal{U}_i}$ is $cs$-separated for each $i\in \{0,\ldots, n\}$ and, hence, \Cref{def:nagata}~(2) yields $\dim_N( \overline{\Theta}) \le  \dim_N(\Theta)$. Since the reverse inequality follows from the monotonicity of the Nagata dimension, the result is proved.
\end{proof}

The following result is an immediate consequence of the definition of the Nagata dimension.

\begin{lemma}\label{lem:NagataSimilar} 
Assume that $\Theta\subset \R^d$ satisfies $\dim_N (\Theta)=k$ with constant $c$. If $f\colon \R^d \to \R^d$ is a similarity transformation then $\dim_N (f(\Theta))=\dim_N (\Theta)$ with constant $c$.
\end{lemma}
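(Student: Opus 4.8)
The plan is to unwind \Cref{def:nagata}~(2) and use that a similarity transformation scales all distances by a fixed factor. Write $f(x) = \lambda\, g(x) + b$ with $\lambda > 0$, $g$ an orthogonal map, and $b\in\R^d$, so that $\vertii{f(x)-f(y)} = \lambda\vertii{x-y}$ for all $x,y\in\R^d$. Consequently, for any $U,V\subset\R^d$ we have $\diam(f(U)) = \lambda\diam(U)$ and $d(f(U),f(V)) = \lambda\, d(U,V)$.

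First I would show $\dim_N(f(\Theta))\le k$ with constant $c$. Let $t>0$ and put $s:=t/\lambda$. By hypothesis there is an $s$-cover $\mathcal U=\bigcup_{i=0}^k\mathcal U_i$ of $\Theta$ with each $\mathcal U_i$ being $cs$-separated. Set $f(\mathcal U_i):=\{f(U)\colon U\in\mathcal U_i\}$ and $f(\mathcal U):=\bigcup_{i=0}^k f(\mathcal U_i)$. Since $\mathcal U$ covers $\Theta$, the family $f(\mathcal U)$ covers $f(\Theta)$; moreover $\diam(f(U)) = \lambda\diam(U)\le\lambda s = t$ for each $U\in\mathcal U$, so $f(\mathcal U)$ is a $t$-cover of $f(\Theta)$. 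Finally, for distinct $U,V\in\mathcal U_i$ we have $d(f(U),f(V)) = \lambda\, d(U,V)\ge\lambda cs = ct$, so each $f(\mathcal U_i)$ is $ct$-separated. By \Cref{def:nagata}~(2), $\dim_N(f(\Theta))\le k$ with constant $c$.

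For the reverse inequality, note that $f^{-1}$ is again a similarity transformation (with scaling factor $1/\lambda$). Applying the inequality just established to $f^{-1}$ and the set $f(\Theta)$ gives $\dim_N(\Theta) = \dim_N(f^{-1}(f(\Theta))) \le \dim_N(f(\Theta))$, and the cover produced has the separation constant unchanged. Combining the two inequalities yields $\dim_N(f(\Theta)) = \dim_N(\Theta) = k$ with constant $c$. There is no real obstacle here; the only point requiring a touch of care is that the definition fixes the value of the Nagata dimension exactly, so both inequalities (hence the use of $f^{-1}$) are needed, and one must check that the separation constant $c$ is genuinely preserved in both directions, which it is because $\lambda$ cancels consistently.
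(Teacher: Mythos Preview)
Your proof is correct and is exactly the direct verification the paper has in mind; the paper gives no proof and simply records the lemma as an immediate consequence of \Cref{def:nagata}. Your argument spells this out by pushing covers forward under $f$ and back under $f^{-1}$, with the scaling factor $\lambda$ cancelling so that the separation constant $c$ is preserved.
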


Also for the Nagata dimension we get an analog of \Cref{prop:dimhindep}.

\begin{proposition}\label{prop:dimnindep}
Let $d\ge 2$. 
\begin{itemize}
\item[(1)] For each $s\in \{0,\ldots,d\}$ there exists a null permeable set $\Theta\subset \R^d$ with $\dim_N(\Theta)=s$.
\item[(2)] For each $s\in \{d-1,d\}$ there exists an impermeable set $\Theta\subset \R^d$ with $\dim_N(\Theta)=s$.
\end{itemize}
\end{proposition}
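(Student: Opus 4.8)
The plan is to prove both assertions by exhibiting explicit examples, closely paralleling the proof of \Cref{prop:dimhindep} but keeping track of the Nagata dimension instead of the Hausdorff dimension. The crucial difference is that the Nagata dimension is integer-valued, so for assertion (1) I need, for each integer $s\in\{0,\dots,d\}$, a null permeable set of Nagata dimension exactly $s$, and for assertion (2) an impermeable set of Nagata dimension $s$ for $s\in\{d-1,d\}$.

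For assertion (1): when $s=0$, the singleton $\{0\}$ is null permeable and has Nagata dimension $0$. When $s=d$, the set $A\subset[0,1]^d$ from \Cref{rem:LebPerm} with $\lambda(A)=1$ is null permeable; since it has nonempty interior after, wait --- $A$ has full measure but empty interior. Instead I would use that $A$ has full Lebesgue measure, so by \cite[Lemma~9.7]{Robinson:11} (or directly, since $\dim_H\le\dim_N$ is false in general --- rather one should note $\dim_N([0,1]^d)=d$ and Nagata dimension is monotone, but $A\subset[0,1]^d$ gives only $\dim_N(A)\le d$). The cleanest route is: a set of positive Lebesgue measure in $\R^d$ contains no ball, but I can instead take the null permeable sets $A_n$ from \Cref{rem:LebPerm} --- these are closed with measure close to $1$; a closed set of positive measure in $\R^d$ has Nagata dimension $d$, since it contains a density point and hence, by the Lebesgue density theorem applied at small scales, cannot be covered efficiently --- more simply, a closed set of positive Lebesgue measure has Hausdorff dimension $d$, and for closed (or more generally analytic) sets one can invoke $\dim_H\le\dim_N$? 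That inequality does hold: $\dim_N\ge\dim_H$ always, since a $Cr$-cover with bounded $r$-multiplicity controls Hausdorff content. So $\dim_N(A_n)=d$. For the intermediate values $0<s<d$, I would take $\Theta=[0,1]^s\times\{0\}^{d-s}$, an $s$-dimensional coordinate cube: it is null permeable by Example~\ref{exs:basic}~(12) (it is a bi-Lipschitz embedded submanifold, closed in $\R^d$, of dimension $s<d$) --- wait, (12) only gives permeability, not null permeability. Better: use \Cref{th:Haus}, since $\mathcal H^{d-1}([0,1]^s\times\{0\}^{d-s})=0$ whenever $s\le d-2$, giving null permeability for $s\in\{0,\dots,d-2\}$; and $\dim_N([0,1]^s\times\{0\}^{d-s})=s$ is classical (Nagata dimension of $\R^s$ is $s$, and the cube has the same). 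For $s=d-1$, the cube $[0,1]^{d-1}\times\{0\}$ is \emph{not} null permeable, so I instead need a null permeable set of Nagata dimension $d-1$; here a self-similar Cantor-type set of the right dimension, or a product of Cantor sets $(C_a)^{d}$ with $\dim_N((C_a)^d)=d-1$, used together with null permeability from \Cref{prop:c-quasiconvex=>nullperm}. Actually the cleanest choice for every $s\in\{1,\dots,d-1\}$ is a product $C\times[0,1]^{s}\times\{0\}^{d-1-s}$ of a measure-zero Cantor set --- hmm, this is getting delicate. The point I expect to be the main obstacle is precisely realizing Nagata dimension exactly $d-1$ with a null permeable set: the obvious $(d-1)$-dimensional objects (cubes, hyperplanes) are impermeable, so one must build a genuinely fractal set --- e.g. a self-similar set in $\R^d$ whose $\mathcal H^{d-1}$-measure is zero but whose Nagata dimension is $d-1$, for instance a suitable Cantor-like set times a cube of the right dimension, and verify both its null permeability (via \Cref{prop:c-quasiconvex=>nullperm} or \Cref{th:Haus}) and that its Nagata dimension equals $d-1$.

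For assertion (2): when $s=d$, any set with nonempty interior, say $[0,1]^d$ itself, is impermeable by Example~\ref{exs:basic}~(7) and has $\dim_N([0,1]^d)=d$. When $s=d-1$, I take $\Theta=[0,1]^{d-1}\times P$ where $P\subset[0,1]$ is an uncountable compact set; this is impermeable because it contains $[0,1]^{d-1}\times\{p\}$ --- no wait, impermeability of $[0,1]^{d-1}\times P$ follows because it is a superset of, hmm, it need not contain a full hyperplane-times-interval. The argument used in \Cref{prop:dimhindep} was that $[0,1]^{d-1}\times P$ is "obviously" impermeable: indeed a path from $(0,\dots,0,t_0)$ to $(0,\dots,0,t_1)$ with $t_0<t_1$ both chosen so that $[t_0,t_1]$ meets $P$ in an uncountable set must --- no. Let me instead follow the cited reasoning: $[0,1]^{d-1}\times P$ is impermeable when $P$ is uncountable because for suitable endpoints any short connecting path has uncountable intersection with it (this is the standard "slab" argument). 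For the Nagata dimension, $\dim_N([0,1]^{d-1}\times P)=d-1$ when $\dim_N(P)=0$, i.e. when $P$ is chosen as a sufficiently sparse Cantor set (as in \cite[Section 3.2]{Falconer:97}); this uses the product/union behaviour of Nagata dimension (Nagata dimension of a product is at most the sum, and at least that of each factor, so $\dim_N([0,1]^{d-1}\times P)$ lies between $d-1$ and $d-1+0=d-1$).

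So the concrete plan of steps is: (i) handle $s=0$ and $s=d$ for null permeability using a singleton and a closed full-measure $A_n$ from \Cref{rem:LebPerm} (with $\dim_N=d$ from $\dim_N\ge\dim_H=d$); (ii) for $1\le s\le d-2$, use an $s$-dimensional coordinate cube, null permeable by \Cref{th:Haus} since $\mathcal H^{d-1}=0$, with $\dim_N=s$; (iii) for $s=d-1$, construct a self-similar Cantor set $C_a\subset\R$ with $\dim_H(C_a)=a$ and form $(C_a)^d$, which has $\dim_N=\dim_H=d\cdot a$ wait that gives a non-integer --- so instead take $C_a$ with $\dim_H(C_a)=(d-1)/d$, giving $\dim_N((C_a)^d)=d-1$, and null permeability from \Cref{prop:c-quasiconvex=>nullperm} (it is a measure-zero self-similar set with nowhere dense coordinate projections, or directly $C$-quasi-convex complement); (iv) for impermeability with $s=d$, take $[0,1]^d$; (v) for $s=d-1$, take $[0,1]^{d-1}\times P$ with $P$ an uncountable Cantor set of Nagata dimension $0$, impermeable by the slab argument and with $\dim_N=d-1$ by the product estimate. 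The main obstacle, as noted, is constructing the null permeable set of Nagata dimension exactly $d-1$ in step (iii) and verifying $\dim_N((C_a)^d)=d\dim_H(C_a)$ --- this identity for products of self-similar sets satisfying the open set condition is where care is needed, but it follows from the equality of Hausdorff, box, and Nagata dimensions for such attractors (cf.\ \cite{Olsen:11}) combined with the product formula for box dimension.
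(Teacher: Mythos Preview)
Your proposal contains two genuine errors, both stemming from a misunderstanding of the Nagata dimension.

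First, the inequality $\dim_N\ge\dim_H$ that you invoke for the case $s=d$ is \emph{false}. The middle-third Cantor set has $\dim_N=0$ (the level-$n$ cylinders give a $cs$-separated cover at every scale) but $\dim_H=\log 2/\log 3>0$. So your argument that $A_n$ has $\dim_N(A_n)=d$ because $\dim_H(A_n)=d$ does not work. The conclusion happens to be salvageable via \Cref{pro:LebesgueNagata} (a Lebesgue point forces $\dim_N=d$), but that result comes later in the paper and requires real work.

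Second, and more seriously, your construction for $s=d-1$ fails outright. The set $(C_a)^d$ has Nagata dimension $0$, not $d-1$: each factor $C_a$ has $\dim_N(C_a)=0$, and by the product estimate \cite[Theorem~2.6]{LangSchlichenmaier05} one gets $\dim_N((C_a)^d)\le 0+\cdots+0=0$. Your appeal to \cite{Olsen:11} for ``equality of Hausdorff, box, and Nagata dimensions'' is a misreading: that reference concerns the \emph{Assouad} dimension, and there is no such equality for Nagata dimension (which is integer-valued and typically much smaller than $\dim_H$ for fractals). So you have no example for $s=d-1$, and in fact no example for $s=d$ either with a correct justification.

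The paper avoids all of this with a single uniform construction: $\Theta=\Z^s\times\{0\}^{d-s}$. This set is countable, hence null permeable in $\R^d$ for $d\ge2$ by the corollary to \Cref{th:Haus}; and $\dim_N(\Z^s)=s$ is classical (e.g.\ because $\Z^s$ is quasi-isometric to $\R^s$ and Nagata dimension is a quasi-isometry invariant, or directly from the definition). This handles all $s\in\{0,\dots,d\}$ at once, including the troublesome cases $s=d-1$ and $s=d$. For part~(2) your approach is essentially the same as the paper's.
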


\begin{proof}
To prove (1) we note that $\Z^s \times \{0\}^{d-s}$ is null permeable and has Nagata dimension $s$ for each $s\in \{0,\ldots, d\}$. 

To prove (2) take the uncountable set $B$ with Assouad dimension $0$ mentioned in the proof of \Cref{cor:Assu} and consider the impermeable set $\Theta=B\times [0,1]^{d-1}$. This set has $\dim_N(\Theta)=d-1$ because $\dim_N(B)\leq \dim_A(B)=0$ (the inequality is due to \cite[Theorem 1.1]{LeDonneRajala15}) and
\[
d-1=\dim_N(\{0\}\times [0,1]^{d-1}) \le \dim_N(B\times [0,1]^{d-1})=\dim_N(B)+\dim_N([0,1]^{d-1}) \le d-1
\] 
(see \cite[Theorem 2.6]{LangSchlichenmaier05} for the last inequality). This proves the case $s=d-1$. Since $\R^d$ is an impermeable set with $\dim_N(\R^d)=d$, (2) follows.
\end{proof}

The main goal of the present section is to establish the following result.

\begin{theorem}\label{th:Nagata}
Let $d\ge 2$. If $\Theta \subset \mathbb{R}^d$ satisfies $\dim_N(\Theta) \le d-2$ then $\Theta$ is null permeable.
\end{theorem}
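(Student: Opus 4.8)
The plan is to mimic the double-cone construction from the proof of \Cref{th:Haus}, but instead of throwing away a single bad direction $z^*$ from a $(d-1)$-dimensional slice, I will exploit the low Nagata dimension to build a path that \emph{completely avoids} $\Theta$ by a covering argument inside the base of the cone. Fix $x,y\in\R^d$ and $\delta>0$, let $B$ be the $(d-1)$-dimensional ball of radius $\delta$ in the affine hyperplane through $\tfrac12(x+y)$ orthogonal to $y-x$, and form the double cone $\mathcal C=c([-1,1]\times B)$ with apices $x,y$ exactly as in \eqref{eq:cHdf}. For each $z\in B$ the segment $\sigma_z:=c([-1,1]\times\{z\})$ is a polygonal chain of length at most $(\vertii{x-y}^2+4\delta^2)^{1/2}$ connecting $x$ and $y$, so it suffices to find $z\in B$ with $\sigma_z\cap\Theta\subseteq\{x,y\}$, i.e.\ to show that the set of ``bad'' parameters $z$ does not exhaust $B$.

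The core idea: $\Theta\cap\mathcal C$ gets projected onto $B$ via the (locally Lipschitz, away from the apices) radial projections $p_{s}$ used in the proof of \Cref{th:Haus}. Since Nagata dimension is preserved under Lipschitz maps up to controlling the constant --- more precisely, a composition with a Lipschitz map cannot increase Nagata dimension (this is a standard fact; I would cite \cite[Section~4]{LangSchlichenmaier05} or reprove the one-line covering argument) --- each projected piece $p_{\pm s_n}(\Theta\cap c([0,\pm s_n]\times B))$ has Nagata dimension $\le d-2$ inside the $(d-1)$-dimensional ball $B$. Taking a countable union over $s_n\uparrow1$, the total bad set $\Theta_\infty\subseteq B$ is a countable union of sets of Nagata dimension $\le d-2$. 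The key point is then: \emph{a countable union of subsets of a $(d-1)$-dimensional ball, each of Nagata dimension $\le d-2$, cannot cover the ball.} Indeed Nagata dimension dominates topological (covering) dimension (\cite[Theorem~2.2]{LangSchlichenmaier05} or the trivial direction thereof), so each bad piece has topological dimension $\le d-2$; by the countable sum theorem for topological dimension a countable union of closed sets of dimension $\le d-2$ has dimension $\le d-2$, hence has empty interior in $B$ and cannot equal $B$. (One must be mildly careful that the pieces are closed, or pass to closures, using \Cref{pro:nagataClosure} to keep the dimension bound; alternatively invoke that a $\sigma$-compact set of Nagata dimension $\le d-2$ has Lebesgue measure zero in $\R^{d-1}$.) Therefore there is $z^*\in B\setminus\Theta_\infty$, and as in \Cref{th:Haus} the segment $\sigma_{z^*}$ meets $\Theta$ at most in $\{x,y\}$, giving null permeability.

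I would carry out the steps in this order: (i) set up the double cone and the radial projections $p_s$ exactly as in \Cref{th:Haus}, noting they are bi-Lipschitz on $c([-s,s]\times B)$ for $s<1$; (ii) state and justify that Lipschitz images do not increase Nagata dimension, so each finite-level projected piece has $\dim_N\le d-2$ inside $B$; (iii) take closures (legitimate by \Cref{pro:nagataClosure}) and form the countable union $\Theta_\infty$; (iv) invoke the chain $\dim_N\ge\dim_{\mathrm{top}}$ together with the countable sum theorem for topological dimension to conclude $\dim_{\mathrm{top}}(\Theta_\infty)\le d-2<d-1=\dim_{\mathrm{top}}(B)$, hence $B\not\subseteq\Theta_\infty$; (v) pick $z^*\in B\setminus\Theta_\infty$ and finish as in \Cref{th:Haus}.

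The main obstacle I anticipate is step (ii)--(iv): ensuring that ``Nagata dimension $\le d-2$ in $\R^{d-1}$'' genuinely prevents covering the ball. The cleanest route is probably not through topological dimension at all but through the measure-zero property: a set in $\R^{m}$ of Nagata (equivalently Assouad--Nagata) dimension $< m$ has $m$-dimensional Lebesgue measure zero --- this follows because such a set admits, for every scale $s$, an $s$-cover of bounded multiplicity by at most $\le d-1$ many $cs$-separated families, forcing a Vitali-type count that beats the volume bound. Then $\Theta_\infty$, being a countable union of measure-zero sets in $B\subset\R^{d-1}$, has measure zero, so its complement in $B$ is nonempty (indeed dense). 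I would include whichever of these two arguments (topological countable-sum theorem vs.\ measure-zero) admits the shortest self-contained write-up, and relegate the other to a remark.
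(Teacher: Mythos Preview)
Your argument has a genuine gap at step~(ii): the claim that Lipschitz images cannot increase Nagata dimension is false. A clean counterexample is the abelianization homomorphism from the free group on two generators onto $\Z^2$; with the word metrics this map is $1$-Lipschitz, the Cayley graph of the free group is a tree and hence has Nagata dimension $1$, while $\dim_N(\Z^2)=2$. The paper flags exactly this obstruction in the paragraph following the statement of \Cref{th:Nagata}: unlike Hausdorff dimension, Nagata dimension does \emph{not} behave well under Lipschitz maps or projections, and this is precisely why the double-cone argument of \Cref{th:Haus} does not transplant. Your alternative route via Lebesgue measure in $B$ has the same defect, since it still requires $\dim_N\big(p_{s}(\Theta')\big)\le d-2$ before you can invoke the measure-zero conclusion inside $\R^{d-1}$; and you cannot bypass this by using only that $\Theta$ is a null set in $\R^d$, because null sets in $\R^d$ can project onto sets of full $(d-1)$-measure. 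Nothing in \cite{LangSchlichenmaier05} supports the monotonicity you cite --- what is proved there is quasisymmetric invariance, which requires control in both directions.

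The paper's proof proceeds entirely differently and never pushes $\Theta$ forward through a non-injective map. First, \Cref{lem:ballsize} shows that $\dim_N(\Theta)\le d-1$ with constant $c$ forces uniform $q$-porosity with $q=q(d,c)$, via a pigeonhole of the $d$ separated families of a cover against the $d+1$ faces of a simplex (\Cref{lem:KurDim1}). Then, working inside a fixed thin $d$-simplex $E$, a second combinatorial separation lemma (\Cref{lem:KurDim2}) combined with the $(d-1)$-family cover coming from $\dim_N(\Theta)\le d-2$ produces a path of \emph{uniformly bounded} length in $E$ from the apex to the opposite face that avoids $\Theta$ except near its two endpoints (\Cref{lem:Ucover}, \Cref{lem:InductionStartNagata1}). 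The porosity is used to place those endpoints in $\Theta$-free balls, and a geometric iteration (\Cref{lem:subpaths}, \Cref{lem:avoidA1}) concatenates scaled copies of such paths into a path in $\R^d\setminus\Theta$ of length at most a fixed multiple of $\vertii{y-x}$ between any $x,y\notin\Theta$. Null permeability then follows from \Cref{prop:c-quasiconvex=>nullperm} together with \Cref{pro:LebesgueNagata}. The essential idea you are missing is to work directly with the separated cover in $\R^d$ and exploit the mismatch between $d-1$ families and $d$ simplex faces, rather than trying to reduce to a lower-dimensional ambient space.
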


\Cref{th:Nagata} can be viewed as a quantified version of a classical result by Mazurkiewicz on the topological dimension of cuts of the Euclidean space (see~\cite[\S59, II, Theorem~1]{Kuratowski68}).

The proofs of the analogs of \Cref{th:Nagata} for the Hausdorff dimension in \Cref{sec:dimH} are relatively easy. This is due to the fact that the Hausdorff dimension (and the Hausdorff measures) behaves well under Lipschitz maps and, {\it a fortiori}, under projections. In particular, the Hausdorff dimension cannot increase under a Lipschitz map. Since this is no longer true for the Nagata dimension, the proof of \Cref{th:Nagata} is much harder and relies on several auxiliary results.

\begin{proposition}\label{pro:LebesgueNagata}
If a measurable set $\Theta\subset \mathbb{R}^d$ has a Lebesgue point then  $\dim_N (\Theta) = d$.
\end{proposition}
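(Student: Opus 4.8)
The goal is to show that if a measurable set $\Theta\subset\mathbb R^d$ has a Lebesgue point then $\dim_N(\Theta)=d$; since trivially $\dim_N(\Theta)\le\dim_N(\mathbb R^d)=d$, the real content is the lower bound $\dim_N(\Theta)\ge d$, equivalently $\dim_N(\Theta)>d-1$. I would argue by contradiction: suppose $\dim_N(\Theta)\le d-1$, so by \Cref{def:nagata}~(2) there is a constant $c>0$ (which we may take in $(0,1]$) such that for every $s>0$ the set $\Theta$ admits an $s$-cover $\mathcal U=\bigcup_{i=0}^{d-1}\mathcal U_i$ with each $\mathcal U_i$ being $cs$-separated. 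The plan is to derive from this a uniform upper bound, strictly less than $1$, on the density $\lambda(\Theta\cap B_r(x))/\lambda(B_r(x))$ as $r\to 0$ at \emph{every} point $x$, contradicting the existence of a Lebesgue point.

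The key geometric estimate is the following: if $\mathcal V$ is a $cs$-separated family of sets each of diameter $\le s$, then inside any ball $B_r(x)$ with, say, $r$ comparable to $s$, the union $\bigcup_{V\in\mathcal V}V$ occupies only a bounded fraction of the volume of $B_r(x)$. Indeed, replace each $V\in\mathcal V$ meeting $B_r(x)$ by a ball $B_s(x_V)$ containing it; because the family is $cs$-separated, the balls $B_{cs/2}(x_V)$ are pairwise disjoint, so a volume-packing argument bounds the number of such $V$ by $(2r/cs + 2)^d\cdot(\text{const})$, hence the measure of $\bigcup_{V}V$ within $B_r(x)$ by a constant multiple of $s^d$ times that count, which after choosing $s$ a suitable \emph{fixed} small multiple of $r$ gives $\lambda\big(B_r(x)\cap\bigcup_{V\in\mathcal V}V\big)\le \theta\,\lambda(B_r(x))$ for a constant $\theta<1$ depending only on $d$ and $c$ (not on $r$ or $x$). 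I would carry out this step carefully: fix once and for all $s=\alpha r$ with $\alpha$ chosen so small that $\theta:=C_d(1+2/(c\alpha))^{-d}\cdot(\text{packing bound})<\tfrac1d$, say — the exact bookkeeping is routine but needs to be done so that summing over the $d$ colours $i=0,\dots,d-1$ still leaves room.

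Then $\Theta\cap B_r(x)\subset \bigcup_{i=0}^{d-1}\bigcup_{U\in\mathcal U_i}U$, and applying the estimate colour by colour,
\[
\lambda\big(\Theta\cap B_r(x)\big)\le \sum_{i=0}^{d-1}\lambda\Big(B_r(x)\cap\bigcup_{U\in\mathcal U_i}U\Big)\le d\,\theta\,\lambda(B_r(x)),
\]
and with the choice $\theta<1/d$ this gives $\lambda(\Theta\cap B_r(x))\le d\theta\,\lambda(B_r(x))$ with $d\theta<1$, uniformly in $r>0$ and $x\in\mathbb R^d$. Hence $\limsup_{r\to0}\lambda(B_r(x)\setminus\Theta)/\lambda(B_r(x))\ge 1-d\theta>0$ for every $x$, so $\Theta$ has no Lebesgue point — contradiction. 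Therefore $\dim_N(\Theta)\ge d$, and combined with the trivial upper bound, $\dim_N(\Theta)=d$.

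The main obstacle I anticipate is purely quantitative: making the colour-wise volume bound $\theta$ small enough that, after summing over the $d$ possible colours, the total is still $<1$. This forces $s$ to be a sufficiently small fixed fraction of $r$ and requires a clean packing estimate for the number of $cs$-separated, diameter-$\le s$ sets meeting a ball of radius $r$; the constant $c$ being allowed to be small makes the packing count blow up like $(1/c)^d$, so one must be a little careful, but since $s$ can be taken as small a multiple of $r$ as we like, the factor $s^d$ from the volume of each piece always wins and the argument goes through. Everything else — the contradiction with the Lebesgue point, the reduction to the lower bound — is immediate.
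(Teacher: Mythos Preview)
Your packing computation does not give what you need. Write $s=\alpha r$. The number of sets $V$ in a $cs$-separated family of diameter $\le s$ meeting $B_r(x)$ is at most of order $(r/(cs))^d$, while each such $V$ has measure at most of order $s^d$; multiplying, the $s^d$ and $s^{-d}$ cancel and you are left with a bound of order $c^{-d}\lambda(B_r(x))$. Since $c\le 1$, this is $\ge \lambda(B_r(x))$ and hence vacuous. Your remark that ``the factor $s^d$ from the volume of each piece always wins'' is exactly where the error lies: it never wins, it cancels. Even a sharper estimate via Brunn--Minkowski (comparing $\lambda(V)$ to $\lambda([V]_{cs/2})$ and using disjointness of the fattenings) only gives $\theta\le (1+c/2)^{-d}$, which for small $c$ is arbitrarily close to~$1$; summing over $d$ colours then gives a bound close to $d$, not $<1$. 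Concretely, for any $c\in(0,1]$ take axis-parallel cubes of side $s/\sqrt d$ placed on a grid with gap $cs$: this is a $cs$-separated family of diameter-$s$ sets with density $(1+c\sqrt d)^{-d}$, which is as close to $1$ as you like when $c$ is small. So $\theta<1/d$ is simply unattainable by this method, and the contradiction never materialises.

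The paper avoids measure entirely and instead shows that $\dim_N(\Theta)\le d-1$ with constant $c$ forces $\Theta$ to be $q$-porous for some $q=q(d,c)>0$; porosity immediately excludes Lebesgue points. The porosity is obtained by a topological argument: one places a small simplex $S$ inside $B_1(0)$, chooses $\varepsilon$ so small that every set of diameter $\le \varepsilon$ is $c\varepsilon$-far from at least one face of $S$, and then regroups the Nagata cover $\mathcal U=\bigcup_{i=0}^{d-1}\mathcal U_i$ into $d+1$ closed sets $A_0,\dots,A_d$, each avoiding a distinct face. If the $\frac{c\varepsilon}{3}$-fattenings of $\Theta$ covered $S$, Kuratowski's simplex lemma would force $\bigcap_j A_j\ne\emptyset$, producing $d+1$ distinct cover elements with pairwise small distance; by pigeon-hole two of them lie in the same $cs$-separated $\mathcal U_i$, a contradiction. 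Hence there is a point of $S$ at distance $>c\varepsilon/3$ from $\Theta$, giving the porosity ball. This sidesteps the density obstacle you ran into.
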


Before we turn to the proof of this result we recall the definition of a porous set. Recall that $B_r(x)$ denotes the open ball with radius $r$ around $x$. 

\begin{definition}[Porosity; see {\cite[Section 2]{zajicek1976}}]\label{def:porosity}
 Let $\Theta$ be a subset of $\R^d$ and $q>0$.
\begin{enumerate}
\item
A point $x$ is called a $q$-\emph{porosity point} of $\Theta$, if for each $r>0$, there exists $y\in \R^d$ with $B_{qr}(y)\subset B_{r}(x)\setminus \Theta$.
\item $\Theta$ is called $q$-\emph{porous} if every point of $\Theta$ is a $q$-porosity point of $\Theta$.
\end{enumerate}
\end{definition}

From this definition it is immediate that a $q$-porous set $\Theta\subset \mathbb{R}^d$  does not have a Lebesgue point, neither does its closure $\overline{\Theta}$. Thus the $d$-dimensional Lebesgue measure of $\overline{\Theta}$ is zero. Proposition~\ref{pro:LebesgueNagata} now follows by combining \cite[Theorem~5.2]{Luukkainen:98} with \cite[Theorem~1.1]{LeDonneRajala15} (see also~\cite[Remark~6.14]{David21}). However, this proof runs via the Assouad dimension. For the sake of self-containment, we therefore provide a short and direct proof of Proposition~\ref{pro:LebesgueNagata}. Indeed, we will prove \Cref{pro:LebesgueNagata} by showing that each $\Theta \subset \mathbb{R}^d$ with $\dim_N(\Theta)\le d-1$ is $q$-porous for some $q>0$ (see \Cref{lem:ballsize}). To establish this lemma, we need the following classical result from dimension theory.

\begin{lemma}[{\cite[\S28, II, Theorem 6]{Kuratowski66}}]\label{lem:KurDim1}
Let $S=[p_0,\ldots,p_d]$ be a nondegenerate closed simplex with closed faces $S_i=[p_0,\ldots,p_{i-1},p_{i+1},\ldots,p_d]$, $i \in\{0,\ldots,d\}$. If $A_0,\ldots, A_d$ are $d+1$ closed sets  with $S=A_0\cup\cdots\cup A_d$ and $A_i \cap S_i=\emptyset$ for each $i \in\{0,\ldots,d\}$, then $A_0\cap\cdots\cap A_d\ne\emptyset$.
\end{lemma}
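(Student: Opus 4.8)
The plan is to recognize Lemma~\ref{lem:KurDim1} as a reformulation of the Knaster--Kuratowski--Mazurkiewicz covering lemma and to prove it via Sperner's lemma together with a compactness argument. First I would restate the hypothesis in KKM form. For a nonempty $I\subseteq\{0,\ldots,d\}$ write $S_I:=\conv\{p_j\colon j\in I\}$, so that $S_{\{0,\ldots,d\}}=S$ and $S_i=S_{\{0,\ldots,d\}\setminus\{i\}}$. If $j\notin I$ then $S_I\subseteq S_j$, so the assumption $A_j\cap S_j=\emptyset$ gives $S_I\cap A_j=\emptyset$; combined with $S=A_0\cup\cdots\cup A_d$ this yields the inclusion $S_I\subseteq\bigcup_{j\in I}A_j$ for every nonempty $I$, and in particular $p_i\in A_i$ for each $i$.

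Next I would run the standard Sperner argument. Fix a sequence $(\mathcal T_n)_{n\in\N}$ of simplicial subdivisions of $S$ (e.g.\ iterated barycentric subdivisions) whose mesh tends to $0$. To each vertex $v$ of $\mathcal T_n$ assign a label $\lambda(v)\in\{0,\ldots,d\}$ with $v\in A_{\lambda(v)}$; this is possible because the $A_i$ cover $S$, and by the inclusion above the label can be chosen inside $I$ whenever $v\in S_I$. Then $\lambda(p_i)=i$ for all $i$ and no vertex lying on the face $S_i$ carries the label $i$, so $\lambda$ is a Sperner labeling of $\mathcal T_n$. Sperner's lemma then provides a sub-simplex $\sigma_n\in\mathcal T_n$ whose $d+1$ vertices $v^{(n)}_0,\ldots,v^{(n)}_d$ carry all the labels $0,\ldots,d$; hence $v^{(n)}_i\in A_i$ for each $i$, while $\diam(\sigma_n)\to 0$.

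Finally I would pass to the limit: since $S$ is compact, a subsequence of $(v^{(n)}_0)_n$ converges to some $x\in S$, and because $\diam(\sigma_n)\to 0$, along this subsequence $v^{(n)}_i\to x$ for every $i$ as well. As each $A_i$ is closed and contains $v^{(n)}_i$ for all $n$ in the subsequence, we conclude $x\in A_i$ for all $i$, i.e.\ $x\in A_0\cap\cdots\cap A_d$. The step that I expect to require the most care is the verification that $\lambda$ is a legitimate Sperner labeling, which is exactly where the hypothesis $A_i\cap S_i=\emptyset$ enters, through the inclusion $S_I\subseteq\bigcup_{j\in I}A_j$. As an alternative one can argue by contradiction: if $A_0\cap\cdots\cap A_d=\emptyset$, then the open sets $S\setminus A_i$ cover $S$ and a subordinate partition of unity produces a continuous retraction of $S$ onto $\partial S$, contradicting the no-retraction theorem. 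Since the statement is classical, a detailed proof may be found in \cite{Kuratowski66}.
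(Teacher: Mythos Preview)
Your argument is correct and is precisely the classical KKM/Sperner proof of this covering lemma; the verification that $\lambda$ is a Sperner labeling via the inclusion $S_I\subseteq\bigcup_{j\in I}A_j$ is clean, and the compactness step is fine. Note that the paper does not supply its own proof of this lemma at all --- it merely quotes the result from \cite[\S28, II, Theorem~6]{Kuratowski66} --- so there is no in-paper argument to compare against; your write-up simply fills in what the paper leaves to the reference.
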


Proposition~\ref{pro:LebesgueNagata} is an immediate consequence of the following lemma. The uniformity of the porosity constant $q$ asserted in this lemma will be needed later (see Lemma~\ref{lem:InductionStartNagata1}).

\begin{lemma}\label{lem:ballsize}
Let $d\in \mathbb{N}$ and $c > 0$. Then there exists  $q>0$ such that every set 
$\Theta\subset\mathbb{R}^d$ with $\dim_N (\Theta) \le d-1$ with constant $c$ is $q$-porous. 
\end{lemma}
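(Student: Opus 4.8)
The plan is to extract a quantitative non-thickness statement from the Nagata covering hypothesis via the topological obstruction in Lemma~\ref{lem:KurDim1}. Fix $d$ and $c$, and let $\Theta\subset\R^d$ have $\dim_N(\Theta)\le d-1$ with constant $c$; I must produce a universal $q=q(d,c)>0$ so that every point of $\Theta$ is a $q$-porosity point. Fix $x\in\Theta$ and $r>0$. Rescale and translate so that $x=0$ and we work at scale $r$; apply Definition~\ref{def:nagata}~(2) at some scale $s=\kappa r$, where $\kappa$ is a small constant to be chosen depending only on $d$ and $c$: this gives an $s$-cover $\mathcal U=\bigcup_{i=0}^{d-1}\mathcal U_i$ of $\Theta$ with each $\mathcal U_i$ a $cs$-separated family. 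The point of choosing $s$ small relative to $r$ is that any one member of any $\mathcal U_i$ has diameter $\le s$, hence meets at most a controlled portion of a ball of radius $\asymp r$.

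The core step is to run a Knaster–Kuratowski–Mazurkiewicz / Lebesgue-covering argument \emph{inside} $B_r(0)$. Place a nondegenerate closed $d$-simplex $S$ with $B_{\rho r}(0)\subset S\subset B_r(0)$ for a dimensional constant $\rho>0$, with faces $S_0,\dots,S_d$. For each $i\in\{0,\dots,d-1\}$ let $A_i$ be the closure (within $S$) of $\bigcup_{U\in\mathcal U_i}U$, shrunk slightly if necessary so that it avoids the opposite face $S_i$; since each $\mathcal U_i$ is $cs$-separated and $s=\kappa r$ is small, the set $A_i$ is a disjoint union of pieces of diameter $\le s$, so after a controlled adjustment of $\kappa$ we can indeed arrange $A_i\cap S_i=\emptyset$ while $A_0,\dots,A_{d-1}$ still cover $\Theta\cap S$. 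Now suppose, toward a contradiction, that $0$ is \emph{not} a $q$-porosity point, i.e. $B_r(0)\setminus\Theta$ contains no ball of radius $qr$: then $\Theta$ is $qr$-dense in $B_r(0)$, hence (enlarging each $A_i$ by $qr$, which only helps) $A_0,\dots,A_{d-1}$ together with the single extra closed set $A_d:=$ (a thin collar of $\partial S$, arranged to miss $S_d$) cover all of $S$ and satisfy $A_i\cap S_i=\emptyset$ for every $i\in\{0,\dots,d\}$. Lemma~\ref{lem:KurDim1} then forces $A_0\cap\dots\cap A_d\ne\emptyset$. But $A_d$ is a boundary collar while the other $A_i$ lie (by construction, once $\kappa$ and the collar width are small compared to $\rho$) strictly inside $S$ away from $\partial S$ — contradiction. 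Hence some ball of radius $qr$ lies in $B_r(0)\setminus\Theta$, for a $q$ depending only on $d$, $c$ (through the constants $\kappa,\rho$ and the enlargement budget).

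The main obstacle I expect is the bookkeeping in the middle step: one must choose the scale ratio $\kappa$, the inradius constant $\rho$, the shrinkage needed to separate each $A_i$ from $S_i$, and the porosity budget $qr$ so that all the inequalities are simultaneously consistent and so that $q$ ends up depending only on $d$ and $c$ — in particular $q$ must be \emph{uniform} over all such $\Theta$, which is exactly what is flagged as needed for Lemma~\ref{lem:InductionStartNagata1}. A secondary technical point is the passage to closed sets: the families $\mathcal U_i$ are $cs$-separated hence locally finite, so taking closures preserves both the separation and the covering property (as in the proof of Proposition~\ref{pro:nagataClosure}), which is what legitimizes applying Lemma~\ref{lem:KurDim1}. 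Everything else — the simplex placement, the collar construction for $A_d$, and the final extraction of the porosity ball from "$\Theta$ is not $qr$-dense" — is routine once the constants are pinned down.
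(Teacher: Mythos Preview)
There is a genuine gap in the central step. You set $A_i := \overline{\bigcup_{U\in\mathcal U_i} U}$ for $i\in\{0,\dots,d-1\}$ and claim that ``after a controlled adjustment of $\kappa$'' one can arrange $A_i\cap S_i=\emptyset$. This does not follow: the index $i$ of a color class $\mathcal U_i$ carries no geometric information whatsoever, so no matter how small $s=\kappa r$ is, elements of $\mathcal U_i$ may sit on or arbitrarily near the face $S_i$. Shrinking the pieces does not help (a small piece can still touch $S_i$), and removing the offending pieces from $A_i$ would destroy the covering of $\Theta\cap S$. Your proposed contradiction is also not one: you assert that the collar $A_d$ lies near $\partial S$ while $A_0,\dots,A_{d-1}$ lie ``strictly inside $S$ away from $\partial S$'', but if $\Theta$ is $qr$-dense in $B_r(0)$ then the $A_i$ (which cover $\Theta\cap S$) certainly reach $\partial S$, so $A_0\cap\cdots\cap A_d\ne\emptyset$ is perfectly compatible with your setup.

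The paper resolves both issues with a single idea: it does \emph{not} use the color classes as the sets in Lemma~\ref{lem:KurDim1}. Instead it reorganizes $\mathcal U$ by which \emph{face} each element avoids. Choosing $\varepsilon$ so small that every set of diameter $\le\varepsilon$ lies at distance $\ge c\varepsilon$ from at least one face $S_j$, one lets $A_j$ be the $\tfrac{c\varepsilon}{3}$-thickening of the elements of $\mathcal U$ assigned to face $S_j$, for $j\in\{0,\dots,d\}$. Then $A_j\cap S_j=\emptyset$ holds by construction, and if $[\Theta]_{c\varepsilon/3}\supseteq S$ then $A_0,\dots,A_d$ cover $S$, so Lemma~\ref{lem:KurDim1} gives a point in $A_0\cap\cdots\cap A_d$. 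The contradiction now comes from the color classes via \emph{pigeonhole}: that common point lies in $[U_0]_{c\varepsilon/3}\cap\cdots\cap[U_d]_{c\varepsilon/3}$ for $d+1$ pairwise distinct $U_j\in\mathcal U$, so two of them belong to the same $\mathcal U_i$, contradicting the $c\varepsilon$-separation. This face-indexing of the $A_j$'s together with the pigeonhole step is precisely the mechanism your plan is missing.
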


\begin{proof}
Let  $e_0:=0$ and let $e_i$ be the $i$-th standard basis vector, $i\in \{1,\ldots,d\}$.
Consider the closed $d$-simplex $S=\frac{1}{2}[e_0,e_1,\ldots,e_d] \subset B_1(0)$ with faces $S_j=\frac{1}{2}[e_0,\ldots,e_{j-1},e_{j+1},\ldots,e_d]$ ($0\le j \le d$). We fix $\varepsilon>0$  so small that $[S]_{c\varepsilon} \subset B_1(0)$ and that each set $X\subset \R^d$ with $\mathrm{diam}(X)\le \varepsilon$ satisfies $d(X,S_j)\ge c\varepsilon$  for at least one $j\in\{0,\ldots,d\}$. Now set~$q=\frac{c\varepsilon}3$.

Consider $\Theta\subset\mathbb{R}^d$ with $\dim_N (\Theta) \le d-1$ with constant $c$, $x\in \R^d$, and  $r>0$. We need to find $y\in \R^d$ with $B_{qr}(y)\subset B_r(x)\setminus \Theta$. In view of Lemma~\ref{lem:NagataSimilar}, by applying a suitable similarity transformation to $\Theta$, we may assume w.l.o.g.\ that $x=0$ and $r=1$, {\it i.e.}, we need to find $y\in \R^d$ with $B_{q}(y)\subset B_1(0)\setminus \Theta$.  By Definition~\ref{def:nagata}, the set $\Theta
$ admits an $\varepsilon$-cover $\mathcal{U}=\bigcup_{i=0}^{d-1} \mathcal{U}_i$, where $\mathcal{U}_i$ is $c\varepsilon$-separated for each $i\in \{0,\ldots, d-1\}$.  Set
\[
\mathcal{V}_j := \{U\in  \mathcal{U} \colon U \text{ is a distance at least $c\varepsilon$ away from }S_j \}\qquad (j\in\{0,\ldots,d\}),
\]
$A_0 := \bigcup_{U\in \mathcal{V}_0} [U]_{c\varepsilon/3}$, and for $j\in\{1,\ldots,d\}$
\[
A_j := \bigcup_{U\in \mathcal{V}_j \setminus(\mathcal{V}_0\cup\cdots\cup \mathcal{V}_{j-1})} [U]_{c\varepsilon/3}.
\]
Note that each $\mathcal {V}_j$ is locally finite and, hence, each $A_j$ is closed. By the choice of $\varepsilon$, $\mathcal{U}=\bigcup_{j=0}^{d} \mathcal{V}_j$. 
Thus $A_0\cup\cdots\cup A_d \supseteq [\Theta]_{c\varepsilon/3}$. Suppose that  $[\Theta]_{c\varepsilon/3} \supseteq S$. Then $A_0\cap S,\ldots, A_d\cap S$ satisfy the conditions of Lemma~\ref{lem:KurDim1}. Thus $A_0\cap \cdots\cap A_d \ne\emptyset$. This implies that there exist pairwise distinct $U_0,\ldots, U_d \in \mathcal{U}$ such that $[U_0]_{c\varepsilon/3}\cap\cdots\cap [U_d]_{c\varepsilon/3} \ne\emptyset$. By the pigeonhole principle at least two of the sets $U_0,\ldots, U_d$ belong to the same $\mathcal U_i$, and this $\mathcal U_i$ therefore cannot be $c\varepsilon$-separated. This contradiction yields $[\Theta]_{c\varepsilon/3} \not\supseteq S$. This implies that there is $y\in S$ such that the ball $B_{q}(y)$ of radius $q=\frac{c\varepsilon}3$ satisfies $B_{q}(y)\subset [S]_{q}\subset [S]_{c\varepsilon} \subset B_1(0)$ and $B_{q}(y) \cap \Theta =\emptyset$. This proves the lemma.
\end{proof}

We recall one more result from dimension theory. In its statement, we use the following terminology. For sets $X,Y,W,Z\subset\R^d$ with $X \subset W$ and $Y\subset W$ we say that $Z$ {\it separates} $W$ between $X$ and $Y$, if there exist $M$ and $N$ such that $W\setminus Z=M\cup N$ with $(\overline{M}\cap N)\cup(M \cap\overline{N})=\emptyset$, $X\subset M$, and $Y\subset N$ (sloppily speaking, $X$ and $Y$ lie in different connected components of $W\setminus Z$).

\begin{lemma}[{\cite[\S28, II, Theorem 8]{Kuratowski66}}]\label{lem:KurDim2}
Let $S=[p_0,\ldots,p_d]$ be a nondegenerate closed simplex with closed faces $S_i=[p_0,\ldots,p_{i-1},p_{i+1},\ldots,p_d]$, $i \in\{0,\ldots,d\}$. If $A_1,\ldots, A_d$ are $d$ open sets in  $S$ with $A_i \cap S_i=\emptyset$ for each $i \in\{1,\ldots,d\}$, and such that $A_1\cup\cdots\cup A_d$ separates $S$ between $\{p_0\}$ and $S_0$, then $A_1\cap\cdots\cap A_d\ne\emptyset$.
\end{lemma}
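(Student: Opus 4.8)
The plan is to deduce this classical separation lemma (it is the ``$d$ open sets'' companion of \Cref{lem:KurDim1}) from \Cref{lem:KurDim1} itself, arguing by contradiction. So suppose $A_1\cap\cdots\cap A_d=\emptyset$. First I would unwind the separation hypothesis. Since $A_1\cup\cdots\cup A_d$ is open in $S$ and separates $S$ between $\{p_0\}$ and $S_0$, we may write $S\setminus(A_1\cup\cdots\cup A_d)=P\cup Q$ with $P,Q$ separated, $p_0\in P$ and $S_0\subseteq Q$; because $S\setminus(A_1\cup\cdots\cup A_d)$ is closed in $S$ and $P,Q$ are separated, both $P$ and $Q$ are in fact closed, hence compact, and disjoint. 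Note also that each $A_i$ is disjoint from $P\cup Q$, since $P\cup Q=S\setminus\bigcup_jA_j$.

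The goal is then to manufacture closed sets $B_0,B_1,\ldots,B_d$ that cover $S$, satisfy $B_i\cap S_i=\emptyset$ for every $i\in\{0,\ldots,d\}$, and have $B_0\cap B_1\cap\cdots\cap B_d=\emptyset$; this contradicts \Cref{lem:KurDim1}. For $B_0$ I would take a small closed neighbourhood of $P$ that is still disjoint from $Q$ (possible since $P,Q$ are disjoint compacta), so that $B_0\cap S_0=\emptyset$ because $S_0\subseteq Q$. To cover $Q$ I would distribute it among $B_1,\ldots,B_d$: since $\bigcap_{i=1}^dS_i=\{p_0\}$ and $p_0\notin Q$, every point of $Q$ stays at positive distance from at least one face $S_i$ with $i\ge1$, so assigning each $x\in Q$ to an index maximising $d(x,S_i)$ produces closed sets $Q_i\subseteq Q$ with $Q_i\cap S_i=\emptyset$ and $\bigcup_{i\ge1}Q_i=Q$. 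Finally one enlarges each $B_i$ ($i\ge1$) by a closed set $C_i\subseteq A_i$, pushed off the forbidden face, so that the $C_i$ together with $B_0$ cover $\bigcup_jA_j$ and hence $B_0\cup\cdots\cup B_d=S$; because each $C_i$ lies in $A_i$, $\bigcap_{i=1}^dA_i=\emptyset$, and $B_0$ misses $Q\supseteq\bigcup Q_i$, one gets $B_0\cap\cdots\cap B_d=\emptyset$. Then \Cref{lem:KurDim1} yields the contradiction, proving $A_1\cap\cdots\cap A_d\neq\emptyset$.

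I expect the main obstacle to be precisely the construction of the $C_i$ (equivalently, of the $B_i$): one must simultaneously cover all of $S$, including the residual set $Q$ whose points can lie on or arbitrarily close to several faces at once, keep each $B_i$ off $S_i$, and keep the $C_i$ genuine closed subsets of the corresponding $A_i$ so that the emptiness of $\bigcap B_i$ survives. The delicate point is that the $A_i$ are only open, so $\overline{A_i}$ may meet $S_i$ and $A_i$ may approach $S_i$ without bound; producing a closed shrinking of the finite open family $\{A_i\}$ that remains closed as a subset of $S$ and still avoids the faces is the technical heart, and it may be convenient to first shrink the $A_i$ slightly (preserving the face conditions and, by a standard argument, the separation) before running the construction. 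As a consistency check and an alternative line of attack, for $d=2$ one notes that $A_1\cap A_2=\emptyset$ forces $A_1\cup A_2$ to be a disjoint union of two open sets, hence disconnected, and one can try to show directly — by exhibiting an arc from $p_0$ to $S_0$ inside $S\setminus(A_1\cup A_2)$, or by a connectedness argument after slightly separating the closures $\overline{A_i}$ — that such a disconnected set whose $i$-th piece avoids $S_i$ cannot separate $S$ between $p_0$ and $S_0$; the general case might then be reached by induction on $d$, restricting the configuration to the face $S_d$ (where all hypotheses are inherited) and propagating the conclusion.
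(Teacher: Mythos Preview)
The paper does not prove this lemma at all: it is simply quoted from Kuratowski's book (\S28, II, Theorem~8), just as \Cref{lem:KurDim1} is quoted without proof. So there is no ``paper's own proof'' to compare against; your proposal is an attempt to supply what the paper takes as a black box.

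Your reduction to \Cref{lem:KurDim1} is sound and is essentially the classical route. A few remarks on the execution. First, the step you flag as the ``technical heart'' is more routine than you fear: once $B_0$ is chosen as a closed neighbourhood of $P$ (so $P\subset B_0^\circ$ and $B_0\cap Q=\emptyset$), the set $F:=\big(\bigcup_j A_j\big)\setminus B_0^\circ$ is closed in $S$ and covered by the finite open family $\{A_1,\ldots,A_d\}$; by normality there are closed $C_i\subseteq A_i$ with $\bigcup_i C_i\supseteq F$. Since $C_i\subseteq A_i$ and $A_i\cap S_i=\emptyset$, you automatically get $C_i\cap S_i=\emptyset$; no preliminary shrinking of the $A_i$ is needed. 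Second, your verification of $\bigcap_{i=0}^d B_i=\emptyset$ is correct: any common point lies in $B_0$, hence not in $Q\supseteq Q_i$, hence in every $C_i\subseteq A_i$, contradicting $\bigcap A_i=\emptyset$. Third, the covering $B_0\cup\bigcup_{i\ge1}(Q_i\cup C_i)=S$ follows because $B_0^\circ\supseteq P$, $\bigcup Q_i=Q$, and $\bigcup C_i\supseteq F$. So the argument closes cleanly; the alternative inductive route you mention at the end is unnecessary.
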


Let $d\in \mathbb{N}$, $c>0$, and let $q$ be the porosity constant from \Cref{lem:ballsize}.  
Choose a nondegenerate closed $(d-1)$-simplex $E_{0}=[p_1,\ldots,p_d] \subset \mathbb{R}^{d-1}\times \{0\}$ with barycenter $b_0=0$ and diameter bounded by $q$ . Set $p_0=(0,\ldots,0,4)$ and define 
a nondegenerate closed simplex 
$
E :=  [p_0,\ldots,p_d].
$
For $i\in\{0,\ldots,d\}$ let $E_i=[p_0,\ldots,p_{i-1},p_{i+1},\ldots,p_d]$ be a closed face of $E$
(note that for $i=0$ this coincides with the definition of $E_0$ above). Let
\[
T:= E \setminus (B_{q}(p_0) \cup B_{q}(b_0))
\]
(see Figure~\ref{fig:T}).
\begin{figure}[h]
     \centering
     \begin{subfigure}[b]{0.38\textwidth}
         \centering
        \includegraphics[height=0.38\textheight]{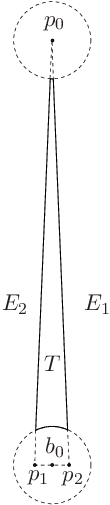}         
        \caption{} 
     \end{subfigure}
     \hfil
     \begin{subfigure}[b]{0.38\textwidth}
         \centering
         \includegraphics[height=0.38\textheight]{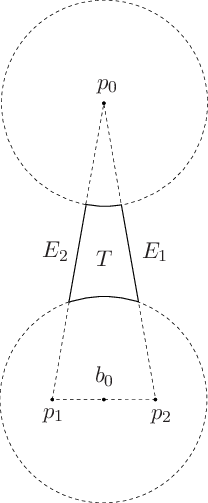}
         \caption{} 
     \end{subfigure}
 	\caption{Two examples of $T$. In (A) the radius $q$ of the dotted circles is
small, in (B) it is larger, which is not scaled properly, but better suited for the
visualization of the cover $\mathcal{U}$ in Figure~\ref{fig:U}.}
\label{fig:T}
\end{figure}
Throughout this section fix $\varrho>0$ in a way that each set $B$ of diameter bounded by
$(1+c)\varrho$ intersecting $T$ satisfies $B\cap(\{p_0\} \cup E_0)=\emptyset$ and
$B\cap E_i=\emptyset $ for some $i\in\{1,\ldots,d\}$.  Then, also throughout
this section, fix a finite cover $\mathcal{U}$ of $T$ by
open balls of diameter $\frac{c\varrho}4$. 
An example of a cover $\mathcal{U}$ is shown in Figure~\ref{fig:U}. There  we illustrate that $\varrho$ is chosen small enough.

\begin{figure}[h]
\includegraphics[height=0.28\textheight]{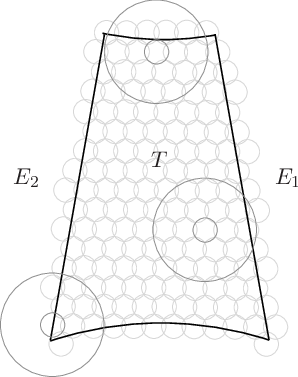}
\caption{The set $T$ from Figure~\ref{fig:T}~(B) enlarged and drawn together with the elements of a cover $\mathcal{U}$. The radius $\varrho$ is chosen so that no enlarged ball intersects both $E_1$ and $E_2$. Three of the enlarged balls are also drawn in the figure.
\label{fig:U}}
\end{figure}

\begin{lemma}\label{lem:Ucover}
Let $X_1, \ldots, X_d$ be unions of elements of $\mathcal U$ and set $\mathcal{P}=\{X_1,\ldots, X_d\}$. If there is $\delta >0$ with $ [X_i]_\delta  \cap E_i = \emptyset$ for each  $i\in\{1,\ldots, d\}$ and $\bigcap_{i=1}^d [X_i]_\delta = \emptyset$, then there is a path $\alpha_\mathcal{P}$ of finite length $\ell(\alpha_\mathcal{P})$ in $E\setminus \overline{X_1\cup\cdots\cup X_d}$ connecting $p_0$ and $b_0$. 
\end{lemma}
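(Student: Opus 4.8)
The plan is to derive the statement from \Cref{lem:KurDim2}, applied in its contrapositive form: \emph{if $A_1,\dots,A_d$ are sets open in $E$ with $A_i\cap E_i=\emptyset$ for all $i\in\{1,\dots,d\}$ and $\bigcap_{i=1}^{d}A_i=\emptyset$, then $A_1\cup\cdots\cup A_d$ does not separate $E$ between $\{p_0\}$ and $E_0$.} I would build such sets $A_i$ from the given $X_i$, then translate ``does not separate'' into ``$p_0$ and $b_0$ lie in the same connected component of $E\setminus\overline{X_1\cup\cdots\cup X_d}$'', and finally use that a relatively open connected subset of the convex body $E$ is polygonally connected to produce a path of finite length.

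Set $Y:=X_1\cup\cdots\cup X_d$, so that $\overline Y=\overline{X_1}\cup\cdots\cup\overline{X_d}$. First I would record the geometry. Each ball of $\mathcal U$ has diameter $\tfrac{c\varrho}{4}$ and meets $T$ (as we may assume of our cover), so by the choice of $\varrho$ a fixed small enlargement of such a ball still has diameter at most $(1+c)\varrho$ and still meets $T$, hence is disjoint from $\{p_0\}\cup E_0$; consequently $d(\overline{X_i},\{p_0\}\cup E_0)>0$ for each $i$, and in particular $p_0,b_0\notin\overline Y$, i.e.\ $p_0,b_0\in E\setminus\overline Y$. Also $\overline{X_i}\subset[X_i]_\delta$, which by hypothesis is disjoint from $E_i$. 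Now fix $\delta'$ with $0<\delta'<\delta$ small enough that $(X_i)_{\delta'}\cap(\{p_0\}\cup E_0)=\emptyset$ for all $i$ (possible by the distance estimate just given), and put $A_i:=(X_i)_{\delta'}\cap E$, which is open in $E$. Since $(X_i)_{\delta'}\subset[X_i]_{\delta'}\subset[X_i]_\delta$, the hypotheses yield $A_i\cap E_i=\emptyset$ for all $i$ and $\bigcap_{i=1}^{d}A_i=\emptyset$, while the choice of $\delta'$ yields $p_0\notin\bigcup_iA_i$ and $E_0\cap\bigcup_iA_i=\emptyset$. Finally, $\overline{X_i}\subset(X_i)_{\delta'}$, so $\overline Y\cap E\subset\bigcup_iA_i$, whence $E\setminus\bigcup_iA_i\subseteq E\setminus\overline Y$.

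Now suppose, towards a contradiction, that $p_0$ and $b_0$ lay in different connected components of $E\setminus\overline Y$. Since $E$ is a convex body, it is locally connected, so the component $V$ of $E\setminus\overline Y$ containing $p_0$ and its relative complement $V':=(E\setminus\overline Y)\setminus V$ are both open in $E$; hence $\overline V\cap V'=\emptyset=V\cap\overline{V'}$. The simplex $E_0$ is connected and contained in $E\setminus\overline Y$, so it lies in a single component, which contains $b_0\in V'$; thus $E_0\subset V'$. Setting $M:=V\cap(E\setminus\bigcup_iA_i)$ and $N:=V'\cap(E\setminus\bigcup_iA_i)$, we get $E\setminus\bigcup_iA_i=M\cup N$ with $M$ and $N$ separated, $p_0\in M$, and $E_0\subset N$; in other words, $A_1\cup\cdots\cup A_d$ separates $E$ between $\{p_0\}$ and $E_0$. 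By \Cref{lem:KurDim2} this forces $\bigcap_{i=1}^{d}A_i\neq\emptyset$, contradicting the construction of the $A_i$. Therefore $p_0$ and $b_0$ lie in the same component of $E\setminus\overline Y$; as $E\setminus\overline Y$ is relatively open in the convex set $E$, that component is polygonally connected, so we may take $\alpha_\mathcal{P}$ to be a polygonal chain in $E\setminus\overline Y$ joining $p_0$ and $b_0$, and then $\ell(\alpha_\mathcal{P})<\infty$ because $\alpha_\mathcal{P}$ is a finite concatenation of segments.

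I expect the main obstacle to be the bookkeeping that reconciles the closed ``barrier'' $\overline{X_1\cup\cdots\cup X_d}$ in the conclusion with the open sets demanded by \Cref{lem:KurDim2}: one has to inflate each $X_i$ by $\delta'$ so that the $A_i$ become open and still absorb $\overline Y$, yet keep $\delta'$ small enough (relative to $\delta$ and to the $\varrho$-gap between the $X_i$ and $\{p_0\}\cup E_0$) that all three conditions $A_i\cap E_i=\emptyset$, $\bigcap_iA_i=\emptyset$, and $(\{p_0\}\cup E_0)\cap\bigcup_iA_i=\emptyset$ survive. Once this is set up, the passage from ``does not separate'' to ``same component'' and then to a polygonal chain is soft and uses only local connectedness of the simplex.
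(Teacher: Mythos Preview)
Your proof is correct and follows essentially the same approach as the paper: both inflate the $X_i$ to open neighborhoods and apply \Cref{lem:KurDim2} in contrapositive form to conclude that the union does not separate $p_0$ from $E_0$, then extract a polygonal path. The only noteworthy difference is that the paper first shrinks the simplex to $E'\subset E^\circ$ so that the non-separation argument lands in an open subset of $\R^d$ (and then patches in short segments $\beta_1,\beta_2$ back to $p_0,b_0$), whereas you work directly in $E$ and invoke local convexity of $E$ to get polygonal connectedness of the relevant component; your route is slightly more economical, while the paper's shrinking lets it cite standard region/polygonal-chain theorems from Kuratowski verbatim.
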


\begin{proof}
Because each set $X_i$, $i\in\{1,\ldots,d\}$, is a union of elements of $\mathcal {U}$,  by the conditions on $\mathcal {U}$ we have $\overline{X_1\cup \cdots \cup X_d}\cap(\{p_0\} \cup E_0)=\emptyset$. 
Thus we may choose $\delta>0$ small enough so that $[X_1\cup \cdots \cup X_d]_{\delta}\cap(\{p_0\} \cup E_0)=\emptyset$. Then there is a nondegenerate simplex $E'=[p'_0,\ldots,p'_d]\subset E^\circ$ with faces $E'_i$, satisfying $\vertii{p_i'-p_i}<\frac\delta2$ ($i\in\{0,\ldots,d\}$), such that $E_i' \cap [X_i]_{\delta/2} = \emptyset$ for each $i\in\{1,\ldots,d\}$, $[X_1\cup \cdots \cup X_d]_{\delta/2}\cap(\{p'_0\} \cup E'_0)=\emptyset$, and such that there exists a  line segment $\beta_1 \subset E\setminus (X_1\cup\cdots\cup X_d)$ connecting $p_0$ and $p_0'$  and a  line segment $\beta_2 \subset E\setminus (X_1\cup\cdots\cup X_d)$ connecting the barycenter $b_0'$ of $E_0'$ and $b_0$. Thus it suffices to construct a path $\alpha'_{\mathcal{P}}$ of finite length $\ell(\alpha'_\mathcal{P})$ in $E\setminus \overline{X_1\cup\cdots\cup X_d}$ connecting $p'_0$ and $b'_0$.

Since $(X_1)_{\delta/2} \cap\cdots\cap (X_d)_{\delta/2}=\emptyset$, Lemma~\ref{lem:KurDim2} implies that $E' \cap ((X_1)_{\delta/2} \cup\cdots\cup (X_d)_{\delta/2})$ does not separate $E'$ between $\{p'_0\}$ and $E'_0$. Thus $E' \cap \overline{X_1\cup\cdots\cup X_d}$ does not separate $E'$ between $\{p'_0\}$ and $E'_0$. Because $E'_0\cap \overline{X_1\cup\cdots\cup X_d} =\emptyset$, the set $E' \cap \overline{X_1\cup\cdots\cup X_d}$ does not separate $E'$ between $\{p'_0\}$ and $\{b'_0\}$  and, {\em a fortiori}, it does not separate $E^\circ$ between $\{p'_0\}$ and $\{b'_0\}$. Now \cite[\S49, IV, Theorem~1]{Kuratowski68} implies that there is a region $R \subset E^\circ\setminus \overline{X_1\cup\cdots\cup X_d}$ (open in $E^\circ$ and, hence, open in $\mathbb{R}^d$) with $\{p'_0,b'_0\}\in R$. Thus $p'_0$ and $b'_0$ can be connected by a path  $\alpha'_\mathcal{P} \subset R$ of finite length (even by a polygonal chain, see~\cite[\S59, I, Theorem~1]{Kuratowski68}). Obviously, $\alpha'_\mathcal{P}$ is contained in $E\setminus \overline{X_1\cup\cdots\cup X_d}$. Thus $\alpha_\mathcal{P} = \beta_1 \cup \alpha'_\mathcal{P}\cup \beta_2$ is the desired path.
\end{proof}

Note that because $\mathcal{U}$ is finite, there are only finitely many choices of collections $\mathcal{P}=\{X_1,\ldots, X_d\}$ that satisfy the assumptions of Lemma~\ref{lem:Ucover}. Let $\mathcal{K}$ be the finite set  of all these collections. Note that $\mathcal K\ne \emptyset$  because we may choose $X_1=\ldots=X_d=\emptyset$. For all that follows we fix a path  $\alpha_\mathcal{P}$ as in Lemma~\ref{lem:Ucover} for each $\mathcal{P} \in \mathcal{K}$. Then
\[
\ell_{\max}:=  \max\{\ell(\alpha_\mathcal{P})\colon \mathcal{P} \in \mathcal{K}\}
\]
is finite. The finiteness of this maximum is important for bounding the lengths of the paths $\alpha$ uniformly in the following lemma.

\begin{lemma}\label{lem:InductionStartNagata1}
Suppose $\Theta \subset E \subset \mathbb{R}^d$ has $\dim_N (\Theta)\le d-2$ with constant $c$, and let $q$ be as in Lemma~\ref{lem:ballsize}. Then there exists a path $\alpha\subset E$ with $\ell(\alpha) \le \ell_{\max}$ connecting $p_0$ with $b_0$ such that
$\Theta \cap \alpha \subset B_{q}(p_0) \cup B_{q}(b_0)$.
\end{lemma}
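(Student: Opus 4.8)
The plan is to mimic the proof of \Cref{lem:ballsize}, but now using \Cref{lem:KurDim2} in place of \Cref{lem:KurDim1} and using \Cref{lem:Ucover} to turn the topological nonseparation statement into an actual path. Recall from the setup that $\mathcal{U}$ is a fixed finite cover of $T$ by open balls of diameter $\frac{c\varrho}{4}$, that $\varrho$ was chosen so small that any set of diameter at most $(1+c)\varrho$ meeting $T$ avoids $\{p_0\}\cup E_0$ and avoids $E_i$ for some $i\in\{1,\dots,d\}$, and that $\mathcal{K}$ is the finite family of admissible collections $\mathcal{P}=\{X_1,\dots,X_d\}$ furnished by \Cref{lem:Ucover}, with $\ell_{\max}$ the (finite) maximum of the lengths of the associated paths $\alpha_{\mathcal{P}}$. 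So it suffices to produce \emph{some} $\mathcal{P}\in\mathcal{K}$ such that $\Theta\setminus(B_q(p_0)\cup B_q(b_0))\subset X_1\cup\cdots\cup X_d$; then $\alpha:=\alpha_{\mathcal{P}}$ works, since $\alpha_{\mathcal{P}}\subset E\setminus\overline{X_1\cup\cdots\cup X_d}$ lies in $E$, has length $\le\ell_{\max}$, connects $p_0$ and $b_0$, and misses $\Theta$ outside the two small balls.

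First I would apply the definition of Nagata dimension. Since $\dim_N(\Theta)\le d-2$ with constant $c$, for the scale $s=\varrho$ there is an $\varrho$-cover $\mathcal{W}=\bigcup_{i=0}^{d-2}\mathcal{W}_i$ of $\Theta$ with each $\mathcal{W}_i$ being $c\varrho$-separated. Discard from $\mathcal{W}$ all members that do not meet $T$; this only shrinks the collections and keeps them $c\varrho$-separated, while still covering $\Theta\cap T$, hence covering $\Theta\setminus(B_q(p_0)\cup B_q(b_0))$ (note $q<\varrho$ by the choice of $\varrho$ relative to the diameter-$q$ simplex $E_0$, so the complement of the two $q$-balls inside $E$ is contained in $T$). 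By the choice of $\varrho$, each surviving $W\in\mathcal{W}$ has $W\cap E_{j(W)}=\emptyset$ for some $j(W)\in\{1,\dots,d\}$, and $W\cap(\{p_0\}\cup E_0)=\emptyset$. Group the $\mathcal{W}_i$'s into $d$ classes $\mathcal{V}_1,\dots,\mathcal{V}_d$ by letting $\mathcal{V}_j$ consist of those $W$ (from any $\mathcal{W}_i$) with smallest index $j$ such that $W\cap E_j=\emptyset$; since there are only $d-1$ indices $i$ but we are distributing over the $d$ faces $E_1,\dots,E_d$, the key point is instead that \emph{within} a single $\mathcal{W}_i$ the sets are $c\varrho$-separated. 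More precisely, for $i\in\{0,\dots,d-2\}$ and $j\in\{1,\dots,d\}$ set $\mathcal{W}_{i,j}:=\{W\in\mathcal{W}_i: j \text{ is minimal with } W\cap E_j=\emptyset\}$; then for each $j$ the union $X_j:=\bigcup_i\bigcup_{W\in\mathcal{W}_{i,j}}(\text{elements of }\mathcal U \text{ meeting } W)$ can be taken, and one checks $[X_j]_\delta\cap E_j=\emptyset$ for $\delta$ small (using that $W$ has positive distance to $E_j$, uniformly since $\mathcal W$ is finite). Here I use that each $X_j$ is a union of elements of $\mathcal U$ by fattening slightly, exactly as in \Cref{lem:Ucover}.

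The main obstacle, and the step I expect to need the most care, is showing $\bigcap_{j=1}^d[X_j]_\delta=\emptyset$ for suitable small $\delta>0$, which is what certifies $\mathcal{P}=\{X_1,\dots,X_d\}\in\mathcal{K}$. This is where the dimension bound $d-2$ (rather than $d-1$) is used: if the intersection were nonempty, we would get, by the pigeonhole principle, a point lying in the $\delta$-neighborhoods of two sets $X_j,X_{j'}$ that both trace back to members of the \emph{same} $\mathcal{W}_i$ (there being only $d-1$ collections $\mathcal{W}_0,\dots,\mathcal{W}_{d-2}$ but $d$ indices $j$), forcing two sets in that $\mathcal{W}_i$ to lie within distance $O(\delta)$ of each other, contradicting $c\varrho$-separation once $\delta$ is chosen small enough (e.g.\ $\delta$ comparable to the ball radii $\tfrac{c\varrho}{8}$, safely below $c\varrho$ after accounting for the diameter $\tfrac{c\varrho}{4}$ of the $\mathcal U$-balls). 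Finally, since by construction $\Theta\setminus(B_q(p_0)\cup B_q(b_0))\subset\bigcup_W W\subset X_1\cup\cdots\cup X_d$, \Cref{lem:Ucover} gives the path $\alpha_{\mathcal P}\subset E\setminus\overline{X_1\cup\cdots\cup X_d}$ of length $\le\ell_{\max}$ from $p_0$ to $b_0$, and this $\alpha:=\alpha_{\mathcal P}$ satisfies $\Theta\cap\alpha\subset B_q(p_0)\cup B_q(b_0)$, completing the proof.
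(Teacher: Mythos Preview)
Your plan follows essentially the same approach as the paper: take a $\varrho$-cover $\mathcal{C}=\bigcup_{i=0}^{d-2}\mathcal{C}_i$ of $\Theta\cap T$ with $c\varrho$-separated layers, fatten each $C$ to the union $U_C$ of the $\mathcal{U}$-balls meeting it, distribute these fattened sets among $X_1,\dots,X_d$ according to which face $E_j$ they avoid, and use pigeonhole over the $d-1$ layers to rule out a common point of $[X_1]_\delta,\dots,[X_d]_\delta$, so that \Cref{lem:Ucover} applies.

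One point needs adjusting. You assign each $W$ to the minimal $j$ with $W\cap E_j=\emptyset$ and only \emph{then} fatten by $\mathcal{U}$-balls. But $d(W,E_j)$ may well be smaller than the $\mathcal{U}$-ball diameter $\tfrac{c\varrho}{4}$, so after fattening $X_j$ could meet $E_j$; your appeal to ``$\mathcal{W}$ is finite, so positive distance is uniform'' does not help, since that uniform distance could still lie below $\tfrac{c\varrho}{4}$, and no choice of $\delta>0$ repairs this. The paper avoids this by reversing the order: first form the fattened sets $V=U_C$ (diameter $\le(1+\tfrac{c}{2})\varrho$), then assign $V$ to the minimal $j$ with $[V]_{c\varrho/4}\cap E_j=\emptyset$. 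Such a $j$ exists because $[V]_{c\varrho/4}$ has diameter $\le(1+c)\varrho$ and meets $T$, and $\varrho$ was chosen for precisely this bound. With $\delta=\tfrac{c\varrho}{8}$ one then gets $[X_j]_\delta\cap E_j=\emptyset$ for free, and your pigeonhole step goes through verbatim (two of the $d$ distinct $V$'s lie in the same $\tfrac{c\varrho}{2}$-separated collection $\mathcal{V}'_i$, yet are within $2\delta=\tfrac{c\varrho}{4}$ of each other). Also, the claim $q<\varrho$ is neither needed nor obviously true; $\Theta\setminus(B_q(p_0)\cup B_q(b_0))=\Theta\cap T$ is immediate from $\Theta\subset E$ and the definition of $T$.
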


The assertion of Lemma~\ref{lem:InductionStartNagata1}, is illustrated in Figure~\ref{fig:alpha}.

\begin{proof}
Let $\mathcal U$ and $\varrho$ be as described before \Cref{lem:Ucover} . Because $\Theta$ satisfies $\dim_N (\Theta)\le d-2$ with constant $c$, the set $\Theta \setminus \big(B_{q}(p_0) \cup B_{q}(b_0)\big)$ admits a $\varrho$-cover $\mathcal{C}=\bigcup_{i=0}^{d-2} \mathcal{C}_i$, where $\mathcal{C}_i$ is $c\varrho$-separated for each $i\in \{0,\ldots, d-2\}$.
For each $C \in \mathcal{C}$ write
\[
 U_C := \bigcup_{\substack{U \in \mathcal U\\U \cap C \ne \emptyset}}U.
\]
For $i\in \{0,\ldots,d-2\}$ define
$
\mathcal{V}'_i := \{U_C  \colon  C\in \mathcal{C}_i\},
$
which is $\frac{c\varrho}2$-separated for each $i\in \{0,\ldots, d-2\}$ and set $\mathcal{V}:=\bigcup_{i=0}^{d-2} \mathcal{V}'_i$. Let
\[
\mathcal{V}_1 := \{V\in  \mathcal{V}\colon [V]_{c\varrho/4} \cap E_1 = \emptyset \}
\]
and
for $j\in\{2,\ldots,d\}$ let
\[
\mathcal{V}_j := \{V\in  \mathcal{V}\colon [V]_{c\varrho/4} \cap E_j = \emptyset \}
\setminus (\mathcal{V}_1 \cup\cdots\cup \mathcal{V}_{j-1}).
\]
By construction, each $V\in \mathcal{V}$ has diameter bounded by $(1+\frac c2)\varrho$ and, hence, satisfies $[V]_{c\varrho/8}  \cap E_j = \emptyset$ for some $j\in\{1,\ldots,d\}$. Thus $\mathcal{V}=\mathcal{V}_1 \cup \cdots \cup \mathcal{V}_d$ and, hence, $\mathcal{V}_1 \cup \cdots \cup \mathcal{V}_d$ is a cover of $\Theta \setminus \big(B_{q}(p_0) \cup B_{q}(b_0)\big)$. 
Define
\[
X_j:=\bigcup_{V\in\mathcal{V}_j}V, \qquad j\in \{1,\ldots, d\},
\]
and set $\delta=\frac{c\varrho}8$. Assume that there is $x \in \bigcap_{j=1}^d [X_j]_\delta$. Then there exist pairwise distinct $V_1,\ldots, V_d \in \mathcal V$ so that $x \in [V_j]_\delta$ for all $j\in\{1,\ldots,d\}$. Since there are $d$ of these sets, two of them are from the same collection $\mathcal V_i'$ for some $i\in\{0,\ldots,d-2\},$ a contradiction, because $\mathcal V_i'$ is $4\delta$-separated. Thus, $\bigcap_{j=1}^d [X_j]_\delta = \emptyset$ and so the sets $X_i$
satisfy the conditions of Lemma~\ref{lem:Ucover}. This lemma exhibits a path  $\alpha_\mathcal{P}$, $\mathcal{P}=\{X_1,\ldots, X_d\}$, with $\ell(\alpha_\mathcal{P}) \le \ell_{\max}$ that is contained in $E\setminus \overline{X_1\cup\cdots\cup X_d}$ and connects $p_0$ and $b_0$. Because $\overline{X_1\cup\cdots\cup X_d} \supset \Theta \setminus \big(B_{q}(p_0) \cup B_{q}(b_0)\big)$, the path  $\alpha:=\alpha_\mathcal{P}$ has the properties asserted in the statement of the lemma.
\end{proof}

\begin{figure}[h]
\includegraphics[width=0.8\textwidth]{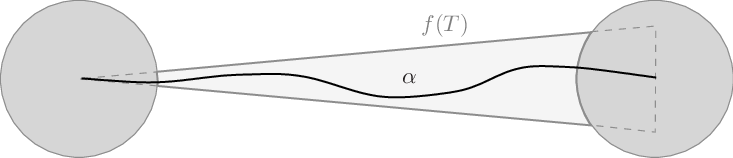}
\caption{Together with Lemma~\ref{lem:NagataSimilar}, Lemma~\ref{lem:InductionStartNagata1} implies that for each similarity transformation $f\colon\R^d\to\R^d$ we can find a path $\alpha\subset f(E)$ that does not intersect $\Theta$ outside the shaded disks $f(B_{q}(p_0) \cup B_{q}(b_0))$, {\it i.e.}, for $\alpha$ we have $(\alpha \cap f(T)) \cap \Theta=\emptyset$. \label{fig:alpha}}
\end{figure}

Set $a_1= (-1,0,\ldots,0)$ and $a_2=(1,0,\ldots,0)$. 
Let $\Theta\subset \mathbb{R}^d$ have $\dim_N(\Theta) \le d-2$ with constant $c$. By Lemma~\ref{lem:ballsize}, $\Theta$ is $q$-porous with $q$ depending only on $d$ and $c$, so for every $i\in\{1,2\}$ there exists a sequence $(s_{i,n})_{n\ge 0}$ with $\vertii{s_{i,n} - a_i} \le 2^{-n}$ such that  
\begin{equation}\label{eq:disjointAdisks}
B_{2^{-n}q}(s_{i,n}) \cap \Theta = \emptyset \qquad(n\ge 0).  
\end{equation}
This implies that
\begin{equation}\label{eq:bSequences}
\vertii{s_{1,0} - s_{2,0}} \le 4 \qquad\text{and}\qquad
\vertii{s_{i,n} - s_{i,n+1}} \le 2^{1-n} \quad (i\in\{1,2\},\, n\ge 0).
\end{equation}
We illustrate this sequence in Figure~\ref{fig:arc1}.\footnote{If $\Theta$ is disjoint from $\{a_1,a_2\}$ (which we will assume later) and closed, we could define $(s_{i,n})_{n\ge 0}$ in a way that there exists $n_0$ such that $s_{i,n}=a_i$ holds for $n\ge n_0>0$ as in the figure ($i\in\{1,2\}$). However, this observation is not crucial for the current proof.}
\begin{figure}[h]
\includegraphics[width=0.9\textwidth]{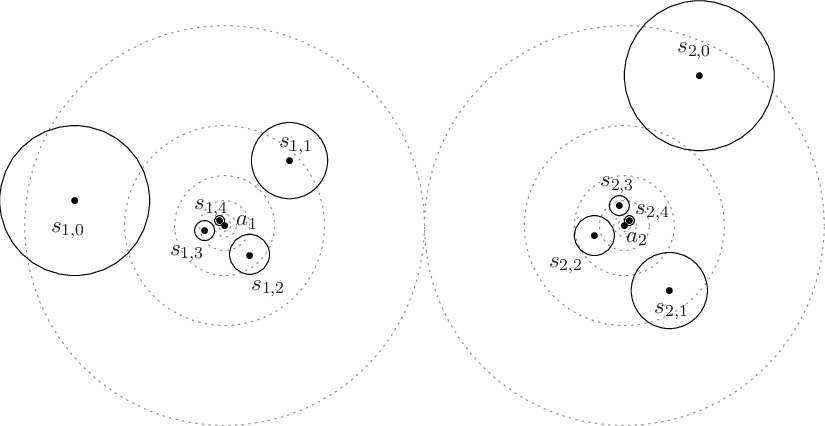}
\caption{The sequences $(s_{i,n})_{n\ge 1}$ together with the circles $B_{2^{-n}q}(s_{i,n})$ for $i\in\{1,2\}$ and small $n$.
\label{fig:arc1}}
\end{figure}

For a  similarity transformation $f\colon \R^d\to \R^d$ we call the unique number $r>0$ satisfying $\vertii{f(x)-f(y)} = r\vertii{x-y}$ for all $x,y\in \R^d$ the {\em similarity ratio} of $f$ and denote it by $\operatorname{sr}(f)$.

\begin{lemma}\label{lem:subpaths}
Let $\Theta\subset \mathbb{R}^d$ with $\dim_N(\Theta) \le d-2$ with constant $c$ and for $i\in\{1,2\}$ define the sequences $(s_{i,n})_{n\ge 0}$ as above. Then the following assertions hold.
\begin{itemize}
\item[(1)] For each $i\in \{1,2\}$ and each $n \ge 0$ there is a path $\alpha_{i,n} \subset \mathbb{R}^d\setminus \Theta$ connecting $s_{i,n}$ and $s_{i,n+1}$ with $\ell(\alpha_{i,n}) \le 2^{-n-1}\ell_{\max}$.
\item[(2)] There exists a path $\beta\subset \mathbb{R}^d\setminus \Theta$ connecting $s_{1,0}$ and $s_{2,0}$ with $\ell(\beta) \le \ell_{\max}$.
\end{itemize}
\end{lemma}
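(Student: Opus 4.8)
The plan is to obtain both families of paths from the single path produced by \Cref{lem:InductionStartNagata1}, transported by an appropriate similarity transformation, using \Cref{lem:NagataSimilar} to carry along the Nagata dimension bound and using the $\Theta$-free balls from \eqref{eq:disjointAdisks} to absorb the two ``bad'' end pieces. First I would isolate the following general step. Let $u,v\in\R^d$ with $u\neq v$, put $\varrho_0:=\vertii{u-v}/4$, and suppose that $B_{\varrho_0 q}(u)\cap\Theta=\emptyset$ and $B_{\varrho_0 q}(v)\cap\Theta=\emptyset$. Then $u$ and $v$ can be joined by a path in $\R^d\setminus\Theta$ of length at most $\varrho_0\,\ell_{\max}$. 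To see this I would pick a similarity transformation $f\colon\R^d\to\R^d$ with $f(p_0)=u$ and $f(b_0)=v$; such $f$ exists because $\vertii{p_0-b_0}=\vertii{(0,\ldots,0,4)}=4$, and necessarily $\operatorname{sr}(f)=\varrho_0$. By \Cref{lem:NagataSimilar} the set $f^{-1}(\Theta)$ has Nagata dimension at most $d-2$ with the same constant $c$, and since this property (for a fixed constant) passes to subsets, so does $\Theta':=f^{-1}(\Theta)\cap E$; moreover $\Theta'\subset E$. Hence \Cref{lem:InductionStartNagata1} applies to $\Theta'$ and yields a path $\tilde\alpha\subset E$ with $\ell(\tilde\alpha)\le\ell_{\max}$ connecting $p_0$ and $b_0$ such that $\Theta'\cap\tilde\alpha\subset B_q(p_0)\cup B_q(b_0)$. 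Because $\tilde\alpha\subset E$, in fact $f^{-1}(\Theta)\cap\tilde\alpha=\Theta'\cap\tilde\alpha\subset B_q(p_0)\cup B_q(b_0)$, so pushing forward by $f$ the path $\alpha:=f(\tilde\alpha)$ connects $u$ and $v$, has length $\operatorname{sr}(f)\,\ell(\tilde\alpha)\le\varrho_0\,\ell_{\max}$, and meets $\Theta$ only inside $f(B_q(p_0))\cup f(B_q(b_0))=B_{\varrho_0 q}(u)\cup B_{\varrho_0 q}(v)$, which is disjoint from $\Theta$ by hypothesis. When $u=v$ the constant path does the job (and in the applications below $u\notin\Theta$, since $u$ lies in one of the $\Theta$-free balls).

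Assertion (1) then follows by applying this step with $u=s_{i,n}$ and $v=s_{i,n+1}$. Here $\varrho_0=\vertii{s_{i,n}-s_{i,n+1}}/4\le 2^{-n-1}$ by \eqref{eq:bSequences}, so $B_{\varrho_0 q}(s_{i,n})\subseteq B_{2^{-n}q}(s_{i,n})$ and $B_{\varrho_0 q}(s_{i,n+1})\subseteq B_{2^{-(n+1)}q}(s_{i,n+1})$, and both of the larger balls are disjoint from $\Theta$ by \eqref{eq:disjointAdisks}; the length bound $\varrho_0\,\ell_{\max}\le 2^{-n-1}\ell_{\max}$ is exactly what is claimed. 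Assertion (2) follows by applying the step with $u=s_{1,0}$ and $v=s_{2,0}$: now $\varrho_0=\vertii{s_{1,0}-s_{2,0}}/4\le 1$ by \eqref{eq:bSequences}, so $B_{\varrho_0 q}(s_{i,0})\subseteq B_q(s_{i,0})$ is disjoint from $\Theta$ by \eqref{eq:disjointAdisks} (taken with $n=0$) for $i\in\{1,2\}$, and $\ell(\beta)\le\varrho_0\,\ell_{\max}\le\ell_{\max}$.

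Since the real work has already been done in \Cref{lem:ballsize} and \Cref{lem:InductionStartNagata1}, what remains is essentially bookkeeping, and I expect the only delicate point to be matching the hypothesis ``$\Theta\subset E$'' of \Cref{lem:InductionStartNagata1}. One must check that replacing $f^{-1}(\Theta)$ by $f^{-1}(\Theta)\cap E$ preserves the bound $\dim_N\le d-2$ \emph{with the same separation constant $c$} — which holds because any $s$-cover with $cs$-separated pieces restricts to such a cover of any subset — and that the conclusion of \Cref{lem:InductionStartNagata1} is insensitive to this restriction, since the path it produces lies in $E$. The second thing worth emphasizing is that $\ell_{\max}$ depends only on $d$ and $c$ (through $q$, $\varrho$, and the fixed cover $\mathcal U$), not on $\Theta$; this uniformity is precisely what makes the single constant $\ell_{\max}$ serve for all $n$ in (1) and for (2) simultaneously.
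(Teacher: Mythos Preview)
Your proof is correct and follows essentially the same approach as the paper: choose a similarity sending $p_0,b_0$ to the two target points, pull $\Theta$ back, apply \Cref{lem:InductionStartNagata1} to $f^{-1}(\Theta)\cap E$, and push the resulting path forward, using \eqref{eq:disjointAdisks} to clear the end balls. The only difference is presentational---you isolate the common ``general step'' once and then specialize, whereas the paper runs the argument directly for~(1) and then notes that~(2) is analogous.
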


\begin{proof}
We start with the proof of (1). Fix $i\in\{1,2\}$ and $n\ge 0$. We may assume that $s_{i,n}\ne s_{i,{n+1}}$ because otherwise there is nothing to prove. Fix a similarity transformation $f_{i,n}\colon\mathbb{R}^d \to \mathbb{R}^d$ that maps $p_0$ to $s_{i,n}$ and $b_0$ to $s_{i,{n+1}}$. By Lemma~\ref{lem:NagataSimilar}, $f_{i,n}^{-1}(\Theta)$ satisfies $\dim_N (f_{i,n}^{-1}(\Theta)) \le d-2$ with constant $c$. Moreover, \eqref{eq:disjointAdisks} and \eqref{eq:bSequences} imply that $\operatorname{sr}(f_{i,n}) \le 2^{-n-1}$ and
\begin{equation}\label{eq:fminus1empty}
f_{i,n}^{-1}(\Theta) \cap B_q(p_0)=\emptyset \quad\hbox{and}\quad f_{i,n}^{-1}(\Theta) \cap B_q(b_0)=\emptyset.
\end{equation}
Thus the set $f_{i,n}^{-1}(\Theta)\cap E$ satisfies the conditions of Lemma~\ref{lem:InductionStartNagata1} and we obtain a path $\alpha\subset E$ connecting $p_0$ and $b_0$ of length $\ell(\alpha) \le \ell_{\max}$ with $f_{i,n}^{-1}(\Theta) \cap \alpha \subset B_{q}(p_0) \cup B_{q}(b_0)$. From \eqref{eq:fminus1empty} we even obtain that $f_{i,n}^{-1}(\Theta)\cap \alpha = \emptyset$. Thus $\alpha_{i,n}=f_{i,n}(\alpha)$ is a path with the desired properties.
 
To prove (2) we may assume that $s_{1,0}\ne s_{2,0}$ because otherwise there is nothing to prove. Choose a similarity transformation $f\colon\mathbb{R}^d \to \mathbb{R}^d$ mapping $p_0$ to $s_{1,0}$ and $b_0$ to $s_{2,0}$ and observe that the similarity ratio of $f$ is bounded by $1$ because of~\eqref{eq:bSequences}. Then proceed as in the proof of~(1).
\end{proof}

\begin{lemma}\label{lem:avoidA1}
Let $\Theta\subset \mathbb{R}^d \setminus\{(\pm 1,0,\ldots,0)\}$ with $\dim_N(\Theta) \le d-2$ with constant $c$. Then there exists a path $\alpha \subset \mathbb{R}^d\setminus \Theta$,  connecting $a_1=(-1,0,\ldots,0)$ and $a_2=(1,0,\ldots,0)$, with length $\ell(\alpha)\le 3\ell_{\max}$.
\end{lemma}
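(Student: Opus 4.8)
The plan is to obtain $\alpha$ simply by concatenating the short paths furnished by \Cref{lem:subpaths}. For each $i\in\{1,2\}$ I would first assemble the countably many paths $\alpha_{i,0},\alpha_{i,1},\alpha_{i,2},\dots$ of \Cref{lem:subpaths}~(1) into a single path $\alpha_i$ joining $s_{i,0}$ to $a_i$. Concretely, reparametrize $\alpha_{i,n}$ so that it is defined on the interval $[1-2^{-n},1-2^{-n-1}]$; since $\alpha_{i,n}$ ends at $s_{i,n+1}$ and $\alpha_{i,n+1}$ begins at $s_{i,n+1}$, these pieces fit together to a continuous map on $[0,1)$ starting at $s_{i,0}$, and I would set $\alpha_i(1):=a_i$. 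Then I would take the path $\beta$ from \Cref{lem:subpaths}~(2), which joins $s_{1,0}$ and $s_{2,0}$, and let $\alpha$ be the concatenation of $\alpha_1$ run backwards from $a_1$ to $s_{1,0}$, then $\beta$ from $s_{1,0}$ to $s_{2,0}$, then $\alpha_2$ from $s_{2,0}$ to $a_2$.

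The verifications are routine. For continuity of $\alpha_i$ at the endpoint $t=1$: if $t\in[1-2^{-n},1-2^{-n-1}]$ then $\alpha_i(t)$ lies on $\alpha_{i,n}$, hence within distance $\ell(\alpha_{i,n})\le 2^{-n-1}\ell_{\max}$ of its endpoint $s_{i,n}$, and $\vertii{s_{i,n}-a_i}\le 2^{-n}$ by the construction of the sequence $(s_{i,n})_{n\ge0}$; thus $\vertii{\alpha_i(t)-a_i}\le 2^{-n}+2^{-n-1}\ell_{\max}\to 0$ as $t\to 1^-$, so $\alpha_i$ is indeed a path. For disjointness from $\Theta$: every $\alpha_{i,n}$ and $\beta$ lie in $\mathbb{R}^d\setminus\Theta$, and the only points added to the image of $\alpha$ upon taking these limits are $a_1$ and $a_2$, which are excluded from $\Theta$ by hypothesis; hence $\alpha\cap\Theta=\emptyset$. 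For the length bound: $\ell(\alpha_i)=\sum_{n\ge0}\ell(\alpha_{i,n})\le\sum_{n\ge0}2^{-n-1}\ell_{\max}=\ell_{\max}$ and $\ell(\beta)\le\ell_{\max}$, so by additivity of length under concatenation $\ell(\alpha)=\ell(\alpha_1)+\ell(\beta)+\ell(\alpha_2)\le 3\ell_{\max}$, as claimed.

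I do not expect any genuine obstacle here: the lemma is essentially an assembly of \Cref{lem:subpaths}. The one point needing a little care is checking that the infinite concatenation $\alpha_i$ extends continuously to its limit endpoint $a_i$ and that this endpoint does not lie in $\Theta$ --- the former is exactly what the geometric decay $\ell(\alpha_{i,n})\le 2^{-n-1}\ell_{\max}$, together with $\vertii{s_{i,n}-a_i}\le 2^{-n}$, is tailored to provide, and the latter is guaranteed by the hypothesis $\Theta\subset\mathbb{R}^d\setminus\{(\pm1,0,\dots,0)\}$.
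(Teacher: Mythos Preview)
Your proposal is correct and follows exactly the same approach as the paper: concatenate the paths $\alpha_{i,n}$ and $\beta$ from \Cref{lem:subpaths} and take the closure to add the endpoints $a_1,a_2$. The paper's proof is a one-sentence sketch, whereas you have spelled out the continuity at the limit endpoints, the disjointness from $\Theta$, and the length estimate in detail; all of these verifications are sound.
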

The construction of the path  $\alpha$ is illustrated in Figure~\ref{fig:arc2}.
\begin{figure}[h]
\includegraphics[width=0.9\textwidth]{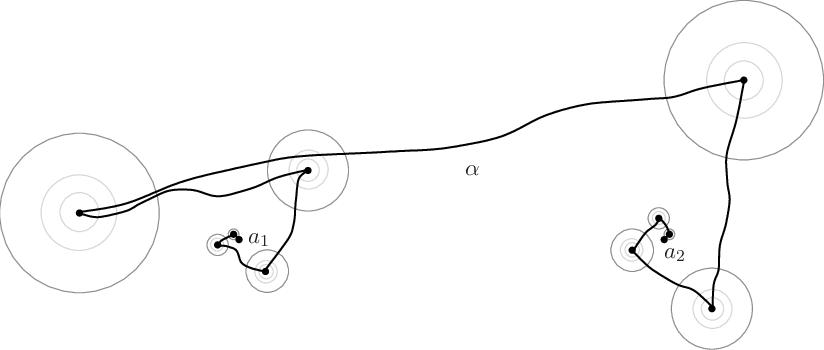}
\caption{
An illustration of the path $\alpha$ constructed in Lemma~\ref{lem:avoidA1}. The subpaths are given by Lemma~\ref{lem:subpaths}. They avoid the set $\Theta$ near the endpoints due to porosity, while further away from the endpoints the avoidance follows from Lemma~\ref{lem:InductionStartNagata1}.
\label{fig:arc2}}
\end{figure}

\begin{proof}
This is an immediate consequence of Lemma~\ref{lem:subpaths}. Just concatenate the paths 
\[
\ldots, \alpha_{1,3},\alpha_{1,2},\alpha_{1,1},\alpha_{1,0},\beta,\alpha_{2,0},\alpha_{2,1},\alpha_{2,2},\alpha_{2,3},\ldots
\] 
(with the  orientations appropriately adapted)
and take the closure to obtain the desired path~$\alpha$.
\end{proof}

We can now finish the proof of our main theorem.

\begin{proof}[Proof of \Cref{th:Nagata}]
Let $x,y\in \R^d\setminus \Theta$. If $x=y$ there is nothing to prove. Thus assume $x\ne y$. There exists a 
similarity transform $f$ which maps $x$ to $(1,0,\ldots,0)$ and $y$ to $(-1,0,\ldots,0)$. Hence,
$\operatorname{sr}(f)=\frac{2}{\|y-x\|}$.
By \Cref{lem:NagataSimilar} and \Cref{lem:avoidA1} there exists a path $\alpha\subset \R^d\setminus f(\Theta)$ connecting $(1,0,\ldots,0)$ and $(-1,0,\ldots,0)$ with $\ell(\alpha)\le 3 \ell_{\max}$. Therefore
$f^{-1}\circ \alpha$ is a path in $\R^d\setminus \Theta$ connecting $x$ and $y$ with $\ell( f^{-1}\circ \alpha)
=\operatorname{sr}(f^{-1})\ell(\alpha)=\frac{\|y-x\|}{2}3\ell_{\max}$. 
Thus $\R^d\setminus \Theta$ is $\tfrac{3}{2}\ell_{\max}$-quasi convex. Since $\Theta$ is a Lebesgue 
nullset by \Cref{pro:LebesgueNagata}, the result now follows from \Cref{prop:c-quasiconvex=>nullperm}.
\end{proof}

\subsection{Lipschitz dimension}
Let $d\ge 2$. In \cite{David21}, the integer valued \emph{Lipschitz dimension} $\dim_{\rm Lip}$ of Cheeger-Kleiner is treated. In \cite[Corollary 3.5]{David21} it is shown that $\dim_N(\Theta) \leq\dim_{\rm Lip}(\Theta)$ holds for each $\Theta\subset \R^d$. This immediately yields that if for $\Theta\subset\R^d$ we have $\dim_{\rm Lip}(\Theta)<d-1$, then $\Theta$ is null permeable. An example for an impermeable set with $\dim_{\rm Lip}(\Theta)=d-1$ is furnished by $\Theta=C\times[0,1]^{d-1}$ with $C$ being the middle third Cantor set. Indeed, from \cite[Proposition 3.6]{David21} it follows that $\dim_{\rm Lip}(C)=0$ and it is easy to see by definition (\cite[Definition 1.3]{David21}) that $\dim_{\rm Lip}([0,1]^{d-1})=d-1$. By \cite[Proposition 4.2]{David21}, we have that $\dim_{\rm Lip}(C\times[0,1]^{d-1})\leq d-1$ and since $\dim_N(C\times[0,1]^{d-1})=d-1$, we conclude that $\dim_{\rm Lip}(C\times[0,1]^{d-1})= d-1$. An example for a permeable set with Lipschitz dimension $d$ is given by $\Z^d$.

\subsection{Topological dimension}
The {topological dimension} of a subset of $\R^d$ is defined for instance in \cite[\S25, I]{Kuratowski66}, and we denote it by $\dim_T$. Concerning permeability, it behaves differently from the other notions of dimension. Sets of dimension 0 may be impermeable, take for example the product $F^d\subset\R^d$, where $F$ is the Smith-Volterra-Cantor set of \Cref{ex:closedneeded}. On the other hand, a hyperplane $H$ in $\R^d$ is a finitely permeable set with $\dim_T(H)=d-1$, and any linear subspace of topological dimension $d-2$ is null permeable. However, a set $\Theta\subset\R^d$ with $\dim_T(\Theta)=d$ has nonempty interior by the Menger-Urysohn Theorem (see {\it e.g.}~\cite[Theorem~1.8.10]{Engelking}) and, hence, it is impermeable. Summing up, it is easy to get the following result.

\begin{proposition}\label{cor:Topo}
Let $d \ge 1$. 
\begin{itemize}
\item[(1)] For each $s\in \{0,\ldots,d\}$ there exists an impermeable set $\Theta\subset \R^d$ with $\dim_T(\Theta)=s$.
\item[(2)] For each $s\in \{0,\ldots,d-1\}$ there exists a finitely permeable set $\Theta\subset \R^d$ with $\dim_T(\Theta)=s$.
\item[(3)] For each $s\in \{0,\ldots,d-2\}$ there exists a null permeable set $\Theta\subset \R^d$ with $\dim_T(\Theta)=s$. 
\item[(4)] Each $\Theta\subset \R^d$ with $\dim_T(\Theta)= d$  is impermeable.
\end{itemize}
\end{proposition}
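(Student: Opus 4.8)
The plan is to obtain all four items from results established earlier in the paper, exhibiting explicit witnessing sets for (1)--(3) and invoking a classical dimension-theoretic fact for (4). For \textbf{(1)}, given $s\in\{0,\dots,d\}$ I would take $\Theta=F^{d-s}\times[0,1]^s$, where $F\subset\R$ is the Smith--Volterra--Cantor set from \Cref{ex:closedneeded} (so for $s=d$ this reads $[0,1]^d$). Since $F$ and $[0,1]$ both carry positive Lebesgue measure, \Cref{cor:impermeableproduct} immediately gives that $\Theta$ is impermeable. For its topological dimension, $\dim_T(F)=0$ as $F$ is totally disconnected, hence $\dim_T(F^{d-s})=0$, so $\dim_T(\Theta)\le\dim_T(F^{d-s})+\dim_T([0,1]^s)=s$ by the standard product inequality for topological dimension (\cite{Engelking}), while $\dim_T(\Theta)\ge s$ because $\Theta$ contains a copy of $[0,1]^s$; thus $\dim_T(\Theta)=s$.

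For \textbf{(3)}, where necessarily $d\ge2$, I would take the $s$-dimensional linear subspace $\Theta=\R^s\times\{0\}^{d-s}$, which has $\dim_T(\Theta)=s$ and, since $s\le d-2<d-1$, satisfies $\mathcal H^{d-1}(\Theta)=0$; hence $\Theta$ is null permeable by \Cref{th:Haus}. Item \textbf{(2)} then follows for $s\le d-2$ because every null permeable set is finitely permeable, and for the remaining value $s=d-1$ I would take $\Theta$ to be an affine hyperplane $H$, so that $\dim_T(H)=d-1$; here $H$ is finitely permeable because the straight segment between two points that do not both lie on $H$ meets $H$ in at most one point and has length $\vertii{y-x}$, while if an endpoint lies on $H$ one first shifts both endpoints a distance $\varepsilon$ off $H$ to one fixed side, which changes the total length by at most $2\varepsilon$ and still produces a path meeting $H$ in finitely many points.

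Finally, for \textbf{(4)}, if $\dim_T(\Theta)=d$ then $\Theta$ has nonempty interior by the Menger--Urysohn theorem (see \cite[Theorem~1.8.10]{Engelking}), and so $\Theta$ is impermeable by Examples~\ref{exs:basic}~\eqref{ex:inner-point}. There is no genuine obstacle in this argument; the only steps needing a little care are the topological-dimension bookkeeping for the products in (1) (the inequality $\dim_T(X\times Y)\le\dim_T(X)+\dim_T(Y)$ together with the subspace lower bound) and the elementary check that an affine hyperplane is finitely permeable.
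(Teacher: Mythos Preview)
Your proof is correct and follows essentially the same approach as the paper. The paper does not give a formal proof of this proposition; the preceding paragraph simply lists the same witnessing examples you use ($F^d$ for an impermeable zero-dimensional set, a hyperplane for finite permeability, linear subspaces for null permeability, and the Menger--Urysohn theorem for item (4)), and then states that the result follows easily. Your write-up supplies the details---in particular the extension to $F^{d-s}\times[0,1]^s$ for general $s$ in (1) and the dimension bookkeeping via the product inequality and subspace monotonicity---which the paper leaves implicit.
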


For $d=2$, item (3) is best possible, {\it i.e.}, there is no null permeable set of topological dimension $d-1=1$. Indeed, let $\Theta\subset \R^2$ be null permeable, $x\in \Theta$, and $r>0$. There exist points $w_1,w_2,w_3\in \R^2$ such that  $\overline{w_1w_2w_3w_1}\subset B_{r}(x)$, and such that $\operatorname{wind}(\overline{w_1w_2w_3w_1},x)=1$, where $\operatorname{wind}(\kappa,z)$ denotes the winding number of a closed path $\kappa\subset \R^2$ relative to $z\in \R^2$. 
Since $\Theta$ has no interior points, for every $i\in\{1,\dotsc,4\}$ we can shift the 
corner point $w_i$ by a sufficiently small distance, to find points $v_i$ in the complement of $\Theta$, such that still $\overline{v_1v_2v_3v_1}\subset B_{r}(x)$ and $\operatorname{wind}(\overline{ v_1v_2v_3v_1},x)=1$. 
Since $\Theta$ is null permeable we can find paths $\gamma_1,\gamma_2,\gamma_3$ of finite length connecting $v_1$ and $v_2$, $v_2$ and $v_3$, and $v_3$ and $v_1$, respectively.  If we require the paths $\gamma_1,\gamma_2,\gamma_3$ to be sufficiently short, as we may by null permeability of $\Theta$, the   concatenation $\gamma$ of $\gamma_1,\gamma_2,\gamma_3$ satisfies $\gamma\subset B_{r}(x)$, and $\operatorname{wind}(\gamma,x)=1$. By \cite[\S 61, II, Theorem 5]{Kuratowski68}, we may assume that $\gamma$ is a loop. By the Jordan Curve Theorem there exist disjoint open sets $A_1,A_2$ with $\R^2=A_1\cup A_2\cup \gamma$ and $x\in A_1$. By construction, $x\in A_1\subset B_{r}(x)$. Since $\gamma\cap \Theta=\emptyset$, $A_1$ is a neighborhood of $x$ whose boundary has empty intersection with $\Theta$.

We have shown that, for every $x\in \Theta$ and $r>0$, there exists an open set $A_1\subset B_r(x)$ such that $A_1\cap \Theta$ is a neighborhood of $x$ w.r.t.~to the subspace topology on $\Theta$ having empty boundary relative to this topology. Since $r$ --- and thus also $A_1\cap\Theta$ --- can be made arbitrarily small, $\dim_T(\Theta)=0$ by definition.

If $d\ge 3$ and $\Theta\subset \R^d$ is closed, then also the optimality of (3) still holds, {\it i.e.,}~$\Theta$ cannot be null permeable if $\dim_T(\Theta)=d-1$:
Assume $\dim_T(\Theta)=d-1$. Then by \cite{Frankl:1930ab} (for $d=3$) and \cite{Frankl:1930aa} (for $d>3$), 
$\Theta$ does not contain a region, but separates a region. 
Let $G$ be such a region separated by $\Theta$ and let $x\in \Theta\cap G$. Then there exists $r>0$ such that $\Theta$ separates $B_r(x)$, i.e., $B_r(x)$ is the disjoint union of $A_1, A_2,  \Theta\cap B_r(x)$, where $A_1,A_2$ are open sets. Choose $x_1\in A_1$, $x_2\in A_2$. Then 
every path  $\gamma\in B_r(x)$ connecting $x_1$ and $x_2$ has nonempty intersection with $\Theta$. 
Every path  $\gamma$ connecting $x_1$ and $x_2$ and with $\gamma\cap (\R^d\setminus B_r(x))\ne \emptyset$ satisfies $\ell(\gamma)>\vertii{x_2-x_1}+c$ for some constant $c$. Thus $\Theta$ is not null permeable.

For $d\geq 3$ it is unclear if there exists a non-closed null permeable set $\Theta\subset\R^d$ with $\dim_T(\Theta)=d-1$. The example in Sitnikov~\cite[Section~3.3]{Sitnikov} indicates that this might be a difficult question.

\subsection{Synopsis}\label{sec:synopsis}
The interplay between the various notions of dimension we treated in this section and their relation to permeability is summarized concisely in \Cref{fig:dimensions}.
\begin{figure}[h]
\begin{center}
\includegraphics[width=0.7\textwidth]{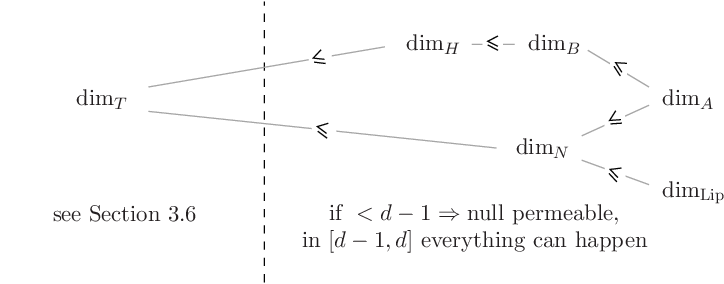}
\end{center}
\caption{Relations between different notions of dimension and permeability}\label{fig:dimensions}
\end{figure}

\section{Permeability and self-similar sets}\label{sec:selfsimilarsets}
We now turn to permeability results for self-similar (and, in the last part of this section, self-affine) sets. It turns out that the well-known {\em finite type condition} ({\em cf.}~\cite[Definition~8]{BandtRao:07} and see \cite{NgaiWang01} for a predecessor of this condition) is crucial for permeability of self-similar sets. This section is organized as follows. In Section~\ref{sec:ifsdef} we recall the terminology on iterated function systems that is relevant for us. Moreover, we prove that in the case of self-similar sets, the finite type condition is equivalent to the seemingly stronger condition of {\em finite types of neighborhoods}. This result is needed in the subsequent sections. 
\Cref{sec:selfsimilar2d} gives a criterion for a two-dimensional self-similar  set to be permeable. In dimensions greater than 2, we can use Theorem~\ref{th:Nagata} in order to give a criterion for a self-similar  set to be null permeable. This will be established in \Cref{sec:3difs}. Finally, \Cref{sec:bedford-mcmullen} is devoted to the impermeability of certain self-affine Bedford-McMullen carpets. Self-affine sets show a completely different behavior w.r.t.\ their permeability properties. This becomes apparent from the examples exhibited in~\cite{BuLeSt22}.

\subsection{Iterated function systems and self-similar sets}\label{sec:ifsdef}
Recall that $\{f_1,\ldots, f_m\}$ is an {\em iterated function system} (IFS, for short) in $\R^d$, if, for each $j\in\{1,\ldots,m\}$, $f_j\colon \R^d\to \R^d$ is a contraction having {\em contraction ratio} $r_j\in (0,1)$. It is known from Hutchinson~\cite{Hutchinson:81} that there exists a unique nonempty compact set $K$ satisfying
 \[
 K=\bigcup_{j=1}^m f_{j}(K).
 \]  
This set $K$ is called the {\em attractor} of the iterated function system $\{f_1,\ldots, f_m\}$. If the mappings $f_1,\ldots,f_m$ are similarity transformations, we call the iterated function system $\{f_1,\ldots,f_m\}$ {\em self-similar}. In this case $K$ is called a {\em self-similar set}. The definition of a {\em self-affine set} is analogous if the mappings $f_1,\ldots,f_m$ are affine transformations.

We introduce some notation. For a nonempty set $M$ we denote by $M^*$ and $M^\N$ the set of finite and infinite sequences of elements of $M$, respectively. 
For $k\in\N$ and $\mathbf{i}=i_1\cdots i_k\in M^*$ we write $|\mathbf{i}|:=k$ for the {\em length} of $\mathbf{i}$ and we write $M^{\le k}=\{\mathbf{i}\in M^*\colon |\mathbf{i}|\le k\}$. Let $\mathcal{F}=\{f_1,\ldots,f_m\}$ be a self-similar IFS in $\mathbb{R}^d$ with similarity ratios $r_i:=\operatorname{sr}(f_i)$ for $i\in\{1,\ldots,m\}$.   Set $r_{\min}:=\min\{r_1,\ldots, r_m\}$ and $r_{\max}:=\max\{r_1,\ldots, r_m\}$.
For $\mathbf{i}=i_1\cdots i_k\in\{1,\ldots, m\}^*$ let $f_{\mathbf{i}}=f_{i_1} \circ \cdots \circ f_{i_k}$. For $\mathbf{i}=\epsilon$, the empty word, this means that $f_\epsilon=\operatorname{id}$. 
In the sequel, we will deal with attractors $K$ satisfying  $\#f_i(K) \cap f_j(K) < \infty$ for $1\le i<j\le m$. 

\begin{definition}[Open set condition]
We say that an iterated function system $\mathcal{F}:=\{f_1,\ldots, f_m\}$ in $\R^d$ satisfies the {\em open set condition} if there exists a nonempty open set $U\subset \R^d$ such that $\bigcup_{i=1}^m f_i(U)\subset U$ and $f_i(U)\cap f_j(U) = \emptyset$ for $i\ne j$.
\end{definition}
 
We now recall the {\em finite type condition} of \cite[Definiton~8]{BandtRao:07} together with a stronger version of it. We use the notation $[Y]_\varepsilon$ from \eqref{eq:Aeps}.

\begin{definition}[Neighbor maps, finite types of neighborhoods, and finite type condition]\label{def:finiteneighborhoods}
Let $K$ be the attractor of a self-similar IFS $\mathcal{F}=\{f_1,\ldots,f_m\}$. 
\begin{enumerate}
\item A {\em neighbor map} for $\mathcal{F}$ is a map $h\colon \R^d\to \R^d$ of the form $h=f_{\mathbf{i}}^{-1} \circ f_{\mathbf{j}}$
with $h\ne \operatorname{id}$. For every $\varepsilon \in [0,\infty)$ denote by  $N_\varepsilon(\mathcal{F})$ the set of neighbor maps $h$ with $[K]_\varepsilon \cap h([K]_\varepsilon) \ne \emptyset$ and $\operatorname{sr}(h) \in [r_{\min},r_{\min}^{-1}]$;
\item $\mathcal{F}$ has {\em finite types of neighborhoods for $\varepsilon >0$} if  $N_\varepsilon(\mathcal{F})$  is finite;
\item $\mathcal{F}$ has {\em finite types of neighborhoods} if there exists $\varepsilon>0$ such that 
$\mathcal{F}$ has finite types of neighborhoods for $\varepsilon$;
\item $\mathcal{F}$ satisfies the {\em finite type condition} if $N_0(\mathcal{F})$ is finite ({\em cf.}~\cite[Definiton~8]{BandtRao:07}).
\end{enumerate}
\end{definition}

Set
\[
H(\mathcal{F}) := \{ x \in \mathbb{R}^d \colon x \in   K \cap h(K) \text{ for some }h\in N_0(\mathcal{F})  \}=\bigcup_{h\in N_0(\mathcal{F})} K \cap h(K),
\]
which we call the set of {\em intersection points} of $\mathcal{F}$. It is immediate that $H(\mathcal{F})=\emptyset$ if and only if $N_0(\mathcal{F})=\emptyset$.
Note that if $0<\varepsilon_1<\varepsilon_2$, then $N_{\varepsilon_1}(\mathcal{F})\subset N_{\varepsilon_2}(\mathcal{F})$, so it makes sense to define
\[
N(\mathcal{F}):=\lim_{\varepsilon\to 0+}N_{\varepsilon}(\mathcal{F})=\bigcap_{\varepsilon\in (0,\infty)}N_{\varepsilon}(\mathcal{F})  .
\]

\begin{theorem}\label{thm:finite-type-finite-types}
Let $\mathcal{F}:=\{f_1,\ldots,f_m\}$ be a self-similar IFS. Then $\mathcal{F}$ has finite types of neighborhoods if and only if it satisfies the finite type condition.
\end{theorem}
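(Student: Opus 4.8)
The plan is to prove the two directions separately. The implication ``finite types of neighborhoods $\Rightarrow$ finite type condition'' is the easy direction: if $N_\varepsilon(\mathcal F)$ is finite for some $\varepsilon>0$, then since $N_0(\mathcal F)=N(\mathcal F)=\bigcap_{\varepsilon'>0}N_{\varepsilon'}(\mathcal F)\subseteq N_\varepsilon(\mathcal F)$ (using the monotonicity $N_{\varepsilon_1}\subseteq N_{\varepsilon_2}$ noted in the excerpt), finiteness of $N_0(\mathcal F)$ follows immediately. So the content is in the converse.

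For the converse, assume $N_0(\mathcal F)$ is finite; I want to produce $\varepsilon>0$ with $N_\varepsilon(\mathcal F)$ finite. The neighbor maps $h=f_{\mathbf i}^{-1}\circ f_{\mathbf j}$ appearing in any $N_\varepsilon(\mathcal F)$ have $\operatorname{sr}(h)\in[r_{\min},r_{\min}^{-1}]$ by definition, so they form a set of similarity transformations whose ratios are bounded above and below. The first step is to show this family is \emph{equicontinuous} in an appropriate sense and, more importantly, that the set of all candidate neighbor maps with $\operatorname{sr}(h)\in[r_{\min},r_{\min}^{-1}]$ that additionally satisfy a mild localization condition (say $h([K]_1)\cap [K]_1\neq\emptyset$) is \emph{relatively compact} in the space of similarity transformations (with, e.g., the topology of uniform convergence on compacta, or equivalently convergence of the defining parameters: orthogonal part, translation part, ratio). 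This relative compactness holds because bounded ratio plus a common bounded point (both $[K]_1$ and its image meet, and $K$ is bounded) pins down the translation parts to a bounded set, while the orthogonal parts live in the compact group $O(d)$.

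The key step is then an accumulation/limit argument. Suppose for contradiction that $N_\varepsilon(\mathcal F)$ is infinite for every $\varepsilon>0$. Pick a sequence $\varepsilon_n\downarrow 0$ and, for each $n$, a neighbor map $h_n\in N_{\varepsilon_n}(\mathcal F)\setminus N_0(\mathcal F)$; such a choice is possible because $N_0(\mathcal F)$ is finite while each $N_{\varepsilon_n}(\mathcal F)$ is infinite. By the relative compactness just described, after passing to a subsequence, $h_n\to h$ for some similarity $h$ with $\operatorname{sr}(h)\in[r_{\min},r_{\min}^{-1}]$. Now one must argue two things: first, that $h$ is itself a neighbor map, i.e.\ $h=f_{\mathbf i}^{-1}\circ f_{\mathbf j}$ for some $\mathbf i,\mathbf j$ with $h\neq\operatorname{id}$ --- this is where I expect the main obstacle, since the words $\mathbf i_n,\mathbf j_n$ defining $h_n$ may have unbounded length, and one has to extract, via a compactness/combinatorial argument (a König's-lemma-type pruning on the tree of words, tracking the partial compositions $f_{i_1}^{-1}\cdots$), stable words $\mathbf i,\mathbf j$ with $f_{\mathbf i}^{-1}\circ f_{\mathbf j}=\lim h_n=h$; one should also rule out $h=\operatorname{id}$, e.g.\ by noting that if $h_n\to\operatorname{id}$ then for $n$ large $h_n$ would have to already lie in a fixed finite set forcing $h_n=\operatorname{id}$, contradicting $h_n\notin N_0(\mathcal F)$, or by a separation estimate coming from the attractor not being a single point. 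Second, since $[K]_{\varepsilon_n}\cap h_n([K]_{\varepsilon_n})\neq\emptyset$ and $\varepsilon_n\to 0$ with $h_n\to h$, a limiting argument gives $K\cap h(K)\neq\emptyset$, so $h\in N_0(\mathcal F)$. But then $h_n\to h$ with $h$ in the finite set $N_0(\mathcal F)$; since the neighbor maps form a discrete-enough family near any fixed one (two similarities that are close and both of the form $f_{\mathbf i}^{-1}f_{\mathbf j}$ with ratios in the fixed range must coincide once they are close enough --- this uses that the relevant parameters lie in a set with no accumulation except within itself, which one extracts from finiteness of $N_0$ together with the localization), we conclude $h_n=h\in N_0(\mathcal F)$ for all large $n$, contradicting the choice $h_n\notin N_0(\mathcal F)$. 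This contradiction shows $N_\varepsilon(\mathcal F)$ is finite for some $\varepsilon>0$, completing the proof.

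The main obstacle, as flagged, is the bookkeeping in the limit argument: passing from convergence of the maps $h_n$ to the existence of fixed defining words for the limit map, when the words $\mathbf i_n,\mathbf j_n$ grow in length. I would handle this by working with the \emph{reduced} representation (canceling common prefixes so that $\mathbf i_n$ and $\mathbf j_n$ start with different letters), observing that $\operatorname{sr}(h_n)=r_{\mathbf j_n}/r_{\mathbf i_n}\in[r_{\min},r_{\min}^{-1}]$ forces $\bigl||\mathbf i_n|-|\mathbf j_n|\bigr|$ to be bounded, and then showing that the reduced words themselves must be bounded in length --- otherwise the localization condition $[K]_{\varepsilon_n}\cap h_n([K]_{\varepsilon_n})\neq\emptyset$ would fail, because deep cylinders $f_{\mathbf i_n}(K)$ and $f_{\mathbf j_n}(K)$ that are disjoint at the top level and have bounded ratio quotient are forced apart by a definite amount after applying $f_{\mathbf i_n}^{-1}$. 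Once the reduced words are bounded in length there are only finitely many of them, so the whole discussion reduces to a finite set and the continuity/accumulation argument closes routinely.
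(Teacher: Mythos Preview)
Your easy direction is fine, and the compactness framework you set up for the hard direction is natural. But there is a genuine gap at the step you yourself flag as the ``main obstacle,'' and your proposed resolution is circular.

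You want to conclude that the reduced words $\mathbf i_n,\mathbf j_n$ have bounded length, and you justify this by saying that otherwise ``deep cylinders $f_{\mathbf i_n}(K)$ and $f_{\mathbf j_n}(K)$ that are disjoint at the top level and have bounded ratio quotient are forced apart by a definite amount after applying $f_{\mathbf i_n}^{-1}$.'' But this is precisely the statement to be proved. From $h_n\notin N_0(\mathcal F)$ you only know $K\cap h_n(K)=\emptyset$, hence $d(K,h_n(K))>0$; you need a \emph{uniform} lower bound $d(K,h_n(K))\ge c>0$ independent of $n$, and nothing in your sketch supplies it. (Note also that ``disjoint at the top level'' cannot mean that the first-letter cylinders $f_{i_1}(K),f_{j_1}(K)$ are disjoint: they may well touch, as in the Sierpi\'nski triangle.) Your fallback argument---that the neighbor maps form a ``discrete-enough family'' near any $h\in N_0(\mathcal F)$---is equally circular: discreteness of the full set of neighbor maps is equivalent to what you are trying to prove, and finiteness of $N_0(\mathcal F)$ alone does not give it. Finally, without bounded word length you cannot even conclude that the limit $h$ is a neighbor map at all, since the set of neighbor maps is countable and need not be closed.

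The paper supplies exactly the missing uniform bound, by an inductive descent rather than compactness. It defines, for each word length $n$, the minimal $\varepsilon$-separation $e_n$ between $K$ and $h(K)$ over all non-touching neighbor maps $h=f_{\mathbf i}^{-1}\circ f_{\mathbf j}$ with $|\mathbf i|\le n$, and shows that whenever $e_{n+1}<e_n$ one must have $e_{n+1}\ge e$ for a fixed $e>0$ determined by the finite set $N_0(\mathcal F)$. The mechanism is: if $f_{\mathbf i}^{-1}\circ f_{\mathbf j}$ with $|\mathbf i|=n+1$ realizes the new minimum, truncate $\mathbf i$ and $\mathbf j$ by one level to get $\mathbf i',\mathbf j'$ with $\operatorname{sr}(f_{\mathbf i'}^{-1}\circ f_{\mathbf j'})\in[r_{\min},r_{\min}^{-1}]$; the self-similarity inclusion $f_{\mathbf i}([K]_\varepsilon)\subset f_{\mathbf i'}([K]_\varepsilon)$ (which fails for general affine maps) forces $f_{\mathbf i'}^{-1}\circ f_{\mathbf j'}\in N_0(\mathcal F)\cup\{\operatorname{id}\}$, so the original map lies in a fixed finite set of one-step extensions of $N_0(\mathcal F)$, whose members have a uniform positive separation $e$ from $K$. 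This descent is the real content; your sketch reaches the point where it is needed but does not carry it out.
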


\begin{proof} 
For $\#K=1$, the assertion is immediate, so we assume $\#K>1$.
We only prove necessity, sufficiency being trivial. Thus we assume that the finite type condition holds, {\it i.e.}, $N_0(\mathcal{F})$ is finite. For convenience, let $\mathcal{M}:=\{1,\ldots, m\}$ and $C:=\big\lceil\frac{\log (r_{\min})}{\log (r_{\max})}\big\rceil$. Define
\[
\begin{split}
R&:= \{ g= f_{i}^{-1}\circ h \circ f_{\mathbf{j}} \colon
h\in N_0(\mathcal{F})\cup\{\mathrm{id}\},\, i\in \mathcal{M}, \mathbf{j}\in \mathcal{M}^{\le C}, 
\operatorname{sr}(g)\in[r_{\min},r_{\min}^{-1}], K\cap g(K)=\emptyset
\}, \\
R(\mathbf{i})&:=\{
g=f_{\mathbf{i}}^{-1}\circ f_{\mathbf{j}} \colon\mathbf{j}\in \mathcal{M}^*,\operatorname{sr}(g)\in[r_{\min},r_{\min}^{-1}], K\cap g(K)=\emptyset
\} \quad (\mathbf{i}\in \mathcal{M}^*).
\end{split}
\]
These sets are finite, since $N_0(\mathcal{F})$ is finite and since there are only finitely many contractions in~$\mathcal{F}$. Because $\#K>1$, the minimum
\[
n_1:=\min\{n\in \N\colon \text{there exists } \mathbf{i}\in \mathcal{M}^{\le n} \text{ with } R(\mathbf{i})\neq \emptyset\}
\]
is finite. Thus
\[
e_n:=\sup\{\varepsilon>0\colon  [K]_\varepsilon\cap h([K]_\varepsilon)=\emptyset
\text{ for each } \mathbf{i}\in \mathcal{M}^{\le n} \text{ and each } h\in R(\mathbf{i}) \} \quad (n\ge n_1)
\]
are all finite. Moreover, let
\[
e:=\sup\{\varepsilon>0\colon [K]_\varepsilon \cap g([K]_\varepsilon)=\emptyset, \,\text{for all } g\in R\}, 
\]
and note that $e=\infty$ if $R=\emptyset$.
Since $K$ is compact and $R$ as well as $R(\mathbf{i})$ are finite, we have $e_n>0$ for $n\ge n_1$ and 
$e\in (0,\infty]$. Moreover, the sequence $(e_n)_{n\geq n_1}$ is nonincreasing. 

The result will follow if we show that each element of the sequence $(e_n)_{n\geq n_1}$ is greater than or equal to $\min\{e,e_{n_1}\}$. To establish this claim it suffices to show that $e_{n+1}<e_n$ implies $e_{n+1}\ge e$. To prove this, let $n\geq n_1$ with $e_{n+1}<e_n$. Since $K$ is compact and  $\bigcup_{|\mathbf{i}|\le n+1}R(\mathbf{i})$ is finite, there exist $\mathbf{i}\in  \mathcal{M}^{\le n+1}$ and $\mathbf{j}\in  \mathcal{M}^*$ with $\operatorname{sr}(f_{\mathbf{i}}^{-1}\circ f_{\mathbf{j}})\in[r_{\min},r_{\min}^{-1}]$ and
$[K]_{e_{n+1}}\cap (f_{\mathbf{i}}^{-1}\circ f_{\mathbf{j}})([K]_{e_{n+1}})\ne \emptyset$ but $[K]_\varepsilon\cap (f_{\mathbf{i}}^{-1}\circ f_{\mathbf{j}})([K]_\varepsilon)=\emptyset$ for all $0\le\varepsilon<e_{n+1}$. Since $e_{n+1}<e_n$, we need to have $|\mathbf{i}|=n+1$ and $|\mathbf{j}|\ge n+1$ (if $|\mathbf{j}|\le n$, one can swap $\mathbf{i}$ and $\mathbf{j}$ to get a contradiction). 
Let $\mathbf{i}'$ be obtained by cutting off the last index from $ \mathbf{i}$. 
There exists $k\in\{0,\ldots,C\}$ such that if  $\mathbf{j}'$ is obtained from $ \mathbf{j}$ by cutting off the last $k$ indices, then $\operatorname{sr}(f_{\mathbf{i}'}^{-1}\circ f_{\mathbf{j}'})\in [r_{\min},r_{\min}^{-1}]$. 
We show that $K\cap \big(f_{\mathbf{i}'}^{-1}\circ f_{\mathbf{j}'}\big)(K)\ne \emptyset$: 
Otherwise $f_{\mathbf{i}'}^{-1}\circ f_{\mathbf{j}'}\in R(\mathbf{i}')$, and if we let $e_{n+1}<\varepsilon<e_n$,
then
\begin{equation}\label{eq:similarity-necessary}
f_{\mathbf{i}}([K]_\varepsilon)\subset f_{\mathbf{i}'}([K]_\varepsilon)\quad\text{ and }\quad f_{\mathbf{j}}([K]_\varepsilon)\subset f_{\mathbf{j}'}([K]_\varepsilon),
\end{equation}
since all occurring maps are similarity transformations. Hence, 
\[\emptyset\ne f_{\mathbf{i}}([K]_\varepsilon)\cap f_{\mathbf{j}}([K]_\varepsilon) \subset f_{\mathbf{i}'}([K]_\varepsilon)\cap f_{\mathbf{j}'}([K]_\varepsilon). \]
But this is impossible by  the definition of $e_n$.
Now $K\cap \big(f_{\mathbf{i}'}^{-1}\circ f_{\mathbf{j}'}\big)(K)\ne \emptyset$ with $\operatorname{sr}(f_{\mathbf{i}'}^{-1}\circ f_{\mathbf{j}'})\in [r_{\min},r_{\min}^{-1}]$ means that $f_{\mathbf{i}'}^{-1}\circ f_{\mathbf{j}'}\in N_0(\mathcal{F})\cup\{\mathrm{id}\}$. 
 Thus $f_{\mathbf{i}}^{-1}\circ f_{\mathbf{j}}=f_{i_{n+1}}^{-1}\circ h \circ f_{\tilde {\mathbf{j}}}$ for $h\in  N_0(\mathcal{F})\cup\{\mathrm{id}\}$, $i_{n+1}\in \mathcal{M}$ and $\tilde{\mathbf{j}}\in\mathcal{M}^{\le C}$, in other words, $f_{\mathbf{i}}^{-1}\circ f_{\mathbf{j}}\in R$. 
From $f_{\mathbf{i}}^{-1}\circ f_{\mathbf{j}}\in R$ we gain $e_{n+1}\ge e$ and the claim is proved. Thus $\mathcal{F}$ has finite types of neighborhoods for every $\varepsilon \in (0,\min\{e,e_{n_1}\})$.
\end{proof}

\begin{remark}\mbox{}
\begin{enumerate}
\item In the preceding proof we obtained, as an intermediate result, that $e_n>e_{n+1}$ implies $R\ne \emptyset$. 
Conversely, if $R=\emptyset$, then $e_n=e_{n_1}$ for every $n\ge n_1$.

\item The inclusions in \eqref{eq:similarity-necessary} only hold for self-similar IFS. We believe that the conclusion of \Cref{thm:finite-type-finite-types} does not hold if ``self-similar'' is replaced by ``self-affine''.
\item  Note that \Cref{thm:finite-type-finite-types} implies $N_0(\mathcal{F})=N(\mathcal{F})$.
\end{enumerate}
\end{remark}

\begin{lemma}\label{lemma:HK-finite}
Let $K\subset \R^d$ be the attractor of a self-similar IFS $\mathcal{F}:=\{f_1,\ldots, f_m\}$ satisfying
\begin{enumerate}
\item   
$f_i(K) \cap f_j(K) < \infty$ for $1\le i<j\le m$;
\item  $\mathcal{F}$ is of finite type.
\end{enumerate}
Then $N(\mathcal{F})$ and $H(\mathcal{F})$ are both finite sets and there exists $\varepsilon\in (0,\infty)$ with  $N(\mathcal{F})=N_{\varepsilon}(\mathcal{F})$.
\end{lemma}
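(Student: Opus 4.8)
The plan is to derive everything from \Cref{thm:finite-type-finite-types}. By assumption (2), $\mathcal{F}$ satisfies the finite type condition, so $N_0(\mathcal{F})$ is finite by definition. Then \Cref{thm:finite-type-finite-types} tells us that $\mathcal{F}$ has finite types of neighborhoods, i.e.\ there exists $\varepsilon_0>0$ with $N_{\varepsilon_0}(\mathcal{F})$ finite. Since $N_{\varepsilon_1}(\mathcal{F})\subset N_{\varepsilon_2}(\mathcal{F})$ for $0<\varepsilon_1<\varepsilon_2$, and since $N(\mathcal{F})=\bigcap_{\varepsilon>0}N_\varepsilon(\mathcal{F})\subset N_{\varepsilon_0}(\mathcal{F})$, the set $N(\mathcal{F})$ is finite as well. (Recall from the remark after \Cref{thm:finite-type-finite-types} that actually $N_0(\mathcal{F})=N(\mathcal{F})$, so there is nothing further to check for finiteness of $N(\mathcal{F})$.)

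For the claim $N(\mathcal{F})=N_\varepsilon(\mathcal{F})$ for some $\varepsilon\in(0,\infty)$: I would argue that the decreasing family $\{N_\varepsilon(\mathcal{F})\}_{\varepsilon>0}$ of finite sets (all contained in the finite set $N_{\varepsilon_0}(\mathcal{F})$) must stabilize. Concretely, for each neighbor map $h\in N_{\varepsilon_0}(\mathcal{F})\setminus N(\mathcal{F})$ there is, by definition of $N(\mathcal{F})$ as the intersection, some $\varepsilon_h>0$ with $h\notin N_{\varepsilon_h}(\mathcal{F})$, i.e.\ $[K]_{\varepsilon_h}\cap h([K]_{\varepsilon_h})=\emptyset$; since $N_{\varepsilon_0}(\mathcal{F})$ is finite, we may take $\varepsilon:=\min\{\varepsilon_0,\min_h \varepsilon_h\}>0$, and then $N_\varepsilon(\mathcal{F})$ contains no such $h$, so $N_\varepsilon(\mathcal{F})\subseteq N(\mathcal{F})$; the reverse inclusion is automatic, giving $N_\varepsilon(\mathcal{F})=N(\mathcal{F})$. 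Alternatively, one can invoke the intermediate result from the proof of \Cref{thm:finite-type-finite-types} (recorded in the first item of the Remark) that either $R=\emptyset$, in which case $e_n=e_{n_1}$ for all $n$ and the bound on $\varepsilon$ is uniform, or $R\neq\emptyset$ and one takes $\varepsilon\in(0,\min\{e,e_{n_1}\})$; both yield a single $\varepsilon$ with $N_\varepsilon(\mathcal{F})=N(\mathcal{F})$.

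Finally, finiteness of $H(\mathcal{F})$: by definition $H(\mathcal{F})=\bigcup_{h\in N_0(\mathcal{F})}K\cap h(K)$, a finite union since $N_0(\mathcal{F})=N(\mathcal{F})$ is finite. It therefore suffices to show each set $K\cap h(K)$ is finite for $h=f_{\mathbf{i}}^{-1}\circ f_{\mathbf{j}}\in N_0(\mathcal{F})$. Applying the similarity $f_{\mathbf{i}}$, finiteness of $K\cap h(K)=K\cap (f_{\mathbf{i}}^{-1}\circ f_{\mathbf{j}})(K)$ is equivalent to finiteness of $f_{\mathbf{i}}(K)\cap f_{\mathbf{j}}(K)$. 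Here $\mathbf{i}\neq\mathbf{j}$ (as $h\neq\operatorname{id}$); peeling off the longest common prefix $\mathbf{k}$ and writing $\mathbf{i}=\mathbf{k}i'\cdots$, $\mathbf{j}=\mathbf{k}j'\cdots$ with $i'\neq j'$, we get $f_{\mathbf{i}}(K)\cap f_{\mathbf{j}}(K)\subseteq f_{\mathbf{k}}\big(f_{i'}(K)\cap f_{j'}(K)\big)$, which is a similar copy of a subset of $f_{i'}(K)\cap f_{j'}(K)$, a finite set by hypothesis (1). Hence $K\cap h(K)$ is finite, and $H(\mathcal{F})$ is finite.

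I do not expect a genuine obstacle here; the only point that needs a little care is the stabilization argument for $N_\varepsilon(\mathcal{F})=N(\mathcal{F})$, where one must use finiteness of the ambient set $N_{\varepsilon_0}(\mathcal{F})$ to pass from ``for each $h$ there is an $\varepsilon_h$'' to ``there is one $\varepsilon$ working for all $h$ simultaneously'' — this is exactly where finiteness (rather than mere nestedness) is essential.
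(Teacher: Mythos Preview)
Your proof is correct and follows exactly the paper's approach: invoke \Cref{thm:finite-type-finite-types} to obtain some $\varepsilon_0>0$ with $N_{\varepsilon_0}(\mathcal{F})$ finite, use monotonicity and stabilization of the finite decreasing family $(N_\varepsilon(\mathcal{F}))_{0<\varepsilon\le\varepsilon_0}$ to find $\varepsilon$ with $N_\varepsilon(\mathcal{F})=N(\mathcal{F})$, and conclude $H(\mathcal{F})$ is finite as a finite union of finite sets. You additionally spell out the finiteness of each $K\cap h(K)$ via the common-prefix reduction to hypothesis~(1), a detail the paper simply asserts without justification.
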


\begin{proof}
By \Cref{thm:finite-type-finite-types}, $\mathcal{F}$ has finite types of neighborhoods for some $\varepsilon_0 > 0$. 
The set $H(\mathcal{F})$ is finite because $\# N_{0}(\mathcal{F})<\infty$ and $\# K \cap h(K)<\infty$ for each $h\in N_{0}(\mathcal{F})$.

Note further that, since $N_\varepsilon(\mathcal{F})$ is a finite set for every $\varepsilon\in (0,\varepsilon_0)$, there exists $\varepsilon\in (0,\infty)$ such that for all $\varepsilon'\in (0,\varepsilon]$ we have $N_{\varepsilon'}(\mathcal{F})=N_{\varepsilon}(\mathcal{F})$, and thus $N(\mathcal{F})=\lim_{\varepsilon'\to 0+}N_{\varepsilon'}(\mathcal{F})=N_{\varepsilon}(\mathcal{F})$. 
\end{proof}

\subsection{Two-dimensional self-similar sets}\label{sec:selfsimilar2d}
In this section we prove the following theorem, which provides a criterion of permeability of planar self-similar sets in terms of the finite type condition from Definition~\ref{def:finiteneighborhoods}.

\begin{theorem}\label{thm:ftn-null-permeable2d}
Let $K\subset \R^2$ be the attractor of a self-similar IFS $\mathcal{F}:=\{f_1,\ldots, f_m\}$ satisfying $\# f_i(K) \cap f_j(K) < \infty$ for $1\le i<j\le m$. If $K$ is connected and $\mathcal{F}$ satisfies the finite type condition, then $K$ is permeable.
\end{theorem}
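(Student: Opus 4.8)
The plan is to reduce the problem to a local statement around each intersection point and then bootstrap via self-similarity. By Lemma 4.4 (\texttt{lemma:outside-theta-closed}), since $K$ is closed with empty interior (it is a self-similar set with $\#f_i(K)\cap f_j(K)<\infty$, so it cannot contain an open set), it suffices to connect points $x,y\in\mathbb R^2\setminus K$ by paths $\gamma$ with $\ell(\gamma)<\|y-x\|+\delta$ and $\overline{\gamma\cap K}$ countable. By Lemma 4.19 (\texttt{lemma:HK-finite}), the finite type condition together with the finiteness of the $f_i(K)\cap f_j(K)$ guarantees that the set $H(\mathcal F)$ of intersection points is \emph{finite}, that $N(\mathcal F)$ is finite, and that $N(\mathcal F)=N_\varepsilon(\mathcal F)$ for some $\varepsilon>0$. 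This finiteness is the crucial structural input.

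First I would set up a ``combing'' construction: given a straight segment $\overline{xy}$ missing $K$, I cover $\overline{xy}$ by finitely many small balls and, inside each ball that meets $K$, replace the segment by a short detour. The key point is that within a small ball $B$ meeting $K$, after rescaling by the appropriate $f_{\mathbf i}^{-1}$, the intersection pattern of $K$ with itself looks (up to the finitely many neighbor maps in $N(\mathcal F)$) like one of finitely many fixed configurations; in particular near such a ball $K$ is, in a neighborhood, a finite union of scaled copies $f_{\mathbf i}(K)$ meeting pairwise in finitely many points. Because $K$ is connected, I cannot hope to avoid $K$ entirely, but I can route the detour so that it crosses $K$ only near the finitely many ``cut points'' coming from $H(\mathcal F)$, and near each such crossing the situation is again self-similar at a smaller scale. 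Iterating this — each time we pass near an intersection point we zoom in by a contraction ratio bounded away from $1$ and are faced with the same finite list of local configurations — produces a path whose intersection with $K$ accumulates only at a countable (in fact, nested sequence of scaled copies of a finite) set of points. Concretely, $\overline{\gamma\cap K}$ will be contained in a countable union of scaled images $f_{\mathbf i}(H(\mathcal F))$ together with their limit points, and a Cantor--Bendixson / scattered-set argument shows this closure is countable. The length bookkeeping is the same geometric-series estimate as in Lemmas 4.2 and 4.3: at stage $n$ the total added length is $O(r_{\max}^n)$, so the excess over $\|y-x\|$ can be made $<\delta$.

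The main obstacle I anticipate is the \emph{topological routing step inside one small ball}: showing that a short detour can be chosen crossing $K$ only at (copies of) intersection points, rather than getting trapped because $K$ locally disconnects the ball. Here is where $d=2$ and connectedness of $K$ are essential — one wants to use that in the plane a connected compact set with the finite-type local structure separates a small ball into regions whose common boundary, relative to the boundary of the ball, is governed by the finite set $H(\mathcal F)$, so one can pass from one region to an adjacent one through a single intersection point. Making this precise likely requires a careful analysis of the finitely many neighbor configurations from $N(\mathcal F)$ and an induction on the ``generation'' at which two pieces $f_{\mathbf i}(K)$, $f_{\mathbf j}(K)$ first meet, using Lemma 4.19 to keep the combinatorics finite at every scale. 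Once this local lemma is in hand, concatenating the detours and taking closures (as in Lemma 4.3, using the identity \eqref{eq:glue-paths-closed}) finishes the proof, since a countable union of countable scattered sets has countable closure.
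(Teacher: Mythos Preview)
Your framework is right --- reducing via the closed-set lemma to points outside $K$, using the empty interior of $K$, and exploiting the finiteness of $H(\mathcal F)$ and $N(\mathcal F)$ --- and you correctly locate the crux in the local routing step. But the iterative zoom-in has a genuine gap. If at each scale the detour near an intersection point is forced to cross $K$ at $M\ge 2$ new (smaller-scale) intersection points, then after $n$ stages there are on the order of $M^n$ crossings, and the set of limit points of these crossings is a priori a Cantor set. Your Cantor--Bendixson remark does not rescue this: a countable set of crossings can very well have uncountable closure, and nothing in the recursion forces the perfect kernel of $\overline{\gamma\cap K}$ to be empty. So the sketch does not establish that $\overline{\gamma\cap K}$ is countable, and the ``main obstacle'' you flag is not a technicality but precisely the missing idea.

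The paper sidesteps the recursion entirely by constructing, once and for all, a single loop $\alpha$ of finite length that \emph{encircles} $K$ and meets $K$ in exactly the finite set $H(\mathcal F)$; moreover every scaled copy satisfies $f_{\mathbf i}(\alpha)\cap K=f_{\mathbf i}(H(\mathcal F))$. The loop is the boundary of the unbounded complementary component of an open set $O\supset K\setminus H(\mathcal F)$ built from nested square covers that shrink geometrically near each point of $H(\mathcal F)$; Torhorst's theorem guarantees this boundary is a loop, and a separation lemma (driven by the finite-type hypothesis) ensures $f_{\mathbf i}(\alpha)$ does not hit the other pieces $f_{\mathbf j}(K)$. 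With $\alpha$ in hand the proof is non-iterative: the open set condition (which follows from finite type plus the finite-intersection hypothesis via Bandt--Rao) gives Lebesgue measure zero, Fubini yields a line $g'$ near $\overline{xy}$ with $\lambda_{g'}(g'\cap K)=0$, the compact set $g'\cap K$ is covered by finitely many small pieces $f_{\mathbf i}(K)$, and detouring around each along $f_{\mathbf i}(\alpha)$ produces a path meeting $K$ in a \emph{finite} set. No closure argument is needed at all.
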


The proof of this result will be subdivided into lemmata.  

\begin{lemma}\label{prop:ftc-empty-int}
For $d\ge 2$ let $K\subset \R^d$ be the attractor of a self-similar IFS $\mathcal{F}:=\{f_1,\ldots, f_m\}$ satisfying $\#f_i(K) \cap f_j(K) < \infty$ for all $1\le i<j\le m$. Then $K$ has no interior points.
\end{lemma}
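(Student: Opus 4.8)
The plan is to argue by contradiction: assume $K$ has an interior point and produce a ball inside $K$ of radius larger than any such ball, which is absurd since $K$ is bounded. Concretely, I would set
$\rho^{*} := \sup\{\varrho>0 : B_{\varrho}(z)\subseteq K \text{ for some } z\in\R^{d}\}$,
note that $\rho^{*}<\infty$ because $K$ is compact (hence bounded) and that $\rho^{*}>0$ under the contradiction hypothesis, and then — writing $r_{\max}:=\max_{i}\operatorname{sr}(f_{i})<1$ — fix a ball $B_{\rho}(x)\subseteq K$ with $r_{\max}\rho^{*}<\rho\le\rho^{*}$; such a $\rho$ exists by the definition of the supremum. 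The whole argument then reduces to showing that $B_{\rho}(x)$ is forced to lie inside a single piece $f_{i}(K)$.

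Granting that, the conclusion is immediate: if $B_{\rho}(x)\subseteq f_{i}(K)$ then, since $f_{i}^{-1}$ is a similarity with ratio $1/\operatorname{sr}(f_{i})$, the set $f_{i}^{-1}(B_{\rho}(x))\subseteq f_{i}^{-1}(f_{i}(K))=K$ is a ball of radius $\rho/\operatorname{sr}(f_{i})\ge\rho/r_{\max}>\rho^{*}$, contradicting the choice of $\rho^{*}$. So the crux is to rule out the possibility that $B_{\rho}(x)$ meets two pieces ``substantially''. For this I would put $A_{i}:=B_{\rho}(x)\cap f_{i}(K)$ for $i\in\{1,\dots,m\}$. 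Each $A_{i}$ is closed in $B_{\rho}(x)$ (since $f_{i}(K)$ is compact, hence closed), the $A_{i}$ cover $B_{\rho}(x)$ because $B_{\rho}(x)\subseteq K=\bigcup_{i}f_{i}(K)$, and $A_{i}\cap A_{j}\subseteq f_{i}(K)\cap f_{j}(K)$ is finite for $i\ne j$ by the standing hypothesis $\#f_{i}(K)\cap f_{j}(K)<\infty$. Let $F:=\bigcup_{i<j}(A_{i}\cap A_{j})$, a finite set. Since $d\ge 2$, the punctured ball $B_{\rho}(x)\setminus F$ is connected, and it is the disjoint union of the sets $A_{i}\setminus\bigcup_{j\ne i}A_{j}$, each of which is open in $B_{\rho}(x)\setminus F$: a point of $A_{i}$ lying outside the closed set $\bigcup_{j\ne i}A_{j}$ has a neighbourhood disjoint from $\bigcup_{j\ne i}A_{j}$, and by the covering property that neighbourhood lies in $A_{i}$. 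Connectedness then forces all but one of these sets, say all with $i\ne i_{0}$, to be empty, so $A_{i}\subseteq F$ is finite for $i\ne i_{0}$, whence $B_{\rho}(x)\setminus F\subseteq A_{i_{0}}$; as $A_{i_{0}}$ is closed in $B_{\rho}(x)$ and $B_{\rho}(x)\setminus F$ is dense in $B_{\rho}(x)$, this gives $A_{i_{0}}=B_{\rho}(x)$, i.e.\ $B_{\rho}(x)\subseteq f_{i_{0}}(K)$, which is exactly what was needed.

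The main obstacle — indeed the only non-formal step — is the topological one: checking that the sets $A_{i}\setminus\bigcup_{j\ne i}A_{j}$ are relatively open and partition $B_{\rho}(x)\setminus F$, and invoking connectedness of an open ball with finitely many points removed. This is precisely where $d\ge 2$ is used, and the restriction is genuine: for $d=1$ the statement is false, as $[0,1]=[0,\tfrac12]\cup[\tfrac12,1]$ shows (a one-point overlap, nonempty interior), and correspondingly a punctured interval is disconnected. Everything else is bookkeeping with similarity ratios; note that no assumption on $\#K$ is needed, since if $K$ is a single point then $\rho^{*}=0$ and there is nothing to prove.
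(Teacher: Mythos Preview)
Your argument is correct and takes a genuinely different route from the paper's. Both proofs exploit the same topological fact --- that an open ball in $\R^d$ with finitely many points removed remains connected when $d\ge 2$ --- but they deploy it differently. The paper fixes an arbitrary ball $B_\varepsilon(x)\subset K$, iterates the IFS until some deep-level piece $f_{\mathbf{i}}(K)$ is strictly contained in the ball, and then splits $B_\varepsilon(x)\setminus F$ into the two nonempty relatively open pieces $B_\varepsilon(x)\setminus f_{\mathbf{i}}(K)$ and $B_\varepsilon(x)\setminus\bigcup_{\mathbf{j}\ne\mathbf{i}}f_{\mathbf{j}}(K)$, obtaining a disconnection directly. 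Your ``largest ball'' argument stays at level one: a near-maximal ball is forced into a single first-level piece $f_{i_0}(K)$ by the connectedness argument, and blowing up by $f_{i_0}^{-1}$ contradicts maximality. Your version avoids iterating the IFS and never needs the (easy but unstated) observation that $\#f_{\mathbf{i}}(K)\cap f_{\mathbf{j}}(K)<\infty$ for arbitrary words $\mathbf{i}\ne\mathbf{j}$; the paper's version, on the other hand, needs no extremal quantity and no care about whether the supremum is attained.
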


\begin{proof} The conclusion trivially holds if $K$ is a finite set. Thus assume that $K$ is infinite.  Suppose there exists  $x\in K$ and $\varepsilon>0$ such that $B_\varepsilon(x)\subset K$. There exists $\mathbf{i}\in \{1,\ldots,m\}^*$ with $f_{\mathbf{i}}(K)\subsetneq B_\varepsilon(x)$. Let $J=\{1,\ldots,m\}^{|\mathbf{i}|}\setminus\{\mathbf{i}\}$. By assumption, for every $\mathbf{j}\in J$ we have that 
 $f_{\mathbf{i}}(K)\cap f_{\mathbf{j}}(K)$ is finite.  Then
  \begin{align*}
(f_{\mathbf{i}}(K)\cap B_\varepsilon(x)) \cup \bigg(\bigcup_{\mathbf{j}\in J}f_{\mathbf{j}}(K)\cap B_\varepsilon(x)\bigg)&=B_\varepsilon(x),
\end{align*}
but $F:=\big(f_{\mathbf{i}}(K)\cap B_\varepsilon(x)\big) \cap \bigcup_{\mathbf{j}\in J}f_{\mathbf{j}}(K)\cap B_\varepsilon(x)$ is finite,
 thus $B_\varepsilon(x)\setminus F=\big(B_\varepsilon(x)\setminus f_{\mathbf{i}}(K)\big) \cup \Big(B_\varepsilon(x)\setminus  \bigcup_{\mathbf{j}\in J}f_{\mathbf{j}}(K)\Big)$, that is, we have written $B_\varepsilon(x)\setminus F$ as the disjoint union of 
 two nonempty open sets. But if $d\ge2$, $B_\varepsilon(x)\setminus F$ is clearly path connected and therefore connected, yielding a contradiction. 
 \end{proof}

\begin{lemma}\label{lemma:different-indices}
Let $K\subset \R^d$ be a nondegenerate continuum which is the attractor of a self-similar IFS $\mathcal{F}:=\{f_1,\ldots, f_m\}$. If $\# f_i(K) \cap f_j(K) < \infty$ for $1\le i<j\le m$, then for any two distinct $\mathbf{i},\mathbf{j}\in\{1,\ldots,m\}^*$ we have $f_\mathbf{i}\ne f_\mathbf{j}$.
\end{lemma}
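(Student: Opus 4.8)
The statement to prove is: if $K$ is a nondegenerate continuum that is the attractor of a self-similar IFS $\mathcal F=\{f_1,\dots,f_m\}$ with $\#f_i(K)\cap f_j(K)<\infty$ for $1\le i<j\le m$, then $f_{\mathbf i}\ne f_{\mathbf j}$ for any two distinct finite words $\mathbf i,\mathbf j$. Let me sketch how I would argue this.

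First I would reduce to a canonical form for a hypothetical coincidence. Suppose $f_{\mathbf i}=f_{\mathbf j}$ with $\mathbf i\ne\mathbf j$. Cancelling a common prefix (since the $f_k$ are invertible similarities, $f_{\mathbf i}=f_{\mathbf j}$ forces $f_{\mathbf i'}=f_{\mathbf j'}$ where $\mathbf i',\mathbf j'$ are obtained by deleting the longest common prefix), we may assume $\mathbf i,\mathbf j$ have different first letters, say $i_1=a\ne b=j_1$, OR one of them becomes empty. If, say, $\mathbf j$ is empty, then $f_{\mathbf i}=\operatorname{id}$ with $|\mathbf i|\ge 1$; but then $\operatorname{sr}(f_{\mathbf i})=\prod r_{i_k}<1$, a contradiction. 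So we may assume $\mathbf i$ and $\mathbf j$ are nonempty with distinct first letters $a\ne b$, and $f_{\mathbf i}=f_{\mathbf j}$.

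Now the key step: I would apply $f_{\mathbf i}=f_{\mathbf j}$ to $K$. This gives $f_{\mathbf i}(K)=f_{\mathbf j}(K)$. Since $\mathbf i$ starts with $a$ and $\mathbf j$ starts with $b$, we have $f_{\mathbf i}(K)\subseteq f_a(K)$ and $f_{\mathbf j}(K)\subseteq f_b(K)$, hence $f_{\mathbf i}(K)=f_{\mathbf j}(K)\subseteq f_a(K)\cap f_b(K)$, which is finite by hypothesis. But $f_{\mathbf i}(K)$ is a similar copy of $K$, hence a nondegenerate continuum (it has at least two points since $K$ does, and is homeomorphic to $K$), so it is infinite — contradiction. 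This is essentially the whole argument, and the only mildly delicate point is handling the case where one word is a prefix of the other (treated above via the similarity ratio being $<1$) and making sure the "cancel the common prefix" step is legitimate, which it is because each $f_k$ is a bijection of $\R^d$.

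I expect the main (though modest) obstacle to be bookkeeping around the reduction: being careful that after cancelling the common prefix we genuinely land in one of the two cases (distinct first letters, or one word empty), and noting that in the first case we do not even need $\mathbf i,\mathbf j$ to have distinct first letters for the inclusion argument to start — it suffices that they differ somewhere, but passing to the first point of difference is the cleanest route. No serious analytic or combinatorial difficulty is anticipated; the nondegeneracy of $K$ and the finiteness of pairwise overlaps do all the work, together with the fact that a nonempty composition of contractive similarities has similarity ratio strictly less than $1$.
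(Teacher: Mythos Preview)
Your proposal is correct and follows essentially the same approach as the paper: reduce to words with distinct first letters, then observe that $f_{\mathbf i}(K)=f_{\mathbf j}(K)\subseteq f_{i_1}(K)\cap f_{j_1}(K)$ is finite while $f_{\mathbf i}(K)$ is uncountable. You are slightly more explicit than the paper in justifying the reduction (cancelling the common prefix and ruling out the empty-word case via the similarity ratio), whereas the paper simply writes ``w.l.o.g.''; otherwise the arguments coincide.
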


\begin{proof}
Assume, on the contrary, that $f_\mathbf{i} = f_\mathbf{j}$. We may assume, w.l.o.g., that $\mathbf{i} =i_1\cdots i_k$ and $\mathbf{j}=j_1\cdots j_n$ with $i_1 \ne  j_1$. Then,
\[
f_{i_1}(K) \cap f_{j_1}(K) \supseteq f_\mathbf{i}(K)\cap f_\mathbf{j}(K) = f_\mathbf{i}(K) 
\]
is uncountable, because $K$ is uncountable, and the similarity transformation $f_\mathbf{i}$ is a bijection. This contradicts the finiteness of the intersections.
\end{proof}

Now the following lemma is an easy consequence of \Cref{lemma:different-indices} and \cite[Theorem~9]{BandtRao:07}.

\begin{lemma}\label{lem:osc}
Let $K\subset \R^d$ be a nondegenerate continuum which is the attractor of a self-similar IFS $\mathcal{F}:=\{f_1,\ldots, f_m\}$. If $\# f_i(K) \cap f_j(K) < \infty$ for $1\le i<j\le m$ and $\mathcal{F}$ satisfies the finite type condition, then $\mathcal{F}$ satisfies the open set condition.
\end{lemma}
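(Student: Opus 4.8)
## Proof Proposal for Lemma~\ref{lem:osc}

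The plan is to invoke the known characterization of the open set condition for self-similar sets with the finite type property, as established in \cite[Theorem~9]{BandtRao:07}. That theorem (in the formulation relevant here) states that if a self-similar IFS $\mathcal{F}$ satisfies the finite type condition, then $\mathcal{F}$ satisfies the open set condition \emph{unless} there exist two distinct words $\mathbf{i},\mathbf{j}\in\{1,\ldots,m\}^*$ with $f_{\mathbf{i}}=f_{\mathbf{j}}$ (in other words, the only obstruction to the OSC for finite type IFS is the presence of a nontrivial coincidence among the composed maps). So the strategy is simply: assume the hypotheses of the lemma, apply \Cref{lemma:different-indices} to rule out the exceptional case, and conclude.

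Concretely, I would argue as follows. Since $K$ is a nondegenerate continuum it is in particular uncountable, so the hypothesis $\#f_i(K)\cap f_j(K)<\infty$ for $1\le i<j\le m$ puts us in the setting of \Cref{lemma:different-indices}. That lemma yields $f_{\mathbf{i}}\ne f_{\mathbf{j}}$ for any two distinct $\mathbf{i},\mathbf{j}\in\{1,\ldots,m\}^*$, i.e., the IFS has ``no coincidences.'' Combining this with the assumed finite type condition, \cite[Theorem~9]{BandtRao:07} implies that $\mathcal{F}$ satisfies the open set condition. This completes the proof.

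I expect the only subtlety to be matching the precise phrasing of \cite[Theorem~9]{BandtRao:07} to the statement needed here; depending on how that source is formulated, one may need to note explicitly that ``finite type'' together with ``no coincidences'' (equivalently, the map $\mathbf{i}\mapsto f_{\mathbf{i}}$ is injective on $\{1,\ldots,m\}^*$) is exactly the hypothesis under which that theorem delivers the OSC. Since the paper already signals (in the sentence preceding the lemma) that this is ``an easy consequence of \Cref{lemma:different-indices} and \cite[Theorem~9]{BandtRao:07},'' the proof is essentially a two-line citation argument and no genuine obstacle is anticipated; the bulk of the real work sits in \Cref{lemma:different-indices}, which has already been proved.
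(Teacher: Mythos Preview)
Your proposal is correct and matches the paper's approach exactly: the paper states that the lemma is an easy consequence of \Cref{lemma:different-indices} and \cite[Theorem~9]{BandtRao:07}, which is precisely the two-step citation argument you outline.
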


Let $\varrho \in(0,1)$ be given. We curtail each infinite word $i_1i_2\cdots\in\{1,\ldots,m\}^\N$ after the first term $i_k$ satisfying
\[
r_{\min} \varrho < \operatorname{sr}(f_{i_1\cdots i_k}) = r_{i_1} r_{i_2} \cdots r_{i_k} \le\varrho
\] 
and let $\mathcal{Q}_\varrho$ be the set of all finite sequences in $ \{1,\ldots,m\}^*$ obtained in this way. 
 Then it is easy to see that (recall that  $\varrho < 1$)
\[
K = \bigcup_{\mathbf{i} \in \mathcal{Q}_\varrho} f_{\mathbf{i}}(K).
\]

\begin{lemma}\label{lem:osc-rho}
Let $K\subset \R^d$ be the attractor of a self-similar IFS $\mathcal{F}:=\{f_1,\ldots, f_m\}$ satisfying the open set condition. Then there exists a constant $c$ such that 
for all $\varrho\in (0,\infty)$ and all balls $B\subset \R^d$ with diameter $\varrho$ there exists  $\mathcal D \subset \mathcal{Q}_\varrho$ such that $\#\mathcal D\le c$ and
\[
\bigcup_{\mathbf{i}\in \mathcal{D}}f_\mathbf{i}(K)  \supseteq K \cap \overline{B}.
\]
\end{lemma}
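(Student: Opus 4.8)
The plan is to run the classical volume–packing estimate for self-similar sets under the open set condition, organised around the cut set $\mathcal{Q}_\varrho$ (we may assume $\varrho\in(0,1)$, the range in which $\mathcal{Q}_\varrho$ was defined). \emph{The first step} is to replace the given open set by a bounded one. Let $U_0$ be an open set witnessing the open set condition for $\mathcal{F}$, fix any $\eta>0$, and put $U:=U_0\cap(K)_\eta$; this is open and bounded. It is nonempty because, for $u_0\in U_0$, the orbit $f_{\mathbf i}(u_0)$ stays in $U_0$ (as $f_i(U_0)\subset U_0$ for all $i$), while $d(f_{\mathbf i}(u_0),K)\to 0$ as $|\mathbf i|\to\infty$, so $f_{\mathbf i}(u_0)\in U$ for sufficiently long words $\mathbf i$. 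One checks $\bigcup_i f_i(U)\subset U$ (using $f_i(U_0)\subset U_0$ and $f_i((K)_\eta)=(f_i(K))_{r_i\eta}\subset(K)_\eta$) and that the sets $f_i(U)$ are pairwise disjoint, so $U$ again witnesses the open set condition. Finally $K\subset\overline U$: for any $u\in U$ the set $\{f_{\mathbf i}(u)\}$ lies in $U$ and is dense in $K$, since $\{f_{\mathbf i}(u):|\mathbf i|=n\}\to K$ in the Hausdorff metric by \cite{Hutchinson:81}. Hence $f_{\mathbf i}(K)\subset f_{\mathbf i}(\overline U)=\overline{f_{\mathbf i}(U)}$ for every word $\mathbf i$, with $\diam f_{\mathbf i}(\overline U)=\operatorname{sr}(f_{\mathbf i})\diam U$.

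\emph{The second step} records the separation fact: for distinct $\mathbf i,\mathbf j\in\mathcal{Q}_\varrho$ the sets $f_{\mathbf i}(U)$ and $f_{\mathbf j}(U)$ are disjoint. Indeed, $\mathcal{Q}_\varrho$ is a cut set, so no word in it is a prefix of another; letting $l$ be the first place where $\mathbf i$ and $\mathbf j$ differ, we have $f_{\mathbf i}(U)\subset f_{i_1\cdots i_l}(U)$, $f_{\mathbf j}(U)\subset f_{j_1\cdots j_l}(U)$, and $f_{i_1\cdots i_l}(U)\cap f_{j_1\cdots j_l}(U)=f_{i_1\cdots i_{l-1}}\bigl(f_{i_l}(U)\cap f_{j_l}(U)\bigr)=\emptyset$, whence $f_{\mathbf i}(U)\cap f_{\mathbf j}(U)=\emptyset$.

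\emph{The third step} is the count. Given a ball $B\subset\R^d$ of diameter $\varrho$, set $\mathcal D:=\{\mathbf i\in\mathcal{Q}_\varrho:f_{\mathbf i}(K)\cap\overline B\ne\emptyset\}$; since $K=\bigcup_{\mathbf i\in\mathcal{Q}_\varrho}f_{\mathbf i}(K)$, automatically $\bigcup_{\mathbf i\in\mathcal D}f_{\mathbf i}(K)\supseteq K\cap\overline B$. For $\mathbf i\in\mathcal D$ we have $\operatorname{sr}(f_{\mathbf i})\le\varrho$, hence $\diam f_{\mathbf i}(\overline U)\le\varrho\diam U$; since $f_{\mathbf i}(\overline U)$ contains $f_{\mathbf i}(K)$, which meets $\overline B$, it follows that $f_{\mathbf i}(\overline U)\subset B':=[\overline B]_{\varrho\diam U}$, a ball of radius $\varrho(\tfrac12+\diam U)$. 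The sets $f_{\mathbf i}(U)$, $\mathbf i\in\mathcal D$, are pairwise disjoint (second step), contained in $B'$, and each satisfies $\lambda(f_{\mathbf i}(U))=\operatorname{sr}(f_{\mathbf i})^d\lambda(U)>(r_{\min}\varrho)^d\lambda(U)$, because $\mathbf i\in\mathcal{Q}_\varrho$. Summing, $\#\mathcal D\,(r_{\min}\varrho)^d\lambda(U)<\lambda(B')=\lambda(B_1(0))\,\varrho^d(\tfrac12+\diam U)^d$, so $\#\mathcal D<c:=\lambda(B_1(0))(\tfrac12+\diam U)^d/(r_{\min}^d\lambda(U))$, a constant depending only on $\mathcal{F}$ and $d$, as required.

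I do not expect a real obstacle here: the only point needing care is the first step, where one must produce a bounded open set with $K\subset\overline U$ (alternatively one could invoke the equivalence of the open set condition with the strong open set condition and boundedness of the latter's open set); the rest is the standard packing estimate plus the bookkeeping that $\mathcal{Q}_\varrho$ is a cut set with $\operatorname{sr}(f_{\mathbf i})\in(r_{\min}\varrho,\varrho]$ for all $\mathbf i\in\mathcal{Q}_\varrho$. One should also note $0<\lambda(U)<\infty$ and $\diam U<\infty$, which are immediate since $U$ is a nonempty bounded open set.
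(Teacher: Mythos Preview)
Your argument is correct and is precisely the standard volume--packing estimate that the paper invokes by reference (the paper's proof is simply ``See, e.g., the proof of \cite[Theorem~9.3]{Falconer2003}''). Your three steps---replacing the open set by a bounded one with $K\subset\overline U$, the cut-set disjointness, and the Lebesgue-measure count---are exactly the ingredients of that textbook proof, so there is nothing to add.
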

\begin{proof}
See, {\it e.g.}, the proof of \cite[Theorem~9.3]{Falconer2003}.
\end{proof}

Next, we consider the case of plane IFS: Let $K\subset \R^2$ be a nondegenerate continuum which is the attractor of a self-similar  IFS $\mathcal{F}:=\{f_1,\ldots, f_m\}$ satisfying $\# f_i(K) \cap f_j(K) < \infty$ for $1\le i<j\le m$. Suppose that $\mathcal{F}$ has finite type and hence, by \Cref{thm:finite-type-finite-types}  satisfies the finite types of neighborhoods condition for some $\varepsilon>0$. W.l.o.g.,\ $\varepsilon$ is such that $N_\varepsilon(\mathcal{F})=N(\mathcal{F})$.

For $\eta>0$ let \[
\mathbf{C}(\eta) := \bigg\{ \Big(-\frac\eta2,\frac\eta2\Big)^2 + z \colon \bigg(\Big(-\frac\eta2,\frac\eta2\Big)^2 + z\bigg) \cap K \ne\emptyset \;,\; z \in \frac\eta2\mathbb{Z}^2\bigg \}.
\]
The following rather technical choice of $\delta$ will be needed in the sequel, particularly in the proof of \Cref{Uproperties}. 
Since $H(\mathcal{F})$ is finite by \Cref{lemma:HK-finite} (and $\#H(\mathcal{F}) >2$ because $K$ is a nondegenerate continuum) we may choose $\delta>0$ with 
\begin{equation}\label{eq:deltadef1}
\delta < \frac14\min\big\{1,\varepsilon, \min\{\vertii{ x - y} \colon x,y\in H(\mathcal{F}),\, x\ne y\}\big\} 
\end{equation}
and such that for each $x\in H(\mathcal{F})$ and each $h\in N(\mathcal{F})$
\begin{equation}\label{eq:deltadef2}
h(K) \cap B_{2\delta}(x) \ne\emptyset \quad\Longrightarrow\quad  x\in h(K).
\end{equation}
The latter implication says that a ``neighbor'' $h(K)$ of $K$ that is $2\delta$-close to an intersection point of $K$  ({\it i.e.}, to an element of the finite set $H(\mathcal{F})$) already contains this intersection point. 

The set $K\setminus (H(\mathcal{F}))_{\delta}$ is nonempty (by the choice of $\delta$ and by connectedness of $K$) and compact. Since $N(\mathcal{F})$ is finite, and $h(K)$ is compact for each $k\in N(\mathcal{F})$, there is some $k\ge 1$  such that
\begin{equation}\label{eqn:U2-distance}
\min\Big\{\vertii{x-y}\colon x\in K\setminus (H(\mathcal{F}))_{\delta},\, y\in  \bigcup_{h\in N(\mathcal{F}) } h(K) \Big\}> \frac{\delta}{2^{k-1}}.
\end{equation}

Note that \eqref{eqn:U2-distance} guarantees that, for  every open set $V$ with diameter less than $\tfrac{\delta}{2^{k-1}}$ and every $h\in N(\mathcal{F})$,  
\begin{equation}\label{eqn:U2-distance-cor}
\overline{V}\cap K \ne \emptyset\text{ and } \overline{V}\cap h(K)\ne \emptyset\Longrightarrow \overline{V}\cap (H(\mathcal{F}))_\delta\ne \emptyset.
\end{equation}

Set
\begin{equation}\label{eq:U1}
\mathcal U_1:=\{V\in \mathbf{C}(\delta2^{-k})\colon V\cap (K\setminus (H(\mathcal{F}))_{\delta})\ne \emptyset\}.
\end{equation}
In Figure~\ref{fig:HataCover} we illustrate this cover for the {\it Hata tree}, a self-similar set that will appear in the examples later (see~\Cref{exs:p2}). The shaded trees are (parts of) $h(K)$ for $h\in N(\mathcal{F})$. The set $(H(\mathcal{F}))_{\delta}$ is indicated by shaded disks; the estimate in \eqref{eqn:U2-distance} states that outside these disks the attractor $K$ is separated from its ``neighbors'' $h(K)$ by a distance of at least $\frac{\delta}{2^{k-1}}$. Thus no element of $\mathcal{U}_1$ can intersect such a neighbor.
\begin{figure}[h]
\begin{center}
\includegraphics[width=\textwidth]{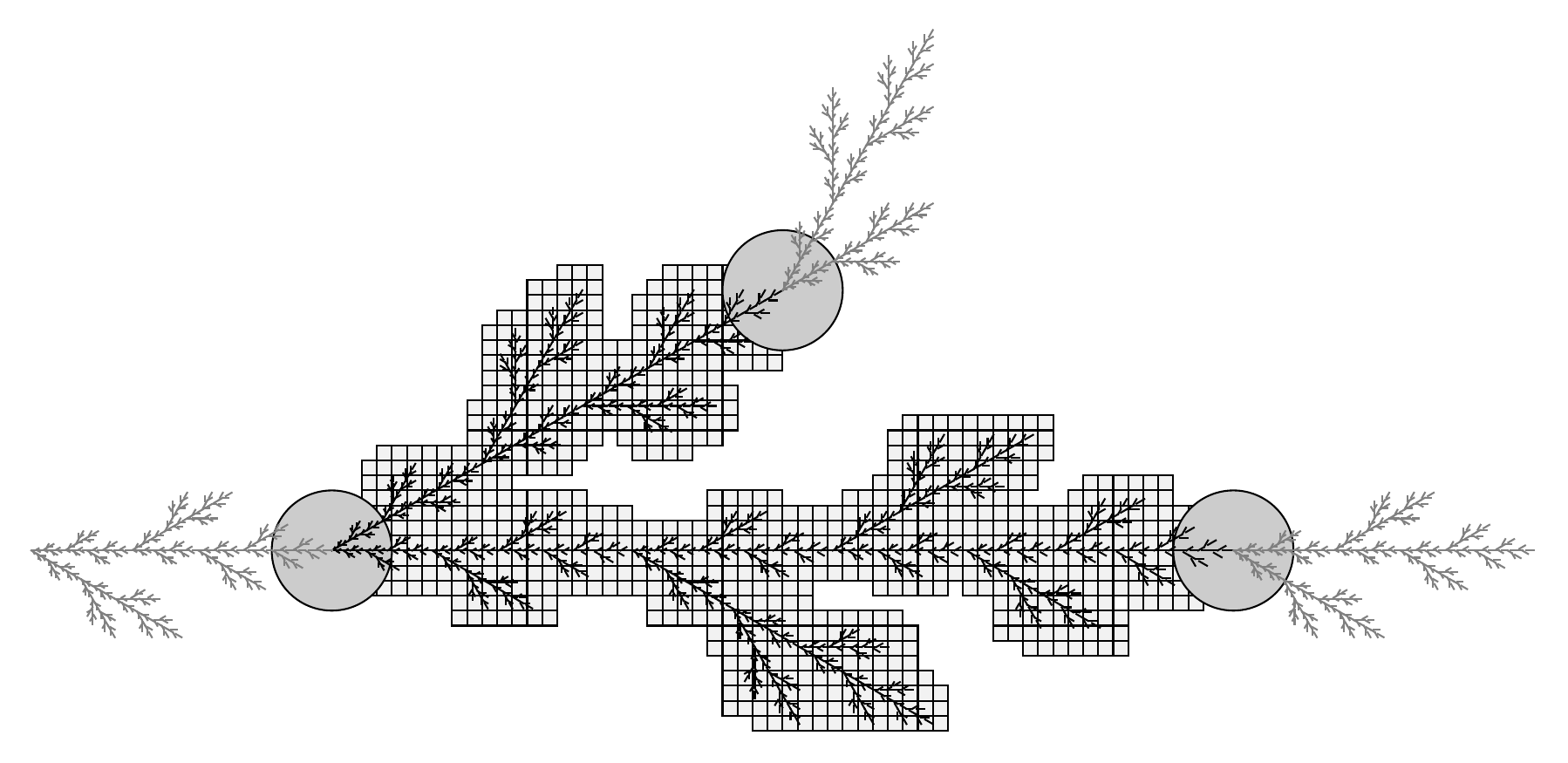}
\end{center}
\caption{The cover $\mathcal{U}_1\cup \{(H(\mathcal{F}))_{\delta}\}$ of the Hata tree $K$.}\label{fig:HataCover}
\end{figure}
We now iteratively define a sequence $(\mathcal{U}_n)_{n\ge 1}$ of covers of $K$  as follows. Suppose that $\mathcal{U}_1,\ldots,\mathcal{U}_{n-1}$ have already been defined. 
Let 
\begin{equation}\label{eq:Pn}
\mathcal{P}_{n}=\{\mathbf{i} \in \mathcal{Q}_{\delta^{n}}\colon
f_{\mathbf{i}}(K) \cap [H(\mathcal{F})]_{\delta^{n-1}} \ne\emptyset
\}.
\end{equation}
For each $\mathbf{i} \in \mathcal{P}_{n}$ we set
\begin{equation}\label{eq:Vbfi}
\mathcal{V}_{\mathbf{i}} = \{V \in  f_\mathbf{i}(\mathbf{C}(\delta2^{-k})) \colon 
V\cap (K\setminus (H(\mathcal{F}))_{\delta^n})\ne \emptyset
\}.
\end{equation}
We set
 \begin{equation}\label{eq:Vndef}
\mathcal{U}_{n} = \mathcal{U}_{n-1}  \cup  \bigcup_{\mathbf{i} \in \mathcal{P}_{n}} \mathcal{V}_{\mathbf{i}} .
\end{equation}

Note that, by Lemma~\ref{lem:osc-rho}, there exists a constant $c$ such that $\#\mathcal{P}_n \le c$ for all $n\in \N$. This uniform boundedness
will be essential later.

\begin{lemma}\label{Uproperties}
Let $(\mathcal{U}_n)_{n\ge 1}$ be defined as above and let $U_n:=\bigcup_{U\in \mathcal U_n}U$. Then the following assertions hold.
\begin{itemize}
\item[(1)] For each $n\ge 1$, the collection $\mathcal{U}_n$ is an open cover of $K\setminus (H(\mathcal{F}))_{\delta^n}$.
\item[(2)] For each $n\ge 1$, each $\varrho>0$, and each $\mathbf{j}\in\mathcal{Q}_\varrho$,we have 
\[
f_{\mathbf{j}}(\overline{U_n}) \cap \bigcup_{\begin{subarray}{c}\mathbf{k}\in\mathcal{Q}_\varrho\\\mathbf{k}\neq \mathbf{j} \end{subarray}} f_{\mathbf{k}}(K)=\emptyset.
\]
\end{itemize} 
\end{lemma}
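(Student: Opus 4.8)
My plan is to prove assertion~(1) by induction on $n$ and then derive assertion~(2) as a consequence of the way the cover is built up scale by scale, relying on the open set condition (Lemma~\ref{lem:osc}) and the separation estimates~\eqref{eqn:U2-distance}, \eqref{eqn:U2-distance-cor} built into the construction. For~(1), the base case $n=1$ is immediate from the definition of $\mathcal U_1$ in~\eqref{eq:U1}: every $V\in\mathbf C(\delta2^{-k})$ is an open square, and the collection of all squares in the grid $\tfrac{\delta}{2}2^{-k}\mathbb Z^2$ that meet $K\setminus(H(\mathcal F))_\delta$ visibly covers that set (each point lies in at least one such half-open-turned-open square, and shrinking to open squares still covers since we may always nudge to an interior point, or one argues directly that the union of closures covers and then uses that $K\setminus(H(\mathcal F))_{\delta}$ avoids the grid lines after an arbitrarily small translation of the grid — which is harmless). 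For the inductive step, assuming $\mathcal U_{n-1}$ covers $K\setminus(H(\mathcal F))_{\delta^{n-1}}$, I must show $\mathcal U_n$ covers $K\setminus(H(\mathcal F))_{\delta^n}$. Take $x\in K\setminus(H(\mathcal F))_{\delta^n}$. If $x\notin(H(\mathcal F))_{\delta^{n-1}}$ we are already done by the inductive hypothesis since $\mathcal U_{n-1}\subset\mathcal U_n$. Otherwise $x\in(H(\mathcal F))_{\delta^{n-1}}\setminus(H(\mathcal F))_{\delta^n}$; then I locate $\mathbf i\in\mathcal Q_{\delta^n}$ with $x\in f_{\mathbf i}(K)$, and since $x$ is $\delta^{n-1}$-close to $H(\mathcal F)$ and $\operatorname{sr}(f_{\mathbf i})\le\delta^n<\delta^{n-1}$, one checks $f_{\mathbf i}(K)$ meets $[H(\mathcal F)]_{\delta^{n-1}}$, i.e.\ $\mathbf i\in\mathcal P_n$ as defined in~\eqref{eq:Pn}; applying the scaled grid $f_{\mathbf i}(\mathbf C(\delta2^{-k}))$ and the fact that $x\in K\setminus(H(\mathcal F))_{\delta^n}$ gives a $V\in\mathcal V_{\mathbf i}$ with $x\in V$, so $x\in U_n$.

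For assertion~(2), fix $n$, $\varrho>0$, $\mathbf j\in\mathcal Q_\varrho$, and suppose for contradiction that $f_{\mathbf j}(\overline{U_n})$ meets $f_{\mathbf k}(K)$ for some $\mathbf k\in\mathcal Q_\varrho$, $\mathbf k\ne\mathbf j$. Applying $f_{\mathbf j}^{-1}$, the map $h:=f_{\mathbf j}^{-1}\circ f_{\mathbf k}$ is a neighbor map, and by the open set condition together with the curtailment defining $\mathcal Q_\varrho$ one has $\operatorname{sr}(h)\in[r_{\min},r_{\min}^{-1}]$ and $K\cap h(K)\ne\emptyset$ (the last since $f_{\mathbf j}(K)$ and $f_{\mathbf k}(K)$ are pieces of $K$ at comparable scales that must abut because of connectedness of $K$ and finiteness of $N(\mathcal F)$ — more precisely $h\in N(\mathcal F)=N_\varepsilon(\mathcal F)$ after possibly noting $\overline{U_n}\subset[K]_\varepsilon$). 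So it suffices to show $\overline{U_n}\cap h(K)=\emptyset$ for every $h\in N(\mathcal F)$. This I prove by induction on $n$ using the nested structure~\eqref{eq:Vndef}: for $n=1$, any $V\in\mathcal U_1$ has diameter $\delta2^{-k}<\tfrac{\delta}{2^{k-1}}$ and meets $K\setminus(H(\mathcal F))_\delta$, so by~\eqref{eqn:U2-distance} it cannot meet any $h(K)$, $h\in N(\mathcal F)$ — indeed $\overline V$ still has diameter $\le\delta2^{-k}<\tfrac{\delta}{2^{k-1}}$ and by~\eqref{eqn:U2-distance-cor} if it met both $K$ and $h(K)$ it would meet $(H(\mathcal F))_\delta$, contradicting $\overline V\cap(K\setminus(H(\mathcal F))_\delta)\ne\emptyset$ combined with $\operatorname{diam}\overline V<\delta$; here one uses $\delta<\tfrac14$ so that closures of such small squares meeting $K\setminus(H(\mathcal F))_\delta$ stay inside $(H(\mathcal F))_{\delta/2}^{\,c}$, roughly. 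For the step from $\mathcal U_{n-1}$ to $\mathcal U_n$, the new elements are the $V\in\mathcal V_{\mathbf i}$, $\mathbf i\in\mathcal P_n$; each such $V$ lies in $f_{\mathbf i}(\mathbf C(\delta2^{-k}))$ and meets $K\setminus(H(\mathcal F))_{\delta^n}$, so applying $f_{\mathbf i}^{-1}$ — which by finite types of neighborhoods and the defining properties of $\varepsilon$ conjugates each $h\in N(\mathcal F)$ either out of range (making $f_{\mathbf i}^{-1}\circ h\circ f_{\mathbf{i}}$ irrelevant) or into $N(\mathcal F)$ — reduces to the $n=1$ case at the rescaled level, using~\eqref{eqn:U2-distance}, \eqref{eqn:U2-distance-cor}, and the definitions~\eqref{eq:deltadef2}, \eqref{eq:Pn}, \eqref{eq:Vbfi}.

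The main obstacle I anticipate is the bookkeeping in assertion~(2): one must carefully track how a neighbor relation $K\cap h(K)\ne\emptyset$ at the outer scale $\varrho$ "pulls back" under $f_{\mathbf i}^{-1}$ for $\mathbf i\in\mathcal P_n$ to a neighbor relation that is still controlled by the finite set $N(\mathcal F)$ and by the separation estimate~\eqref{eqn:U2-distance}, rather than producing neighbor maps of uncontrolled similarity ratio. This is exactly where the finite type condition (via \Cref{thm:finite-type-finite-types} and \Cref{lemma:HK-finite}) and the careful choices~\eqref{eq:deltadef1}--\eqref{eqn:U2-distance} pay off: \eqref{eq:deltadef2} ensures that a neighbor close to an intersection point actually contains it, so that the pieces $f_{\mathbf i}(K)$ for $\mathbf i\in\mathcal P_n$ see a ``rescaled copy'' of the original neighbor configuration around $H(\mathcal F)$, and~\eqref{eqn:U2-distance} then keeps the cover elements away from all neighbors outside the successively shrinking $\delta^n$-neighborhoods of $H(\mathcal F)$. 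I expect the rest — the covering property~(1) and the grid combinatorics of $\mathbf C(\eta)$ — to be routine once the scales are lined up.
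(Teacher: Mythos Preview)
Your overall architecture matches the paper's: both parts are proved by induction on $n$, and for part~(2) one reduces to showing $\overline{U_n}\cap h(K)=\emptyset$ for neighbor maps $h$, splitting into the cases $h\notin N(\mathcal F)$ (handled via $\overline{U_n}\subset[K]_\varepsilon$) and $h\in N(\mathcal F)$. Part~(1) is fine apart from the unnecessary worries about grid lines; the open half-shifted squares in $\mathbf C(\eta)$ already cover $\R^2$.

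There is, however, a genuine gap in the inductive step of~(2). The map you propose, $f_{\mathbf i}^{-1}\circ h\circ f_{\mathbf i}$, is not the relevant one: applying $f_{\mathbf i}^{-1}$ to $h(K)$ gives $(f_{\mathbf i}^{-1}\circ h)(K)$, and this similarity has ratio $\operatorname{sr}(h)/\operatorname{sr}(f_{\mathbf i})$, which is huge. What you must do instead is first pass to a subpiece: choose $\mathbf l$ so that $\overline V\cap h(f_{\mathbf l}(K))\ne\emptyset$ \emph{and} $g:=f_{\mathbf i}^{-1}\circ h\circ f_{\mathbf l}$ has $\operatorname{sr}(g)\in[r_{\min},1)$. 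Only then does the finite-type machinery bite: since $f_{\mathbf i}^{-1}(\overline V)\in\mathbf C(\delta 2^{-k})$ meets both $K$ and $g(K)$ and has diameter $<\varepsilon$, you get $g\in N_\varepsilon(\mathcal F)=N(\mathcal F)$. Now \eqref{eqn:U2-distance-cor} places $f_{\mathbf i}^{-1}(\overline V)$ inside $(H(\mathcal F))_\delta$; \eqref{eq:deltadef1} singles out a unique $x\in H(\mathcal F)$ with $f_{\mathbf i}^{-1}(\overline V)\subset B_{2\delta}(x)$; and \eqref{eq:deltadef2} upgrades proximity to containment, $x\in K\cap g(K)$. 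Pushing forward by $f_{\mathbf i}$ yields $f_{\mathbf i}(x)\in f_{\mathbf i}(K)\cap h(f_{\mathbf l}(K))\subset K\cap h(K)\subset H(\mathcal F)$, and $\overline V$ lies within $\operatorname{sr}(f_{\mathbf i})\cdot\delta\le\delta^{n+1}$ of this point. But $V\in\mathcal V_{\mathbf i}$ forces $\overline V$ to meet $K\setminus(H(\mathcal F))_{\delta^n}$, and since $\operatorname{diam}(\overline V)<\delta^{n+1}<\delta^n-\delta^{n+1}$ (using $\delta<\tfrac14$), $\overline V$ is disjoint from $(H(\mathcal F))_{\delta^{n+1}}$ --- the contradiction. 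Your final paragraph shows you see the role of \eqref{eq:deltadef2}, but without the correct choice of $\mathbf l$ and this forward--backward chain the argument does not close. (A smaller point: in the base case for $h\in N(\mathcal F)$, use \eqref{eqn:U2-distance} directly --- any $\overline V$ meeting $K\setminus(H(\mathcal F))_\delta$ with diameter $<\delta/2^{k-1}$ is automatically disjoint from $h(K)$; your detour through \eqref{eqn:U2-distance-cor} does not yield a contradiction as written.)
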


\begin{proof}
In this proof the choice of $\delta$ satisfying  \eqref{eq:deltadef1} and \eqref{eq:deltadef2} will be of great importance.

Assertion (1) is proved by induction. For $\mathcal{U}_1$ this assertion follows from its definition in \eqref{eq:U1}. Now assume that $(1)$ holds for $\mathcal{U}_{n-1}$.  By the definition of $\mathcal{P}_{n}$ in \eqref{eq:Pn} the union $\bigcup_{\mathbf{i} \in \mathcal{P}_{n}} \mathcal{V}_{\mathbf{i}} $ covers $ [H(\mathcal{F})]_{\delta^{n-1}} \setminus  (H(\mathcal{F}))_{\delta^{n}}$ and, hence, by \eqref{eq:Vndef}, (1) also holds for $\mathcal{U}_{n}$.

To prove assertion (2), we use induction as well. Choose two distinct elements $\mathbf{j},\mathbf{k}\in \mathcal{Q}_\varrho$ arbitrary and set $h:=f^{-1}_\mathbf{j}\circ f_\mathbf{k}$. By the definition of $\mathcal{Q}_\varrho$ we have $\operatorname{sr}(h)\in (r_{\min},r_{\min}^{-1})$,  and by \Cref{lemma:different-indices}, $h\ne \id$. 

To show (2) for $n=1$ it suffices to prove that 
\begin{equation}\label{eq:U1hKempty}
\overline{U_1} \cap h(K) = \emptyset. 
\end{equation}
If $h \not \in N(\mathcal{F})$ then $[K]_\varepsilon \cap h([K]_\varepsilon) = \emptyset$ by our choice of $\varepsilon$. Thus, because $\delta<\frac\varepsilon4$ and $U_1\subset [K]_\delta$ by \eqref{eq:U1}, we get \eqref{eq:U1hKempty}. It remains to deal with $h\in N(\mathcal{F})$. In this case \eqref{eq:U1hKempty} follows immediately from \eqref{eqn:U2-distance} and \eqref{eq:U1} because the diameter of the elements of $\mathbf{C}(\delta 2^{-k})$ is $\delta 2^{-k+1/2} < \delta 2^{-k+1}$.

Now assume that assertion (2) holds for $n-1$. In view of \eqref{eq:Vndef} 
it is sufficient  to show that for each $\mathbf{i} \in \mathcal{P}_n$ and each $V \in \mathcal{V}_\mathbf{i}$ we have 
\begin{equation}\label{eq:VhKempty}
\overline{V} \cap h(K)=\emptyset. 
\end{equation}
If $h \not \in N(\mathcal{F})$ then $[K]_\varepsilon \cap h([K]_\varepsilon) = \emptyset$. By \eqref{eq:Vbfi} we have $V \subset [K]_{\delta 2^{-k+1/2}}\subset [K]_{\varepsilon}$ from which we immediately gain \eqref{eq:VhKempty}. Next  we  assume that $h\in N(\mathcal{F})$. We argue by contradiction. Suppose that \eqref{eq:VhKempty} does not hold. Then there exists $\mathbf{l}\in\{1,\ldots,m\}^*$  such that 
\begin{equation}\label{eq:vhfnot0}
\overline{V} \cap h\circ f_\mathbf{l} (K)\ne\emptyset,
\end{equation}
and such that $g:=f^{-1}_\mathbf{i} \circ h \circ f_\mathbf{l}=f^{-1}_\mathbf{i} \circ f^{-1}_\mathbf{j} \circ f_\mathbf{k} \circ f_\mathbf{l}$ has a similarity ratio $\operatorname{sr}(g) \in [r_{\min},1)$.
Applying $f^{-1}_\mathbf{i}$ to \eqref{eq:vhfnot0} we gain 
\begin{equation}\label{eq:vhfnot0b}
f^{-1}_\mathbf{i}(\overline{V}) \cap g(K) \ne \emptyset. 
\end{equation}
Since $f^{-1}_\mathbf{i}(\overline{V}) \in \mathbf{C}(\delta2^{-k})$ has diameter bounded by $\delta2^{-k+1/2} <\frac{\delta}{2^{k-1}}<\delta< \frac\varepsilon4$ and satisfies $f^{-1}_\mathbf{i}(\overline{V})\cap K\not=\emptyset$, we conclude that $g\in N(\mathcal{F})$. Thus \eqref{eqn:U2-distance-cor} implies that
 \begin{equation}\label{eq:vhfnot0c}
f^{-1}_\mathbf{i}(\overline{V}) \cap (H(\mathcal{F}))_\delta \ne\emptyset.
 \end{equation}
Observe that, by \eqref{eq:deltadef1}, $\delta$ is so small that $f^{-1}_\mathbf{i}(\overline{V})$ can intersect the $\delta$-neighborhood of at most one point $x\in H(\mathcal{F})$. By \eqref{eq:vhfnot0b}, for this point $x$ we have $f^{-1}_\mathbf{i}(\overline{V}) \subset B_{2\delta}(x)$. Thus \eqref{eq:vhfnot0c} yields $B_{2\delta}(x) \cap g(K) \ne\emptyset$, and by the definition of $\delta$ in \eqref{eq:deltadef2} we gain that $x\in K\cap g(K)$. Combining this with \eqref{eq:vhfnot0c} again, yields $f^{-1}_\mathbf{i}(\overline{V}) \cap (K \cap g(K))_\delta \ne\emptyset$. 
Applying the similarity transformation $f_\mathbf{i}$ and observing that  $\operatorname{sr}(f_\mathbf{i}) \le \delta^n$ we derive
\begin{equation}\label{eq:VfiNonEmpty}
\emptyset \ne \overline{V} \cap (f_\mathbf{i}(K) \cap h\circ f_\mathbf{l}(K))_{\delta^{n+1}} 
\subset \overline{V} \cap (K \cap h(K))_{\delta^{n+1}} 
\subset \overline{V} \cap (H(\mathcal{F}))_{\delta^{n+1}}. 
\end{equation}
On the other hand, by the definition of $\mathcal{V}_\mathbf{i}$ we have that $ \overline{V} \setminus (H(\mathcal{F}))_{\delta^n} \ne\emptyset$. And because $\mathrm{diam}(\overline{V})<\delta^{n+1}<\delta^n-\delta^{n+1}$, since $\delta<\frac{1}{4}$, we have $\overline{V} \cap (H(\mathcal{F}))_{\delta^{n+1}}=\emptyset$, a contradiction to \eqref{eq:VfiNonEmpty}. This concludes the proof.
\end{proof}
 
We are now in a position to exhibit a path $\alpha$ that surrounds small copies of $K$ by intersecting $K$ in finitely many points.

\begin{lemma}\label{prop:2difsaux}
Let $(U_n)_{n\ge 1}$ be as in \Cref{Uproperties},  and $O:=\bigcup_{n\ge 1} U_n$. The boundary of the unbounded component\footnote{Note that in general, $\R^2\setminus O$ is not connected.} of $\R^2\setminus O$ is a path $\alpha$ of finite length satisfying $f_\mathbf{i}(\alpha) \cap K= f_\mathbf{i}(H(\mathcal{F}))$ for each $\mathbf{i}\in\{1,\ldots,m\}^*$.
\end{lemma}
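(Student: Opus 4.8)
The plan is to analyse the open set $O$ and its unbounded complementary component directly, using the two properties of the covers established in \Cref{Uproperties}. Write $\mathcal{U}:=\bigcup_{n\ge1}\mathcal{U}_n$ and, as in \Cref{Uproperties}, $U_n:=\bigcup_{V\in\mathcal{U}_n}V$; then $O=\bigcup_{V\in\mathcal{U}}V$ is a countable union of open squares, each meeting $K$ and of diameter at most $\delta$, whence $O\subseteq[K]_\delta$ is bounded and $K\cup O=\bigcup_{V\in\mathcal{U}}(V\cup K)$ is connected, being a union of connected sets all containing the connected set $K$. Hence $\overline{O}=\overline{K\cup O}$ is a continuum. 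By \Cref{Uproperties}~(1) we have $K\setminus(H(\mathcal{F}))_{\delta^n}\subseteq U_n$, so $K\setminus H(\mathcal{F})\subseteq O$; since a nondegenerate continuum is perfect, $K\setminus H(\mathcal{F})$ is dense in $K$ and thus $K\subseteq\overline{O}$. On the other hand, $\mathcal{U}_1$ is finite and, for $n\ge2$, every $V\in\mathcal{U}_n\setminus\mathcal{U}_{n-1}=\bigcup_{\mathbf{i}\in\mathcal{P}_n}\mathcal{V}_{\mathbf{i}}$ has $\operatorname{diam}V<\delta^n$ and is at distance $o(1)$ from $H(\mathcal{F})$ as $n\to\infty$; from this one checks that no element of $\mathcal{U}$ meets $H(\mathcal{F})$, that $H(\mathcal{F})\subseteq\partial O$ and $\partial O\cap K=H(\mathcal{F})$, and that $\partial O\subseteq H(\mathcal{F})\cup\bigcup_{V\in\mathcal{U}}\partial V$.

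Next let $\Omega$ be the unbounded component of $\R^2\setminus O$ and put $\alpha:=\partial\Omega$. A routine point-set argument gives that $\Omega$ is also the unbounded component of $\R^2\setminus\overline{O}$ and that $\alpha\subseteq\partial O$; in particular $\alpha$ is compact and $\alpha\cap K\subseteq\partial O\cap K=H(\mathcal{F})$. In the sphere $\mathbb{S}^2=\R^2\cup\{\infty\}$, the set $\Omega\cup\{\infty\}$ is a complementary domain of the continuum $\overline{O}$, so by the classical theorem that complementary domains of continua in $\mathbb{S}^2$ have connected boundary (see, e.g., \cite{Kuratowski68}), $\alpha$ is connected, hence a continuum. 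Its length is finite: $\mathcal{H}^1(\alpha)\le\mathcal{H}^1(\partial O)\le\sum_{V\in\mathcal{U}}\mathcal{H}^1(\partial V)$ since $H(\mathcal{F})$ is finite (\Cref{lemma:HK-finite}), and here $\mathcal{U}_1$ contributes a finite amount while for $n\ge2$ the family $\mathcal{U}_n\setminus\mathcal{U}_{n-1}$ has at most $c\cdot\#\mathbf{C}(\delta2^{-k})$ squares of side at most $\operatorname{sr}(f_{\mathbf{i}})\,\delta2^{-k}\le\delta^{n+1}2^{-k}$, so the total perimeter is bounded by a convergent geometric series. Since it is classical that a continuum of finite $\mathcal{H}^1$-measure is the image of a Lipschitz path of length at most twice that measure, $\alpha$ is a path of finite length.

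It remains to show $f_{\mathbf{i}}(\alpha)\cap K=f_{\mathbf{i}}(H(\mathcal{F}))$ for every $\mathbf{i}\in\{1,\dots,m\}^*$. For the inclusion ``$\subseteq$'': if $\mathbf{i}=\epsilon$ this is $\alpha\cap K\subseteq\partial O\cap K=H(\mathcal{F})$; if $\mathbf{i}\ne\epsilon$, then $\mathbf{i}\in\mathcal{Q}_{\varrho}$ for $\varrho:=\operatorname{sr}(f_{\mathbf{i}})$, so $K=\bigcup_{\mathbf{k}\in\mathcal{Q}_{\varrho}}f_{\mathbf{k}}(K)$, and the clustering of the squares at $H(\mathcal{F})$ yields $\overline{O}\subseteq\bigcup_n\overline{U_n}\cup H(\mathcal{F})$, hence $f_{\mathbf{i}}(\alpha)\subseteq\bigcup_nf_{\mathbf{i}}(\overline{U_n})\cup f_{\mathbf{i}}(H(\mathcal{F}))$; by \Cref{Uproperties}~(2) each $f_{\mathbf{i}}(\overline{U_n})$ misses $f_{\mathbf{k}}(K)$ for $\mathbf{k}\in\mathcal{Q}_{\varrho}\setminus\{\mathbf{i}\}$, while $f_{\mathbf{i}}(\alpha)\cap f_{\mathbf{i}}(K)=f_{\mathbf{i}}(\alpha\cap K)\subseteq f_{\mathbf{i}}(H(\mathcal{F}))$, and intersecting $f_{\mathbf{i}}(\alpha)$ with each $f_{\mathbf{k}}(K)$ gives $f_{\mathbf{i}}(\alpha)\cap K\subseteq f_{\mathbf{i}}(H(\mathcal{F}))$. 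The reverse inclusion follows at once from $f_{\mathbf{i}}(H(\mathcal{F}))\subseteq f_{\mathbf{i}}(K)\subseteq K$ together with the claim $H(\mathcal{F})\subseteq\alpha$.

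The claim $H(\mathcal{F})\subseteq\alpha=\partial\Omega$ — equivalently, that each intersection point $x\in H(\mathcal{F})$ is a limit of points of the unbounded component $\Omega$ — is the main obstacle, being the only part that uses the geometry rather than soft topology. The plan here is a local analysis at $x$: by \Cref{prop:ftc-empty-int} the set $K$ has empty interior, and together with $\#f_i(K)\cap f_j(K)<\infty$ and the open set condition (\Cref{lem:osc}) this forces $K$ to be porous at $x$ and only finitely many ``branches'' of $K$ to reach $x$. Coordinating the level $n$ of the squares of $\mathcal{U}$ that lie near $x$ with the radius $r$, one shows that $O\cap B_r(x)\subseteq[K]_{o(r)}$ as $r\to0$; combined with porosity this produces, for every small $r$, a point of $B_r(x)$ at distance $\gtrsim r$ from $K$ lying neither in $O$ nor in any bounded complementary component of $O$, hence joinable to $\infty$ within $\R^2\setminus O$. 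Therefore $x\in\overline{\Omega}$, and as $x\in\partial O$ we conclude $x\in\partial\Omega$. This scale bookkeeping near the pinch points is the technical heart; everything else reduces to the topological facts and the perimeter estimate above.
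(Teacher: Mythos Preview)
Your overall architecture is genuinely different from the paper's and most of it is sound: the perimeter estimate, the containment $\partial O\subseteq H(\mathcal{F})\cup\bigcup_{V\in\mathcal{U}}\partial V$, the use of \Cref{Uproperties}~(2) to pass from $\alpha\cap K\subseteq H(\mathcal{F})$ to $f_{\mathbf{i}}(\alpha)\cap K\subseteq f_{\mathbf{i}}(H(\mathcal{F}))$, and the appeal to the classical fact that a continuum of finite $\mathcal{H}^1$-measure is a Lipschitz image of an interval are all fine. (One small slip: ``$\Omega$ is also the unbounded component of $\R^2\setminus\overline{O}$'' is false in general---think of an annulus---but the inclusion $\alpha\subseteq\partial O$ you actually need follows directly: if $x\in\partial\Omega$ and $x\notin\overline{O}$ then a small ball around $x$ lies in $\R^2\setminus O$ and meets $\Omega$, hence lies in $\Omega$, contradicting $x\in\partial\Omega$.)

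The real gap is in the last step, $H(\mathcal{F})\subseteq\partial\Omega$, and your sketch does not close it. First, the scale bookkeeping gives only $O\cap B_r(x)\subseteq[K]_{cr}$ for a \emph{fixed} constant $c$ (squares at level $n$ meeting $B_r(x)$ satisfy $\delta^n\lesssim r$ and have diameter $\lesssim\delta^{n+1}\lesssim\delta r$), not $[K]_{o(r)}$. Second, and more seriously, even if porosity produces $y\in B_r(x)$ with $d(y,K)\gtrsim r$ (hence $y\notin O$ once $q>c$, which you would also have to arrange), you give no reason why $y$ lies in the \emph{unbounded} component of $\R^2\setminus O$ rather than in a bounded one; bounded complementary components of $O$ certainly exist for examples like the Sierpi\'nski triangle, and nothing in your local analysis rules them out near $x$. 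The paper sidesteps exactly this difficulty by an approximation trick: it sets $V_n:=U_n\cup(H(\mathcal{F}))_{\delta^{n-1}}$ (plugging open disks at the pinch points), shows $\overline{V_n}$ is a Peano continuum without cut points, applies Torhorst's theorem to obtain a Jordan loop $\alpha_n=\partial(\text{unbounded component of }\R^2\setminus V_n)$, bounds the lengths uniformly via the same perimeter sum, and then lets $n\to\infty$. Because all the higher-level squares of $O$ near $x$ are swallowed by the disk $B_{\delta^{n-1}}(x)\subseteq V_n$, the $\alpha_n$ stabilise away from $H(\mathcal{F})$ and their disk-boundary arcs shrink to the points of $H(\mathcal{F})$, forcing the limit loop $\alpha$ through each such point. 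That disk-plugging is the missing idea in your approach.
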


\begin{proof}
Each element of $\mathcal{U}_n$ is a square. We want to calculate the sum $\ell_n$ of the lengths $\ell(\partial U)$ of all loops\footnote{Recall that a loop in $\R^d$ is a continuous injection $\alpha\colon\mathbb{S}^1\to \R^d$. Since  $\alpha$ can be considered as a path in $\R^d$, its length is well defined. } in the set $L_n:=\{\partial U\colon U \in \mathcal{U}_n\}$. Because $\# \mathcal{P}_n \le c$ uniformly in $n$, we gain from  \eqref{eq:Vbfi}, \eqref{eq:Vndef}, and \eqref{eq:U1}
\begin{align}
\ell_n &:= \sum_{U\in \mathcal{U}_n} \ell(\partial U) = \sum_{U\in \mathcal{U}_1} \ell(\partial U) + \sum_{j=2}^n\sum_{\mathbf{i}\in\mathcal{P}_j} \sum_{U\in \mathcal{V}_\mathbf{i}} \ell(\partial U) \nonumber\\
 &\le  4 \delta 2^{-k} \#\mathbf{C}(\delta 2^{-k}) +  \sum_{j=2}^n c 4 \delta^{j}2^{-k}\#\mathbf{C}(\delta 2^{-k}) \label{sumDeltaUAlign} \\
&\le 2^{-k+2}c\#\mathbf{C}(\delta 2^{-k}) \sum_{j=1}^n \delta^{j} < 2^{-k+2}c\#\mathbf{C}(\delta 2^{-k})
\nonumber
\end{align}
(in the last inequality we used that $\delta<\frac14$). Because the last term in \eqref{sumDeltaUAlign} does not depend on $n$, the sequence $(\ell_n)_{n\ge 1}$ is bounded. 

Set $V_n:= U_n \cup (H(\mathcal{F}))_{\delta^{n-1}}$. Let $\alpha_n$ be the boundary of the unbounded complementary component of $V_n$. By Lemma~\ref{Uproperties}~(1), $\mathcal{U}_n \cup \{B_{\delta^{n-1}}(x)\colon x\in H(\mathcal{F})\}$  is an open cover of the connected set $K$ each of whose elements has nonempty intersection with $K$. Thus its closure $\overline{V_n}$ is a nondegenerate Peano continuum without cut points and, hence, Torhorst's Theorem (see~\cite[\S61, II, Theorem~4, (ii)]{Kuratowski68}) shows that $\alpha_n$ is a loop. Because this loop is contained in $L_n \cup \partial [H(\mathcal{F})]_{\delta^{n-1}}$, also the length of $\alpha_n$, $n\in \N$, is bounded by some constant $\ell_{\max}$ (note that  $\partial [H(\mathcal{F})]_{\delta^{n-1}}$ is the union of finitely many circles of radius $\delta^{n-1}$).

By the definition of $V_n$, for each $\eta>0$ there is $N\in \mathbb{N}$ such that 
$V_n \triangle V_m \subset [H(\mathcal{F})]_\eta$ for all $n,m > N$. 
Thus the sequence $(\alpha_n)_{n\in \N}$ converges uniformly  to a loop $\alpha$, whereby
$\alpha$ coincides with $\alpha_n$ on the complement of  $[H(\mathcal{F})]_{\delta^{n}}$ for every $n$. Hence the length of $\alpha$ is bounded by $\ell_{\max}$.
 By construction, $\alpha$ is the boundary of the unbounded complementary component of $O$ and $\alpha\cap K= H(\mathcal{F})$, which is finite. From this we get
$f_\mathbf{i}(\alpha) \cap K=f_\mathbf{i}(H(\mathcal{F}))$. Because we can regard $\alpha$ as a path, the result follows.
\end{proof}

\begin{proof}[Proof of \Cref{thm:ftn-null-permeable2d}]
Because of the finiteness condition $\#f_i(K) \cap f_j(K) < \infty$, it  follows from \Cref{prop:ftc-empty-int}  that $K$ has empty interior. 
Moreover, by Lemma~\ref{lem:osc}, $K$ satisfies the open set condition. 
However, according to \cite[(2.3) Corollary]{Schief:94}, a self-similar set satisfying the open set condition has empty interior if and only if it has Lebesgue measure zero. Thus $K$ has Lebesgue measure zero. Moreover, \Cref{thm:finite-type-finite-types} implies that $K$ has finite types of neighborhoods. Let $O$ be as in \Cref{prop:2difsaux}. By \Cref{prop:2difsaux}, the boundary of the unbounded complementary component of $O$ is a path $\alpha$ of finite length satisfying $f_\mathbf{i}(\alpha) \cap K= f_\mathbf{i}(H(\mathcal{F}))$ for each $\mathbf{i}\in\{1,\ldots,m\}^*$. 

Let $x$, $y$ be two points in $x,y \in \mathbb{R}^2\setminus K$. Since $K$ is compact, we may choose $\delta>0$
such that $\min\big\{d(x,K),d(y,K)\big\}>\frac{\delta}{4}$. 
By \Cref{lemma:outside-theta-closed}, to prove permeability of $K$, we have to construct a path 
$\gamma\colon [0,1]\to \R^2$ connecting $x$ and $y$ with $\ell(\gamma)<\vertii{x-y} +\delta$, and with $K\cap \gamma$ countable. 

Let $g$ be the line passing through $x$ and $y$. Because $K$ is a Lebesgue nullset, Fubini's theorem implies that there is a line $g'\subset [g]_{\delta/4}$, parallel to $g$, such that $\lambda_{g'}(g'\cap K) = 0$ (here, $\lambda_{g'}$ is the one-dimensional Lebesgue measure on $g'$). Let $x'$ and $y'$ be the orthogonal projections of $x$ and $y$ to $g'$, respectively. Since $\min\big\{d(x,K),d(y,K)\big\}>\frac{\delta}{4}$, the line segments $\overline{xx'}$ and $\overline{y'y}$ have empty intersection with $K$.   Let $c$ be the constant from \Cref{lem:osc-rho} and note that $c$ only depends on $f_1,\ldots,f_m$. Since $\lambda_{g'}(g'\cap K) = 0$, and $g'\cap K$ is closed,  we can cover $g'\cap K$, regarded as a subset of $g'$, with finitely many open intervals with total measure less than $\frac{\delta}{4c\ell(\alpha)}$. That means that $g'\cap K$, regarded as a subset of $\R^2$, can be covered by finitely many disks $B_{r_1}(x_1),\ldots,B_{r_k}(x_k)$ with $\sum_{j=1}^k 2r_j<\frac{\delta}{4c\ell(\alpha)}$. 
By \Cref{lem:osc-rho}, for every $j\in\{1,\ldots,k\}$ there exists $\mathcal D_j \subset \mathcal{Q}_{2r_j}$ with $\#\mathcal D_j\le c$
and \[
\bigcup_{\mathbf{i}\in \mathcal{D}_j}f_\mathbf{i}(K)  \supseteq K \cap \overline{B_{r_j}(x_j)}.
\]
 Set $I:=\mathcal{D}_1\cup\cdots\cup \mathcal{D}_k$. Then
 \begin{align*}
\sum_{\mathbf{i}\in I}\operatorname{sr}(f_\mathbf{i})\le\sum_{j=1}^k2r_j c<2c\frac{\delta}{4c\ell(\alpha)}=\frac{\delta}{2\ell(\alpha)}.
\end{align*}
Consider the unbounded component of $\R^2\setminus\Big(g' \cup \bigcup_{i\in I} f_\mathbf{i}(O)\Big)$. Its boundary is contained in 
$g'\cup \bigcup_{i\in I} f_\mathbf{i}(\alpha)$, which is path-connected and contains $x'$ and $y'$, and so there exists a path
\(
\beta \subset g' \cup \bigcup_{\mathbf{i}\in I} f_\mathbf{i}(\alpha)
\)
which connects $x'$ and $y'$ with 
\[\big\vert \ell(\beta) - \vertii{x'-y'} \big\vert \le \sum _{\mathbf{i}\in I} \ell(f_\mathbf{i}(\alpha))=\ell(\alpha) \sum_{\mathbf{i}\in I}\operatorname{sr}(f_\mathbf{i})<\frac{\delta}{2} ,\] 
and $\beta \cap f_\mathbf{i}(K)$ is finite. Concatenating $\overline{xx'},\beta,\overline{y'y}$ yields the desired path $\gamma$.
\end{proof}

\begin{examples}\label{exs:p2}
\Cref{thm:ftn-null-permeable2d} applies to a wealth of examples. For instance, it  implies permeability of the {\it von Koch Curve} (see \cite[Chapter~3]{Edgar:04}), the {\it Sierpi\'nski triangle} (see~\cite{Sierpinski:1915}), and the {\it Hata tree} (see~\cite{Hata:85}) (for standard references to these famous self-similar sets we refer for instance to~\cite{Barnsley:93,Falconer2003,YamagutiHataKigami1997}).  Besides that, the examples of self-similar sets $K$ in Barnsley~\cite[Chapter~7, Example~1.14]{Barnsley:93}, satisfying $ \# f_i(K) \cap f_j(K) <\infty$ (which can be verified easily), are permeable by \Cref{thm:ftn-null-permeable2d}. In each of these examples the finite type condition is obviously satisfied.

We also refer to \cite{bandt2018elementary,Bandt:22}, where the computer system {\tt ifstile}\footnote{See \url{https://ifstile.com}.} is used to create interesting carpets. This system can be used to create further examples to which \Cref{thm:ftn-null-permeable2d} can be applied.
\end{examples}

 The {\it Sierpi\'nski carpet}\footnote{See {\it e.g.}~\cite[Example~1.3.17]{Kigami:01} for the definition of the Sierpi\'nski carpet.} $K$ mentioned in Example~\ref{exs:basic}~(\ref{ex:carpet1}) is an impermeable self-similar set in the plane.
It is a nondegenerate Peano continuum (this follows immediately, see \cite[Theorem~4.6]{Hata:85}) with $\dim_H(K)=\dim_B(K)=\frac{\log8}{\log3}\approx 1.8928$ (by \cite[Theorem~9.3]{Falconer2003}). Moreover, it is clearly porous  in the sense of Definition~\ref{def:porosity}. 
This example shows that one cannot get rid of the condition $\# f_i(K) \cap f_j(K) <\infty$ in \Cref{thm:ftn-null-permeable2d}.

\subsection{Null permeability of self-similar sets in higher dimensions}\label{sec:3difs}
We now turn to self-similar sets in dimension $d\ge 3$. It turns out that Theorem~\ref{th:Nagata} on the null permeability of sets with small Nagata dimension is useful here. The following theorem, which gives a criterion for null permeability of self-similar sets of dimension $d\ge 3$ is a consequence of this result. Indeed, we will show that the finite type condition from Definition~\ref{def:finiteneighborhoods} provides a criterion for a self-similar set to have Nagata dimension less than or equal to $1$. 

\begin{theorem}\label{thm:ftn-null-permeable3d}
For $d\ge 3$ let $K\subset \R^d$ be the attractor of a self-similar IFS $\mathcal{F}:=\{f_1,\ldots, f_m\}$ satisfying $\#f_i(K) \cap f_j(K) < \infty$ for $1\le i<j\le m$.
Suppose further that $\mathcal{F}$ is of finite type.
Then $K$ is null permeable.
\end{theorem}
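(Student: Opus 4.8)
The plan is to deduce this from \Cref{th:Nagata}, so the whole proof reduces to showing that under the stated hypotheses the attractor $K$ has Nagata dimension at most $1$; since $d\ge 3$ this gives $\dim_N(K)\le 1\le d-2$ and null permeability follows immediately. First I would invoke \Cref{lemma:HK-finite}: the finite type condition together with $\#f_i(K)\cap f_j(K)<\infty$ guarantees that $N(\mathcal F)$ and $H(\mathcal F)$ are finite, and there is $\varepsilon>0$ with $N(\mathcal F)=N_\varepsilon(\mathcal F)$. I would also note, via \Cref{lem:osc} and \cite[(2.3) Corollary]{Schief:94}, that $K$ satisfies the open set condition and is a Lebesgue nullset (though the measure-zero statement is not strictly needed once we have the Nagata bound, since \Cref{th:Nagata} already yields null permeability; still, it is reassuring and may be used to control covers).

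The core is a self-similar covering argument producing, for every scale $s>0$, an $s$-cover $\mathcal U=\mathcal U_0\cup\mathcal U_1$ with each $\mathcal U_i$ being $cs$-separated for a fixed constant $c$. The idea is to cover $K$ by pieces $f_{\mathbf i}(K)$ with $\mathbf i\in\mathcal Q_\varrho$ for an appropriate $\varrho$ comparable to $s$; by \Cref{lem:osc-rho} only boundedly many such pieces meet any given ball of diameter $\varrho$. Away from the finite intersection set $H(\mathcal F)$ (and its small copies $f_{\mathbf i}(H(\mathcal F))$), the pieces $f_{\mathbf i}(K)$ are pairwise separated by a definite fraction of their diameter — this is exactly the kind of estimate established in \eqref{eqn:U2-distance} and exploited in \Cref{Uproperties}; so those pieces can be sorted into two colour classes $\mathcal U_0$ (the ``bulk'' pieces, mutually well separated once we remove neighbourhoods of intersection points) and $\mathcal U_1$ (small sets covering the neighbourhoods of the finitely many intersection points, which are themselves separated from one another by \eqref{eq:deltadef1}), each of which is $cs$-separated. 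One then iterates this construction at successively finer scales inside the neighbourhoods of intersection points, exactly as in the definition of $(\mathcal U_n)_{n\ge1}$ in \Cref{sec:selfsimilar2d}, obtaining a cover of all of $K$; the uniform bound $\#\mathcal P_n\le c$ keeps the separation constants and multiplicities under control across all scales. Combining the colour classes at matched scales yields the desired $2$-piece $cs$-separated $s$-cover, i.e.\ $\dim_N(K)\le 1$ with constant $c$.

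I expect the main obstacle to be organizing the multi-scale bookkeeping so that a single constant $c$ works simultaneously at every scale $s$: one must ensure that when pieces at different generations are merged into the two colour classes, elements of the same class remain $cs$-separated. The delicate point is near the intersection points $H(\mathcal F)$, where copies of $K$ at many different scales accumulate; here the two-colouring has to be done with care, using the finite-type structure (finitely many neighbour maps, all with similarity ratio bounded away from $0$ and $\infty$) to guarantee that only boundedly many pieces of comparable size cluster together and that they can be $2$-coloured with the required separation. This is precisely where the finite type condition, via \Cref{thm:finite-type-finite-types} and \Cref{lemma:HK-finite}, does the essential work, and where replacing ``self-similar'' by ``self-affine'' would break the argument. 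Once the Nagata estimate $\dim_N(K)\le 1$ is in hand, the conclusion is one line: since $d\ge 3$ we have $\dim_N(K)\le 1\le d-2$, so $K$ is null permeable by \Cref{th:Nagata}.
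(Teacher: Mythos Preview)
Your overall strategy --- show $\dim_N(K)\le 1$ and then invoke \Cref{th:Nagata} --- is exactly what the paper does. However, your route to the Nagata bound is over-complicated. The paper does \emph{not} iterate at successively finer scales; instead, its \Cref{lem:topdimIFS} gives a direct, single-scale construction. For each $s>0$ one takes $\mathbf i\in\mathcal Q_{s/2}$ and builds the two colour classes in one stroke: $\mathcal U_2 = \{f_{\mathbf i}(K\setminus [H(\mathcal F)]_{c_1}) : \mathbf i\in\mathcal Q_{s/2}\}$ (the ``bulk'' pieces with small neighbourhoods of intersection points excised) and $\mathcal U_1$ consisting of the balls $f_{\mathbf i}(B_{2c_1}(x))$ for $x\in H(\mathcal F)$, $\mathbf i\in\mathcal Q_{s/2}$ (after discarding the smaller of any two concentric balls). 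The iteration $(\mathcal U_n)_{n\ge1}$ from \Cref{sec:selfsimilar2d} that you import is tailored to the different task of building a surrounding loop of finite length in the plane; bringing it here creates precisely the ``multi-scale bookkeeping'' obstacle you anticipate, and it is unnecessary.

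The delicate point you flag --- separation near intersection points --- is real, but \eqref{eq:deltadef1} alone does not resolve it: that bound only controls distances among points of $H(\mathcal F)$ itself, whereas the balls in $\mathcal U_1$ are centred at $f_{\mathbf i}(z)$ for \emph{different} $\mathbf i\in\mathcal Q_{s/2}$. The paper handles this by a case split on whether $f_{\mathbf i_1}^{-1}\circ f_{\mathbf i_2}$ lies in $N(\mathcal F)\cup\{\mathrm{id}\}$: if not, the equality $N_\varepsilon(\mathcal F)=N(\mathcal F)$ from \Cref{lemma:HK-finite} supplies an $\varepsilon$-gap; if so, one uses that $f_{\mathbf i_1}^{-1}(x_2)$ falls into the finite set $Z=H(\mathcal F)\cup\bigcup_{h\in N(\mathcal F)}h(H(\mathcal F))$ and appeals to the minimum mutual distance in $Z$. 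Separation of $\mathcal U_2$ follows by the analogous dichotomy. This direct argument bypasses iteration entirely, so your anticipated main obstacle simply does not arise.
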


To prove this result, we show that the conditions of the theorem imply that the Nagata dimension of $K$ is less than or equal to $1$. This result is of interest in its own right.

\begin{proposition}\label{lem:topdimIFS}
For $d\ge 2$ let $K\subset \R^d$ be the  attractor of a self-similar IFS $\mathcal{F}:=\{f_1,\ldots, f_m\}$ satisfying the finite type condition and $\#f_i(K) \cap f_j(K) < \infty$ for $1\le i<j\le m$. Then $\dim_N(K)\le~\!1$.
\end{proposition}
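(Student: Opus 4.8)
The plan is to show that the finite type condition, combined with the finiteness of the intersections $f_i(K)\cap f_j(K)$, forces $K$ to admit, for every scale $s>0$, an $s$-cover that decomposes into two $cs$-separated families, which is exactly $\dim_N(K)\le 1$ in the sense of \Cref{def:nagata}~(2). The geometric idea is that at any scale, a point of $K$ lies in one of the ``level pieces'' $f_{\mathbf i}(K)$ with $\operatorname{sr}(f_{\mathbf i})\approx s$, and by the finite type condition two such pieces either coincide or are uniformly separated (after rescaling, their relative position is governed by a finite set of neighbor maps); the only trouble comes near the finitely many intersection points $H(\mathcal F)$, and these can be handled by recursing into the construction --- much as in the two-dimensional argument leading to \Cref{Uproperties}, but now we only need the combinatorics of separated families rather than a path.

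First I would set up the pieces. Fix $\varrho\in(0,1)$ and recall the collection $\mathcal Q_\varrho$ of curtailed words with $\operatorname{sr}(f_{\mathbf i})\in(r_{\min}\varrho,\varrho]$, so that $K=\bigcup_{\mathbf i\in\mathcal Q_\varrho}f_{\mathbf i}(K)$. By \Cref{lemma:HK-finite} (whose hypotheses are exactly ours) $N(\mathcal F)$ and $H(\mathcal F)$ are finite, and there is $\varepsilon>0$ with $N(\mathcal F)=N_\varepsilon(\mathcal F)$. As in the derivation of \eqref{eqn:U2-distance}, the compact set $K\setminus (H(\mathcal F))_\delta$ is at distance $\ge \delta 2^{-(k-1)}$ from $\bigcup_{h\in N(\mathcal F)}h(K)$ for a suitable $k$; after rescaling by $f_{\mathbf i}^{-1}$ this says that two distinct level-$\varrho$ pieces $f_{\mathbf i}(K)$, $f_{\mathbf j}(K)$, once we stay away from the $\delta\varrho$-neighborhood of $f_{\mathbf i}(H(\mathcal F))$, are separated by a definite fraction of $\varrho$. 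So away from $[H(\mathcal F)]$-neighborhoods the pieces $f_{\mathbf i}(K)$ themselves already form a $c'\varrho$-separated family of diameter $O(\varrho)$; that is a single colour. The remaining part of $K$ near the intersection points is $\bigcup_{\mathbf i}f_{\mathbf i}([H(\mathcal F)]_{\delta'})$, and inside each $f_{\mathbf i}([H(\mathcal F)]_{\delta'})$ we repeat the decomposition one scale finer. Iterating, the even-generation contributions go into colour $0$ and the odd-generation ones into colour $1$: at each generation the new pieces are separated from the old ones of the same colour because they sit inside disjoint (indeed far-apart) $f_{\mathbf i}(K)$'s, and they are separated from each other at that generation by the rescaled version of \eqref{eqn:U2-distance}. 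The uniform bound $\#\mathcal P_n\le c$ coming from \Cref{lem:osc-rho} (applicable since finite type plus finite intersections gives the open set condition by \Cref{lem:osc} --- here one should note the continuum hypothesis is not needed, or else argue directly) guarantees the families stay locally finite and the separation constants do not degrade.

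Concretely, I would prove the statement by strong induction on $n$ producing, for a fixed target diameter $s$, a finite family $\mathcal U^{(0)}_n\cup\mathcal U^{(1)}_n$ of sets of diameter $\le s$, each $\mathcal U^{(j)}_n$ being $cs$-separated, covering $K\setminus\bigcup_{\mathbf i\in\mathcal P_n}f_{\mathbf i}((H(\mathcal F))_{\delta^{\,n}})$, with the colour alternating with the recursion depth; taking $n\to\infty$ the uncovered part shrinks to $\bigcup_{\mathbf i}f_{\mathbf i}(H(\mathcal F))$, a countable (indeed, at each finite stage finite) set of isolated points that can be absorbed into either colour as singletons. Since $s>0$ was arbitrary and the separation constant $c$ depends only on $\mathcal F$, this yields $\dim_N(K)\le 1$. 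The main obstacle I anticipate is the bookkeeping of the separation constants through the recursion: one must check that a new fine piece at depth $n$ is separated \emph{both} from same-coloured pieces at depth $n-2,n-4,\dots$ (which follows because it lives deep inside a single $f_{\mathbf i}(K)$ whose neighbours are already pushed away by \eqref{eqn:U2-distance}) \emph{and} from the other fine pieces at depth $n$ (the rescaled \eqref{eqn:U2-distance} again), and that the diameters can be forced $\le s$ simultaneously --- this is where choosing the base scale $\varrho$ in terms of $s$ and $\delta$, exactly as in \eqref{eq:deltadef1}, \eqref{eq:deltadef2}, \eqref{eqn:U2-distance}, does the job. Once this proposition is in hand, \Cref{thm:ftn-null-permeable3d} is immediate: for $d\ge 3$ we have $\dim_N(K)\le 1\le d-2$, so \Cref{th:Nagata} gives null permeability of $K$.
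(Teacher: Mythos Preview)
Your recursive two-colouring by parity of depth has a genuine gap. Consider two distinct $\mathbf{i},\mathbf{j}\in\mathcal Q_\varrho$ with $f_{\mathbf i}(K)\cap f_{\mathbf j}(K)=\{p\}$, say $p=f_{\mathbf i}(x)=f_{\mathbf j}(y)$ with $x,y\in H(\mathcal F)$. Your depth-$1$ (colour-$1$) pieces include sets lying inside $f_{\mathbf i}(B_{\delta}(x))$ coming from the recursion on the $\mathbf i$-side, and sets lying inside $f_{\mathbf j}(B_{\delta}(y))$ from the $\mathbf j$-side; both families accumulate at the common point $p$, so colour~$1$ cannot be $cs$-separated for any fixed $c>0$. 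The rescaled \eqref{eqn:U2-distance} you invoke only separates depth-$1$ pieces sharing the \emph{same} parent $f_{\mathbf i}$; it says nothing about pieces from neighbouring parents meeting at an intersection point. More generally, the separation your scheme produces between same-colour pieces at depth $n$ is of order $\delta^{n} s$, not $cs$, so the constant degrades with the recursion and the bookkeeping you anticipate as ``the main obstacle'' cannot be completed.

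The paper's proof avoids recursion entirely and is much simpler. For each $s>0$ it uses two families at a \emph{single} scale: $\mathcal U_2=\{f_{\mathbf i}(K\setminus [H(\mathcal F)]_{c_1}):\mathbf i\in\mathcal Q_{s/2}\}$ (essentially your depth-$0$ pieces) and $\mathcal U_1$, consisting of one small ball $f_{\mathbf i}(B_{2c_1}(x))$ around each image of an intersection point, retaining only the largest ball per centre. The key insight you are missing is that these balls are already $cs$-separated without any refinement: if two distinct centres $f_{\mathbf i_1}(z_1)$ and $f_{\mathbf i_2}(z_2)$ with $z_1,z_2\in H(\mathcal F)$ were close, then $h:=f_{\mathbf i_1}^{-1}\circ f_{\mathbf i_2}\in N(\mathcal F)\cup\{\mathrm{id}\}$, so $f_{\mathbf i_1}^{-1}\bigl(f_{\mathbf i_2}(z_2)\bigr)=h(z_2)$ lies in the \emph{finite} set $Z:=H(\mathcal F)\cup\bigcup_{h\in N(\mathcal F)}h(H(\mathcal F))$ but differs from $z_1$, hence is at distance at least $\varepsilon$ from $z_1$ (for $\varepsilon$ chosen below the minimal gap in $Z$), giving $\vertii{f_{\mathbf i_1}(z_1)-f_{\mathbf i_2}(z_2)}\ge \tfrac{r_{\min}s}{2}\,\varepsilon$. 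This one-step argument handles precisely the collision your recursion stumbles on. Note also that the paper's proof uses neither the open set condition nor \Cref{lem:osc-rho}, so your worry about $K$ not being a continuum never arises.
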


\begin{proof} 
Since the case where $\# K=1$ is trivial, we assume that $\#K> 1$, so in particular, $\diam(K)>0$. W.l.o.g., we may assume that $\diam(K)=1$.
By \Cref{thm:finite-type-finite-types}, $K$ has finite types of neighborhoods, 
so, by \Cref{lemma:HK-finite}, $H(\mathcal{F})$ is finite and there exists $\varepsilon\in(0,1)$
such that $N_\varepsilon(\mathcal{F})=N(\mathcal{F})$, and $N(\mathcal{F})$ is finite. 
Thus $Z:=H(\mathcal{F})\cup\{h(H(\mathcal{F}))\colon h \in N(\mathcal{F})\}$ is finite as well. W.l.o.g., we assume\footnote{As usual, a minimum over an empty set is infinite.} 
\begin{equation}\label{nagatad1-epsilonZ}
\varepsilon<\min\{\vertii{x-y}\colon x,y \in Z, x\ne y\}.
\end{equation} 
As before, let $r_{\min}$ denote the minimum of the similarity ratios of $f_1,\ldots, f_m$. Set
\begin{align}
\label{nagatad1-c1}c_1&:=\frac{\varepsilon r_{\min}}{8},\\
\label{nagatad1-c2}c_2&:=\frac{r_{\min}}{2}\min\{\vertii{x-y}\colon x\in K \setminus (H(\mathcal{F}))_{c_1}, y\in h(K \setminus (H(\mathcal{F}))_{c_1}), h\in N(\mathcal{F})\},
\end{align}
which are strictly positive by the finiteness of $N(\mathcal{F})$ and the compactness of $K \setminus (H(\mathcal{F}))_{c_1}$.
Further define $c:=\min\{c_1,c_2\}$ and let $s>0$. We define a collection
\begin{align*}
\mathcal{U}_0 &:= \big\{f_{\mathbf{i}}(B_{2 c_1}(x)) \colon x\in H(\mathcal{F}),\,\mathbf{i} \in \mathcal{Q}_{s/2}   \big\}.
\end{align*}
Note that $\mathcal{U}_0$ is not $cs$-separated, because it may contain different balls with the same center. Let 
$\mathcal{U}_1$ be the collection of sets obtained from $\mathcal{U}_0$, if for every such center only the ball with the largest radius is chosen, {\em i.e.}, 
\[
\mathcal{U}_1 := \{B \in \mathcal{U}_0 : \text{there is no } B'\in \mathcal{U}_0 \text{ with } B\subsetneq B'\}.
\]
Further set 
\begin{align*}
\mathcal{U}_2 &:= \big\{f_{\mathbf{i}}(K \setminus [H(\mathcal{F})]_{c_1}) \colon \mathbf{i} \in \mathcal{Q}_{s/2} \big\}.
\end{align*} 
and consider the union $\mathcal{U}=\mathcal{U}_1\cup \mathcal{U}_2$. 
From our choice \eqref{nagatad1-c1} of $c_1$,  $\mathcal{U}$ is an $s$-cover of $K$. 
Indeed, this is true for each element of $\mathcal{U}_1$ because $ c_1<\tfrac 1 8$ and, hence, $\diam\big(f_{\mathbf{i}}(B_{2 c_1}(x))\big)\le \tfrac s 2 \cdot 4 c_1<s$ for every 
$\mathbf{i}\in \mathcal{Q}_{s/2}$. Moreover, $V=f_{\mathbf{i}}(K \setminus [H(\mathcal{F})]_{c_1})\in \mathcal{U}_2$ has diameter less than or equal to $s$ since $\operatorname{sr}(f_{\mathbf{i}})\le \tfrac s 2$ and $\diam(K)=1$. 

It remains to show that both, $\mathcal{U}_1$ and $\mathcal{U}_2$, are $c s$-separated. 
First consider two distinct elements $B_1,B_2\in \mathcal{U}_1$ with centers $x_1$ and $x_2$, 
respectively. Note that each of their radii is less than or equal to $c_1s$. There exist $\mathbf{i}_1,\mathbf{i}_2 \in \mathcal{Q}_{s/2}$ and $z_1,z_2\in H(\mathcal{F})$ such that 
$x_1=f_{\mathbf{i}_1}(z_1)$ and $x_2=f_{\mathbf{i}_2}(z_2)$. If $f_{\mathbf{i}_1}^{-1}(x_2)\notin Z$, {\em i.e.}, 
$f_{\mathbf{i}_1}^{-1}\circ f_{\mathbf{i}_2}(z_2)\notin Z$ then $f_{\mathbf{i}_1}^{-1}\circ f_{\mathbf{i}_2}\notin N(\mathcal{F})\cup \{\id\}$.
Thus $[H(\mathcal{F})]_\varepsilon\cap \big(f_{\mathbf{i}_1}^{-1}\circ f_{\mathbf{i}_2}\big)([H(\mathcal{F})]_\varepsilon)=\emptyset$,  and, hence, 
\begin{equation}\label{nagata-dist-centers1}
\begin{aligned}
\vertii{x_1-x_2}&= \vertii{f_{\mathbf{i}_1}(z_1)-f_{\mathbf{i}_2}(z_2)}\ge \frac{s}{2} r_{\min}\vertii{z_1-f_{\mathbf{i}_1}^{-1}\circ f_{\mathbf{i}_2}(z_2)}\\
&\ge \frac{s}{2} r_{\min}\min\{\vertii{z_1-y}\colon y\in f_{\mathbf{i}_1}^{-1}\circ f_{\mathbf{i}_2}(H(\mathcal{F})) \}\ge \frac{s}{2} r_{\min}\varepsilon=4c_1s,
\end{aligned}
\end{equation}
where we used \eqref{nagatad1-c1} in the last equality.
Now suppose $f_{\mathbf{i}_1}^{-1}(x_2)\in Z$. Since $f_{\mathbf{i}_1}$ is bijective 
and $x_1\ne x_2$,
this implies $f_{\mathbf{i}_1}^{-1}(x_2)\in Z\setminus\{z_1\}$. Hence, 
$\vertii{z_1-f_{\mathbf{i}_1}^{-1}(x_2)}\ge \frac{8 }{r_{\min}}c_1$ by \eqref{nagatad1-epsilonZ} and \eqref{nagatad1-c1}, and because $\operatorname{sr}(f_{\mathbf{i}_1})\ge \frac{s}{2}r_{\min}$, we gain 
\begin{equation}\label{nagata-dist-centers2}
\vertii{x_1-x_2}\ge 4 c_1s.
\end{equation}
Now, since $B_1,B_2$ are balls with radii less than or equal to $c_1s$, we get from \eqref{nagata-dist-centers1}, \eqref{nagata-dist-centers2},  and  the inequality $c_1\ge c$ that 
\[
\min\{\vertii{x-y}\colon x\in B_1,y\in B_2\}\ge \vertii{x_1-x_2} -2 c_1s\ge 4 c_1s-2 c_1s>  cs,
\]
so $\mathcal{U}_1$ is $cs$-separated.

Finally, let $\mathbf{i},\mathbf{j} \in \mathcal{Q}_{s/2}$, with $\mathbf{i}\ne \mathbf{j}$. If $f^{-1}_\mathbf{i}\circ f_\mathbf{j}\notin N(\mathcal{F})$, then $f_\mathbf{i}([K]_\varepsilon)\cap  f_\mathbf{j}([K]_\varepsilon)=\emptyset$, and therefore 
\begin{align*}
\lefteqn{\min\{\vertii{x-y}\colon x\in f_\mathbf{i}(K \setminus (H(\mathcal{F}))_{c_1}), y\in f_\mathbf{j}(K \setminus (H(\mathcal{F}))_{c_1})\}}\\
&\ge\min\{\vertii{x-y}\colon x\in  f_\mathbf{i}(K), y\in f_\mathbf{j}(K)\}>2 \varepsilon \frac{s}{2} r_{\min}= \varepsilon s r_{\min}\ge cs,
\end{align*} 
where we used \eqref{nagatad1-c1} and the inequality $c_1\ge c$ in the last estimate. If $f^{-1}_\mathbf{i}\circ f_\mathbf{j}\in N(\mathcal{F})$,
then by \eqref{nagatad1-c2} and the inequality $c_2\ge c$ we have 
\begin{align*}
&\min\{\vertii{x-y}\colon x\in f_\mathbf{i}(K \setminus (H(\mathcal{F}))_{c_1}), y\in f_\mathbf{j}(K \setminus (H(\mathcal{F}))_{c_1})\}\\
&= \operatorname{sr}(f_\mathbf{i}) \min\{\vertii{x-y}\colon x\in K \setminus (H(\mathcal{F}))_{c_1}, y\in f^{-1}_\mathbf{i}\circ f_\mathbf{j}(K \setminus (H(\mathcal{F}))_{c_1})\}\\
&\ge \operatorname{sr}(f_\mathbf{i}) \frac{2  c_2}{r_{\min}} \ge \frac{s}{2}r_{\min}    \frac{2c_2}{r_{\min}}\ge cs.
\end{align*}
We have shown  that $\mathcal{U}_2$ is $cs$-separated.

Thus, according to Definition~\ref{def:nagata}~(2), $K$ has Nagata dimension less than or equal to $1$.
\end{proof}

On close examination of the previous proof one sees that, if $H(\mathcal{F})=\emptyset$, then 
already $\mathcal{U}_2$ is a $cs$ separated set, and therefore $\dim_N(K)=0$. 
The proof remains correct also in this case.

\begin{proof}[Proof of \Cref{thm:ftn-null-permeable3d}]
From \Cref{lem:topdimIFS} we gain $\dim_N(K) \le1$. Since $d \ge 3 \; \Leftrightarrow \; 1 \le d-2$ the result follows from \Cref{th:Nagata}.
\end{proof}

It is reasonable to conjecture that, with suitable assumptions on $\mathcal{F}$, we have
$\dim_N(K)\le k+1$ if $\dim_N(f_i(K) \cap f_j(K))\le k$ for $i\neq j$. Although this would lead to a generalization of \Cref{thm:ftn-null-permeable3d}, we do not pursue this here because in general, for a given self-similar set $K$, the inequalities $\dim_N(f_i(K) \cap f_j(K))\le k$ ($i\neq j$) do not seem to be  easier to check than $\dim_N(K)\le k+1$.

\begin{example}[Sierpi\'nski tetrahedron]\label{ex:SierpTetr2}
In Example \ref{ex:SierpinskiTetrahedron} we have already shown that the Sierpi\'nski tetrahedron in $\R^3$ is permeable.
\Cref{thm:ftn-null-permeable3d} gives the stronger result that 
the Sierpi\'nski tetrahedron is null permeable.
\end{example}

\subsection{Final example: A Bedford-McMullen carpet}\label{sec:bedford-mcmullen}

The classical Sierpi\'nski carpet is the attractor of a self-similar IFS, it can be represented explicitly as 
\[
\Big\{\sum_{k=1}^\infty 3^{-k} x_k\colon x_1,x_2,\ldots\in \{0,1,2\}^2\setminus \{(1,1)\}\Big\}\subset [0,1]^2.
\] 
As already stated in \Cref{exs:basic}(\ref{ex:carpet1}), the classical Sierpi\'nski carpet is impermeable. It has Lebesgue measure 0 and topological dimension 1. We want to construct an impermeable subset of $\R^2$ of measure 0 and topological dimension 0. The construction is similar to that of \cite[Section 4.C.1]{hakobyan2008} and can be generalized to subsets of $\R^d$ for $d\ge 1$, {\em cf.}\ \Cref{thm:bedf-mcmullen-d}.

\smallskip

A {\em Bedford-McMullen carpet} (BMC, for short; see, {\em e.g.}, \cite{Fraser:21b} for a recent survey on these objects) is a self-affine subset of $[0,1]^2$ that is defined as follows: Fix  $n,m\in \N$ and consider a fixed subset $R\subset \{0,\ldots,n-1\}\times \{0,\ldots,m-1\}$,  which we will call {\em $n\times m$-pattern}. Set 
\[
K_R:=\Big\{\sum_{k=1}^\infty T^k x_k\colon x_1,x_2,\ldots\in R\Big\}\subset [0,1]^2,
\] 
where $T(x,y):=\big(\frac{x}{n},\frac{y}{m}\big)$, $(x,y)\in \R^2$. The closed set $K_R$ is the attractor of the self-affine IFS $\{f_{(i,j)} \colon (i,j)\in R\}$ consisting of the functions
\begin{equation}\label{eq:bm-functions}
f_{(i,j)}(x,y):=\Big(\frac{x}{n},\frac{y}{m}\Big)+\Big(\frac{i}{n},\frac{j}{m}\Big)\,,\qquad(i,j)\in R. 
\end{equation}

\begin{figure}[h]
     \centering
     \begin{subfigure}[b]{0.416\textwidth}
         \centering
         \includegraphics[width=\textwidth]{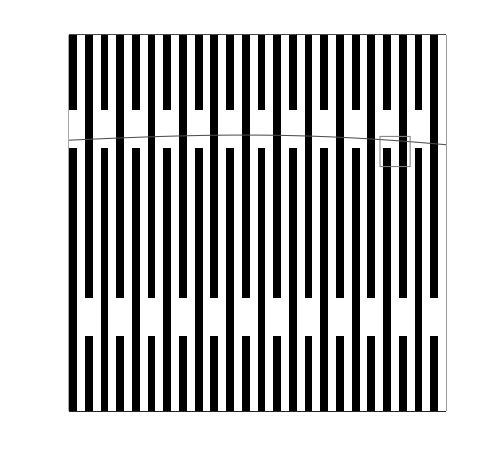}
         \caption{A path crossing the BMC} 
     \end{subfigure}
     \hfil
     \begin{subfigure}[b]{0.55\textwidth}
         \centering
         \includegraphics[width=\textwidth]{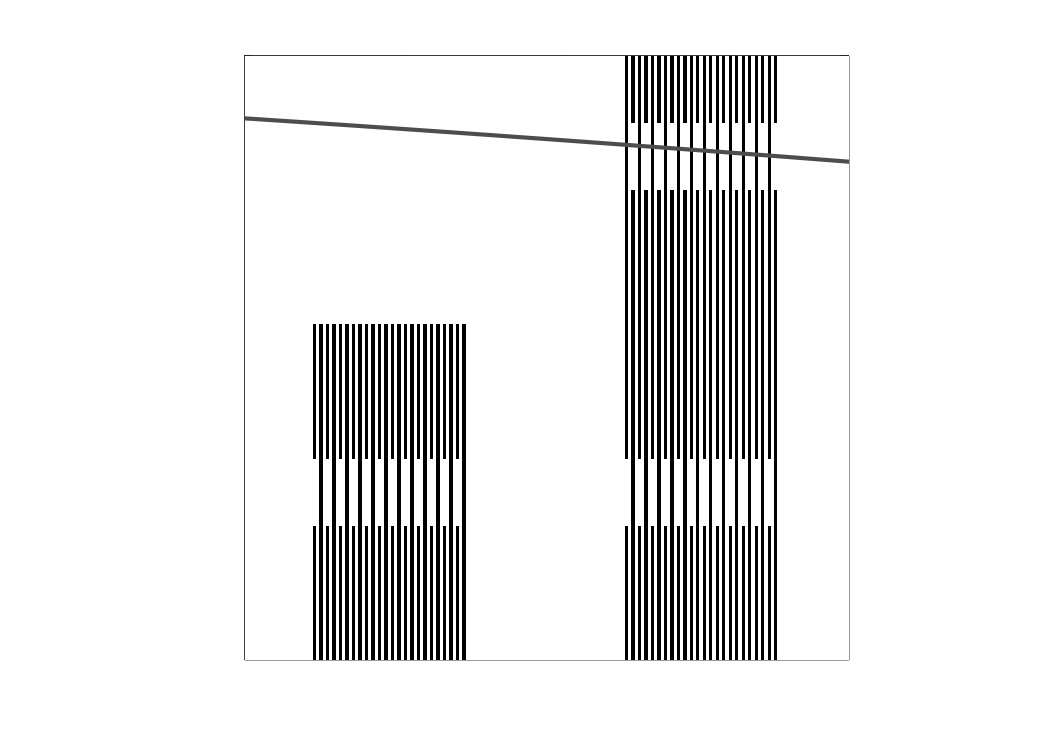}
         \caption{A magnified section of (A)} 
     \end{subfigure}
 	\caption{A Bedford-McMullen carpet}
        \label{fig:BMC-imp}
\end{figure}

For our purposes we consider the $48\times 10$-pattern $R$ depicted in \Cref{fig:BMC-imp}~(A), {\it i.e.}, we set
\[
R:=\{(2 i,j)\colon 0\le i<24, 0\le j< 10, j\not \equiv 5i+7\pmod {10}\}.
\] 
Clearly, $K:=K_R$ is closed and has Lebesgue measure 0. 
We show that $K$ is impermeable. For this purpose we formulate a lemma. In its statement recall that the total variation $V_a^b(f)$ of a function 
$f\colon [a,b]\to \R$ is defined as 
\[
V_a^b(f):=\sup\bigg\{  \sum_{k=1}^N |f(t_k)-f(t_{k-1})|\colon N\in\N,\,a=t_0<\cdots<t_N=b
\bigg\}.
\]

Having $n=48$ and $m=10$, let $R_2 := R \cup (R + \{(0,10)\})$ and let
\begin{equation}\label{eq:bm-functions-neu}
f_{(i,j)}(x,y):=\Big(\frac{x}{n},\frac{y}{m}\Big)+\Big(\frac{i}{n},\frac{j}{m}\Big)\,,\qquad(i,j)\in R_2. 
\end{equation}

In the next lemma and its proof, we say that $\gamma\colon [a,b]\to \R^2$ is a path from $X\subset \R^2$ to $Y\subset \R^2$, if $\gamma(a)\in X$ and $\gamma(b)\in Y$.
We denote the coordinate functions of a path $\gamma$ in $\R^2$ by $(\gamma)_1$ and $(\gamma)_2$, respectively, {\it i.e.}, we write $\gamma(t)=((\gamma)_1(t),(\gamma)_2(t))$. 

\begin{lemma}\label{lemma:zoltan-neu}
Let $\gamma\colon [a,b]\to \R^2$ be a path from $\{0\}\times[0,2]$ to $\{1\}\times[0,2]$, satisfying $\gamma([a,b])\subset [0,1]\times [0,2]$ and $V_a^b((\gamma)_2) < 1$. Then there exist elements $(i_1,j_1),(i_2,j_2) \in R_2$ with $i_1<i_2$ and $(i_1,j_1+1),(i_2,j_2+1) \in R_2$ with the following property:
There exist $a_1<b_1<a_2<b_2$ such that, for $k\in \{1,2\}$,  $\gamma_k:=\gamma|_{[a_k,b_k]}$ is a path from 
$f_{(i_k,j_k)}(\{0\}\times[0,2])$ to $f_{(i_k,j_k)}(\{1\}\times[0,2])$ satisfying $\gamma_k([a_k,b_k])\subset f_{(i_k,j_k)}([0,1]\times[0,2])$ and $V_{a_k}^{b_k}((\gamma_k)_2) < \frac1{10}$.
\end{lemma}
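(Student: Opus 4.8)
The strategy is to exploit the key numerical feature of the pattern $R$: in each of the $48$ columns of the $48\times m$ grid (equivalently, each of the $24$ \emph{nonempty} even columns), exactly one of the $10$ rows is missing, and these missing rows are determined by the arithmetic progression $j\equiv 5i+7\pmod{10}$. The point of passing from $R$ to $R_2=R\cup(R+\{(0,10)\})$ and working in the strip $[0,1]\times[0,2]$ is exactly the following: a path with $V_a^b((\gamma)_2)<1$ can move vertically by strictly less than one ``small'' cell height $\frac1m=\frac1{10}$ on the full scale, but more importantly, when we zoom into a subcolumn the analogous bound becomes $\frac1{10}\cdot\frac1m$, so iterating the lemma is consistent; the doubled strip provides the room for the vertical wiggle so that the sub-path is honestly a path across a full doubled sub-cell $f_{(i,j)}([0,1]\times[0,2])$.

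First I would set up the column decomposition. Write $x$-coordinate space $[0,1]$ as the union of the $n=48$ vertical strips $\Sigma_i:=[\frac in,\frac{i+1}n]\times[0,2]$, $i\in\{0,\dots,47\}$; only the even $i$ are relevant, and for each even $i$ the strip $\Sigma_i$ meets $K$. Since $\gamma$ runs from $\{0\}\times[0,2]$ to $\{1\}\times[0,2]$ inside $[0,1]\times[0,2]$, by continuity and the intermediate value theorem its first coordinate $(\gamma)_1$ assumes every value in $[0,1]$; hence for each even $i$ there is a parameter subinterval on which $\gamma$ crosses $\Sigma_i$ from its left edge $x=\frac in$ to its right edge $x=\frac{i+1}n$ while staying inside $\Sigma_i$ (take, e.g., the last time $\gamma$ is on the left edge before the first subsequent time it reaches the right edge). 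Call such a subinterval $[\alpha_i,\beta_i]$; these can be chosen so that $\beta_i\le\alpha_{i'}$ when $i<i'$ (process the even columns left to right, always choosing the crossing that happens first after the previous one is completed). On $[\alpha_i,\beta_i]$ the restriction $\gamma|_{[\alpha_i,\beta_i]}$ is a path from $f_{(i,j)}(\{0\}\times[0,2])$ to $f_{(i,j)}(\{1\}\times[0,2])$ \emph{for the appropriate $j$}, and its vertical variation is at most $V_a^b((\gamma)_2)<1$.

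The heart of the argument is a counting/pigeonhole step to locate a \emph{row} compatible with membership in $R_2$. The crossing path $\gamma|_{[\alpha_i,\beta_i]}$ lives in $\Sigma_i$, which in the $y$-direction is $[0,2]$, subdivided into $20$ cells of height $\frac1{10}$ indexed by $j\in\{0,\dots,19\}$; the cell row $j$ is ``present in $R_2$ at column $i$'' iff $(i,j)\in R_2$, i.e. iff $j\not\equiv 5i+7\pmod{10}$ (considering $j$ mod $10$). Since $V_{\alpha_i}^{\beta_i}((\gamma)_2)<1$, the total vertical travel of the crossing is less than $1=10\cdot\frac1{10}$, so the set of cell-rows $j$ actually visited by $\gamma|_{[\alpha_i,\beta_i]}$ is a ``vertically connected'' block of at most $\lceil 10\rceil+1=11$ consecutive values — more carefully, the image of $(\gamma)_2$ on $[\alpha_i,\beta_i]$ is an interval of length $<1$, hence meets at most $11$ of the height-$\frac1{10}$ cells, and among any $11$ consecutive cell-rows there are (since the forbidden rows repeat with period $10$) at least $10$ allowed ones and, crucially, we can find an allowed row $j$ such that \emph{the row $j+1$ (mod $10$, in the doubled strip) is also allowed}; this is where doubling to $R_2$ and choosing the forbidden progression $5i+7\pmod{10}$ matters — the forbidden set in two adjacent rows $j,j+1$ is just two residues, so avoiding both is easy within an $11$-cell window. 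We then have $(i,j),(i,j+1)\in R_2$, and the crossing of $\Sigma_i$, cut down to the $y$-sub-band $[\frac jm,\frac{j+2}m]$ (i.e. to $f_{(i,j)}([0,1]\times[0,2])$), is (again by an IVT/first-crossing argument in the $y$-variable) a path $\gamma_k$ from $f_{(i,j)}(\{0\}\times[0,2])$ to $f_{(i,j)}(\{1\}\times[0,2])$ inside that doubled sub-cell with $V((\gamma_k)_2)\le V((\gamma)_2)/m<\tfrac1{10}$ — wait, this last scaling is the delicate point.

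\textbf{Main obstacle.} The genuinely delicate point, and the one I would spend the most care on, is the \emph{bookkeeping of the vertical variation under zooming}: after restricting to a sub-column $\Sigma_i$ and then to a vertical sub-band, one must argue that the resulting sub-path really is a crossing of $f_{(i,j)}([0,1]\times[0,2])$ with $V((\gamma_k)_2)<\frac1{10}$, \emph{not} merely $<1$. The self-affine map $f_{(i,j)}$ contracts the $y$-direction by $\frac1m=\frac1{10}$, so a crossing of the \emph{unit} strip $f_{(i,j)}([0,1]\times[0,1])$ would force vertical variation comparable to $\frac1{10}$; but we are crossing the \emph{doubled} strip $f_{(i,j)}([0,1]\times[0,2])$, which has $y$-extent $\frac2{10}$, and the hypothesis $V_a^b((\gamma)_2)<1$ does \emph{not} obviously localize to give $<\frac1{10}$ on a subinterval. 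The resolution must be that we do not simply restrict a crossing of the big strip; rather, we need to find \emph{two} disjoint sub-intervals $[a_1,b_1],[a_2,b_2]$ (hence the assertion $i_1<i_2$ with two distinct columns) where the path crosses two different doubled sub-cells with small variation, and the existence of two such is forced precisely because if the path had vertical variation $\ge\frac1{10}$ on the crossing of \emph{every} doubled sub-cell it meets, summing over the $\ge 24$ columns it must cross would give total variation $\ge 24\cdot\frac1{10}>1$, contradicting $V_a^b((\gamma)_2)<1$; so for all but a bounded number of columns the crossing has vertical variation $<\frac1{10}$, and I pick two such columns (the left-to-right ordering gives $i_1<i_2$ and $a_1<b_1<a_2<b_2$). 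Assembling this averaging argument correctly — in particular making the ``at most $24\cdot\frac1{10}<1$ is false, so a good column exists, and in fact two good columns exist'' inequality rigorous while simultaneously ensuring the chosen good columns have \emph{both} rows $j,j+1$ in $R_2$ — is the technical crux; everything else is standard IVT/first-crossing manipulation of paths.
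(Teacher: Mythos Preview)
Your overall plan—pigeonhole on vertical variation across disjoint column-crossings, combined with the combinatorics of $R_2$—is the right one and matches the paper's strategy. However, there is a genuine gap in your execution.

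You pigeonhole over the $24$ individual even columns, concluding (correctly) that at least $15$ of them carry crossings with vertical variation $<\tfrac1{10}$. You then say one must ``simultaneously ensure the chosen good columns have both rows $j,j+1$ in $R_2$'', flag this as the crux—but never resolve it. The problem is real: for a \emph{single} even column $2i$ the forbidden row is $5i+7\pmod{10}$; when the variation on the crossing is just below $\tfrac1{10}$, the admissible interval $[10(c+\varepsilon)-2,10c]$ for $j$ has length only slightly above $1$ and may contain a \emph{unique} integer $j$, and nothing prevents that $j$ or $j+1$ from hitting the forbidden residue. So the ``averaging'' step and the ``$R_2$-membership'' step do not compose as easily as you hope; your earlier remark about an ``$11$-cell window'' applies only before the variation bound is sharpened to $\tfrac1{10}$, and is irrelevant afterwards.

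The paper's fix is to pigeonhole not over single columns but over the $12$ blocks of four consecutive columns $\{4\nu,\dots,4\nu+3\}$. Since $\sum_{\nu=0}^{11} V_{s_{4\nu}}^{t_{4\nu+3}}((\gamma)_2)\le V_a^b((\gamma)_2)<1$, at least two blocks have variation $<\tfrac1{10}$, hence their crossings sit in some $2$-cell vertical window $[\tfrac{j_k}{10},\tfrac{j_k+2}{10}]$. The point of the block width is combinatorial: each block contains \emph{two} even columns, $4\nu=2(2\nu)$ and $4\nu+2=2(2\nu+1)$, and the formula $5i+7\pmod{10}$ makes their forbidden rows $7$ and $2$ respectively. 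Since the corresponding ``bad'' sets of $j_k$, namely $\{6,7,16,17\}$ and $\{1,2,11,12\}$, are disjoint, for \emph{every} $j_k$ at least one of the two even columns $i_k\in\{4\nu_k,4\nu_k+2\}$ has both $(i_k,j_k),(i_k,j_k+1)\in R_2$. Restricting to the crossing $[s_{i_k},t_{i_k}]$ of that sub-column then yields the required $\gamma_k$. This pairing of columns with complementary forbidden rows is the missing idea in your proposal; without it the argument does not close.
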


\begin{proof}
For every $i\in \{0,\ldots,47\}$ we define $s_{i}:=\sup\{t\in [a,b]\colon (\gamma)_1(t)=\tfrac{i}{48}\}$ and
 $t_{i}:=\inf\{t\in [s_i,b]\colon (\gamma)_1(t)=\tfrac{i+1}{48}\}$.  Since $\gamma$ is a path from $\{0\}\times[0,2]$ to $\{1\}\times[0,2]$, $s_i$ and $t_i$ are well-defined
 and $s_i,t_i\in [a,b]$ with $s_i<t_i$.

Since $a\le s_{i}<t_{i}\le s_{i+1}\le b$ for all $i\in  \{0,\ldots,47\}$, the intervals from the collection $\{[s_{4\nu},t_{4\nu+3}]:\nu\in\{0,\ldots,11\}\} $ have mutually disjoint interiors. Moreover, 
\[
\sum_{\nu=0}^{11}V_{s_{4\nu}}^{t_{4\nu+3}}\big((\gamma)_2\big)\le V_a^b\big((\gamma)_2\big)<1,
\]
so there are
at least 2 indices $\nu_1,\nu_2\in\{0,\ldots,11\}$, $\nu_1<\nu_2$, with $V_{s_{4\nu_k}}^{t_{4\nu_k+3}}\big((\gamma)_2\big)<10^{-1}$ for $k \in \{1,2\}$. Thus for each $k \in \{1,2\}$ there exist $j_k \in \{0,\ldots,18\}$ with
\[
\gamma([s_{4\nu_k},t_{4\nu_k+3}]) \subset \Big[\frac{4\nu_k}{48},\frac{4\nu_k+4}{48} \Big] \times
 \Big[\frac{j_k}{10},\frac{j_k+2}{10} \Big].
\]
By the definition of $R$ (resp.\ $R_2$) for every $k\in \{1,2\}$ there is at least one index
\(
i_k \in 
\{4\nu_k,4\nu_k+2\}
\)
with $\{(i_k,j_k),(i_k,j_k+1)\} \subseteq R_2$ (see Figure~\ref{fig:R2} for an illustration). 
\begin{figure}[h]
\begingroup%
  \makeatletter%
  \providecommand\color[2][]{%
    \errmessage{(Inkscape) Color is used for the text in Inkscape, but the package 'color.sty' is not loaded}%
    \renewcommand\color[2][]{}%
  }%
  \providecommand\transparent[1]{%
    \errmessage{(Inkscape) Transparency is used (non-zero) for the text in Inkscape, but the package 'transparent.sty' is not loaded}%
    \renewcommand\transparent[1]{}%
  }%
  \providecommand\rotatebox[2]{#2}%
  \newcommand*\fsize{\dimexpr\f@size pt\relax}%
  \newcommand*\lineheight[1]{\fontsize{\fsize}{#1\fsize}\selectfont}%
  \ifx\svgwidth\undefined%
    \setlength{\unitlength}{397.57699765bp}%
    \ifx\svgscale\undefined%
      \relax%
    \else%
      \setlength{\unitlength}{\unitlength * \real{\svgscale}}%
    \fi%
  \else%
    \setlength{\unitlength}{\svgwidth}%
  \fi%
  \global\let\svgwidth\undefined%
  \global\let\svgscale\undefined%
  \makeatother%
  \begin{picture}(1,0.40654764)%
    \lineheight{1}%
    \setlength\tabcolsep{0pt}%
    \put(0,0){\includegraphics[width=\unitlength,page=1]{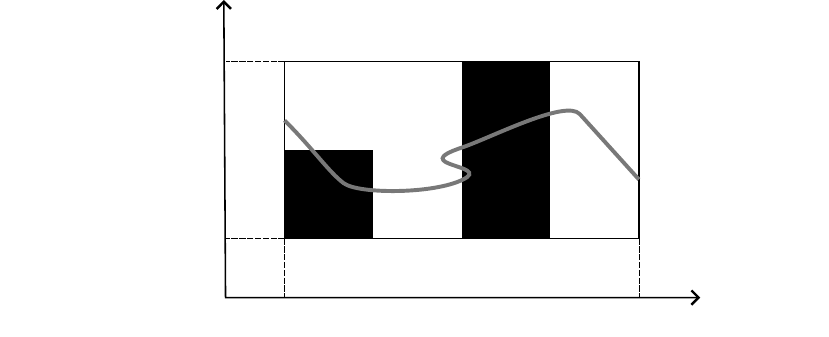}}%
    \put(0.34359481,0.00616036){\makebox(0,0)[t]{\lineheight{1.25}\smash{\begin{tabular}[t]{c}$\frac{4\nu_k}{48}$\end{tabular}}}}%
    \put(0.771383,0.00616036){\makebox(0,0)[t]{\lineheight{1.25}\smash{\begin{tabular}[t]{c}$\frac{4\nu_k+4}{48}$\end{tabular}}}}%
    \put(0.23822017,0.11480119){\makebox(0,0)[t]{\lineheight{1.25}\smash{\begin{tabular}[t]{c}$\frac{j_k}{10}$\end{tabular}}}}%
    \put(0.23822017,0.32869528){\makebox(0,0)[t]{\lineheight{1.25}\smash{\begin{tabular}[t]{c}$\frac{j_k+2}{10}$\end{tabular}}}}%
    \put(0.39004079,0.26953954){\makebox(0,0)[t]{\lineheight{1.25}\smash{\begin{tabular}[t]{c}$\gamma(s_{4\nu_k})$\end{tabular}}}}%
    \put(0.52183988,0.23853291){\makebox(0,0)[t]{\lineheight{1.25}\smash{\begin{tabular}[t]{c}$\gamma(s_{i_k})$\end{tabular}}}}%
    \put(0.70008496,0.28927335){\makebox(0,0)[t]{\lineheight{1.25}\smash{\begin{tabular}[t]{c}$\gamma(t_{i_k})$\end{tabular}}}}%
    \put(0.83461353,0.18315013){\makebox(0,0)[t]{\lineheight{1.25}\smash{\begin{tabular}[t]{c}$\gamma(t_{4\nu_k+3})$\end{tabular}}}}%
    \put(0,0){\includegraphics[width=\unitlength,page=2]{bedf-mcmull-part.pdf}}%
  \end{picture}%
\endgroup%
 \caption{An illustration of a $4\times 2$ subblock of the pattern in Figure~\ref{fig:BMC-imp}. The black squares correspond to elements $(i,j)\in R$. Each such subblock contains a vertical column of two black squares. \label{fig:R2}}
\end{figure}
With this choice we have $\gamma|_{[s_{i_k},t_{i_k}]} \subseteq f_{(i_k,j_k)}([0,1]\times[0,2])$. Thus, setting $a_k=s_{i_k}$ and $b_k=t_{i_k}$, the path $\gamma_k:=\gamma|_{[a_k,b_k]}$ satisfies the assertions of the lemma (note that $i_1<i_2$ by construction).
\end{proof}

Now let $\gamma\colon [0,1]\to \R^2$ be a path connecting the points 
$(0,\tfrac{1}{2})$ and $(1,\tfrac{1}{2})$ with  $V_0^1\big((\gamma)_2\big)<\tfrac{1}{2}$.
Let $a=a_{0,1}:=\sup\{t\in [0,1]\colon (\gamma)_1(t)=0\}$ and $b=b_{0,1}:=\inf\{t\in [a,1]\colon (\gamma)_1(t)=1\}$. Then $\gamma|_{[a,b]}$ satisfies the assumptions of \Cref{lemma:zoltan-neu} and thus, 
by iteratively applying this lemma, there exist double sequences $\big((i_{l,k},j_{l,k})\big)_{l \ge 0,1\le k \le 2^l}$ in $R_2$ and 
$\big([a_{l,k},b_{l,k}]\big)_{l\in \N,1\le k \le 2^l}$ 
of intervals, such that $\big([a_{l,k},b_{l,k}]\big)_{1\le k \le 2^l}$ are disjoint for every $l$, $\bigcup_{k=1}^{2^l}[a_{l,k},b_{l,k}]\subset \bigcup_{k=1}^{2^{l-1}}[a_{l-1,k},b_{l-1,k}]$ and 
\[
\gamma\Big(\bigcup_{k=1}^{2^l}[a_{l,k},b_{l,k}]\Big)\subset \bigcup_{k=1}^{2^l}f_{(i_{1,k},j_{1,k})}\circ\cdots\circ f_{(i_{l,k},j_{l,k})} ([0,1]\times[0,2])
\subset \bigcup_{\mathbf{j}\in R^l}f_{\mathbf{j}}([0,1]^2),
\]
where the $f_{(i_{l,k},j_{l,k})}$ map into $[0,1]^2$ since $\gamma$ is contained in $[0,1]^2$ (otherwise its variation would exceed $\tfrac{1}{2}$).
Note that  $(\gamma)_1\Big(\bigcup_{k=1}^{2^l}[a_{l,k},b_{l,k}]\Big)$ is the disjoint union of $2^l$ closed intervals of length $\frac1{48^l}$. Thus 
\[
\bigcap_{l\ge 0}(\gamma)_1\Big(\bigcup_{k=1}^{2^l}[a_{l,k},b_{l,k}]\Big)
\]
is a Cantor set, and, hence, 
\[
\bigcap_{l\ge 0}\gamma\Big(\bigcup_{k=1}^{2^l}[a_{l,k},b_{l,k}]\Big)\subset \bigcap_{l\ge 0}\bigcup_{\mathbf{j}\in R^l}f_{\mathbf{j}}([0,1]^2)= K.
\]
is uncountable.
Note that the $\|\cdot\|_1$-length of $\gamma$ is $V_0^1((\gamma)_1)+V_0^1((\gamma)_2)$. We have shown that if  $\gamma\colon [0,1]\to \R^2$ is a path from 
$(0,\tfrac{1}{2})$ to $(1,\tfrac{1}{2})$ with $V_0^1(\gamma_2)<\tfrac{1}{2}$, then $\gamma\cap K$ is uncountable, and thus $K$ 
is impermeable with respect to the $1$-norm on $\R^2$. 
From 
impermeability of $K$ with respect to 
the 1-norm follows impermeability with respect to the Euclidean norm, {\em cf.}\ \Cref{rem:non-convex-norm}.

It is not hard to see that $K$ is totally disconnected. Since it is also compact, by \cite[II. Section 3.2, Theorem 5]{o2012general} we have $ \dim_T(K)=0$. We thus have shown the following proposition.

\begin{proposition}
There exists an impermeable set in $\R^2$ which is compact, has Lebesgue measure~0 and topological dimension~0.
\end{proposition}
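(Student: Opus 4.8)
The plan is to construct an explicit Bedford-McMullen carpet that simultaneously has all three properties, and this is precisely what the preceding discussion does. The set is $K := K_R$, where $R$ is the $48\times 10$-pattern
\[
R = \{(2i,j) \colon 0 \le i < 24,\ 0 \le j < 10,\ j \not\equiv 5i+7 \pmod{10}\}.
\]
So the first step is just to record that $K$ is the attractor of the self-affine IFS in \eqref{eq:bm-functions}, hence compact, and that $K$ has Lebesgue measure $0$: since each row of $R$ omits at least one cell and the columns of $R$ only occupy even coordinates, the area bound $(\#R/(nm))^l \to 0$ gives $\lambda(K) = 0$.

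The heart of the argument is impermeability, and here I would invoke \Cref{lemma:zoltan-neu} iteratively. Suppose $\gamma\colon[0,1]\to\R^2$ is a path from $(0,\tfrac12)$ to $(1,\tfrac12)$ with $V_0^1((\gamma)_2)<\tfrac12$; after trimming to $[a,b]$ where $a = \sup\{t : (\gamma)_1(t)=0\}$ and $b=\inf\{t\ge a : (\gamma)_1(t)=1\}$, the restriction satisfies the hypotheses of the lemma. Applying the lemma recursively produces, at level $l$, a family of $2^l$ disjoint subintervals $[a_{l,k},b_{l,k}]$ nested inside the level-$(l-1)$ ones, together with words $\mathbf j \in R^l$ such that
\[
\gamma\Big(\bigcup_{k=1}^{2^l}[a_{l,k},b_{l,k}]\Big) \subset \bigcup_{\mathbf j\in R^l} f_{\mathbf j}([0,1]^2).
\]
Because the first coordinate of $\gamma$ over these intervals is a disjoint union of $2^l$ closed intervals of length $48^{-l}$, the nested intersection over $l$ projects onto a Cantor set in the first coordinate, hence is uncountable; and it lies in $\bigcap_l \bigcup_{\mathbf j\in R^l} f_{\mathbf j}([0,1]^2) = K$. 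So $\gamma\cap K$ is uncountable. Since $\|\gamma\|_1$-length equals $V_0^1((\gamma)_1)+V_0^1((\gamma)_2)$, any path of $\|\cdot\|_1$-length close to $1$ from $(0,\tfrac12)$ to $(1,\tfrac12)$ must have $V_0^1((\gamma)_2)<\tfrac12$, so no such path can have countable (let alone empty) intersection with $K$; thus $K$ is impermeable in $(\R^2,\|\cdot\|_1)$. By \Cref{rem:non-convex-norm}, impermeability in the $1$-norm implies impermeability in the Euclidean norm.

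Finally, topological dimension $0$: I would check that $K$ is totally disconnected. Since the columns used are $\{0,2,4,\ldots,46\}$, between any two distinct chosen columns at consecutive levels there is an unoccupied vertical strip, so no connected subset of $K$ can join two points with different first coordinates in the Cantor dust of admissible $x$-coordinates; similarly the pattern of omitted cells in each column blocks vertical connections. Hence $K$ is totally disconnected, and being also compact and metrizable, $\dim_T(K) = 0$ by \cite[II.\ Section 3.2, Theorem 5]{o2012general}. Collecting the three properties gives the proposition. The main obstacle is the combinatorial bookkeeping inside \Cref{lemma:zoltan-neu} — verifying that the specially tuned congruence $j\equiv 5i+7\pmod{10}$ guarantees that within every relevant $4\times 2$ subblock there is a full vertical column of two admissible cells, which is exactly what forces the path to descend one level deeper while only spending $\tfrac1{10}$ of its vertical variation budget — but that work is already done in the lemma, so the proof of the proposition itself is mostly assembly.
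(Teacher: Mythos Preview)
Your proposal is correct and follows essentially the same approach as the paper: the same Bedford--McMullen carpet $K=K_R$, the same iterative application of \Cref{lemma:zoltan-neu} to force an uncountable intersection with any short path, the passage from $\|\cdot\|_1$- to Euclidean impermeability via \Cref{rem:non-convex-norm}, and the same observation that $K$ is compact, of measure $0$, and totally disconnected (hence $\dim_T(K)=0$). The paper is equally terse about total disconnectedness (``It is not hard to see''); your remark that the first-coordinate projection is already a Cantor set is the right reason.
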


The example is easily generalized to several dimensions: using the same $n\times m$-pattern $R$ as above, consider
\[
\tilde K_d:=K\times [0,1]^{d-2}= \Big\{\sum_{k=1}^\infty  \{T^k x_k\}\times [0,1]^{d-2}\colon x \in R^\N\Big\}\subset [0,1]^d,
\] 
which provides us with an example of a set of topological dimension $d-2$ and measure 0, which is impermeable.
Alternatively, we may write 
\[
\tilde K_d= \Big\{\sum_{k=1}^\infty  \tilde T^k x_k\colon x \in (R\times \{0,\ldots,9\}^{d-2})^\N\Big\}\subset [0,1]^d,
\]
where $\tilde Ty = (48^{-1}y_1,10^{-1}y_2,\ldots,10^{-1}y_2)$ for $y\in \R^d$. Now define 
\[
P_k\colon \R^d\to \R^d\colon\; (y_1,\ldots, y_d)\mapsto(y_1,y_{2 + (k \pmod{d-1})},\ldots,y_{2 + (k+d-2 \pmod{d-1})}). 
\]
The purpose of $P_k$ is to cyclically shift the last $d-1$ coordinates of an element of $\R^d$ by $k$ places modulo $d-1$. Define  
\[
K_d:= \Big\{\sum_{k=1}^\infty  \tilde T^k P_k x_k\colon x \in (R\times \{0,\ldots,9\}^{d-2})^\N\Big\}\subset [0,1]^d.
\] 
Then $K_d$ is compact and totally disconnected, therefore has topological dimension 0 (by \cite[II. Section 3.2, Theorem 5]{o2012general}), has measure 0 and, along the same lines as for the 2-dimensional case, one can show that it  is impermeable. We end by stating this result as a theorem, adding the trivial case $d=1$, where the middle third Cantor set is a suitable example.

\begin{theorem}\label{thm:bedf-mcmullen-d}
For each $d\ge 1$ there exists an impermeable set in $\R^d$ which is compact, has Lebesgue measure~0 and topological dimension~0.
\end{theorem}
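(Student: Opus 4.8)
The plan is to reuse the machinery already developed for the two-dimensional Bedford--McMullen carpet $K=K_R$ in the paragraph preceding the statement, and to extend it dimension by dimension. For $d=1$ the middle third Cantor set works: it is compact, has Lebesgue measure $0$, and is topologically $0$-dimensional, and it is obviously impermeable (in fact it is even null permeable by Example~\ref{exs:basic}, so here we really do want \emph{impermeable}; thus one instead takes any Cantor set of \emph{positive} Lebesgue measure as a subset of $\R$, which is impermeable by Proposition~\ref{prop:dimhindep}'s remark — but wait, that has positive measure, so the right choice for $d=1$ is simply to note that a Lebesgue nullset in $\R$ is null permeable, hence \emph{no} measure-zero impermeable set exists in $\R$ unless we drop a requirement). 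To be safe I would re-examine which combination the theorem actually claims and, if $d=1$ genuinely forces positive measure, handle $d=1$ separately with the Smith--Volterra--Cantor set $F$; for $d\ge 2$ the construction below does give measure $0$.

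For $d=2$ the set $K=K_R$ with the explicit $48\times 10$-pattern $R$ has already been shown, in the discussion above, to be compact, of Lebesgue measure $0$, and impermeable with respect to the $\ell^1$-norm, hence impermeable with respect to the Euclidean norm by \Cref{rem:non-convex-norm}; total disconnectedness of $K$ together with compactness and \cite[II.\ Section~3.2, Theorem~5]{o2012general} gives $\dim_T(K)=0$. So the $d=2$ case is essentially already in hand, and I would just assemble those observations into a clean statement. The substance is the passage to $d\ge 3$. Here I would take the set
\[
K_d:= \Big\{\sum_{k=1}^\infty  \tilde T^k P_k x_k\colon x \in (R\times \{0,\ldots,9\}^{d-2})^\N\Big\}\subset [0,1]^d
\]
with $\tilde T$ and the cyclic-shift maps $P_k$ as defined in the excerpt. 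Compactness and measure $0$ are immediate (a countable intersection of finite unions of boxes whose total volume tends to $0$). Total disconnectedness — hence $\dim_T(K_d)=0$ by the cited theorem — follows because the shifts $P_k$ guarantee that in \emph{each} of the $d-1$ ``spread'' coordinates the contraction ratio is $1/10<1$ infinitely often and in the first coordinate it is $1/48$, so the projection of $K_d$ to a suitable coordinate line is a Cantor set and any two distinct points of $K_d$ are separated along some coordinate by a gap of the pattern.

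The impermeability of $K_d$ is the main obstacle, and the plan is to mimic the variation argument of \Cref{lemma:zoltan-neu} but applied along a rotating sequence of coordinate directions. Given a short path $\gamma$ connecting two antipodal-type points of a suitable box, its $\ell^1$-length controls the total variation of all coordinate functions; one distributes that variation budget over the cylinder pieces at each level and, exactly as in the $d=2$ argument, at infinitely many levels finds a sub-box into which $\gamma$ descends with small variation in the ``current'' group of coordinates. Iterating, one extracts a nested sequence of cylinders whose intersection with $\gamma$ is forced to meet $K_d$, and a Cantor-set argument along the first coordinate shows this intersection is uncountable. Since the $P_k$ cycle through the coordinate groups, the bookkeeping must track which block of the pattern is being refined at level $k$, but the combinatorial core — every $4\times 2$ sub-block of $R$ contains a full vertical pair, so a path with small vertical variation is trapped — is identical to the planar case and carries over verbatim after the cyclic relabelling. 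This yields impermeability with respect to $\|\cdot\|_1$, and \Cref{rem:non-convex-norm} upgrades it to the Euclidean norm. The one place that needs genuine care is verifying that the rotation of coordinates does not destroy the ``trapping'' property — i.e.\ that at the levels where coordinate $1$ is being subdivided by $R$, the relevant two coordinates of $R$ (not of the $\{0,\dots,9\}^{d-2}$ factor) are the ones whose variation is small — which is arranged precisely because $P_k$ never moves the first coordinate and cycles the remaining ones so that the ``$R$-coordinate'' pair recurs infinitely often.
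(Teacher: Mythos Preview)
Your approach for $d\ge 2$ is essentially the paper's: reuse the planar Bedford--McMullen carpet $K_R$ for $d=2$ and the cyclically shifted version $K_d$ for $d\ge 3$, with impermeability established by the same variation-trapping argument as in \Cref{lemma:zoltan-neu}. The paper itself only says that the higher-dimensional impermeability goes ``along the same lines as for the $2$-dimensional case,'' so your sketch is at the level of detail the paper provides.

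The $d=1$ paragraph, however, is genuinely confused. The middle third Cantor set $C\subset\R$ \emph{is} impermeable --- not null permeable --- and it is precisely the example the paper uses. The argument is one line: any path in $\R$ from $0$ to $1$ has image containing $[0,1]$ by the intermediate value theorem, hence contains all of $C$, so $\overline{\gamma\cap C}=C$ is uncountable regardless of how short $\gamma$ is. Nothing in Example~\ref{exs:basic} asserts that $C$ is null permeable in $\R$; you may be thinking of the fact that $C\times\{0\}^{d-1}$ is null permeable in $\R^d$ for $d\ge 2$ (which follows, e.g., from \Cref{cor:product-of-dense-sets} or \Cref{th:Haus}), but that is a different ambient space. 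Your fallback suggestions --- a positive-measure Cantor set, or the Smith--Volterra--Cantor set --- both violate the Lebesgue-measure-zero requirement of the theorem, and the assertion that ``a Lebesgue nullset in $\R$ is null permeable'' is false, with $C$ itself the counterexample. So for $d=1$ the correct proof is simply: take $C$; it is compact, has measure zero, is totally disconnected (hence $\dim_T(C)=0$), and is impermeable by the one-line argument above.
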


\bigskip

\subsection*{Acknowledgement.} We thank the anonymous referee for her/his careful reading of the manuscript.

\bibliographystyle{amsplain}  
\providecommand{\bysame}{\leavevmode\hbox to3em{\hrulefill}\thinspace}
\providecommand{\MR}{\relax\ifhmode\unskip\space\fi MR }
\providecommand{\MRhref}[2]{%
  \href{http://www.ams.org/mathscinet-getitem?mr=#1}{#2}
}
\providecommand{\href}[2]{#2}

\end{document}